\theoremstyle{plain}
\newtheorem{lemma}{Lemma}[chapter]
\newtheorem{proposition}[lemma]{Proposition}
\newtheorem{theorem}[lemma]{Theorem}
\newtheorem{cor}[lemma]{Corollary}
\theoremstyle{definition}
\newtheorem{definition}[lemma]{Definition}
\theoremstyle{remark}
\newtheorem{assumption}[lemma]{Assumption}
\newtheorem{convention}[lemma]{Convention}
\newtheorem{example}[lemma]{Example}
\newtheorem{remark}[lemma]{Remark}
\newcommand{\bet}{\left |}
\newcommand{\rag}{\right |}
\newcommand{\equa}{\begin{eqnarray*}}
\newcommand{\tion}{\end{eqnarray*}}
\newcommand{\Om}{\Omega}
\newcommand{\om}{\omega}
\newcommand{\vare}{\varepsilon}
\newcommand{\vph}{\varphi}
\newcommand{\half}{{\frac{1}{2}}}
\newcommand{\probsp}{(\Om ,\cF,\P)}
\newcommand{\A}{{\mathbb{A}}}
\newcommand{\E}{{\mathbb{E}}}
\newcommand{\F}{{\mathbb{F}}}
\renewcommand{\H}{{\mathbb{H}}}
\newcommand{\R}{{\mathbb{R}}}
\newcommand{\Q}{{\mathbb{Q}}}
\newcommand{\B}{{\mathbb{B}}}
\newcommand{\D}{{\mathbb{D}}}
\newcommand{\G}{{\mathbb{G}}}
\renewcommand{\P}{{\mathbb{P}}}
\newcommand{\N}{{\mathbb{N}}}
\newcommand{\cA}{{\mathcal{A}}}
\newcommand{\cB}{{\mathcal{B}}}
\newcommand{\cD}{{\mathfrak{D}}}
\newcommand{\cF}{{\mathcal{F}}}
\newcommand{\cH}{{\mathcal{H}}}
\newcommand{\cL}{{\mathcal{L}}}
\newcommand{\cG}{{\mathcal{G}}}
\newcommand{\cP}{{\mathcal{P}}}
\newcommand{\cM}{{\mathcal{M}}}
\newcommand{\cN}{{\mathcal{N}}}
\newcommand{\cR}{{\mathcal{R}}}
\newcommand{\sptext}[3]{\hspace{#1 em}\mbox{#2}\hspace{#3 em}}
\newcommand{\U}{\mathcal{B}(C(M))}
\newcommand{\C}{\mathcal{C}}
\renewcommand{\r}[1]{\widehat{#1}}
\newcommand{\ws}{{\Delta}}
\newcommand{\lip} [2]{\left \| #1 \right \|_{L_\infty([0,T];L_{#2}(\Omega))}}
\newcommand{\lips}[2]{\left \| #1 \right \|_{L_{#2}^\ast(\Omega;L_2([0,T]))}}
\newcommand{\rh}{\mathcal{RH}}
\newcommand{\bmo}{{\rm BMO}}
\newcommand{\bh}{\rm{cExp}}
\newcommand{\Lip}{{\rm Lip}}
\newcommand{\sli}{{\rm sl}}
\newcommand{\myspan}{{\rm span}}
\newcommand{\he}{{\bf h}}
\newcommand{\var}{{\rm var}}
\newcommand{\esssup}{{\rm esssup}}
\newcommand{\indexnorm}{\|}
\newcommand{\st}[1]{{#1}}
\begin{document}

%%%%%%%%%%%%%%%%%%%%%%%%%%%%%%%%%%%%%%%%%%%%%%%%%%%%%%%%%%%%%%%%%%%%%%%%%%%%%%%%%%%%%%%%%%%%%%%%%
%%%%%%%%%%%%%%%%%%%%%%%%%%%%%%%%%%%%%%%%%%%%%%%%%%%%%%%%%%%%%%%%%%%%%%%%%%%%%%%%%%%%%%%%%%%%%%%%%

\frontmatter

\title[Decoupling on the Wiener space]
      {Decoupling on the Wiener Space, Related Besov Spaces, and Applications to BSDEs}

\author{Stefan Geiss}
\address{University of Jyv\"askyl\"a, Department of Mathematics and Statistics, P.O.Box 35, FI-40014 University of Jyv\"askyl\"a,
         Finland}
\email{stefan.geiss@jyu.fi}

\author{Juha Ylinen}
\address{University of Jyv\"askyl\"a, Department of Mathematics and Statistics, P.O.Box 35, FI-40014 University of Jyv\"askyl\"a,
         Finland}
\email{juha.m.ylinen@jyu.fi}
\thanks{The authors were supported by the project 
        "Stochastic and Harmonic Analysis, interactions, and applications", No. 133914,
         of the Academy of Finland.
         The second author was partly supported by the Vilho, Yrj\"o and Kalle V\"ais\"al\"a foundation of the Finnish Academy of Science and Letters.}

\date{}

\subjclass[2010]{Primary 60H07, 60H10, 46E35}

\keywords{Anisotropic Besov spaces, 
          Decoupling on the Wiener Space, 
          Backward Stochastic Differential Equations,
          Interpolation}

\dedicatory{To my parents (Stefan Geiss).}

\begin{abstract}
\st{
We introduce a decoupling method on the Wiener space to define a wide class of an\-iso\-tro\-pic
Besov spaces. The decoupling method is based on a general distributional approach and not restricted 
to the Wiener space.
\smallskip

The class of Besov spaces we introduce contains the traditional isotropic Besov spaces obtained 
by the real interpolation method, but also new spaces that are designed to investigate 
backwards stochastic differential equations (BSDEs).
As examples we discuss the Besov regularity (in the sense of our spaces) of forward diffusions and local times.
It is shown that among our newly introduced Besov spaces there are spaces that characterize quantitative properties of 
directional derivatives in the Malliavin sense without computing or accessing these Malliavin derivatives explicitly. 
\smallskip

Regarding BSDEs, we deduce regularity properties of the solution processes from the Besov regularity of the initial data, 
in particular upper bounds for their $L_p$-variation, where the generator might be of quadratic type
and where no structural assumptions, for example in terms of a forward diffusion, are assumed.
As an example we treat sub-quadratic BSDEs with unbounded terminal conditions.

\smallskip

Among other tools, we use methods from harmonic analysis. As a by-product, we improve the asymptotic behaviour 
of the multiplicative constant in a generalized Fefferman inequality and verify the optimality of the bound we established.
}
\end{abstract}

\maketitle

\tableofcontents

\mainmatter

%%%%%%%%%%%%%%%%%%%%%%%%%%%%%%%%%%%%%%%%%%%%%%%%%%%%%%%%%%%%%%%%%%%%%%%%%%%%%%%%%%%%%%%%%%%%%%%%%
%%%%%%%%%%%%%%%%%%%%%%%%%%%%%%%%%%%%%%%%%%%%%%%%%%%%%%%%%%%%%%%%%%%%%%%%%%%%%%%%%%%%%%%%%%%%%%%%%

\chapter{Introduction}
\label{chapter:intro}

%%%%%%%%%%%%%%%%%%%%%%%%%%%%%%%%%%%%%%%%%%%%%%%%%%%%%%%%%%%%%%%%%%%%%%%%%%%%%%%%%%%%%%%%%%%%%%%%%

\section{Background}
A backward stochastic differential equation (BSDE) is an equation of type
\begin{equation}\label{eqn:BSDE_intro}
   Y_t = \xi + \int_t^T f(s,Y_s,Z_s) ds - \int_t^T Z_s dW_s,
\end{equation}
where $T>0$ is a fixed \st{finite} time horizon, $W=(W_t)_{t\in [0,T]}$ is a $d$-dimensional Brownian motion,
$\xi: \Omega \to \R$ \st{is} a given $\cF_T$-measurable {\em terminal condition}, and
\[ f: [0,T]\times \Omega\times \R \times \R^d \to \R \]
\st{is} a given predictable random generator which might be non-Mar\-kovian. 
Given the data 
$(\xi,f)$, one looks for adapted solution processes $(Y,Z)$. 
Backward stochastic differential equations have a wide range of applications, 
for example in stochastic control and, more generally, in stochastic modeling.
In the case of a Markovian generator, where the randomness comes from a forward diffusion,
there is an important and extremely useful connection to non-linear partial differential 
equations of parabolic type, the so-called (non-linear) Feynman-Kac theory. 
Two seminal papers in this theory were the work of Bismut \cite{Bismut:73}, and Pardoux and 
Peng \cite{Pardoux:Peng:90}.
\medskip

The simulation of BSDEs is an important topic and subject to active research. To setup simulation schemes one 
needs an approximation theory for BSDEs, for example to find optimal time-grids or to obtain upper and lower 
rates for the speed of convergence of these schemes measured in an appropriate way.
To investigate these approximation properties it is more or less mandatory to understand
the variational properties of the solution $(Y,Z)$, i.e. the behavior of
\begin{equation}\label{eqn:variation_YZ}
   \| Y_t-Y_s\|_p 
   \sptext{1}{and (say)}{1}
   \left \| \left ( \int_s^t | Z_r|^2 dr \right )^\frac{1}{2} \right \|_p 
\end{equation}
for all $0\le s < t \le T$ and an appropriate range of $p\in (0,\infty)$,
\st{where $\|\xi\|_p := \| \xi\|_{L_p(\Omega)}= (\E |\xi|^p)^\frac{1}{p}$
for a random variable $\xi:\Omega\to \R$.}

%%%%%%%%%%%%%%%%%%%%%%%%%%%%%%%%%%%%%%%%%%%%%%%%%%%%%%%%%%%%%%%%%%%%%%%%%%%%%%%%%%%%%%%%%%%%%%%%%
   
\section{Outline of the main ideas}
\label{sec:outline}

In these notes we develop an approach to estimate the variations from \eqref{eqn:variation_YZ}
in terms of the regularity of the data $(\xi,f)$, where the regularity is 
a fractional smoothness expressed in terms of Besov spaces. 
Our approach is based on an anisotropic decoupling of the Wiener space. Recently this  
decoupling was already successfully used in \cite{GGG:12,CGeiss:Steinicke:16} and constitutes one 
of the few approaches to estimate variational properties of non-Markovian backwards equations using 
only knowledge of the initial data. Let us explain the basic line of ideas to motivate the structure of these notes.
\medskip
   
If the generator in our BSDE vanishes, i.e. $f\equiv 0$, then one has that
\[ Y_t = \E (\xi | \cF_t). \]
Therefore, in the case $f\not \equiv 0$ the map 
\[ G_t^f: \xi \to Y_t \]
can be interpreted as some kind of generalized {\em non-linear conditional expectation} along the generator $f$
(see \cite{Peng:97, Coquet:Hu:Memin:Peng:02,Peng:05} for the notion of $g$-expectation and nonlinear expectations).
\st{It turns out that our notion of regularity is} 
stable with respect to this non-linear map $G_t^f$. Moreover, since 
\[  \| Y_t-Y_s\|_p 
    \le \   \| Y_t -\E(Y_t|\cF_s) \|_p + \|  \E(Y_t|\cF_s) - Y_s \|_p 
   \]
and since $\|  \E(Y_t|\cF_s) - Y_s \|_p$ can be handled by 'standard' methods,
the main question consists in investigating the behavior of
$\| Y_t - \E(Y_t|\cF_s)\|_p$ for $s\uparrow t$. 
It turns out that
this behaviour corresponds to a notion of fractional smoothness in $L_p$
of the random variable $Y_t$. The crucial point here is that
\begin{equation}\label{eqn:decoupling_vs_conditional_expectation}
 \| Y_t - \E(Y_t|\cF_s)\|_p \sim \| Y_t - Y_t^{(s,t]}\|_p 
\end{equation}
for $p \in [1,\infty)$, where $Y_t^{(s,t]}$ is a decoupled version of $Y_t$
in the sense explained below. Therefore we proceed as follows: 
\begin{enumerate}[(a)]
\item In Chapter \ref{chapter:general_factorization} we introduce 
      a factorization \st{and a method to transfer stochastic processes from one stochastic basis to another one
      while keeping distributional and measurability properties.}
\item \st{In} Chapter \ref{chapter:transference_sde} we apply \st{the methods from 
      Chapter \ref{chapter:general_factorization} to the Wiener space, in particular}
      to stochastic differential equations \st{driven by the Brownian motion.} 
\item \st{In} Chapter \ref{chapter:Besov_spaces} the decoupling and the corresponding 
      Besov spaces on the Wiener space are introduced and investigated.
\item \st{In} Chapter \ref{chapter:BMO} we provide some tools about BMO spaces and reverse 
      H\"older inequalities \st{and apply them to} non-Lipschitz BSDEs.
\item \st{In} Chapter \ref{chapter:BSDE} we \st{apply further} the results
      of Chapters \ref{chapter:general_factorization}, \ref{chapter:transference_sde},
      \ref{chapter:Besov_spaces}, and \ref{chapter:BMO} to BSDEs.
\end{enumerate}
\medskip

We proceed with some exemplary ideas and results obtained in this article:
\medskip

\underline{\bf Chapters \ref{chapter:general_factorization} - \ref{chapter:Besov_spaces}:}
The decoupling to obtain $F^{(a,b]}$ from a random variable $F:\Omega \to\R$ on the Wiener space is done as follows: 
We start with a Wiener space built on a $d$-dimensional Brownian motion $W=(W_t)_{t\in [0,T]}$. 
Then we take a copy of this Wiener space, denote the corresponding Brownian motion by 
$W'=(W_t')_{t\in [0,T]}$, and form the canonical product space carrying the $2d$-dimensional Brownian motion 
$((W_t,W'_t))_{t\in [0,T]}$. But the pair $(W,W')$ of Brownian motions is not the one we are interested in in the sequel.
Instead, we take (for example) an interval $(a,b]\subset (0,T]$ and consider the mixed
Brownian motion $W^{(a,b]}=(W_t^{(a,b]})_{t\in [0,T]}$ where the increments on the interval 
$(a,b]$ from $W$ are replaced by the increments of the independent copy 
$W'=(W_t')_{t\in [0,T]}$, i.e. we define
\[ W_t^{(a,b]} = \left \{ \begin{array}{rcl}
                        W_t &:& 0\le t \le a \\
                        W_a+W'_t - W'_a &:& a\le t \le b \\
                        W_a + (W'_b-W'_a)+(W_t-W_b) &:& b\le t \le T
                        \end{array} \right . .\]

In other words, the Gaussian structure on $(a,b]$ is replaced by an independent copy:

\hspace*{6em}
\includegraphics[scale=.2]{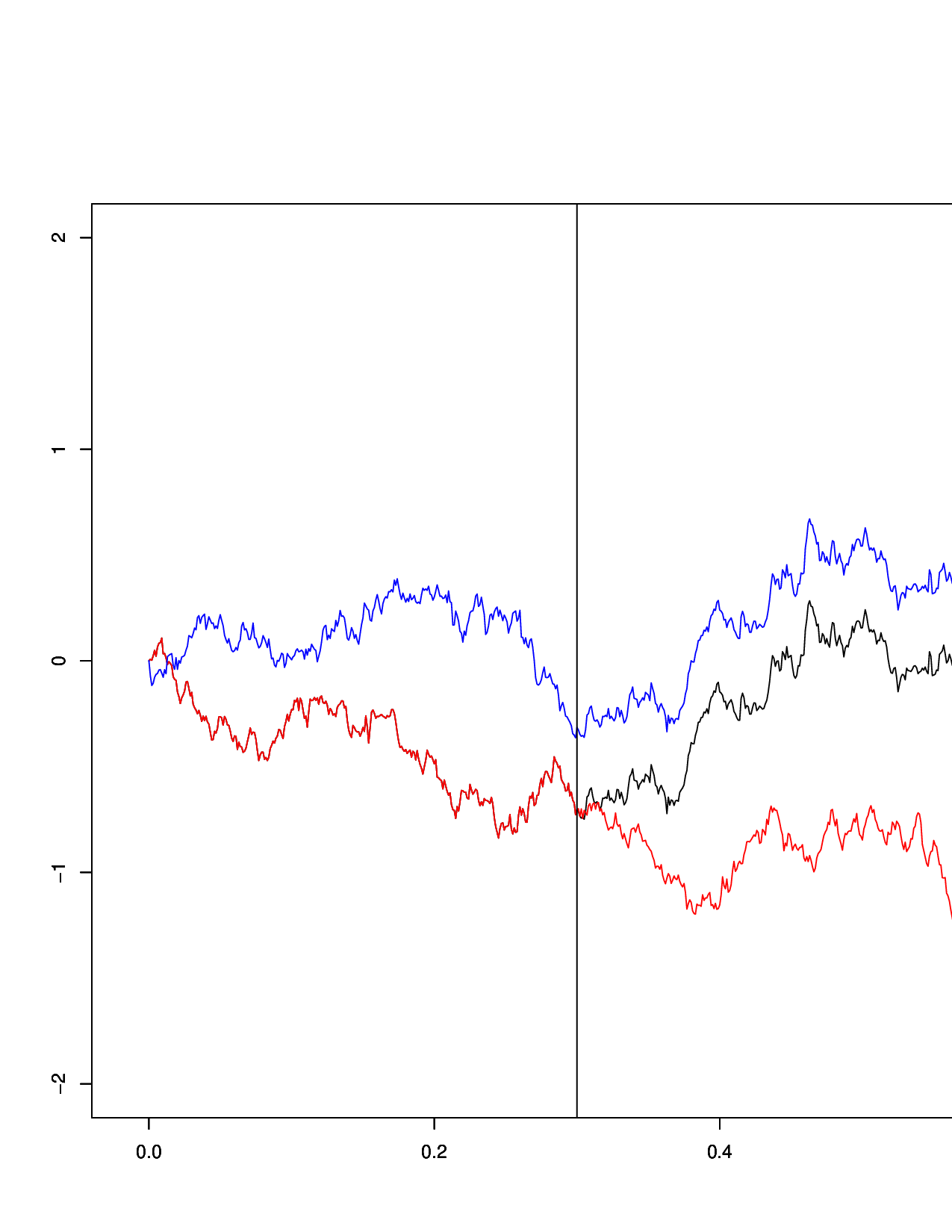}

Now the random variables $F$ from the original Wiener space built on $W$ are extended to the product space 
carrying $(W,W')$ and are transformed by a functional mapping
$F\to F^{(a,b]}$ along the same map as $W\to W^{(a,b]}$ is transformed. 
\medskip

After we have introduced the decoupling method, our
next step consists in observing that one can define anisotropic Besov spaces by imposing 
H\"older type conditions on a random variable $\xi\in L_p$ like 
\begin{equation}\label{eqn:anisotropic_decoupling}
 \| \xi - \xi^{(a,b]}\|_p\le c \,\alpha(a,b)
\end{equation}
for all $0\le a < b \le T$ and an appropriate weight function $\alpha(\cdot,\cdot)$. These anisotropic 
Besov spaces are part of a wider class of spaces containing the traditional Besov spaces obtained by the 
real interpolation method. 
\st{To explain the diction {\em anisotro\-pic}, let us assume $d=1$ and let us formally 
write $\xi=f(W)$ for an appropriate functional $f:C[0,T] \to \R$. 
If $\emptyset \not = (a,b]\not = (0,T]$, then  in \eqref{eqn:anisotropic_decoupling} we compare 
$f(W)$ with $f(W^{(a,b]})$ and note that there is {\em no} constant $c\in [0,1]$ 
such that
\[ \E W_s W_t^{(a,b]} = c \, \E W_s W_t 
   \sptext{1}{for all}{1} s,t\in [0,T]. \]
Now let $\theta \in (0,1)$ and define  
the Brownian motion $W^\theta$ by 
\[ W^\theta_t := \sqrt{1-\theta^2} W_t + \theta W_t'
   \sptext{1}{for}{1}t\in [0,T]. \]
Here the Brownian motion $W^\theta$ (partially) decouples $W$ uniformly in time, not only on 
$(a,b]$. This means, that we have an {\em isotropic} decoupling. In contrast to 
\eqref{eqn:anisotropic_decoupling}, the expression
$\|f(W) - f(W^\theta)\|_p$ compares $f(W)$ and $f(W^\theta)$, where
\[ \E W_s W_t^\theta = \sqrt{1-\theta^2} \, \E W_s W_t
   \sptext{1}{for all}{1} s,t\in [0,T]. \]
The reader is also referred to Remark \ref{remark:reason_for_an-isotropic_BSDEs} below 
for a more detailed example of being {\em anisotropic}.
}

\medskip

To explain a prototype of \st{our} Besov spaces, let us assume 
$p,r\in [2,\infty)$ and $\xi \in L_p$. In  Chapter \ref{chapter:Besov_spaces} we use
\st{inequality \eqref{eqn:anisotropic_decoupling} with}
\[ \alpha_r(a,b):= \sqrt[r]{b-a} \]
to define
$\xi\in \B^{\Phi_r}_p$ provided that
\[    \| \xi \|_{\B_p^{\Phi_r}}^p 
   := \E |\xi|^p +   \| \xi \|_{\Phi_r,p}^p  < \infty
   \sptext{1}{with}{1}
     \| \xi \|_{\Phi_r,p}^p
  := \sup_{0\le a < b \le T} \left | \frac{\| \xi - \xi^{(a,b]}\|_p}{\sqrt[r]{b-a}}\right |^p. \]
The case $r=2$ is treated by Theorem \ref{theorem:Phi_2} and includes the following situation, where $\D_{1,2}$ stands for the
Malliavin Sobolev space and $D\xi$ for the Malliavin derivative:
\bigskip

\begin{theorem}
\label{theorem:Phi_2_intro}
One has $\B_2^{\Phi_2} \subseteq \D_{1,2}$. Moreover, for $p\in [2,\infty)$ and $\xi\in \D_{1,2}\cap L_p$ it holds
      \[ \| \xi\|_{\Phi_2,p} \sim_c
         \sup_{0\le a < b \le T} \left \| \left ( \frac{1}{b-a} \int_a^b |D_s\xi|^2 ds \right )^\frac{1}{2} \right \|_p
      , \]
where $c >0$ depends on $p$ only. In particular, for $p=2$ we have that
\[ \| \xi\|_{\Phi_2,2} \sim_c
         {\rm esssup}_{s\in [0,T]} \| D_s\xi \|_2. \]
\end{theorem}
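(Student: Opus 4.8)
The plan is to prove the two inequalities separately. The easy direction, $\|\xi\|_{\Phi_2,p}\le c_p\sup_{0\le a<b\le T}\|(\frac1{b-a}\int_a^b|D_s\xi|^2\,ds)^{1/2}\|_p$, will follow from the single–interval estimate $\|\xi-\xi^{(a,b]}\|_p\le c_p\|(\int_a^b|D_s\xi|^2\,ds)^{1/2}\|_p$ valid for every $0\le a<b\le T$; the reverse direction is subtler and will be extracted from the supremum by a partitioning argument. Once both are in place, the displayed $p=2$ identity follows, since
\[ \sup_{0\le a<b\le T}\tfrac1{b-a}\int_a^b\|D_s\xi\|_2^2\,ds={\rm esssup}_{s\in[0,T]}\|D_s\xi\|_2^2 \]
by the Lebesgue differentiation theorem (both sides being $+\infty$ when $s\mapsto\|D_s\xi\|_2$ is not essentially bounded), and the right–hand side of the $p=2$ norm above equals $\sup_{a<b}\frac1{\sqrt{b-a}}\|(\int_a^b|D_s\xi|^2\,ds)^{1/2}\|_2$.

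For the single–interval estimate I would work on the product Wiener space carrying $W$ and an independent copy $W'$, with the $2d$–dimensional Brownian filtration $(\cF^*_u)_{u\in[a,b]}$ that knows the paths of $W$ and $W'$ outside $(a,b]$ together with their increments on $(a,u]$. Since, conditionally on the path of $W$ outside $(a,b]$, the random variables $\xi$ and $\xi^{(a,b]}$ have the same law, one has $\E[\xi\mid\cF^*_a]=\E[\xi^{(a,b]}\mid\cF^*_a]$, so $\xi-\xi^{(a,b]}$ is a martingale increment over $[a,b]$ for $(\cF^*_u)$. Martingale representation together with the Clark--Ocone formula (using that $\xi^{(a,b]}$ does not depend on $W|_{(a,b]}$ and $\xi$ does not depend on $W'$) gives
\[ \xi-\xi^{(a,b]}=\int_a^b\big\langle\E[D_s\xi\mid\cF^W_s],dW_s\big\rangle-\int_a^b\big\langle\E[(D_s\xi)^{(a,b]}\mid\cF^{W'}_s],dW'_s\big\rangle, \]
where $(D_s\xi)^{(a,b]}$ is the decoupling of $D_s\xi$, $\cF^W_s=\sigma(W|_{[0,s]\cup[b,T]})$, and $\cF^{W'}_s$ is its natural counterpart on the $W'$–side; in particular both filtrations increase in $s\in[a,b]$. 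Burkholder--Davis--Gundy (the two integrals are orthogonal, so the bracket is $\int_a^b(|\E[D_s\xi\mid\cF^W_s]|^2+|\E[(D_s\xi)^{(a,b]}\mid\cF^{W'}_s]|^2)\,ds$), followed by Stein's inequality for conditional expectations applied to each of the two terms, and finally the fact that decoupling preserves the law, yields $\|\xi-\xi^{(a,b]}\|_p\le c_p\|(\int_a^b|D_s\xi|^2\,ds)^{1/2}\|_p$. Conditioning on the path of $W$ outside $(a,b]$ the two integrals are moreover independent with the second one centred, so $\|\xi-\xi^{(a,b]}\|_p$ already dominates a constant multiple of $\|(\int_a^b|\E[D_s\xi\mid\cF^W_s]|^2\,ds)^{1/2}\|_p$ — but \emph{not} of $\|(\int_a^b|D_s\xi|^2\,ds)^{1/2}\|_p$, which is the crux.

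The main obstacle is exactly this reverse inequality, $\sup_{a<b}\frac1{\sqrt{b-a}}\|(\int_a^b|D_s\xi|^2\,ds)^{1/2}\|_p\le c_p\|\xi\|_{\Phi_2,p}$: Stein's inequality does not reverse, so one cannot pass from $\E[D_s\xi\mid\cF^W_s]$ back to $D_s\xi$ on a single interval, and the supremum defining $\|\xi\|_{\Phi_2,p}$ must be used. Fixing $(a,b]$, I would split it into $n$ equal subintervals $I_1,\dots,I_n$ and, on each $I_k$, set $M^k_u=\E[\xi\mid\sigma(W|_{[0,u]\cup[\max I_k,T]})]$; contractivity of conditional expectation gives $\|M^k_{\max I_k}-M^k_{\min I_k}\|_p\le 2\|\xi-\xi^{I_k}\|_p\le 2\|\xi\|_{\Phi_2,p}|I_k|^{1/2}$, while Burkholder--Davis--Gundy bounds $\E([M^k]_{\max I_k}-[M^k]_{\min I_k})^{p/2}$ by a constant multiple of the $p$–th power of this. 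Summing over $k$ via the triangle inequality in $L_{p/2}$ (legitimate because $p\ge2$) controls $\E\big(\sum_k([M^k]_{\max I_k}-[M^k]_{\min I_k})\big)^{p/2}$ by $(2c_p\|\xi\|_{\Phi_2,p})^p(b-a)^{p/2}$ uniformly in $n$. Since, by the same (generalized) Clark--Ocone representation, $[M^k]_{\max I_k}-[M^k]_{\min I_k}=\int_{I_k}|\E[D_s\xi\mid\sigma(W|_{[0,s]\cup[\max I_k,T]})]|^2\,ds$ and these conditional expectations converge to $D_s\xi$ as the mesh tends to zero (the conditioning $\sigma$–algebras increase to $\sigma(W)$, along a dyadic refinement), martingale convergence combined with Doob's maximal inequality and Fatou's lemma gives $\E(\int_a^b|D_s\xi|^2\,ds)^{p/2}\le(2c_p\|\xi\|_{\Phi_2,p})^p(b-a)^{p/2}$, and taking the supremum over $(a,b]$ finishes the proof. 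The reduction from a general $\xi\in\D_{1,2}\cap L_p$ to $\xi\in\D_{1,p}$ is free (if the right–hand side is infinite there is nothing to prove, and if it is finite then $D\xi\in L_p(\Omega;L_2([0,T]))$); the remaining technical work — making the generalized Clark--Ocone representation rigorous and justifying the interchange of limits uniformly in the partition — is the point I expect to require the most care.
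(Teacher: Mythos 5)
Your proposal is correct and follows essentially the same route as the paper: your lower bound --- dyadic partitioning of $(a,b]$, the Clark--Ocone representation with the time-rearranged filtration $\sigma(W|_{[0,s]})\vee\sigma(W_t-W_b: t\in[b,T])$, $\ell_2$-summation of $L_p$-norms using $p\ge 2$, and the Fatou/martingale-convergence passage from the conditioned gradients to $D_s\xi$ --- is precisely the paper's Step (b), made rigorous there through the chaos-expansion Lemma \ref{lemma:BDG-chaos}. For the upper bound you decompose $\xi-\xi^{(a,b]}$ directly into two orthogonal stochastic integrals on the product space, whereas the paper first reduces to $\|\xi-\E(\xi|\cG_a^b)\|_p$ via Lemma \ref{lemma:exchange-conditional_expectation_new} and then applies a Gaussian Poincar\'e-type inequality (Lemma \ref{lemma:PDE-Stein}) to smooth cylindrical functionals followed by the approximation of Proposition \ref{prop:approximation-D1p}; the underlying ingredients (Burkholder--Davis--Gundy plus Stein's martingale inequality) are identical.
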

\bigskip

The impact of Theorem \ref{theorem:Phi_2_intro} (Theorem \ref{theorem:Phi_2})
is at least twofold: Firstly, we can access the Malliavin derivative by the spaces  
$\B^{\Phi_2}_p$ without using the derivative explicitly. Secondly, the above 
theorem can be localized by replacing $\| \xi \|_{\B_p^{\Phi_2}}^p$ with
\[    \E |\xi|^p + \sup_{A\le a < b \le B} \left | \frac{\| \xi - \xi^{(a,b]}\|_p}{\sqrt{b-a}}\right |^p
      \]
for some $0\le A<B \le T$. Here  $\xi$ does not need to belong to $\D_{1,2}$ anymore.
\medskip

The case $r=4$ turns out to be relevant for the local time of a Brownian motion, 
for example represented by  
\[ L_t^\alpha = \lim_{\varepsilon \downarrow 0} \frac{1}{2\varepsilon} \int_0^t \chi_{(\alpha-\varepsilon,\alpha+\varepsilon)}(W_s) ds 
   \mbox{ a.s.} \]
\st{We} prove in Corollary \ref{cor:smoothness_local_time} that for all $\alpha\in \R$ and $p\in (1,\infty)$ one has that
\[ L_T^\alpha \in \B_p^{\Phi_4}\setminus \left [ \bigcup_{r\in [2,4)} \B_p^{\Phi_r} \right ]. \]
\medskip

\st{{\sc Background and related results}:
Our method includes with Theorem \ref{theorem:real_interpolation_Gaussian_Hilbert} a characterization by decoupling of 
the real interpolation spaces $(L_p,\D_{1,p})_{\theta,q}$ for the full range of interpolation parameters
$(\theta,q)\in  (0,1)\times [1,\infty]$, where $p\in [2,\infty)$. This  directly extends \cite[Theorem 3.1]{Geiss:Toivola:14} 
to the case that the supporting Hilbert space of the Gaussian structure of the abstract Wiener space is infinite dimensional.
In \cite[Remark on p. 428]{Hirsch:99} a different characterization by decoupling was given in the case $p=q$, i.e. 
for  $(L_p,\D_{1,p})_{\theta,p}$. The case $q\not = p$ is  of natural interest on its own, but
 the full range 
of parameters $(\theta,q)\in (0,1)\times [1,\infty]$ is also crucial 
for the understanding of certain phenomena in applications.
\smallskip

The idea to use decoupling to understand better Malliavin Sobolev spaces was used before: 
The natural question, whether Malliavin Sobolev spaces are stable under Lipschitz mapping has been raised by Watanabe in 
\cite{Watanabe:93} and answered by Hirsch \cite{Hirsch:99} by describing $(L_p,\D_{1,p})_{\theta,p}$ by decoupling.
Roughly speaking, any representation by decoupling is stable under Lipschitz mappings, so our Besov spaces $\B_p^\Phi$ are 
stable. Therefore Theorem \ref{theorem:real_interpolation_Gaussian_Hilbert} below
verifies as a by-product that the spaces $(L_p,\D_{1,p})_{\theta,q}$ are stable under Lipschitz mappings for all
$(\theta,q)\in (0,1)\times [1,\infty]$ and $p\in [2,\infty)$.
}

\bigskip
\underline{\bf Chapter \ref{chapter:BMO}:}
Given a continuous BMO-martingale $M$ and its Dol{\'e}an-Dade exponential $\mathcal{E}(M)$,
we introduce the sliceable numbers $\sli_N(M)$, that measure the distance of $M$ to
$\H_\infty$, \st{in Definition \ref{definition:sliceable}. Here $\H_\infty$ stands for 
the space of all continuous mean zero martingales $N$ with 
$\| N\|_{\H_\infty}:= \| \langle N \rangle \|_\infty<\infty$ (see Definition \ref{definition:H_infty})}.
Denoting by 
$\rh_\beta (\mathcal{E}(M))$ the constant in the reverse H\"older inequality 
for $\mathcal{E}(M)$ with the exponent $\beta$, we prove in Theorem \ref{theorem:scliceable_rh}:
\bigskip

\begin{theorem}\label{theorem:scliceable_rh_intro}
Let $\Phi:(1,\infty)\to (0,\infty)$ be a non-increasing function and let
\[ \Psi: \Big \{(\gamma,\beta)\in [0,\infty)\times (1,\infty): 0\le\gamma < \Phi(\beta)<\infty  \Big \}\to [0,\infty) \]
be right-continuous in its first argument and such that
\[ \Psi(\gamma_1,\beta) \le \Psi(\gamma_2,\beta) 
   \sptext{1}{for}{1}
   0\le \gamma_1 \le \gamma_2 < \Phi(\beta), \]
with the property that
$\| M \|_{\bmo}  < \Phi(\beta)$ implies
$\rh_\beta(\mathcal{E}(M)) \le \Psi(\| M \|_{\bmo},\beta)$. 
Then, for $\sli_N(M) < \Phi(\beta)$ we have that
\[ \rh_\beta (\mathcal{E}(M)) \le \big [\Psi(\sli_N(M),\beta)\big ]^N.\]
\end{theorem}
\bigskip

The point of this observation is that we get explicit exponents $\beta$ and explicit
bounds for $\rh_\beta (\mathcal{E}(M))$ in terms of the  sliceable numbers $(\sli_N(M))_{N\ge 1}$. This is applied to BMO-martingales obtained
by the fractional gradient $|Z|^\theta$ of our BSDE where $\theta\in [0,1]$ is the
parameter from \eqref{eqn:generator_intro} below that describes the degree of the BSDEs of 
not being Lipschitz in the $Z$-component ($\theta=0$ corresponds to the Lipschitz case, 
$\theta=1$ to the quadratic case).
\smallskip

Another contribution concerns the generalized Fefferman inequality \cite[Lemma 1.6]{Delbaen:Tang:10} (see also \cite[Theorem 1.1]{Banuelos:Bennett:88}). 
We prove with Theorem \ref{thm:generalized_fefferman_measure} a more abstract version using adapted random measures
that yields in 
Corollary \ref{cor:AB_generalized_Fefferman_inequality_new_constant} to 
\[       \left \| \int_0^T | A_t B_t | dt \right \|_p 
     \le \sqrt{2p} \| A \|_{\H_p(S_2)} \| B \|_{\bmo(S_2)}
\]
which improves the asymptotic behavior of the constant from $p$ in  \cite{Delbaen:Tang:10}  to $\sqrt{p}$.
\st{We also verify that the asymptotic order $\sqrt{p}$ as $p\to \infty$ is optimal.}
\bigskip

\underline{\bf Chapter \ref{chapter:BSDE}}
The decoupling method for BSDEs originates from \cite{GGG:12}, where the terminal condition did depend on finitely
many increments of a forward diffusion and the generator was Markovian and Lipschitz.
The aim of this part of the notes is the further development of this method.
\st{Motivated by the equivalence} \eqref{eqn:decoupling_vs_conditional_expectation} we first decouple the BSDE \eqref{eqn:BSDE_intro}
in order to get \st{a} new BSDE
\[ Y_t^{(a,b]} = \xi^{(a,b]} + \int_t^T f^{(a,b]}(s,Y_s^{(a,b]},Z_s^{(a,b]}) ds 
                 - \int_t^T Z_s^{(a,b]} dW_s^{(a,b]} \]
and aim to use {\em a priori estimates} for BSDEs to estimate 
$\left \| \sup_{s\in [t,T]} | Y_s^{(a,b]} - Y_s| \right \|_p$ and  
$\left \| \left ( \int_t^T |Z_s^{(a,b]} - Z_s |^2 ds \right )^\frac{1}{2} \right \|_p$ 
from above by moments of
\[ \xi - \xi^{(a,b]}
   \sptext{1}{and}{1} f - f^{(a,b]}. \]
Here we consider generators $f:[0,T]\times \Omega\times\R\times\R^{d}\to\R$ such that 
      $(t,\om)\mapsto f(t,\om,y,z)$ is predictable for all $(y,z)$ and there are
      $L_Y,L_Z\ge 0$ and $\theta\in [0,1]$ such that
\begin{equation}\label{eqn:generator_intro}
            |f(t,\om,y_0,z_0)-f(t,\om,y_1,z_1)| 
         \le L_Y |y_0-y_1| + L_Z [1+|z_0|+|z_1|]^\theta |z_0-z_1|
\end{equation}
for all $(t,\om,y_0,y_1,z_0,z_1)$. Here 
$\theta=0$ represents the Lipschitz case,
$\theta=1$ the quadratic case, and
$\theta\in (0,1)$ the sub-quadratic case.
The basic stability result is Theorem \ref{theorem:comparison_psi_phi}, a special case is:
\bigskip

\begin{theorem}
\label{theorem:comparison_psi_phi_intro}
Assume for the BSDE
\[ Y_t = \xi + \int_t^T f(s,Y_s,Z_s)ds - \int_t^T Z_s dW_s, \qquad t \in [0,T], \]
conditions {\rm (B1)-(B4)} of Chapter \ref{chapter:BSDE} for $\theta\in [0,1]$ and that there is a non-increasing 
sequence $\st{(s_N)_{N\ge 1}} \subseteq [0,\infty)$  which dominates the sliceable numbers of the fractional gradient,
i.e. $\sli_N^{S_2} (|Z|^\theta) \le s_N$ \st{for all $N\ge 1$}.
Suppose that conditions {\rm (B5)-(B6)} of Chapter \ref{chapter:BSDE} are satisfied for $p \in [2,\infty)$
where in the case $\lim_N s_N > 0$ we additionally assume that 
$p>p_0(L_Z,\lim_N s_N)$.
Then, one has for all $t \in [0,T]$ and  $0\le a < b \le T$ that
\begin{multline*}
        \left \| \sup_{s\in [t,T]} | Y_s^{(a,b]} - Y_s| \right \|_p 
     +  \left \| \left ( \int_t^T |Z_s^{(a,b]} - Z_s |^2 ds \right )^\frac{1}{2} 
                \right \|_p \\
\le c
      \left [ \|\xi^{(a,b]}-\xi \|_p 
    + \left \| \int_t^T | f^{(a,b]}(s,Y_s,Z_s) - f(s,Y_s,Z_s)| ds 
            \right \|_p \right ].
\end{multline*}
\end{theorem}
\bigskip

In order to apply Theorem \ref{theorem:comparison_psi_phi_intro} (Theorem \ref{theorem:comparison_psi_phi}), 
and because of general interest, we discuss classes of quadratic and sub-quadratic BSDEs such that the assumptions of 
Theorems \ref{theorem:comparison_psi_phi_intro} and \ref{theorem:comparison_psi_phi} are satisfied
in Section \ref{sec:classes_quadratic_subquadratic_BSDEs}.
In case of sub-quadratic BSDEs we use the following definition:
\smallskip

\begin{definition}
\hspace*{0em}
\begin{enumerate}
\item We say that a random variable $\xi$ belongs to $\bh$ provided that there are 
      $(\eta,\mu) \in (0,1)\times (0,\infty)$ such that
      \[ 
             |\xi|_{\bh(\eta,\mu)} 
         :=  \sup_{t\in [0,T)} (T-t)^{\frac{1}{\eta}-1} \left \| \E(e^{\mu |\xi|}|\cF_t) \right \|_\infty
             < \infty.
      \]
\item For a c\`adl\`ag process $Y=(Y_t)_{t\in [0,T]}$ we say that $Y\in \bh$ provided that there are 
      $(\eta,\mu) \in (0,1)\times (0,\infty)$ such that
      \[ 
             |Y|_{\bh(\eta,\mu)} 
         :=  \sup_{t\in [0,T)} (T-t)^{\frac{1}{\eta}-1} \left \| \E(e^{\mu \sup_{s\in [t,T]} |Y_s|}|\cF_t) \right \|_\infty
             < \infty.
      \]
\end{enumerate}
\end{definition}

In Theorem \ref{theorem:existence_uniqueness_bh_subquadratic} we prove the following statement:
\bigskip

\begin{theorem}
\label{theorem:existence_uniqueness_bh_subquadratic_intro}
Assume \eqref{eqn:generator_intro} for some $\theta\in (0,1)$, $\sup_{(t,\omega)\in [0,T]\times \Omega} |f(t,\omega,0,0)|< \infty$, 
and that $\xi \in \bh$. Then there is a unique solution $(Y,Z)$ to the
{\rm BSDE} \eqref{eqn:BSDE_intro} in the class where 
$(Y,|Z|)\in \bh \times \H_2(S_2)$
\footnote{The spaces are given in Definitions \ref{definition:BH} and \ref{definition:H_p(S_2)} below.}.
Moreover, for this solution we have that 
\[ |Z|^\eta\in \bmo(S_2)\sptext{1}{for all}{1} \eta\in (0,1). \]
\end{theorem}
\bigskip

Theorem \ref{theorem:existence_uniqueness_bh_subquadratic_intro} enables us to apply 
Theorem \ref{theorem:comparison_psi_phi_intro}, so that a combination with 
Theorem \ref{theorem:Phi_2_intro} gives in Corollary \ref{cor:theorem:comparison_psi_phi_3}:
\bigskip

\begin{cor}\label{cor:theorem:comparison_psi_phi_3_intro}
Assume \eqref{eqn:generator_intro} for some $\theta\in (0,1)$, $\sup_{(s,\omega)\in [0,T]\times \Omega} |f(s,\omega,0,0)|< \infty$,
$\xi \in \bh$, and that $(Y,Z)$ is the unique solution to the {\rm BSDE} \eqref{eqn:BSDE_intro} in the sense of 
Theorem \ref{theorem:existence_uniqueness_bh_subquadratic_intro}.
Fix $t\in [0,T]$.
Then we have
\begin{multline}\label{eqn:cor:theorem:comparison_psi_phi_3_intro}
        \esssup_{s\in [0,t]} \| D_s Y_t \|_2 \\
    \le c \sup_{(a,b]\subseteq (0,t]} \frac{1}{\sqrt{b-a}} 
        \left [  \| \xi - \xi^{(a,b]} \|_2  
               + \left \| \int_t^T \sup_{y,z} | f(s,y,z) - f^{(a,b]}(s,y,z)| ds 
                 \right \|_2 \right ]
\end{multline}
with the convention that the finiteness of the right-hand side first implies $Y_t\in \D_{1,2}$ and 
then inequality \eqref{eqn:cor:theorem:comparison_psi_phi_3_intro}.
\end{cor}
\smallskip
The assertion of Corollary \ref{cor:theorem:comparison_psi_phi_3_intro} says that we only need to control 
directional derivatives of the initial data $(\xi,f)$ on the interval $(0,t]$ (because the perturbations of the original
Brownian motion  $W$ are only performed on $(a,b]\subseteq (0,t]$) to obtain smoothness of $Y_t$ and that the behaviour 
of $(\xi,f)$  regarding perturbations on $(t,T]$ does not have any impact - in a sense, we have a smoothing  effect.
\medskip

Finally, let us turn to the $L_p$-variation of a solution $(Y,Z)$ to our BSDE. Our idea is to use adapted time-nets obtained 
by a quantile method. This idea is made precise by the following two definitions:

\begin{definition}
Let $p\in [1,\infty)$, 
$A=(A_t)_{t\in [0,T]}$ be a measurable c\`adl\`ag process
$A:[0,T]\times \Omega \to \R$, and $C=(C_t)_{t\in [0,T]}$ be a measurable
process $C:[0,T]\times \Omega \to \R^d$, where $\R^d$ is equipped with the
\st{Euclidean} norm. For a deterministic time-net $\tau=(t_i)_{i=0}^n$ with $0=t_0\le t_1 \le \cdots \le t_n=T$ we let 
\[ \var_p([A,C]|\tau):= 
   \sup_{i=1,...,n}  \left \| \sup_{t_{i-1}\le s \le t \le t_i} |A_t - A_s| \right \|_p +
   \sup_{i=1,...,n} \left \| \left ( \int_{t_{i-1}}^{t_i} |C_r|^2 dr \right )^\frac{1}{2} \right \|_p. \]
\end{definition}

\begin{definition}   
Letting $\Lambda:[0,T]\to (0,\infty)$ be \st{integrable} and $n\ge 1$, the time-net
$\tau_n^\Lambda$ consists of $0=t_0 < \cdots < t_n=T$ such that, for all $i=1,...,n$,
\[ \int_{t_{i-1}}^{t_i} \Lambda(r) dr = \frac{1}{n} \int_0^T \Lambda(r) dr. \]
\end{definition}

Now we obtain as part of \st{Corollary \ref{cor:thm:L_p-variation:cor_2_new}} the following result:

\bigskip
\begin{theorem}
\label{thm:thm:L_p-variation:cor_2_new_intro}
Assume \eqref{eqn:generator_intro} for some $\theta\in (0,1)$,
$\sup_{(t,\omega)\in [0,T]\times \Omega} |f(t,\omega,0,0)|< \infty$,
$\gamma \in [2,\infty)$, $\xi\in \bh$, and that
\[ \| \xi - \xi^{(a,b]} \|_2 + \left \| \int_a^T  \sup_{(y,z) \in \R^{d+1}} 
   |f(r,y,z)-f^{(a,b]}(r,y,z)| dr \right \|_2 
   \le \left ( \int_a^b \Gamma(r)dr \right )^\frac{1}{\gamma} \] 
for some integrable Borel function $\Gamma :[0,T]\to [0,\infty)$. Define the weight function 
\[ \Lambda(r) :=  1+ \|f(r,0,0)\|_2 + \Gamma(r). \]
Then one has that
\[ \sup_{n\ge 1} \sqrt[\gamma]{n} \var_2([Y,Z]|\tau_n^{\Lambda}) < \infty \]
where the solution is taken from Theorem \ref{theorem:existence_uniqueness_bh_subquadratic_intro}.
\end{theorem}
\medskip

Theorem \ref{thm:thm:L_p-variation:cor_2_new_intro} allows us to control \st{the} $L_2$-variation of non-Markovian BSDEs by adapted 
time-nets where only the information of the initial data $(\xi,f)$ is used.
\medskip

\st{{\sc Background and related results}:
Because of applications in stochastic modeling and due to the connections to non-linear PDEs, 
the simulation of BSDEs is of particular importance and subject to active research (see for example 
\cite{Zhang:04,
      Bouchard:Touzi:04,
      Gobet:Lemor:Warin:2005,
      Bouchard:Elie:Touzi:09,
      Hu:Nualart:Song:11,
      Lionnet:Reis:Szpruch:15}
in the Lipschitz case,
\cite{Imke:Reis:Zhang:10,
      Richou:11,
      Chassagneux:Richou:16} in the quadratic case,
and \cite{EHJK:17} for an overview about various numerical methods related to BSDEs). 
To setup simulation schemes, one typically considers a time discretization. First, one fixes a deterministic 
time-grid $\tau = (t_i)_{i=0}^n$, where $0 = t_0 < t_1 < \dots < t_n = T$, and a simulation scheme based 
on this grid is considered. For the $Y$-process this means that one finds random variables $(Y_{t_i}^{\tau})_{i=0}^n$ 
that are sampled and provide an approximation of the random variables $(Y_{t_i})_{i=0}^n$. To study how accurate this approximation is, 
one option is to consider the $L_p$-\emph{simulation error}
\[ {\bf err}_p(\tau) := \sup_{0 \le i \le n} \|Y_{t_i}-Y_{t_i}^\tau\|_p \]
for certain $p \in [2,\infty)$. For any feasible simulation scheme, the simulation error should go to zero 
as the mesh-size of the grid goes to zero. Preferably there is even a rate of convergence, which could mean that 
there exists a $c_p>0$, independent of the particular grid $\tau$, such that
\begin{equation}\label{eqn:hope}
{\bf err}_p(\tau)
\le c_p \left(\max_{i=1,\dots,n} |t_i-t_{i-1}|\right)^{\frac{1}{2}}.
\end{equation}      
To obtain the estimate \eqref{eqn:hope}, it turns out to be more or less mandatory to have a path regularity of the 
exact solution itself. The preferred estimate would be to have some $d_p>0$ such that
\begin{equation}\label{eqn:goal}
    \|Y_{t}-Y_{s}\|_p
\le d_p (t-s)^{\frac{1}{2}}
\end{equation}
for any $0 \le s < t \le T$, or a variant of this inequality.
It is known that upper bounds for the variation $\|Y_t - Y_s\|_p$ also relate to 
differential properties of the initial data and how these properties transfer to the
solution processes. Let us review parts of the corresponding literature:
\medskip

\begin{enumerate}[(1)]
\item {\bf Initial data in $\D_{1,2}$ or of Lipschitz type}

Regularity and differential properties: Regarding Lipschitz BSDEs (the generator is Lipschitz in $z$)  we refer for
differential properties of $Y$ and the representation of $Z$ by the Malliavin derivative of $Y$
to \cite{ElKaroui:97,Hu:Nualart:Song:11,Lionnet:Reis:Szpruch:15,CGeiss:Steinicke:16b} and the references 
therein. The notion of an $L_\infty$-Lipschitz functional of a forward 
diffusion has been used in \cite{Zhang:04} and \cite{Bender:Denk:07}. 
For quadratic BSDEs (the generator only satisfies certain local Lipschitz conditions in $z$)
general regularity results are given in \cite{Ankirchner:Imkeller:DosReis:07}.  These general results
were applied to Markovian decoupled FBSDEs 
(in particular, the randomness of the data $(\xi,f)$ of the BSDE is induced by a forward process 
$(X_t)_{t \in [0,T]}$) in \cite{Ankirchner:Imkeller:DosReis:07} as well. 
In \cite{Cheridito:Nam:14} the existence and uniqueness of solutions to quadratic BSDEs is studied 
when the terminal condition $\xi$ has a uniformly bounded Malliavin derivative, i.e. $|D_\cdot \xi(\cdot)|\le c$ a.e.
which relates to our spaces $\B_p^{\Phi_2}$ by Theorem \ref{theorem:Phi_2}. The existence of solutions to some multidimensional quadratic BSDEs, 
examining as a special case sub-quadratic BSDEs, is considered in \cite{Cheridito:Nam:14_b} under the assumption that 
the terminal condition is bounded.
Continuing with decoupled Markovian FBSDEs under 
certain Lipschitz assumptions on the terminal condition, variational estimates for $Z$ can be found in  \cite{Richou:11} and uniqueness 
and existence results under conditions on the forward diffusion and the final time horizon $T$ are obtained in \cite{Richou:2012}.

Variational properties of $Y$: Typically estimates of type $\|Y_t - Y_s \|_p \le c_p \sqrt{t-s}$ (and related estimates for the $Z$-process)
are obtained for decoupled Markovian FBSDE. So, with terminal values of type $\xi=g(X_T)$ 
this kind of results can be found in 
\cite[Lemma 3.2]{Bouchard:Touzi:04},
\cite[Theorem 5.5]{Imke:Reis:10} (\cite[Lemma 5.1]{Imke:Reis:10} gives an estimate for $\|D_v Y_t - D_u Y_t\|_p$),
\cite[Theorem 4.4]{Imke:Reis:Zhang:10},
and \cite[Proposition 3.1]{Chassagneux:Richou:16}.
The setting is more general in \cite[Lemma 2.3]{Zhang:04}, as there the terminal condition is a path-dependent functional of a forward diffusion.
A fully random setting is used in \cite[Corollary 2.7]{Hu:Nualart:Song:11}.
\medskip

\item {\bf Markovian decoupled  FBSDEs with fractional singularities} of different types at the finite time horizon $T$:
To handle approximation problems for stochastic integrals with a singularity at time of maturity,
special non-equidistant time-nets have been used in \cite{Geiss:02} and \cite{Geiss:Geiss:04}. In the context
of BSDEs this idea and these time-nets have been exploited in \cite{Gobet:Makhlouf:10} and \cite{Turkedjiev:15}.
\medskip

\item {\bf Irregular path-dependent terminal conditions}:
Terminal conditions that depend on finitely many time instances of a forward diffusion and have 
there local fractional singularities have been considered in \cite{GGG:12}, the results extend those 
from \cite{Gobet:Makhlouf:10}. 
\end{enumerate}
\bigskip

Now, let us indicate our contribution related to BSDEs:
\medskip

\begin{enumerate}[(1)]
\item We improve the comparison theorem \cite[Theorem 5.1]{Ankirchner:Imkeller:DosReis:07}
      in Lemma \ref{lemma:briand:elie_new} below where we use a generalization of Fefferman's
      inequality (see Remark \ref{remark:lemma:briand:elie}).
      \medskip

\item Our decoupling method can be directly applied to the above mentioned $L_\infty$-Lipschitz functionals of forward 
      diffusions, as used in \cite{Zhang:04} and \cite{Bender:Denk:07}: Assuming such a functional $g(X)$, 
      that depends on finitely many instances of a forward diffusion $X=(X_t)_{t\in [0,T]}$, 
      we directly get the estimate
      \[ |g(X)-g(X^\varphi)| \le L \sup_{t\in [0,T]} |X_t-X_t^\varphi|. \]
      Therefore decoupling properties of $X$ directly transfer to $\xi=g(X)$ and we may use the results of 
      Section \ref{subsec:forward_diffusions} of these notes.
      \medskip

\item \label{item:1_contributions_Besov_space_before} 
      The spaces to describe the fractional smoothness of the terminal condition in \cite{Gobet:Makhlouf:10} and \cite{GGG:12}
      coincide with $\B_p^\Phi$ with $\Phi= \Phi_{r_1,...,r_L}^{(\theta_1,\infty),...,(\theta_L,\infty)}$ 
      (in \cite{Gobet:Makhlouf:10} with $L=1$) from Definition \ref{definition:anisotropic_Besov_conditional_expectations} below. 
      So the present article generalizes  results from \cite{GGG:12} to the fully path-dependent case where no structural
      assumptions on the terminal condition nor the generator are imposed. 
      \medskip

\item In Section \ref{sec:classes_quadratic_subquadratic_BSDEs} we investigate the uniqueness and distributional properties 
      of the $(Y,Z)$-processes of quadratic and sub-quadratic BSDEs that are not necessarily Markovian and that might have an 
      unbounded terminal condition.
      \medskip

\item In Section \ref{sec:formulation_anisotropic_Besov} we prove that regularity properties of a BSDE in terms 
      of $\B_p^\Phi$ for the terminal condition $\xi$, and a similar one for the generator $f$, are transferred to the solution processes  $(Y,Z)$
      without structural assumptions on $(\xi,f)$. For the particular case described in item
      \eqref{item:1_contributions_Besov_space_before}, this was partially  done in the presence of a forward diffusion 
      in \cite{GGG:12}.
      \medskip

\item Section \ref{sec:L_p-variation}: In  the literature usually estimates of the form  \eqref{eqn:goal}, that
      means estimates with {\it the order $\frac{1}{2}$}, are shown.
      This is due to Lipschitz or uniform $\D_{1,2}$ assumptions and appears in  
      \cite[Theorem 2.6, Corollary 2.7]{Hu:Nualart:Song:11}. There regularity results for $Y$ and $Z$ of the form  \eqref{eqn:goal}
      for non-Markovian Lipschitz BSDEs were proven under Lipschitz assumptions for the generator and under assumptions on the 
      Malliavin derivatives up to the second order of $\xi$ and $f$. In \cite[Theorem 2.3]{Hu:Nualart:Song:11} a condition 
      $M^{2,q}$ is used to investigate the variation of the $Y$-process of the solution to a BSDE with a random linear generator. 
      The structure of this BSDE yields to an explicit representation of the $Y$ process. The condition $M^{2,q}$ relates to our 
      $\B_p^{\Phi_2}$ spaces via Theorem \ref{theorem:Phi_2}. Translated to our setting, the condition $M^{2,q}$ is a condition 
      on the predictable projection of $(D_t\xi)_{t\in [0,T]}$, whereas our condition is a condition on $(D_t\xi)_{t\in [0,T]}$ 
      itself -- however, the condition in  \cite{Hu:Nualart:Song:11} is not a condition on $\xi$, but on $\xi\rho_T$, where $\rho_T$ is 
      a stochastic exponential.

      Parts of our contribution are: for the regularity of $Y$ we do not need to require assumptions on the differentiability of
      $\xi$ (for example), secondly we can also treat cases where we have rates in \eqref{eqn:goal} weaker than $\frac{1}{2}$.

\end{enumerate}

}

%%%%%%%%%%%%%%%%%%%%%%%%%%%%%%%%%%%%%%%%%%%%%%%%%%%%%%%%%%%%%%%%%%%%%%%%%%%%%%%%%%%%%%%%%%%%%%%%%

\section{Notation}
\label{sec:notation}

\st{The spaces $\R^n$ are equipped with the Euclidean norm
$|x|=(\sum_{j=1}^n |x_j|^2)^\frac{1}{2}$ so that 
$[\R^n,|\cdot|]$ becomes a Hilbert space.}
Given a metric space $M$, we let $C(M)$ be the space of all continuous real valued mappings on $M$. 
For a probability space $(\Omega,\cF,\P)$ the space of all random variables $X:\Omega \to \R$, i.e. Borel
measurable maps, is denoted by $\cL_0(\Omega,\cF,\P)$ 
\index{space! $\cL_0(\Omega,\cF,\P)$}
and equipped with the pseudo-metric  
\begin{equation}\label{eqn:pseudo-metric}
d_\Omega(X,Y) := \int_\Omega \frac{|X(\omega)-Y(\omega)|}{1+|X(\omega)-Y(\omega)|} d\P(\omega).
\end{equation}
The space $\cL_p(\Omega,\cF,\P)$, $p\in  (0,\infty)$, \index{space! $\cL_p(\Omega,\cF,\P)$, $p\in (0,\infty]$} consists  of all 
random variables $X:\Omega \to \R$ on $(\Omega,\cF,\P)$ such that
$\|X\|_p := \left ( \int_\Omega|X(\omega)|^p d\P(\omega)\right )^{1/p}<\infty$. 
As usual, for $p=\infty$ we let
$\|X\|_\infty := \esssup_{\omega\in \Omega} |X(\omega)|<\infty$ which yields to the space $\cL_\infty(\Omega,\cF,\P)$. 
By identifying two random variables $X$ and $Y$ on $(\Omega,\cF,\P)$ when $X=Y$ $\P$-a.s., we 
obtain equivalence classes, denoted by $[X]$, the quasi-normed spaces 
$(L_p(\Omega,\cF,\P),\|\cdot\|_p)$ for $p\in  (0,\infty]$, 
\index{space!$L_p(\Omega,\cF,\P)$, $p\in  (0,\infty]$}
and the complete metric space 
$(L_0(\Omega,\cF,\P),d_\Omega)$ with
\index{space!$L_0(\Omega,\cF,\P)$}
\begin{equation}\label{eqn:metric}
d_\Omega([X],[Y]):= d_\Omega(X,Y). 
\end{equation}
In Chapters \ref{chapter:general_factorization} and \ref{chapter:transference_sde} we carefully distinguish 
between equivalence classes and random variables, in the later chapters we follow the standard way to identify
equivalence classes and random variables if there is no risk of confusion.
For  two real valued random variables $X$ and $Y$ or $\R^n$-valued random vectors 
$(X_1,...,X_n)$ and $(Y_1,...,Y_n)$ the \st{notations}  $X\stackrel{d}{=}Y$ and 
$(X_1,...,X_n) \stackrel{d}{=}(Y_1,...,Y_n)$ \st{mean} equality in distribution. 
\index{$X\stackrel{d}{=}Y$}
\index{$(X_1,...,X_n) \stackrel{d}{=}(Y_1,...,Y_n)$}
We shall use the Burkholder-Davis-Gundy inequalities for continuous local martingales \cite[IV.4.1]{Revuz:Yor:99}
with $\beta_p\ge 1$ as constant, i.e. 
given $p \in \st{(0,\infty)}$ and a \st{continuous} real-valued martingale $(M_t)_{t\in [0,T]}$
vanishing at zero, we have
\begin{equation}\label{eqn:BDG}
   \frac{1}{\beta_p} \|\langle M \rangle_T^\frac{1}{2} \|_p \le \| \sup_{t\in [0,T]} |M_t| \|_p
        \le \beta_p  \|\langle M \rangle_T^\frac{1}{2} \|_p
\end{equation}
where $\beta_p \ge 1$ \index{constant! $\beta_p$} is an absolute constant and $\langle M \rangle_T$ \st{is} the quadratic variation of $M$ at time $T$.
We do not need the particular behaviour of the constants $\beta_p$, so that we use for the upper and lower bound
the same constant.
As conventions we use $0^0:=1$ and 
\[ A \sim_c B 
   \sptext{1}{for}{1}
   \frac{1}{c} A \le B \le c A \]
when $A,B\ge 0$ and $c\ge 1$.
\index{$A \sim_c B$} 
\st{Finally, for a set $S$ and $A\subseteq S$ we define the indicator function $\chi_A:S\to \R$ as 
\[ \chi_A(s) := \begin{cases}
                1 & : s\in A\\
                0 & : s\not \in A. \\
                \end{cases}. \]}
\index{$\chi_A$}

%%%%%%%%%%%%%%%%%%%%%%%%%%%%%%%%%%%%%%%%%%%%%%%%%%%%%%%%%%%%%%%%%%%%%%%%%%%%%%%%%%%%%%%%%%%%%%%%%
%%%%%%%%%%%%%%%%%%%%%%%%%%%%%%%%%%%%%%%%%%%%%%%%%%%%%%%%%%%%%%%%%%%%%%%%%%%%%%%%%%%%%%%%%%%%%%%%%

\chapter{A General Factorization}
\label{chapter:general_factorization}

There exist several factorization techniques for random variables and stochastic processes that
have the idea to factor a random variable or process through a canonical space that carries
the typical information about the problem one is interested in. We will use this idea as an
intermediate step  to decouple in Chapter \ref{chapter:Besov_spaces} the Wiener space and to generate
anisotropic Besov spaces.
For the Wiener space there are two natural choices as a canonical space: The function 
space of continuous functions that yields to the Wiener measure and the sequence space $\R^\N$ with
$\N=\{0,1,2,....\}$ that yields to an infinite product of standard Gaussian measures. We use the 
second approach as in \cite{Malliavin:etal:95} and \cite{Imkeller:09}, and extend this approach so that
no particular distribution (like the Gaussian distribution) is needed and so that it includes the 
handling of the stochastic processes we work with later. 
\st{The second approach is convenient for us because we need to consider, from the very beginning, 
only sequences of real valued random variables, and furthermore, it might be generalized to other
canonical spaces than spaces of continuous functions.}
\medskip

Our factorization procedure yields to the operators \st{$\C^M$ that are defined in two steps.
First, we introduce the operators $\C$ acting on random variables, then we extend them to the operators $\C^M$} acting on random
continuous functions defined on \st{complete metric spaces, that are locally $\sigma$-compact}.

%%%%%%%%%%%%%%%%%%%%%%%%%%%%%%%%%%%%%%%%%%%%%%%%%%%%%%%%%%%%%%%%%%%%%%%%%%%%%%%%%%%%%%%%%%%%%%%%%

\section{The operators $\C$ and $\C^M$}
\label{sec:C_T_and_properties}

We shall work with two probability spaces $(\Omega^i,\cF^i,\P^i)$, $i=0,1$, and
random variables $(\xi_k^i)_{k\in I}$, $\xi^i_k:\Omega^i\to\R$, where $I=\{ 0,\ldots, K\}$
or $I=\{ 0,1,2,\ldots \}$, and assume that

\index{coditions!(C1), (C2),\ldots}
\begin{enumerate}[(C1)]
\item $\cF^{\xi,i} :=\sigma ( \xi_k^i : k\in I )$,
\item $\cF^i=\cF^{\xi,i} \vee \cN^i$, where $\cN^i:= \{ A^i \in \cF^i: \P^i(A^i)=0\}$,
\item $(\xi_k^0)_{k\in I}$ and $(\xi_k^1)_{k\in I}$ have the same finite-dimensional
      distributions.
\end{enumerate}

If we omit the superscript $i$ in  $\Om^i,\cF^i,\P^i,(\xi_k^i)_{k\in I}$, or $\cF^{\xi,i}$,
then we consider one of the both probability spaces together with the corresponding
random variables and operators introduced later.
Let $\mathcal{B}(\R^I)$ be the $\sigma$-algebra generated by the cylinder sets
on $\R^I$, and let $\P^c$ \st{be} the law of the map
\[ J_0: \Omega \to \R^I 
   \sptext{1}{with}{1}
   J_0(\omega) := (\xi_k(\omega))_{k\in I}. \]
By the assumption (C3) the measure $\P^c$ is the same for both cases $i=0,1$.
Moreover, let us assume another probability space $(R,\cR,\rho)$, \st{and define
\[ J: R\times \Omega \to R\times \R^I
\sptext{1}{with}{1}
   J(r,\omega):= (r,J_0(\omega)).
\]}
For the construction of the operator $\C$ we start with two lemmas:

\begin{lemma}\label{lemma:Alex_process}
For any $\cR\otimes\cF$-measurable random variable $X:R\times \Omega\to\R$ there is an
$\cR\otimes \cF^\xi$-measurable random variable $X^\xi:R\times \Omega\to\R$ with
$(\rho\otimes\P)\left(  X=X^\xi \right)=1$.
\end{lemma}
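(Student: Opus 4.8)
The plan is to separate the two cases $S=0$ and $S=T$ since the structures differ, but to treat them uniformly by exploiting conditions (C1)--(C2). For $S=0$ the statement is essentially a standard measure-theoretic fact: if $X:\Omega\to\R$ is $\cF$-measurable and $\cF=\cF^\xi\vee\cN$, then $X$ agrees $\P$-a.s.\ with an $\cF^\xi$-measurable random variable. First I would reduce to the case of an indicator $X=\mathbbm{1}_A$ with $A\in\cF$: by definition of $\cF=\sigma(\cF^\xi\cup\cN)$ and the fact that $\cN$ is closed under countable unions and contains all subsets of null sets (by completeness of $(\Omega,\cF,\P)$), one shows that $\cF=\{A\triangle N: A\in\cF^\xi,\ N\in\cN'\}$ for a suitable completion detail; then $\mathbbm{1}_A=\mathbbm{1}_{A_0}$ $\P$-a.s.\ with $A_0\in\cF^\xi$. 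Passing from indicators to simple functions is linear, and to general $X$ one takes a pointwise-approximating sequence of simple functions $X_n\to X$, replaces each by an $\cF^\xi$-measurable version $X_n^\xi$ with $X_n=X_n^\xi$ off a null set $N_n$, and sets $X^\xi:=\limsup_n X_n^\xi$ on $\Omega\setminus\bigcup_n N_n$ (and $:=0$ on that null set); this is $\cF^\xi$-measurable and equals $X$ $\P$-a.s.

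For $S=T$ the random variable $X:[0,T]\times\Omega\to\R$ is $\Sigma_T=\cB([0,T])\otimes\cF$-measurable, and we want a $\Sigma_T^\xi=\cB([0,T])\otimes\cF^\xi$-measurable version agreeing $\P_T=(\lambda\times\P)/T$-a.s. The natural route is a monotone-class / Dynkin argument: the claim holds for $X=\mathbbm{1}_{B\times A}$ with $B\in\cB([0,T])$ and $A\in\cF$ (use the $S=0$ case to replace $A$ by $A_0\in\cF^\xi$, then the exceptional set is $[0,T]\times(A\triangle A_0)$, which is $\P_T$-null), the collection of sets $C\in\Sigma_T$ for which $\mathbbm{1}_C$ has a $\Sigma_T^\xi$-measurable a.s.-version is a $\lambda$-system containing the $\pi$-system of measurable rectangles, hence equals $\Sigma_T$; then lift to simple functions and to general $X$ by the same $\limsup$ construction as above, taking care that the union of the countably many exceptional $\P_T$-null sets is again $\P_T$-null.

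The main obstacle I anticipate is the bookkeeping of null sets in the product setting: an a.s.-statement with respect to $\P$ fibered over $[0,T]$ must be upgraded to an a.s.-statement with respect to $\lambda\times\P$, and one must be careful that the "bad" set $N_n\subseteq\Omega$ produced at stage $n$ enters as $[0,T]\times N_n$, so that $\bigcup_n([0,T]\times N_n)=[0,T]\times\bigcup_n N_n$ is genuinely $\P_T$-null — this is fine because it is a countable union. A secondary subtlety is that $(\Omega_T,\Sigma_T,\P_T)$ is \emph{not} assumed complete (only $(\Omega^i,\cF^i,\P^i)$ is), so I must not invoke completeness of $\P_T$ anywhere; the construction above only uses completeness of $\P$ on $\Omega$ (through the $S=0$ case) and then stays within $\Sigma_T$ and $\Sigma_T^\xi$ proper, which is exactly what the statement asks for. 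Everything else is routine approximation.
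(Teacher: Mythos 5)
Your proposal is correct and follows essentially the same route as the paper: the case $S=0$ is the known completeness fact, and the case $S=T$ reduces to rectangles $A\times B$ with $B$ replaced by $B^\xi\in\cF^\xi$ satisfying $\P(B\Delta B^\xi)=0$, so that $(A\times B)\Delta(A\times B^\xi)=A\times(B\Delta B^\xi)$ is $\P_T$-null. The paper packages the extension from rectangles to all of $\Sigma_T$ as the statement that the $\P_T$-completion of $\Sigma_T^\xi$ contains $\Sigma_T$, which is just a more compact phrasing of your Dynkin-system plus simple-function approximation argument; your remark that completeness of $(\Omega_T,\Sigma_T,\P_T)$ is never needed is accurate and consistent with that.
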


\begin{proof}
We show that the $\rho\otimes \P$-completion of $\st{\cR} \otimes \cF^\xi$ contains $\cR\otimes \cF$. 
It is sufficient to prove that $A\times B \in \overline{\cR\otimes \cF^\xi}^{\rho\otimes\P}$ 
for $A\in  \cR$ and $B\in\cF$. We find a $B^\xi\in \cF^\xi$ such that $\P(B\Delta B^\xi)=0$. Hence
$(A\times B)\Delta (A\times B^\xi)= A \times (B\Delta B^\xi)$ is of $\rho \otimes \P$-measure zero.
Because of $A\times B^\xi\in \cR\otimes \cF^\xi$ we can conclude the proof.
\end{proof}
\medskip

\begin{lemma}
\label{lemma:Functional_representation}
The following assertions hold true:
\begin{enumerate}[{\rm (1)}]
\item For each $\cR\otimes \cF^\xi$-measurable random variable $X:R\times \Omega \to \R$ there exists a random
      variable $\r X : R \times \R^I  \to\R$ such that
      \[ X:(R\times \Omega) \xrightarrow{J} (R\times \R^I) \xrightarrow{\r X} \R. \]
\item For $\cR\otimes \cF^{\xi,0}$-measurable random variables $X,X':R \times \Omega^0\to\R$ with
      $(\rho \otimes \P^0)(X=X')=1$ one has $(\rho \otimes \P^1)(\r {X} \circ J^1=\r {X'} \circ J^1)=1$
      where the factorizations $X=\r X \circ J^0$ and $X'=\r {X'} \circ J^0$
      are obtained by part {\rm (1)}.
\end{enumerate}
\end{lemma}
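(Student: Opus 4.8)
The plan is to prove the two assertions separately, relying on the standard Doob–Dynkin functional representation lemma for part (1) and then bootstrapping to part (2).

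\textbf{Part (1).} First I would treat the case $S=0$, which is exactly the classical Doob–Dynkin lemma: since $\cF^\xi = \sigma(\xi_k : k\in I)$ is generated by the map $J=J_0:\Omega_0\to\Omega_0^c=\R^I$, every $\cF^\xi$-measurable real random variable $X$ factors as $X=\r X\circ J$ for some $\mathcal{B}(\R^I)$-measurable $\r X$. For the case $S=T$ I would note that $\Sigma_T^\xi = \mathcal{B}([0,T])\otimes\cF^\xi$ is generated by the map $J_T=(I_T,J):\Omega_T\to[0,T]\times\R^I=\Omega_T^c$, because $\mathcal{B}([0,T])$ is generated by $I_T$ and $\cF^\xi$ by $J$, and the product $\sigma$-algebra of two generated $\sigma$-algebras is generated by the product map. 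Hence the same Doob–Dynkin argument applies and yields a $\Sigma_T^c$-measurable $\r X:\Omega_T^c\to\R$ with $X=\r X\circ J_T$. (One should be slightly careful that $\R^I$ for countable $I$ is a standard Borel space and $\mathcal{B}(\R^I)$ is the cylinder $\sigma$-algebra, so the Doob–Dynkin lemma in the form "measurable w.r.t.\ $\sigma(g)$ implies a measurable factorization through $g$" is available; this holds in general for real-valued $X$ without any standardness hypothesis on the target, but it is clean here anyway.)

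\textbf{Part (2).} Now suppose $X,X':\Omega_S^0\to\R$ are $\Sigma_S^{\xi,0}$-measurable with $\P_S^0(X=X')=1$, and write $X=\r X\circ J_S^0$, $X'=\r{X'}\circ J_S^0$ from part (1). The set $D:=\{\r X\ne \r{X'}\}\in\Sigma_S^c$ satisfies
\[
\P_S^0\bigl((J_S^0)^{-1}(D)\bigr) = \P_S^0(X\ne X') = 0,
\]
i.e.\ $\P_S^{0,c}(D)=\operatorname{law}(J_S^0)(D)=0$. The key point is that the canonical law does not depend on the index $i$: by (C3) the families $(\xi_k^0)_{k\in I}$ and $(\xi_k^1)_{k\in I}$ have the same finite-dimensional distributions, hence $\operatorname{law}(J_0^0)=\operatorname{law}(J_0^1)$ on $\mathcal{B}(\R^I)$ (finite-dimensional distributions determine the law on the cylinder $\sigma$-algebra), and therefore also $\operatorname{law}(J_T^0)=\lambda/T\otimes\operatorname{law}(J_0^0)=\lambda/T\otimes\operatorname{law}(J_0^1)=\operatorname{law}(J_T^1)$. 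In other words $\P_S^{1,c}=\P_S^{0,c}$. Consequently
\[
\P_S^1\bigl(\r X\circ J_S^1\ne \r{X'}\circ J_S^1\bigr) = \P_S^1\bigl((J_S^1)^{-1}(D)\bigr) = \P_S^{1,c}(D) = \P_S^{0,c}(D) = 0,
\]
which is the claim.

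The only genuinely delicate point I anticipate is the measure-theoretic bookkeeping in part (1) for $S=T$: making sure that $\Sigma_T^c$-measurability of $\r X$ (with $\Sigma_T^c=\mathcal{B}([0,T])\otimes\mathcal{B}(\R^I)$) really is what Doob–Dynkin delivers when the generating map is the pair $J_T=(I_T,J)$, rather than, say, only joint measurability in a weaker sense. This is handled by observing $\sigma(J_T)=\sigma(I_T)\vee\sigma(J)=\mathcal{B}([0,T])\otimes\mathcal{B}(\R^I)$ as subsets of $\Sigma_T$, plus the fact that $I$ is countable so $\R^I$ carries a nice enough Borel structure. Everything else—the identification of laws from finite-dimensional distributions and the transport of a null set along the change of underlying space—is routine once the factorizations are in hand.
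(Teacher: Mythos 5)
Your proposal is correct and follows essentially the same route as the paper: part (1) is the factorization (Doob--Dynkin) lemma applied to the generating maps $J$ and $J_T$, and part (2) is the paper's two ``changes of variables,'' which you merely spell out by transporting the null set $\{\r X\ne\r{X'}\}$ through the common canonical law $\P_S^c$ (equality of the laws of $J_S^0$ and $J_S^1$ coming from (C3)). No gaps; your extra care about $\sigma(J_T)=\mathcal{B}([0,T])\otimes\cF^\xi$ is exactly the point the paper's proof of (1) records.
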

\medskip

\begin{proof}
(1) The map $J$ generates the $\sigma$-algebra $\cR\otimes \cF^\xi$.
Hence we apply the functional representation from the Factorization
Lemma \cite[p. 62]{Bauer:01} and (1) follows.
(2) The assumption implies by a change of variables $(\rho\otimes \P^c) (\r X= \r{X'}) = 1$, and by
another change of variables the conclusion of assertion (2).
\end{proof}

The above lemma enables us to introduce the operator $\C$
that maps an equivalence class $[X]$ from $L_0(R\times \Omega^0)$ to the equivalence class 
$[\r X \circ J^1]$ in $L_0(R\times \Omega^1)$
so that $[X]$ and $[\r X \circ J^1]$ have the same law.
\medskip

\begin{definition}
\label{definition:C_S}
\index{operator!$\C$}
\index{operator!$\C_0$}
\hspace*{0em}
\begin{enumerate}[{\rm (1)}]
\item We define the map
      $\C: L_0(R\times \Omega^0) \to L_0(R\times \Omega^1)$ by
      \[   \C(X) = \C([X]) := [\r X \circ J^1],  \]
      where $X\in [X]$ is an  $\cR\otimes \cF^{\xi,0}$-measurable representative of $[X]$.
\item We define the map
      $\C_0: L_0(\Omega^0) \to L_0(\Omega^1)$ by
      \[   \C_0(X) = \st{\C_0}([X]) := [\r X \circ J^1_0],  \]
      where $X\in [X]$ is an  $\cF^{\xi,0}$-measurable representative of $[X]$.
\end{enumerate}
\end{definition}
\medskip

Part (2) of Definition \ref{definition:C_S} corresponds to the case where $R=\{r_0\}$ is a singleton.
\st{We gave a separate definition since $\C_0$ will play a particular role later on. 
Basic properties of $\C_0$ and $\C$ are summarized in Proposition \ref{proposition:properties_C_T} below.
For its formulation we need a class of functionals $\Phi:\cL_0(R) \times\cdots\times \cL_0(R) \to \R$ 
that, for example in the case $(R,\cR,\rho)=([0,1],\cB([0,1]),\lambda)$ with $\lambda$ being the Lebesgue measure, 
excludes Dirac functionals $\Phi (f):= f(r_0)$, where $r_0\in [0,1]$ is fixed.}
\medskip

\begin{definition}
\label{definition:consistent}
\index{functional!consistent}
A functional $\Phi: (\cL_0(R))^n \to \R$ is called {\em consistent} provided that for all
probability spaces $(A,\cA,\Q)$ and jointly measurable $X_1,...,X_n:R\times A\to \R$
the map $F_X:A\to\R$ with
\[ F_X(\om)=\Phi(X_1(\cdot,\omega),...,X_n(\cdot,\omega))\]
is measurable and $\Q(F_X=F_{X'})=1$ if $(\rho\st{\otimes} \Q) (X_i\not =X_i')=0$ for $i=1,...,n$.
\end{definition}

\begin{proposition}
\label{proposition:properties_C_T}
For $X,X_1,\dots,X_n \in  \cL_0(R\times \Omega^0)$ and $Y_i\in \C(X_i)$, \st{$i=1,\ldots,n$,}
the following holds true:
\begin{enumerate}[{\rm (1)}]
\item $\xi_k^1 \in \C_0(\xi_k^0)$ for $k\in I$.
\item $\C$ is a linear isometry and bijection.
\item $(Y_1,\dots,Y_n) \stackrel{d}{=} (X_1,\dots,X_n)$.
\item For a Borel function $g:\R^n \to \R$ one has
      \[ g(Y_1,\dots,Y_n)\in \C(g(X_1,\dots,X_n)). \]
\item If $\Phi: \cL_0(R) \times \cdots \times \cL_0(R) \to \R$ is consistent, then
      \[ \Phi(Y_1,...,Y_n) \in \C_0 (\Phi(X_1,...,X_n)). \]
\item If $X$ is $\cR\otimes \cF^{\xi,0}$-measurable, then
      there is an $\cR\otimes \cF^{\xi,1}$-measurable $Y \in \C(X)$ such that
      for all $ r\in R$ one has
      \[ Y(r,\cdot) \in \C_0(X(r,\cdot)). \]
\item For $Y \in \cL_0(R\times \Omega^1)$ one has $Y \in \C(X)$ if and only if there is a null-set
      $\cN \subseteq R$ such that for all $r\in R\setminus \cN$ one has
       \[ Y(r,\cdot) \in \C_0(X(r,\cdot)). \]
\end{enumerate}
\end{proposition}
\medskip

\begin{proof}
(1) follows from the definition of $\C_0$.
\medskip

(2) {\sc Linearity:}
Let $a,b \in \R$ and $X,Y \in \cL_0(R\times \Omega^0)$, and take
$\cR\otimes \cF^{\xi,0}$-measurable representatives $X^\xi\in [X]$ and $Y^\xi\in [Y]$. Then
$aX^\xi + bY^\xi\in a[X]+b[Y]$.
From Lemma \ref{lemma:Functional_representation} we get that
\[ X^\xi(\eta)  =  \r {X^\xi} \circ J^0(\eta)
   \sptext{1}{and}{1}
   Y^\xi(\eta)  =  \r {Y^\xi} \circ J^0(\eta), \]
for all $\eta \in R\times \Omega^0$. Defining point-wise
 \[ T := a \r {X^\xi} + b \r {Y^\xi}, \]
we get that $T:R\times \R^I \to \R$ is measurable and
\[ T(J^0(\eta)) = a X^\xi(\eta) +b Y^\xi(\eta)
   \sptext{1}{for all}{1}
   \eta \in R\times \Omega^0 \]
so that $T(J^0)\in [aX^\xi+bY^\xi]$.
By definition of $\C$,
\[ T(J^1) =  a \r {X^\xi} \circ J^1 + b  \r {Y^\xi} \circ J^1
   \in \C(aX + bY), \]
but is also an element of $a\C(X) + b\C(Y)$.
\medskip

{\sc Isometry:}
Because the laws of $J^0$ and $J^1$ coincide, it follows that $X$
and the representatives of $\C(X)$ have the same distribution. As $d(X,X') = d(X-X',0)$
the property that $\C$ is an isometry follows immediately.
\medskip

{\sc Bijection:}
Since $\C$ is an isometry, it is an injection. Now let $Y \in \cL_0(R\times \Omega^1)$
and take $Y^\xi$ to be \st{an} $\st{\cR\otimes} \cF^{\xi,1}$-measurable representative of $[Y]$. Then
there is a measurable $\widehat{Y^\xi}:R\times \R^I\to \R$ such that
\[ Y^\xi(\eta) = \r {Y^\xi} \circ J^1(\eta)
    \sptext{1}{for all}{1}
    \eta \in R\times \Omega^1. \]
Now $\eta \mapsto \r {Y^\xi} \circ J^0(\eta)$ is $\st{\cR}\otimes \cF^{\xi,0}$-measurable and
 \[ \C([\r {Y^\xi} \circ J^0]) = [Y]. \]

(3) The characteristic functions of $(X_1,...,X_n)$ and $(Y_1,...,Y_n)$ coincide, because
for all $(t_1,...,t_n) \in \R^n$ and $Y_k\in \C\left( X_k\right)$ we have
 \[
	 \int_{R\times \Omega^1} e^{i\sum_{k=1}^n t_k Y_k} d(\rho\otimes \P^1)
      =  \int_{R\times \Omega^0} e^{i\sum_{k=1}^n t_k X_k} d(\rho\otimes \P^0) \]
where we used (2) and that $\C$ keeps the distribution invariant.
\medskip

(4) We choose $X_1^\xi,\dots,X_n^\xi$ to be $\cR\otimes \cF^{\xi,0}$-measurable representatives of the
 classes $[X_1],...,[X_n]$, so that
 \[ X_i^\xi(\eta) = \r {X_i^\xi} \circ J^0(\eta) \]
 for  $i=1,\dots,n$ and all $\eta \in R\times \Omega^0$. Next we define the measurable
 functional $T_Z:R\times \R^I \to \R$ as
 \[ T_Z(\zeta) := g(\r{X_1^\xi}(\zeta),\dots,\r{X_n^\xi}(\zeta)) \]
 so that
 $T_Z\circ J^0 = g(X_1^\xi,\dots,X_n^\xi)$.
 By definition of $\C$ we get that
 \[ \C(g(X_1^\xi,\dots,X_n^\xi)) = [T_Z\circ J^1]. \]
 On the other side, by definition of $T_Z$ we have that
 \[ T_Z\circ J^1 = g(\r {X_1^\xi} \circ J^1,\dots,\r {X_n^\xi} \circ J^1), \]
 which is $\rho\otimes \P^1$-a.s. the same as $g(Y_1,\dots,Y_n)$, where $Y_i\in\C(X_i)$. This concludes the proof.
 \medskip

(5) We choose $\cR\otimes \cF^{\xi,0}$-measurable representatives $X_i^\xi\in [X_i]$, define
$Y_i^\xi:= \r{X_i^\xi} \circ J^1$, and get
\equa
       F_{Y^\xi}(\om^1)
& = & \Phi(\r{X_1^\xi}(\cdot,J^1_0(\omega^1)),...,\r{X_N^\xi} (\cdot,J^1_0(\omega^1))), \\
       F_{X^\xi}(\om^0)
& = & \Phi(\r{X_1^\xi}(\cdot,J^0_0(\omega^0)),...,\r{X_N^\xi} (\cdot,J^0_0(\omega^0))).
\tion
\st{Defining
$\Psi: \R^I \to \R$ by} $\Psi(\zeta) :=  \Phi(\r{X_1^\xi}(\cdot,\zeta),...,\r{X_N^\xi} (\cdot,\zeta))$,
our assumptions yields  to a measurable map and
$F_{X^\xi} = \Psi \circ J^0_0$ and
$F_{Y^\xi} = \Psi \circ J^1_0$.
Consequently, $F_{Y^\xi} \in \C_0 ( F_{X^\xi} )$. Finally, our assumption yields that
$F_{X^\xi}$ and $F_X$ belong to the same equivalence class, and $F_{Y^\xi}$ and $F_Y$ belong
to the same equivalence class, so that the proof is complete.
\medskip

(6) We have that $X = \r X \circ J^0$ for some $\r X$,
which implies \st{$X(r) = \widehat{X}(r,J^0_0)$ for all $r\in R$}, and define
$Y :=  \r X \circ J^1$.
By construction this implies that \st{$Y(r) = \r X(r,J^1_0)$
for all $r\in R$}.
\medskip

(7) \st{Choose $X^\xi\in [X]$ to be $\cR\otimes \cF^{\xi,0}$-measurable and
$Y^\xi:= \r {X^\xi} \circ J^1$ so that
\[ Y^\xi\in \C(X) \sptext{1}{and}{1}
   Y^\xi(r) \in \C_0(X^\xi(r)) \]
for all $r\in R$. Moreover, $\P^0 (X^\xi(r) = X(r)) = 1$ for $r\in R\setminus \cN'$ where
$\cN' \subseteq R$ is a null-set,
so that
\[ \C_0(X^\xi(r))=\C_0(X(r)) \]
for all $r\in R \setminus \cN'$. Hence, $Y^\xi(r) \in \C_0(X(r))$ for all $r \in R\setminus \cN'$.
The claim now follows from the fact, that for $Y \in \cL_0(R\times \Omega^1)$ we have that $Y \in \C(X)$ 
if and only if $\P^1(Y(r)=Y^\xi(r))=1$ for all $r \in R \setminus \cN''$, where $\cN'' \subseteq R$ is a null-set.}
\end{proof}

We extend our definition of $\C$ to decouple later random generators of BSDEs.
Let $M$ be a \st{\rm complete metric space that is locally $\sigma$-compact,}\index{locally $\sigma$-compact} i.e. there exist compact 
subsets $\emptyset \not = K_1 \subseteq K_2 \subseteq \dots $, such that $\overline{\mathring K}_n=K_n$ and
$M=\cup_{n=1}^\infty \mathring{K}_n$. \st{For the following we recall that $C(M)$ is the 
space of continuous $\R$-valued functions on $M$.}

\begin{definition}
\index{space!$\cL_0(A;C(M))$}
\index{space!$L_0(A;C(M))$}
Given a measurable space $(A,\cA)$, we let $f\in \cL_0(A;C(M))$ if and only if
$f:A\times M \to \R$ is a Carath\'eodory function, i.e. $f$ satisfies that
\begin{enumerate}[(a)]
\item $\alpha \to f(\alpha,x)$ is measurable for all $x\in M$,
\item $x\to f(\alpha,x)$ is continuous for all $\alpha\in A$.
\end{enumerate}
If $(A,\cA)$ is equipped with a probability measure $\Q$, then 
the space $L_0(A;C(M))$ is the space of equivalence classes with
$f\sim g$ if $\Q(f(x)=g(x), x\in M )=1$.
\end{definition}

\begin{remark}
Equivalently, a Carath\'eodory function  is a measurable function 
$f:A\to C(M)$, when $C(M)$ is equipped with the
smallest $\sigma$-algebra $\U$ such that for all $x\in M$ the maps
$\delta_x: C(M)\to\R$ with $\delta_x(f):=f(x)$ are Borel-measurable.
\end{remark}

The next lemma extends the operator $\C$ to $C(M)$-valued
random variables.

\smallskip

\begin{lemma}\label{lemma:unique_extension_X_i}
For $f \in \cL_0(R\times \Omega^0;C(M))$ there is a $g\in \cL_0(R\times \Omega^1;C(M))$ with
$g(x) \in \C (f(x))$ for all $x \in M$. If $g_1$ and $g_2$ satisfy this property,
then $g_1=g_2$ $(\rho \otimes \P^1)$-a.s.
\end{lemma}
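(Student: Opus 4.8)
The plan is to reduce the $C(M)$-valued statement to the scalar operator $\C_S$ acting on a countable dense set of point-evaluations, using $\sigma$-compactness and separability of $C(M)$ to control the uncountably many conditions $g(x)\in\C_S(f(x))$. First I would fix a countable dense subset $D=\{x_j:j\in\N\}\subseteq M$; since $M=\cup_n\mathring K_n$ with each $K_n$ compact, $M$ is separable, so such a $D$ exists. For each $j$ the scalar random variable $f(x_j)\in\cL_0(\Om_S^0)$ has a well-defined image $\C_S(f(x_j))\in L_0(\Om_S^1)$; pick a $\Sigma_S^{\xi,1}$-measurable representative $h_j\in\C_S(f(x_j))$. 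The heart of the matter is to assemble the countable family $(h_j)_j$ into a single $C(M)$-valued map $g$, i.e. to show the paths $x_j\mapsto h_j(\om^1)$ are $\P_S^1$-a.s.\ uniformly continuous on $D\cap K_n$ for each $n$ and hence extend continuously to $M$.

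For this I would transfer a modulus-of-continuity estimate from the $0$-side. On $\Omega_S^0$ the map $x\mapsto f(\cdot,x)$ is genuinely continuous, so for each compact $K_n$ and each $k\ge1$ there is a finite subset $D_{n,k}\subseteq D$ that is $1/k$-dense in $K_n$, and the event
\[ A^0_{n,k,m} := \bigcap_{\substack{x,x'\in D_{n,k}\\ \rho(x,x')\le 1/m}} \big\{\,|f(x)-f(x')|\le \omega_n(1/m)\,\big\} \]
has $\P_S^0$-probability tending to $1$ suitably as $m\to\infty$ (one can replace $\omega_n$ by any gauge making this work, e.g. using that a continuous function on a compact set is uniformly continuous, so $\P_S^0(\sup_{\rho(x,x')\le\delta,\,x,x'\in K_n}|f(x)-f(x')|>\eta)\to0$ as $\delta\downarrow0$). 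Since $\C_S$ is a distribution-preserving isometry (Theorem \ref{theorem:properties_C_T}(1)-(2)), and since the event $A^0_{n,k,m}$ is a finite-dimensional event in the variables $f(x)$, $x\in D_{n,k}$, its image under the joint map is an event of the same probability in the variables $h_x$, $x\in D_{n,k}$; concretely $(h_x)_{x\in D_{n,k}}\stackrel{d}{=}(f(x))_{x\in D_{n,k}}$ by Theorem \ref{theorem:properties_C_T}(2). Intersecting over $n,k$ and letting $m\to\infty$, I get a $\P_S^1$-full event on which $x\mapsto h_x$ is uniformly continuous on $D\cap K_n$ for every $n$; define $g(\om^1,x)$ as the continuous extension to $M$ on this event and arbitrarily (say $0$) off it. Then $g\in\cL_0(\Om_S^1;C(M))$ by construction, and for $x\in D$ we have $g(x)=h_x\in\C_S(f(x))$ by definition.

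It remains to upgrade $g(x)\in\C_S(f(x))$ from $x\in D$ to all $x\in M$. Fix $x\in M$ and a sequence $x_j\to x$ in $D$. On the $0$-side $f(x_j)\to f(x)$ pointwise on $\Omega_S^0$, hence $d_S^0(f(x_j),f(x))\to0$; since $\C_S$ is an isometry for $d_S$, $d_S^1(g(x_j),\C_S(f(x)))\to0$, i.e.\ $g(x_j)\to\C_S(f(x))$ in $d_S^1$. On the other hand $g(x_j)\to g(x)$ $\P_S^1$-a.s.\ by continuity of paths of $g$, hence also in $d_S^1$. Uniqueness of limits gives $g(x)=\C_S(f(x))$ as required. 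For the uniqueness assertion: if $g_1,g_2$ both satisfy the property, then for each $x$ in the countable dense set $D$ we have $g_1(x)=g_2(x)$ $\P_S^1$-a.s.\ (both equal the single equivalence class $\C_S(f(x))$); taking the intersection over $D$ yields a full event on which $g_1(x)=g_2(x)$ for all $x\in D$, and by path-continuity of $g_1,g_2$ this forces $g_1=g_2$ identically on $M$ on that event, i.e.\ $g_1=g_2$ $\P_S^1$-a.s.

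The step I expect to be the main obstacle is the transfer of the modulus-of-continuity event: one must phrase the uniform continuity of $f$ on each $K_n$ purely in terms of countably many finite-dimensional events in the point-evaluations $f(x)$, $x\in D$, so that Theorem \ref{theorem:properties_C_T}(2) applies verbatim — the subtlety is that $\sup$ over the full compact set $K_n$ is a priori not a finite-dimensional functional, so one has to exploit that a continuous function on a compact metric space is determined by, and has its oscillation controlled by, its values on a countable dense subset, reducing everything to events of the form $\{\max_{x,x'\in D_{n,k},\,\rho(x,x')\le1/m}|f(x)-f(x')|\le\eta\}$. Once that reduction is in place, the rest is the routine completeness/separability bookkeeping sketched above, and the handling of null-sets is clean because $\cG_0$-type null-set issues do not enter here (we only ever intersect countably many full events).
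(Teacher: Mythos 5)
Your argument is correct and is essentially the paper's own proof: the paper simply invokes Theorem \ref{theorem:properties_C_T}(2) for equality of finite-dimensional distributions and then cites Proposition \ref{proposition:cont_modification}, whose proof is exactly your countable-dense-subset, uniform-continuity-transfer, continuous-extension argument (including the identification $g(x)\in\C_S(f(x))$ for all $x$ and the uniqueness via indistinguishability of continuous modifications). You have merely inlined the appendix proposition rather than citing it.
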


\begin{proof}
Proposition \ref{proposition:properties_C_T} implies that $(f(x))_{x \in M}$ and
$(h(x))_{x \in M}$ have the same finite-dimensional distributions for $h(x) \in \C(f(x))$, so
that the result follows from Proposition \ref{proposition:cont_modification}.
\end{proof}
\smallskip
Now we are ready to introduce the extension $\C^M$ of $\C$ that maps equivalence classes from 
$L_0(R\times \Omega^0;C(M))$ to $L_0(R\times \Omega^1;C(M))$ while keeping the distributional properties of the
equivalence classes.
\medskip

\begin{definition}\label{definition:cont_extension}
\index{operator!$\C^M$}
We let
\[ \C^M: L_0(R\times \Omega^0;C(M)) \to L_0(R\times \Omega^1;C(M)) \]
such that $\C^M([f])$ is the unique equivalence-class whose representatives $g$
satisfy $g(x)\in \C(f(x))$ for all $x\in M$. Moreover, \st{we define} 
$\C^M(f) :=\C^M([f])$ \st{for \linebreak $f \in \cL_0(R\times \Omega^0;C(M))$.}
\end{definition}

%%%%%%%%%%%%%%%%%%%%%%%%%%%%%%%%%%%%%%%%%%%%%%%%%%%%%%%%%%%%%%%%%%%%%%%%%%%%%%%%%%%%%%%%%%%%%%%%%

\section{The operators $\C$ and $\C^M$ for stochastic processes}

\st{In this section we specialize to stochastic processes $X:[0,T]\times\Om \to \R$, where 
$T \in (0,\infty)$ is fixed. This means, that we complement some results from Section \ref{sec:C_T_and_properties} 
in the case $(R,\cR,\rho)=([0,T],\cB([0,T]),\lambda/T)$ where $\lambda$ 
is the Lebesgue measure. Here we distinguish more clearly between the operators 
$\C$ and $\C_0$ from Definition \ref{definition:C_S}.} We will use the following notation:

\renewcommand{\arraystretch}{1.2}
\st{\begin{center}
\begin{tabular}{|lcl|lcl|}\hline
 $\Omega_0$     &:= &  $\Omega$                                   & $\Om_T$        &:= & $[0,T]\times\Om$                             \\
 $\Sigma_0^\xi$ &:= &  $\cF^\xi$                                  & $\Sigma_T^\xi$ &:= & $\mathcal{B}([0,T])\otimes\cF^\xi$           \\
 $\Sigma_0$     &:= &  $\cF$                                      & $\Sigma_T$     &:= & $\mathcal{B}([0,T])\otimes\cF$                \\
 $\P_0$         &:= &  $\P$                                       & $\P_T$         &:= & $(\lambda\times\P)/T$                         \\
 $\C_0$         &from& \hspace{-1em} Definition \ref{definition:C_S} & $\C_T$         &:= & $\C$                       \\
                 \hline
\end{tabular}
\end{center}
\index{operator!$\C_T$}
\index{space!$(\Omega_T,\Sigma_T,\P_T)$}

\begin{remark}
One might also consider the infinite time interval $[0,\infty)$ by the choice $(R,\cR,\rho)=([0,\infty),\cB([0,\infty)),\mu)$, where 
(for example) $\mu$ is a probability measure with the same null-sets as the Lebesgue measure.
\end{remark}
}
First we show how continuity and measurability properties are transferred by
the operators $\C_0$ and $\C_T$. \st{Here we use the following convention:}

\begin{convention}\label{convention}
\st{Let $S\in \{0,T\}$ and assume} a sub-$\sigma$-algebra $\cG_S \subseteq \Sigma_S$. \st{We} will interpret
$L_0(\Om_S,\cG_S,\P_S)$ as the space of
equivalence classes  $[X]\in L_0(\Om_S,\Sigma_S,\P_S)$ that
contain a  $\cG_S$-mea\-su\-rable representative.
Similarly, $L_0(\Om_S,\cG_S,\P_S;C(M))$ is the space of
equivalence classes  $[X]\in L_0(\Om_S,\Sigma_S,\P_S;C(M))$ that
contain a \linebreak $\st{(\cG_S,\U)}$-measurable representative.
\end{convention}
\medskip

\begin{proposition}\label{prop:properties_C_T_measurability}
For $i=0,1$ assume right-continuous filtrations $\G^i=(\cG^i_t)_{t \in [0,T]}$ with
$\cG^i_t \subseteq \cF^i$ such that $\cG_0^i$ contains all null-sets of $\cF^i$
and
\[ \C_0(L_0(\Om^0,\cG^0_t)) \subseteq L_0(\Om^1,\cG^1_t)
   \sptext{1}{for  all}{1}
    t \in [0,T]. \]
Then the following assertions are true:
\begin{enumerate}[{\rm (1)}]
\item If $X$ is path-wise continuous and $\G^0$-adapted, then there exists a path-wise continuous
      $\G^1$-adapted process $Y\in \cL_0(\Omega^1_T)$ with
      \[ Y(t)\in \C_0(X(t)) \sptext{1}{for all}{1} t \in [0,T]. \]
\item One has $\C_T(L_0(\Omega_T^0,\cP_T^0)) \subseteq L_0(\Omega_T^1,\cP_T^1)$, where $\cP^i_T$ are the
      predictable $\sigma$-alge\-bras generated by the $\G^i$-adapted processes with paths
      that are left-continuous and have limits from the right.
\end{enumerate}
\end{proposition}
\medskip

\begin{proof}
(1) Taking $\beta(t) \in  \C_0(X(t))$ to be $\cG_t^1$-measurable, Proposition \ref{proposition:properties_C_T}(3)
implies that $(\beta(t))_{t\in [0,T]}$ and $(X(t))_{t\in [0,T]}$ have the same finite-dimen\-sional
distributions. 
\st{For $M=[0,T]$ we can use in the proof of Proposition \ref{proposition:cont_modification} the sets
$K_1=K_2=\cdots = M$ and $D_0=A=[0,T] \cap \Q$. Furthermore, in the proof of 
Proposition \ref{proposition:cont_modification}} we note that $Y(t)$ is defined
as the a.s.-limit of $\beta_{t_n}$, where we may take \st{now} $t_n\uparrow t$.
By our assumption $\beta_{t_n}\in \cL_0(\Om^1,\cG_{t_n}^1)$, so that $Y(t)$ is $\cG^1_t$-measurable.
The facts that $(Y(t))_{t \in [0,T]}$ is continuous and a modification of $(\beta_t)_{t \in [0,T]}$
were proven in Proposition \ref{proposition:cont_modification}.
\medskip

(2) Applying \cite[p. 133, step (b) of the proof of Lemma 2.4]{Karatzas:Shreve:91} we can approximate
any predictable process $X\in \cL_0(\Omega_T^0,\cP_T^0)$ by a sequence of continuous adapted processes
$X^n\in \cL_0(\Omega_T^0,\cP_T^0)$ with $d_T^0(X^n,X)\to_n 0$ \st{(first we approximate $X$ by bounded 
processes by truncation, then we use  \cite{Karatzas:Shreve:91})}. Applying \st{part} (1), we find continuous adapted
processes $Y^n$ such that
$\lim_n d^1_T (Y^n,Y)\to_n 0$ for $Y\in \C_T(X)$. Because of
$Y^n \in \cL_0(\Omega_T^1,\cP_T^1)$ we can choose
$Y\in \cL_0(\Omega_T^1,\cP_T^1)$ as well.
\end{proof}
\medskip

The next \st{proposition} is needed later for technical reason\st{s}:
\smallskip

\begin{proposition}\label{prop:properties_CTM}
The following assertions hold true:
\begin{enumerate}[{\rm (1)}]
\item For $M:=\R^d$, $f \in \cL_0(\Om_T^0;C(M))$, $X_1,...,X_d \in \cL_0(\Om_T^0)$,
      $g \in \C_T^{M}(f)$, and $Y_i\in \C_T(X_i)$, \st{$i=1,\ldots,d$,} one has that
      \[ g(\cdot,Y(\cdot)) \in  \C_T \big ( f(\cdot,X(\cdot))\big ). \]
\item Let $S\in \{0,T\}$ \st{and let $M$ be a complete metric space that is locally $\sigma$-compact}. 
      If one has that  $\C_S(L_0(\Om_S^0,\cG^0_S)) \subseteq L_0(\Om_S^1,\cG^1_S)$
      for $\sigma$-algebras $\cG_S^i\subseteq \Sigma_S^i$,
      then
      \[ \C_S^M(L_0(\Om_S^0,\cG^0_S;C(M))) \subseteq L_0(\Om_S^1,\cG^1_S;C(M)). \]
\end{enumerate}
\end{proposition}
\medskip

\begin{proof}
(1a) First note that Lemma \ref{lemma:joint_measurability}  implies
\[ (f(t,X(t)))_{t\in [0,T]} \in \cL_0(\Om_T^0)
   \sptext{1}{and}{1}
   (g(t,Y(t)))_{t\in [0,T]} \in \cL_0(\Om_T^1).\]

(1b) Define for $i=1,...,d$, $n \ge 1, a_k \in \R$ and a Borel-measurable partition $\bigcup_{k=0}^n B_k = \R$
with $B_k\not = \emptyset$ the processes
\equa
A_i(t) &:=& \sum_{k=0}^n a_k 1_{B_k}(X_i(t))
    \sptext{1}{and}{1}
    A(t) = (A_1(t),...,A_d(t)), \\
D_i(t) &:=& \sum_{k=0}^n a_k 1_{B_k}(Y_i(t))
    \sptext{1}{and}{1}
    D(t) = (D_1(t),...,D_d(t)).
\tion
By Proposition \ref{proposition:properties_C_T} we conclude
\equa
&   & \C_T\left ( (f(t,A(t)))_{t\in [0,T]} \right ) \\
& = & \sum_{n_1,\dots,n_d=0}^n 
       \C_T\left ( \Big (f(t,a_{n_1},\dots,a_{n_d})
                               1_{B_{n_1}\times\dots\times B_{n_d}}(X_1(t),\dots,X_d(t))
                       \Big )_{t\in [0,T]} \right ) \\
& = & \sum_{n_1,\dots,n_d=0}^n
            \C_T\left ( \Big (f(t,a_{n_1},\dots,a_{n_d}) \Big )_{t\in [0,T]} \right ) \\
&   & \hspace*{8em}
      \C_T\left ( \Big (1_{B_{n_1}\times\dots\times B_{n_d}}(X_1(t),\dots,X_d(t)) \Big )_{t\in [0,T]}  \right) \\
& \ni & \sum_{n_1,\dots,n_d=0}^n
            (g(t,a_{n_1},\dots,a_{n_d}))_{t\in [0,T]} 
                (1_{B_{n_1}\times\dots\times B_{n_d}}(Y_1(t),\dots,Y_d(t)))_{t\in [0,T]} \\
& = & (g(t,D(t)))_{t\in [0,T]},
\tion
where the multiplication of equivalence classes is defined as usual.
\medskip

(1c) For $L_n(x) := \sum_{k=-4^n}^{4^n-1} \frac{k}{2^n} 1_{[\frac{k}{2^n},\frac{k+1}{2^n})}(x)$
with $x\in\R$ we let
\[ A_i^n(t) :=  L_n(X_i(t))
   \sptext{1}{and}{1}
   D_i^n(t) := L_n(Y_i(t)) \]
so that $d^0_T(A_i^n,X_i)\to_n 0$ for $i=1,\dots,d$.
Proposition \ref{proposition:properties_C_T} yields
$D_i^n \in \C_T(A_i^n)$
and
$d^1_T(D_i^n,Y_i)= d^1_T(\C_T(A_i^n),\C_T(X_i))\to_n 0$.
Because of step (b) and because $\C_T$ is an isometry\st{,} we obtain the estimates
\equa
&   &   d^1_T\left(\C_T((f(t,X(t)))_{t\in [0,T]})  , [(g(t,Y(t)))_{t\in [0,T]}]      \right) \\
&\le&   d^1_T\left(\C_T((f(t,X(t)))_{t\in [0,T]})  , \C_T((f(t,A^n(t)))_{t\in [0,T]}) \right) \\
&   & + d^1_T\left(\C_T((f(t,A^n(t)))_{t\in [0,T]}), [(g(t,D^n(t)))_{t\in [0,T]}]      \right) \\
&   & + d^1_T\left([(g(t,D^n(t)))_{t\in [0,T]}]     , [(g(t,Y(t)))_{t\in [0,T]}]      \right) \\
& = &   d^0_T\left((f(t,X(t)))_{t\in [0,T]}       , (f(t,A^n(t)))_{t\in [0,T]}      \right) \\
&   & \hspace*{8em}      + d^1_T\left((g(t,D^n(t)))_{t\in [0,T]}     , (g(t,Y(t)))_{t\in [0,T]}      \right).
\tion
Because $f(t,A^n(t)) \to_n f(t,X(t))$ for all $(t,\omega)\in \Omega_T^0$\st{,} we have that
\[ d^0_T\left((f(t,X(t)))_{t\in [0,T]}      , (f(t,A^n(t)))_{t\in [0,T]}      \right)\to_n 0. \]
For the last expression we use that $D_i^n \to_n Y_i$ in probability implies the convergence 
$(g(t,D^n(t)))_{t\in [0,T]} \to_n (g(t,Y(t)))_{t\in [0,T]}$ in probability as well.
\bigskip

(2) From  Proposition \ref{proposition:cont_modification} it follows that
the equivalence-class $\C_S^M(f)$ contains a $(\cG^1_S,\U)$-measurable representative.
\end{proof}
\medskip

We conclude with some comments on part (5) of Proposition \ref{proposition:properties_C_T}:
\medskip

\begin{remark}\label{remark:consistent_new}
\hspace{0em} \medskip
\begin{enumerate}[{\rm (1)}]
\item Let $L_0([0,T])$ be equipped with the Borel $\sigma$-algebra, obtained 
      by the metric of
      type (\ref{eqn:metric}) from Section \ref{sec:notation}, and assume
      a $\bigotimes_1^n \cB(L_0([0,T]))$\st{-measurable} $\Psi:(L_0([0,T]))^n \to \R$ such that
      \[ \Phi(f_1,...,f_n) = \Psi([f_1],...,[f_n])
         \sptext{.7}{for}{.7} f_1,...,f_n\in \cL_0([0,T]). \]
      Then $\Phi$ is consistent.

      In fact, the space $L_0([0,T])$ is separable so that its Borel $\sigma$-algebra is generated by the
open balls. We equip $\cL_0([0,T])$ with the smallest $\sigma$-algebra $\mathcal{B}(\cL_0([0,T]))$ such that
$q:\cL_0([0,T])\to L_0([0,T])$ with $q(f):= [f]$ is
measurable. A measurable process $X:[0,T]\times A\to \R$ generates a canonical map
$\hat{X} : A \to \cL_0([0,T])$ that is measurable because
\[ \left \{ \omega \in A : \int_0^T \frac{|X(t,\omega)-f(t)|}{1+|X(t,\omega)-f(t)|} dt
    < \vare \right \} \in \cA \]
for all $\vare>0$ and $f\in \cL_0([0,T])$.  Hence we can finish the proof as the composition of two measurable
maps is measurable.
\bigskip
\item 
For a measurable $\phi : [0,T]\times \R^n \to \R$ and
$g=(g_1,...,g_n) \in (\cL_0([0,T]))^n$ we obtain a  consistent functional
by
\[ \Phi(g):=
   \int_0^T \phi(t, g(t))
                   \chi_{\{ \int_0^T |\phi(t, g(t))| dt < \infty \} } dt. \]
Applying Proposition \ref{proposition:properties_C_T}(5) to the function $\phi(t,x):= |x|^p\wedge L$ with
$L,p\in (0,\infty)$ and $x\in \R^n$, we get that
\[ \int_0^T (|Y(t)|^p \wedge L) dt \in \C_0 \left ( \int_0^T (|X(t)|^p \wedge L) dt \right ) \]
\st{for $X(t)=(X_1(t),...,X_n(t))$ and $Y(t)=(Y_1(t),...,Y_n(t))$, where $X_i \in \cL_0(\Om_T^0)$ and $Y_i \in \C_T(X_i)$ for $i=1,\dots,n$}.
Assuming that  $\int_0^T |X(t,\omega)|^p dt  < \infty$ for all $\omega\in\Omega^0$, we have
\[ \lim_{N \to \infty}  \left ( \int_0^T (|X(t,\omega) |^p \wedge N) dt \right )
  = \int_0^T |X(t,\omega)|^p dt \]
and that $( \int_0^T (|Y(t)|^p \wedge N) dt )_{N\ge 1}$ is a Cauchy sequence in probabi\-lity.
As this sequence converges for all $\omega\in\Omega^1$ (possibly to infinity) we get that
\bigskip

\begin{enumerate}[(a)]
\item $\P^1(\{\omega\in \Omega^1:\int_0^T |Y(t,\omega)|^p dt  < \infty\})=1$,
\item $\int_0^T |Y(t)|^p \chi_{\{ \int_0^T |Y(s)|^p ds < \infty \} } dt
       \in \C_0 \left ( \int_0^T |X(t)|^p  dt \right )$.
\end{enumerate}

\end{enumerate}
\end{remark}

%%%%%%%%%%%%%%%%%%%%%%%%%%%%%%%%%%%%%%%%%%%%%%%%%%%%%%%%%%%%%%%%%%%%%%%%%%%%%%%%%%%%%%%%%%%%%%%%%
%%%%%%%%%%%%%%%%%%%%%%%%%%%%%%%%%%%%%%%%%%%%%%%%%%%%%%%%%%%%%%%%%%%%%%%%%%%%%%%%%%%%%%%%%%%%%%%%%

\chapter{Transference of SDEs}
\label{chapter:transference_sde}

In this chapter we apply the method from Chapter \ref{chapter:general_factorization} to the Wiener 
space. The main technical result is Theorem \ref{theorem:change_multi_SDE} below and gives 
a functional map to move a BSDE from one stochastic basis to another one. For this we do not need
any uniqueness of the solution of the BSDE that is moved.
By using an independent copy of the Wiener space we generate in Chapter \ref{chapter:BSDE} below a 
twisted copy of our BSDE by this procedure. The comparison of the original BSDE with the twisted copy will yield 
to the notion of anisotropic smoothness.
Theorem \ref{theorem:change_multi_SDE} might also be exploited to map a BSDE to the canonical path-space 
of continuous functions \st{or from the canonical path-space back to some other space}.

%%%%%%%%%%%%%%%%%%%%%%%%%%%%%%%%%%%%%%%%%%%%%%%%%%%%%%%%%%%%%%%%%%%%%%%%%%%%%%%%%%%%%%%%%%%%%%%%%

\section{Setting}
\label{sec:setting_transference_sde}

For $i=0,1$ assume complete probability spaces $(\Omega^i,\cF^i,\P^i)$ hosting $d$-dimensional Brownian 
motions 
\[ W^i = (W_t^i)_{t\in [0,T]} = ((W_{t,1}^i,...,W_{t,d}^i)^\top)_{t\in [0,T]},\]
where all paths are assumed to be continuous and $W_0^i\equiv 0$. 
Taking the transposed vector means also that the Brownian motion is considered as column vector.
Define the filtrations $\F^i=(\cF_t^i)_{t\in [0,T]}$ by
$\cF^i_t:=\sigma(W_s^i: s\in [0,t])\vee \cN^i$ with $\cN^i$ being the $\P^i$-null-sets. Replacing $\cF^i$ by $\cF_T^i$ 
we will assume that $\cF^i=\cF_T^i$. Furthermore, we equip \st{$L_2([0,T];\R^d)$} with the orthonormal basis 
$(h_k \otimes e_i)_{k=0,i=1}^{\infty,d}$, where $(h_k)_{k=0}^\infty$ are the $L_2(\st{[0,T]})$-normalized Haar functions 
\footnote{\st{The Haar functions are based on the dyadic intervals $(T\frac{l-1}{2^L},T\frac{l}{2^L}]$ with $L=0,1,2,\ldots$ and $l=1,\ldots,2^L$).}}
and $e_1,...,e_d$ \st{are} the unit vectors of \st{$\R^d$}. 
The corresponding systems $(\xi_k^i)_{k\in I}$ of random variables from Section \ref{sec:C_T_and_properties}
are given by 
\begin{equation}\label{eqn:B}
   \cB^i:=\{ g_{k,j}^i: k\ge 0, j=1,...,d \}
   \sptext{.7}{with}{.7}
   g_{k,j}^i := \int_0^T h_k(t) dW_{t,j}^i,
\end{equation}
where we take as the representative the finite differences of the
$j$-th coordinate of $W^i$ generated by the Haar function $h_k$. Because all paths of $W^i$ are continuous
we have
\[   \sigma (W_{t,j}^i    : t\in [0,T] ; j=1,...,d ) 
   = \sigma (g_{k,j}^i: k=0,1,2,... \mbox{ and } j=1,...,d ). \]
The predictable $\sigma$-algebras on $(\Omega^i,\cF^i,\P^i,\F^i)$ are denoted by $\cP^i$.

%%%%%%%%%%%%%%%%%%%%%%%%%%%%%%%%%%%%%%%%%%%%%%%%%%%%%%%%%%%%%%%%%%%%%%%%%%%%%%%%%%%%%%%%%%%%%%%%%

\section{Results}

Before we state the main result we need two lemmas.

\begin{lemma}\label{lemma:transference_BM_and adaptedness}
One has $W_{t,j}^1\in \C_0(W_{t,j}^0)$ for $j=1,...,d$ and $t\in [0,T]$ so
that $\C_0(L_0(\Om^0,\cF^0_t,\P^0)) \subseteq L_0(\Om^1,\cF^1_t,\P^1)$ for $t\in [0,T]$.
\end{lemma}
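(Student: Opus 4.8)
The plan is to split the claim into two parts: first the pointwise statement $W^1_{t,j}\in\C_0(W^0_{t,j})$ for each fixed $t$ and $j$, and then the filtration inclusion $\C_0(L_0(\Om^0,\cF^0_t,\P^0))\subseteq L_0(\Om^1,\cF^1_t,\P^1)$, which will follow almost mechanically from the first part together with properties of $\C_0$ already established in Theorem~\ref{theorem:properties_C_T}.

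For the first part, recall that by \eqref{eqn:B} the generating systems are $\cB^i=\{g^i_{k,j}\}$ with $g^i_{k,j}=\int_0^T h_k(t)\langle e_j,dW^i_t\rangle$, and that Theorem~\ref{theorem:properties_C_T}(0) gives $g^1_{k,j}\in\C_0(g^0_{k,j})$ for every $k,j$. The idea is to express $W^i_{t,j}$ as a (deterministic, $i$-independent) limit of finite linear combinations of the $g^i_{k,j}$. Concretely, $W^i_{t,j}=\sum_{k\ge 0}\big(\int_0^t h_k(s)\,ds\big) g^i_{k,j}$, where the series converges in $L_2(\P^i)$ (this is just the Haar expansion of $s\mapsto 1_{(0,t]}(s)$ integrated against $dW^i$, i.e. the $L_2$-isometry of the Wiener integral). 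Writing $S^i_{n,t,j}:=\sum_{k=0}^{n}\big(\int_0^t h_k(s)\,ds\big) g^i_{k,j}$, linearity of $\C_0$ (Theorem~\ref{theorem:properties_C_T}(1)) gives $S^1_{n,t,j}\in\C_0(S^0_{n,t,j})$ for all $n$; since $\C_0$ is an isometry for the metric $d_0$ of convergence in probability and $S^0_{n,t,j}\to W^0_{t,j}$, $S^1_{n,t,j}\to W^1_{t,j}$ in $d_0$, the closedness of the graph of the isometry $\C_0$ yields $W^1_{t,j}\in\C_0(W^0_{t,j})$.

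For the second part, fix $t$ and take $X\in L_0(\Om^0,\cF^0_t,\P^0)$. Since $\cF^0_t=\sigma(W^0_{s,j}:s\le t,\,j\le d)\vee\cN^0$, the random variable $X$ (after passing to a $\sigma(W^0_{s,j}:s\le t)$-measurable representative, as in Lemma~\ref{lemma:Alex_process}) is a Borel function of countably many of the $W^0_{s,j}$ with $s\le t$ — say $X=g\big((W^0_{s_m,j_m})_{m\ge 1}\big)$ for a Borel $g:\R^\N\to\R$. By the first part each $W^1_{s_m,j_m}\in\C_0(W^0_{s_m,j_m})$, and these are $\cF^1_t$-measurable since $s_m\le t$. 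Applying Theorem~\ref{theorem:properties_C_T}(3) (composition with a Borel function $g$, extended to countably many arguments by a standard approximation/limiting argument using that $\C_0$ commutes with a.s.\ limits — or just invoking Theorem~\ref{theorem:properties_C_T}(2) to identify the joint law) gives that $\C_0(X)$ has an $\cF^1_t$-measurable representative, i.e.\ $\C_0(X)\in L_0(\Om^1,\cF^1_t,\P^1)$.

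The main obstacle I expect is a technical one rather than conceptual: making the passage from finitely many generators to the full (countable) family clean. For the first part this means being careful that the Haar series $\sum_k(\int_0^t h_k)g^i_{k,j}$ really converges in probability (convergence in $L_2$ suffices, and is standard) and that one may legitimately pass the inclusion $\C_0$ through this limit — here one uses precisely that $\C_0$ is an isometric bijection of a complete metric space, so it and its inverse are continuous, hence the relation ``$Y\in\C_0(X)$'' is preserved under $d_0$-limits in both arguments. For the second part the subtlety is that $\cF^0_t$ is generated by uncountably many $W^0_{s,j}$, $s\le t$, but it is in fact countably generated modulo nullsets (e.g.\ by $W^0_{s,j}$ for rational $s\le t$, by path-continuity), which reduces everything to a countable family and lets the Borel-function argument go through; this is the same style of reduction already used in Lemma~\ref{lemma:Alex_process} and its proof.
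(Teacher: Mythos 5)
Your proof is correct and follows essentially the same route as the paper: reduce to the Haar coefficients $g_{k,j}$ via Theorem \ref{theorem:properties_C_T}(0), use linearity of $\C_0$, pass to the limit in probability using that $\C_0$ is an isometry, and deduce the filtration inclusion from the pointwise statement. The only (cosmetic) difference is that the paper first treats dyadic $t$, where $W_{t,j}$ is a \emph{finite} linear combination of the $g_{k,j}$, and then approximates general $t$ by dyadic times using path continuity, whereas you work directly with the full $L_2$-convergent Haar expansion of $\chi_{(0,t]}$ for arbitrary $t$.
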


\begin{proof}
The construction and \st{Proposition \ref{proposition:properties_C_T}(1)} imply
$W_{t,j}^1 \in \C_0(W_{t,j}^0)$ whenever $t=Tk/2^n$ with $n=0,1,2,...$ and $k=0,...,2^n$.
For a $t\in (0,T)$ not of this form we find dyadic $t_n\in [0,T]$ with
$t_n\to t$. Hence $W_{t_n,j}^i\to W_{t,j}^i$ for $i=0,1$ in probability
and \st{Proposition \ref{proposition:properties_C_T}(2)} yields $W_{t,j}^1 \in \C_0(W_{t,j}^{0})$.
The second part of the statement is a consequence of the first one.
\end{proof}
\smallskip

\begin{lemma}\label{lemma:change_Ito}
Assume that $K^0 \in \cL_2(\Om_T^0,\cP^0)$. Then, for all $j=1,...,d$,
\[ \int_0^T K_t^1 dW_{t,j}^1 \in \C_0 \left( \int_0^T K_t^0 dW_{t,j}^0 \right), \]
where $K^1\in \C_T(K^0)$ is any $\mathcal{P}^1$-measurable
representative.
\end{lemma}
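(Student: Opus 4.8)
The plan is to reduce the stochastic integral to a limit of elementary (simple) integrands, for which the statement follows from the algebraic properties of $\C_0$ and $\C_T$ collected in Theorem \ref{theorem:properties_C_T}, and then to pass to the limit using the isometry of $\C_0$ together with the It\^o isometry on both spaces. First I would fix $j$ and, using the density of simple predictable processes in $\cL_2(\Om_T^0,\cP^0)$, choose simple processes $K^{0,n}$ of the form $K^{0,n}_t = \sum_{\ell} \eta_\ell^{0,n} 1_{(t_\ell,t_{\ell+1}]}(t)$ with $\eta_\ell^{0,n}$ being $\cF^0_{t_\ell}$-measurable, such that $K^{0,n}\to K^0$ in $L_2(\Om_T^0)$; then $\int_0^T K^{0,n}_t dW^0_{t,j}\to \int_0^T K^0_t dW^0_{t,j}$ in $L_2(\Om^0)$, hence in probability, hence in the metric $d^0$.

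For a single summand $\eta_\ell^{0,n} 1_{(t_\ell,t_{\ell+1}]}$ the integral is $\eta_\ell^{0,n}(W^0_{t_{\ell+1},j}-W^0_{t_\ell,j})$. By Lemma \ref{lemma:transference_BM_and adaptedness} we have $W^1_{t,j}\in\C_0(W^0_{t,j})$ and $\C_0(L_0(\Om^0,\cF^0_{t_\ell},\P^0))\subseteq L_0(\Om^1,\cF^1_{t_\ell},\P^1)$, so choosing $\eta_\ell^{1,n}\in\C_0(\eta_\ell^{0,n})$ that is $\cF^1_{t_\ell}$-measurable and applying Theorem \ref{theorem:properties_C_T}(1) (linearity) and (3) (the Borel-function, in fact product, property) termwise, one obtains that the simple process $K^{1,n}_t := \sum_\ell \eta_\ell^{1,n} 1_{(t_\ell,t_{\ell+1}]}(t)$ is a $\cP^1$-measurable representative satisfying $\int_0^T K^{1,n}_t dW^1_{t,j}\in\C_0\big(\int_0^T K^{0,n}_t dW^0_{t,j}\big)$; moreover $K^{1,n}\in\C_T(K^{0,n})$ by the same termwise argument together with Theorem \ref{theorem:properties_C_T}(6). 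Now I would use that $\C_0$ is an isometry for $d^0,d^1$ (Theorem \ref{theorem:properties_C_T}(1)) and, more usefully, that it preserves the $L_2$-norm (since it preserves the law, Theorem \ref{theorem:properties_C_T}(2)), so that
\[
   \Big\| \int_0^T K^{1,n}_t dW^1_{t,j} - \int_0^T K^{1,m}_t dW^1_{t,j} \Big\|_{L_2(\P^1)}
   = \Big\| \int_0^T K^{0,n}_t dW^0_{t,j} - \int_0^T K^{0,m}_t dW^0_{t,j} \Big\|_{L_2(\P^0)} \to 0
\]
as $n,m\to\infty$ by the It\^o isometry on $\Om^0$. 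Hence $\big(\int_0^T K^{1,n}_t dW^1_{t,j}\big)_n$ is Cauchy in $L_2(\P^1)$; since $K^{1,n}\to K^1$ in $L_2(\Om_T^1)$ (again because $\C_T$ is an isometry and $K^{1,n}\in\C_T(K^{0,n})$, $K^1\in\C_T(K^0)$), the It\^o isometry on $\Om^1$ identifies the limit as $\int_0^T K^1_t dW^1_{t,j}$. Finally, passing to the limit in the relation $\int_0^T K^{1,n}_t dW^1_{t,j}\in\C_0\big(\int_0^T K^{0,n}_t dW^0_{t,j}\big)$ — which is legitimate because $\C_0$ is continuous (indeed isometric) on $L_0$, and $L_0$ with $d^0,d^1$ is complete — yields the claim.

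**Main obstacle.** The routine part is the termwise verification for simple integrands; the delicate point is the interplay of the three different topologies: $\C_0$ and $\C_T$ are defined and continuous on the $L_0$-spaces, but the convergence of the stochastic integrals is naturally controlled in $L_2$, while the independence of the choice of $\cP^1$-measurable representative $K^1\in\C_T(K^0)$ must be tracked (different representatives are $\P^1_T$-a.e. equal, so produce the same $L_2$-limit of integrals, hence the same equivalence class). One must also make sure the approximating sequence $K^{0,n}$ can be taken simple \emph{and} that the corresponding $K^{1,n}$ are genuinely $\cP^1$-measurable — this is exactly where Lemma \ref{lemma:transference_BM_and adaptedness} (adaptedness is transferred) and Theorem \ref{theorem:properties_C_T}(6) are used.
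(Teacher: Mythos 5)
Your proposal is correct and follows essentially the same route as the paper: approximate $K^0$ by simple predictable processes, transfer each elementary integral termwise via the linearity and product properties of $\C_0$ together with Lemma \ref{lemma:transference_BM_and adaptedness}, identify $K^{1,L}\in\C_T(K^{0,L})$ via Theorem \ref{theorem:properties_C_T}(6), and pass to the limit in $L_2$ using that $\C_0$ and $\C_T$ preserve distributions. Your extra remarks on the Cauchy argument in $L_2(\P^1)$ and the identification of the limit by the It\^o isometry only make explicit what the paper leaves implicit.
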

\medskip

\begin{proof}
Let $L \ge 1$, $0=t_0^L < \dots < t_L^L = T$, and $(\varphi_l^{0,L})_{l =1,\dots,L}$
such that $\varphi_l^{0,L}\in \cL_2(\Om^0,\cF_{t_{l-1}^L}^0)$,
and $K^{0,L}_t:= \sum_{l=1}^L \varphi_l^{0,L} 1_{(t_{l-1}^L,t_{l}^L]}(t)$ such that
 \[ \E^0 \int_0^T |K_t^0-K_t^{0,L}|^2 dt \to 0
    \sptext{1}{as}{1} L\to\infty, \]
see \cite[Lemma 3.2.4]{Karatzas:Shreve:91}.
Using \st{Proposition \ref{proposition:properties_C_T}} and Lemma \ref{lemma:transference_BM_and adaptedness}, letting
$\varphi_l^{1,L} \in \C_0(\varphi^{0,L}_l)$
and $K^{1,L}_t:= \sum_{l=1}^L \varphi_l^{1,L} 1_{(t_{l-1}^L,t_{l}^L]}(t)$, we get
\equa
      \C_0\left( \int_0^T K_t^0 dW_{t,j}^0 \right)
& = & \lim_{L \to \infty} \C_0\left( \sum_{l=1}^L \varphi^{0,L}_l(W_{t_l^L,j}^0-W_{t_{l-1}^L,j}^0) \right) \\
&\ni& \lim_{L \to \infty} \sum_{l=1}^L \varphi^{1,L}_l (W_{t_l^L,j}^1-W_{t_{l-1}^L,j}^1) \\
& = &  \int_0^T K^1_t dW_{t,j}^1
\tion
where the limits are taken in $L_2(\Om^1)$ and $K^1$ is a
$\mathcal{P}^1$-measu\-rable process that satisfies
$\E^1 \int_0^T |K_t^1 - K_t^{1,L} |^2 dt \to_L 0$.
Because of \st{Proposition \ref{proposition:properties_C_T}(7)} we have $K^{1,L}\in \C_T(K^{0,L})$ so that 
$K^1\in \C_T(K^0)$ as well.
\end{proof}

For integers $N,d \ge 1$ and $(\Omega,\F,\P,W)$ being one of the quadruples 
$(\Omega^i,\F^i,\P^i,W^i)$ we consider
\begin{equation}\label{eqn:gBSDE}
 L_t = \xi  + \int_t^T f(s,K_s) ds
	    - \sum_{j=1}^d \int_t^T g_{j}(s,K_s)dW_{s,j}
\end{equation}
where
\begin{enumerate}[$(S1)$]
\item $\xi\in \cL_0(\Om)$,
\item $f$ and $g_{j}$
      are $(\mathcal{P},\mathcal{B}(C(\R^N)))$-measurable,
\item $L=(L_t)_{t\in [0,T]}$, $L_t:\Omega\to \R$, is continuous and
      $\F$-adapted,
\item $K=(K_t)_{t\in [0.T]}$, $K_t:\Omega\to \R^N$, is
      $\cP$-measurable,
\item $ \E\int_0^T
        \left [   | f(t,K_t) |
            + | g(t,K_t) |^2 \right ] dt < \infty$,
\item $(\xi,f,g,K,L,W)$ satisfies (\ref{eqn:gBSDE}) for
      $t \in [0,T]$  $\P$-a.s.
\end{enumerate}
\index{coditions!(S1), (S2),\ldots}
\smallskip

Our main technical result is:
\smallskip

\begin{theorem}
\label{theorem:change_multi_SDE}
Assume that
$(\xi^0,f^0,g^0,K^0,L^0,W^0)$ satisfies $(S1)$-$(S6)$.
Let $\xi^1 \in \C_0(\xi^0)$,
$f^1 \in \C_T^{\R^N}(f^0)$ and
$g_{j}^1 \in \C_T^{\R^{N}}(g^0_{j})$ be
$(\mathcal{P}^1,\mathcal{B}(C(\R^N)))$\st{-measurable},
$L^1\in\C_0^{[0,T]}(L^0)$ be $\F^1$-adapted
and $K^1_l \in \C_T(K^0_l)$ be $\cP^1$-measurable for $l=1,...,N$.
Then $(\xi^1,f^1,g^1,K^1,L^1,W^1)$ satisfies conditions $(S1)$-$(S6)$.
\end{theorem}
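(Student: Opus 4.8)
The plan is to verify conditions (C1)--(C6) for $(\xi^1,f^1,g^1,K^1,L^1,W^1)$ in turn, with (C6) the substantial point. Conditions (C1)--(C4) are immediate from the hypotheses: a representative of $\C_0(\xi^0)$ is a random variable on $\Om^1$; $f^1$ and $g^1_j$ are assumed $(\cP^1,\mathcal B(C(\R^N)))$-measurable; the representatives of $\C_0^{[0,T]}(L^0)$ are path-wise continuous by the very definition of the $C([0,T])$-valued operator, and $L^1$ is assumed $\F^1$-adapted; and $K^1$ is assumed $\cP^1$-measurable. The key input for the remaining two conditions is Theorem \ref{theorem:properties_CTM}(1) applied with $M=\R^N$, which gives
\[ (f^1(t,K^1_t))_{t\in[0,T]}\in\C_T\big((f^0(t,K^0_t))_{t\in[0,T]}\big)
   \sptext{1}{and}{1}
   (g^1_j(t,K^1_t))_{t\in[0,T]}\in\C_T\big((g^0_j(t,K^0_t))_{t\in[0,T]}\big), \]
where the compositions are $\cP^i$-measurable by joint measurability of $(t,\om,x)\mapsto f^i(t,\om,x)$ and $(t,\om,x)\mapsto g^i_j(t,\om,x)$.

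For (C5) I fix $L\in(0,\infty)$ and apply Theorem \ref{theorem:properties_C_T}(4) to the consistent functional (see Remark \ref{remark:consistent_new}(2))
\[ \Phi(h_0,h_1,\dots,h_d):=\int_0^T\Big[\Big(|h_0(s)|+\sum_{j=1}^d h_j(s)^2\Big)\wedge L\Big]\,ds \]
together with the above memberships, obtaining
\[ \int_0^T\Big[\big(|f^1(s,K^1_s)|+|g^1(s,K^1_s)|^2\big)\wedge L\Big]\,ds
   \ \in\ \C_0\Big(\int_0^T\Big[\big(|f^0(s,K^0_s)|+|g^0(s,K^0_s)|^2\big)\wedge L\Big]\,ds\Big). \]
Since $\C_0$ preserves distributions (Theorem \ref{theorem:properties_C_T}(1)), taking expectations and letting $L\uparrow\infty$ by monotone convergence yields
\[ \E^1\int_0^T\big[|f^1(s,K^1_s)|+|g^1(s,K^1_s)|^2\big]\,ds
 = \E^0\int_0^T\big[|f^0(s,K^0_s)|+|g^0(s,K^0_s)|^2\big]\,ds<\infty, \]
which is (C5); in particular $\int_0^T|f^1(s,K^1_s)|\,ds<\infty$ $\P^1$-a.s.\ and $g^1_j(\cdot,K^1_\cdot)\in\cL_2(\Om_T^1,\cP^1)$.

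For (C6) I introduce the path-wise continuous processes
\[ R^i_t:=\xi^i+\int_t^T f^i(s,K^i_s)\,ds-\sum_{j=1}^d\int_t^T g^i_j(s,K^i_s)\,dW^i_{s,j},\qquad i=0,1, \]
with continuous versions of the stochastic integrals, so that $R^0$ is indistinguishable from $L^0$ by (C6) on $\Om^0$. The core claim is that $R^1(t)\in\C_0(R^0(t))$ for every fixed $t\in[0,T]$, which I obtain, via linearity of $\C_0$ (Theorem \ref{theorem:properties_C_T}(1)), from three ingredients: (i) $\xi^1\in\C_0(\xi^0)$ by hypothesis; (ii) $\int_t^T f^1(s,K^1_s)\,ds\in\C_0(\int_t^T f^0(s,K^0_s)\,ds)$, by applying Theorem \ref{theorem:properties_C_T}(4) to the consistent functional $h\mapsto\int_0^T h(s)1_{(t,T]}(s)\,\chi_{\{\int_0^T|h(s)|1_{(t,T]}(s)\,ds<\infty\}}\,ds$, which (thanks to the integrability from (C5)) agrees $\P^i$-a.s.\ with $\int_t^T\cdot\,ds$; and (iii) $\int_t^T g^1_j(s,K^1_s)\,dW^1_{s,j}\in\C_0(\int_t^T g^0_j(s,K^0_s)\,dW^0_{s,j})$, by Lemma \ref{lemma:change_Ito} applied to the predictable, square-integrable integrand $g^0_j(\cdot,K^0_\cdot)1_{(t,T]}$, using that its $\C_T$-image is $g^1_j(\cdot,K^1_\cdot)1_{(t,T]}$ (deterministic processes being invariant under $\C_T$, so that Theorem \ref{theorem:properties_C_T}(3) applies to the product). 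Since $R^0(t)=L^0(t)$ $\P^0$-a.s.\ for every $t$, this gives $R^1(t)\in\C_0(L^0(t))$ for all $t$, so the continuous process $R^1$, viewed as a $C([0,T])$-valued random variable, satisfies the defining property of $\C_0^{[0,T]}(L^0)$ (Definition \ref{definition:cont_extension} and Lemma \ref{lemma:unique_extension_X_i}, together with the remark following Theorem \ref{theorem:properties_CTM} identifying path-wise continuous processes with $C([0,T])$-valued random variables). Since $L^1\in\C_0^{[0,T]}(L^0)$ by hypothesis and $\C_0^{[0,T]}(L^0)$ is a single equivalence class, $L^1$ and $R^1$ are indistinguishable on $\Om^1$, which is exactly (C6).

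I expect ingredient (iii) of the (C6)-argument to be the main obstacle. It is the only step involving a functional that is not consistent --- the Itô integral depends on the driving Brownian motion --- so it genuinely requires Lemma \ref{lemma:change_Ito}; in addition one must check that truncating the integrand to $(t,T]$ keeps it predictable and square-integrable and commutes with the decoupling operators, and then assemble the fixed-$t$ identities into an identity of continuous processes by way of the $C([0,T])$-valued reformulation and the uniqueness statement for $\C_0^{[0,T]}$.
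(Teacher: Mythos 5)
Your proof is correct and follows essentially the same route as the paper's: Theorem \ref{theorem:properties_CTM}(1) for the compositions $\phi^i(t,K^i_t)$ with $\phi\in\{f,g_j\}$, Remark \ref{remark:consistent_new}(2) for (C5) and for the Lebesgue-integral term, and Lemma \ref{lemma:change_Ito} for the stochastic integrals, assembled term by term for each fixed $t$ via linearity of $\C_0$. The only difference is that you spell out details the paper leaves implicit, namely the truncation of the integrand to $(t,T]$ and the passage from the fixed-$t$ identities to indistinguishability of the continuous processes through the uniqueness of the class $\C_0^{[0,T]}(L^0)$.
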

\smallskip

\begin{proof}
The existence of suitable measurable representatives can be deduced from a combination of 
Lemma \ref{lemma:transference_BM_and adaptedness} and
\st{Propositions \ref{prop:properties_C_T_measurability} and \ref{prop:properties_CTM}}.
Using \st{Proposition \ref{prop:properties_CTM}(1)} we have for
$\phi\in\{ f,g_j \}$ that
$(\phi^1(t,K^1_t))_{t\in [0,T]}\in \C_T \left ( (\phi^0(t,K^0_t))_{t\in [0,T]}\right )$.
Continuing with Remark \ref{remark:consistent_new}(2) yields that condition (S5) is satisfied for
$f^1(t,K^1_t)$ and $g^1(t,K^1_t)$. For a fixed $t\in [0,T]$ we have
\[      \C_0(L^0_t)
   =  \C_0(\xi^0)  + \C_0\left( \int_t^T f^0(s,K^0_s) ds \right)
     - \sum_{j=1}^d \C_0 \left( \int_t^T g_{j}^0(s,K^0_s) dW_{s,j}^{0} \right). \]
Using Remark \ref{remark:consistent_new}(2) with $\phi(t,x) = x$, we have
 \[  \int_t^T f^1 \big (s,K^1_s \big ) ds
     \in \C_0\left( \int_t^T f^0(s,K^0_s) ds \right)
 \]
for all $t \in [0,T]$.
Similarly Lemma \ref{lemma:change_Ito} gives, for $t \in [0,T]$,
 \[ \int_t^T g_{j}^1(s,K^1_s) dW_{s,j}^{1}
     \in \C_0 \left( \int_t^T g_{j}^0(s,K^0_s) dW_{s,j}^{0} \right). \qedhere
 \]
\end{proof}

Later, in our application we need that certain properties of the gene\-rator transfer. 
For this purpose we use the following 
\medskip

\begin{remark}
\label{rem:quantitative_bounds_transformed_generator}
Assume that $h^0:\Omega^0_T\to C(\R^N)$ is $(\cP^0,\cB(C(\R^N)))$-measurable and $h^1 \in \C^{\R^N}_T(h^0)$
is $(\cP^1,\cB(C(\R^N)))$-measurable. Then the following holds:
\begin{enumerate}[{(1)}]
\item $h^0(\cdot,\cdot,0) \stackrel{d}{=} h^1(\cdot,\cdot,0)$ with respect to
      $\lambda\times\P^0$ and $\lambda\times\P^1$.
\item Given a continuous $H:\R^N\times \R^N \to [0,\infty)$ such that, for all $(t,\omega^0,x_0,x_1)$,
      \[ |h^0(t,\omega^0,x_0) - h^0(t,\omega^0,x_1)| \le H(x_0,x_1), \]
     then we can choose $h^1$ such that,  for all $(t,\omega^1,x_0,x_1)$,
     \[ |h^1(t,\omega^1,x_0) - h^1(t,\omega^1,x_1)| \le H(x_0,x_1). \]
\end{enumerate}
\end{remark}

\begin{proof} (1) follows from Definition \ref{definition:cont_extension} and 
\st{Proposition \ref{proposition:properties_C_T}}.
\medskip

(2) Given $x \in \R^N$, we have by construction $h^1(x) \in \C_T(h^0(x))$, so that 
\[ (h^1(x_0),h^1(x_1))\stackrel{d}{=}(h^0(x_0),h^0(x_1))
   \sptext{1}{for all}{1} x_0,x_1\in \R^N. \]
This implies that 
\[    \|h^1(x_0)-h^1(x_1) \|_{L_\infty(\Omega^1_T)}
   =  \|h^0(x_0)-h^0(x_1) \|_{L_\infty(\Omega^0_T)} 
 \le  H(x_0,x_1). \]
\pagebreak
Hence, letting
\equa
&   & \Om_{T,0}^1 \\
&:= & \{ (t,\om^1) \in [0,T]\times\Omega^1:
      |h^1(t,\om^1,x_0)-h^1(t,\om^1,x_1)| 
      \le H(x_0,x_1) \\
&   & \hspace*{18em} \mbox{ for all } x_0,x_1\in\R^N \} \\
& = & \{ (t,\om^1) \in [0,T]\times\Omega^1:
      |h^1(t,\om^1,x_0)-h^1(t,\om^1,x_1)| 
      \le H(x_0,x_1) \\
&   & \hspace*{18em} \mbox{ for all } x_0,x_1\in\Q^N \},
\tion
we have that $\Om_{T,0}^1 \in \cP^1$ and $\P^1_T(\Om_{T,0}^1)=1$. Setting
\[ \tilde h^1 := \chi_{\Om_{T,0}^1} h^1 \in \C_T^{\R^N}(h^0), \]
we obtain a $(\cP^1,\cB(C(\R^N)))$-measurable map as desired.
\end{proof}

%%%%%%%%%%%%%%%%%%%%%%%%%%%%%%%%%%%%%%%%%%%%%%%%%%%%%%%%%%%%%%%%%%%%%%%%%%%%%%%%%%%%%%%%%%%%%
%%%%%%%%%%%%%%%%%%%%%%%%%%%%%%%%%%%%%%%%%%%%%%%%%%%%%%%%%%%%%%%%%%%%%%%%%%%%%%%%%%%%%%%%%%%%%

\chapter{Anisotropic Besov Spaces on the Wiener Space}
\label{chapter:Besov_spaces}

In this chapter we introduce anisotropic Besov spaces on the Wiener space by the decoupling
method from Chapter \ref{chapter:general_factorization}. The spaces are
designed such that non-linear conditional expectations, that are generated by BSDEs, map these
spaces into itself (see Chapter \ref{chapter:BSDE}). This fact will provide variational estimates
for solutions to BSDEs. Our approach to define
anisotropic Besov spaces is very flexible as it allows different types of spaces, including the
classical spaces obtained by the real interpolation method.

%%%%%%%%%%%%%%%%%%%%%%%%%%%%%%%%%%%%%%%%%%%%%%%%%%%%%%%%%%%%%%%%%%%%%%%%%%%%%%%%%%%%%%%%%%%%%%%

\section{Classical Besov spaces on the Wiener space} 

In this section we introduce the classical Besov spaces on the Wiener space obtained by the real
interpolation method. To do so we first recall the real interpolation method \st{and the concept 
of Banach space valued random variables.}

\subsection{Real interpolation method}
For detailed information about the real interpolation method the reader is referred (for example) to 
the monographs
\cite{Bennett:Sharpley:88},
\cite{Bergh:Loefstroem:76}, or
\cite{Triebel:78}.
To define the method in the general context, we say that two Banach spaces $(E_0,E_1)$ form a 
{\em compatible couple} 
\index{compatible couple of Banach spaces} 
provided that there is a \st{Banach space $X$} 
such that $E_0$ and $E_1$ are continuously embedded into $X$. \st{By this assumption we can define
$E_0 + E_1 := \{ x = x_0 + x_1, x_0\in E_0, x_1 \in E_1 \}$, where the sum is taken in $X$. Afterwards, 
$X$ can be taken to be $E_0+E_1$ if
\[ \| x \|_{E_0+E_1} := \inf \{ \|x_0\|_{E_0} + \|x_1 \|_{E_1} : x = x_0 + x_1, x_i\in E_i \}, \]
see \cite[Lemma 2.3.1]{Bergh:Loefstroem:76}.}
Assuming \st{additionally} that $E_1$ is continuously embedded into $E_0$,
which is our typical case later, we can take $X=E_0$ itself.

\begin{definition}
\index{$K$-functional}
Given a compatible couple $(E_0,E_1)$ of Banach spaces and $x\in E_0+E_1$ and $t>0$, we define the 
$K$-functional 
\index{$K$-functional}
\index{$K(x,t;E_0,E_1)$}
\[ K(x,t;E_0,E_1)
   := \inf \{ \| x_0\|_{E_0} + t \| x_1\|_{E_1} : x = x_0+ x_1, x_i \in E_i \}. \]
For $\theta\in (0,1)$ and $q\in [1,\infty]$ we let $(E_0,E_1)_{\theta,q}$ be the real interpolation 
space
\index{space!real interpolation space}
\index{space!$(E_0,E_1)_{\theta,q}$}
of all $x \in E_0+ E_1$ such that
\[         \| x\|_{(E_0,E_1)_{\theta,q}} :=
         \left \| t^{-\theta}  K(x,t;E_0,E_1) \right \|_{L_q\left ((0,\infty),\frac{dt}{t} \right )}< \infty. \]
\end{definition}
\medskip

\st{To explain the role of the parameters $(\theta,q)$ let us begin with some properties of the real 
interpolation method:}

\pagebreak

\begin{proposition}[\st{{\cite[Section 1.3.3]{Triebel:78}}}]
\label{proposition:properties_real_interpolation}
Let $(E_0,E_1)$ be a compatible couple of Banach spaces,  $\theta\in (0,1)$, and $q\in [1,\infty]$.
Then one has the following:
\begin{enumerate}
\item $(E_0,E_1)_{\theta,q} = (E_1,E_0)_{1-\theta,q}$ for  $\theta\in (0,1)$ and $q\in [1,\infty]$.
\item $(E_0,E_1)_{\theta,q_0} \subseteq (E_0,E_1)_{\theta,q_1}$ for
      $\theta\in (0,1)$ and $\st{1}\le q_0\le q_1 \le \infty$.
\item If $E_1$ is continuously embedded into $E_0$, then
       \[ (E_0,E_1)_{\theta_0,q_0} \subseteq (E_0,E_1)_{\theta_1,q_1}
          \sptext{1}{for}{1}
          0<\theta_1 < \theta_0 <1 \sptext{.7}{and}{.7} q_0,q_1 \in [1,\infty]. \]
\end{enumerate}
\end{proposition}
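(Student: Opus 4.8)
The plan is to establish the three parts in order, using only elementary properties of the $K$-functional: for fixed $x\in E_0+E_1$ the function $t\mapsto K(x,t;E_0,E_1)$ is non-negative and non-decreasing, one has the trivial bounds $K(x,t;E_0,E_1)\le \|x\|_{E_0}$ (when $x\in E_0$) and $K(x,t;E_0,E_1)\le t\|x\|_{E_1}$ (when $x\in E_1$), and the scaling identity
\[ K(x,t;E_0,E_1)=t\,K(x,t^{-1};E_1,E_0),\qquad t>0, \]
which follows at once from the definition by pulling $t$ out of the infimum.

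Part (1) is purely formal. Substituting the scaling identity into the defining norm and then changing variables $s=t^{-1}$ in the integral (so that $\frac{ds}{s}=-\frac{dt}{t}$ and $t^{1-\theta}=s^{-(1-\theta)}$) turns $\|t^{-\theta}K(x,t;E_0,E_1)\|_{L_q((0,\infty),dt/t)}$ into $\|s^{-(1-\theta)}K(x,s;E_1,E_0)\|_{L_q((0,\infty),ds/s)}$. No estimate is used, so the two norms are literally equal.

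For part (2) fix $x$ and put $g(t):=t^{-\theta}K(x,t;E_0,E_1)$; we may assume $1\le q_0<q_1\le\infty$, the remaining cases being trivial. First treat $q_1=\infty$: for each $t>0$ and $s\in(t,2t)$, monotonicity of $K$ together with $\theta>0$ gives $g(s)=s^{-\theta}K(x,s)\ge (2t)^{-\theta}K(x,t)=2^{-\theta}g(t)$, whence $\|g\|_{L_{q_0}(dt/t)}^{q_0}\ge \int_t^{2t}g(s)^{q_0}\frac{ds}{s}\ge 2^{-\theta q_0}(\log 2)\,g(t)^{q_0}$; taking the supremum over $t$ yields $\|g\|_{L_\infty}\le 2^{\theta}(\log 2)^{-1/q_0}\|g\|_{L_{q_0}(dt/t)}$. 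For $q_1<\infty$ one then writes pointwise $g^{q_1}=g^{q_1-q_0}g^{q_0}\le \|g\|_{L_\infty}^{q_1-q_0}g^{q_0}$, integrates against $\frac{dt}{t}$, and inserts the bound on $\|g\|_{L_\infty}$ just obtained. This gives the continuous embedding, with a constant depending only on $\theta$, $q_0$, $q_1$.

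Part (3) is where the hypothesis $E_1\hookrightarrow E_0$ really enters; say $\|y\|_{E_0}\le c\|y\|_{E_1}$ for $y\in E_1$, so that $E_0+E_1=E_0$ and every interpolation space in sight is a subspace of $E_0$. The key observation is that for any decomposition $x=x_0+x_1$ and any $t>0$,
\[ \|x\|_{E_0}\le \|x_0\|_{E_0}+c\|x_1\|_{E_1}\le \max(1,c/t)\,(\|x_0\|_{E_0}+t\|x_1\|_{E_1}), \]
so passing to the infimum and choosing $t=c$ gives $\|x\|_{E_0}\le K(x,c;E_0,E_1)$, and hence $K(x,t;E_0,E_1)\le \|x\|_{E_0}\le K(x,c;E_0,E_1)$ for all $t$. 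Now reduce to the case $q_0=\infty$, $q_1=1$ by applying part (2) at both ends: $(E_0,E_1)_{\theta_0,q_0}\subseteq(E_0,E_1)_{\theta_0,\infty}$ and $(E_0,E_1)_{\theta_1,1}\subseteq(E_0,E_1)_{\theta_1,q_1}$. If $x\in(E_0,E_1)_{\theta_0,\infty}$ with $M:=\|x\|_{(E_0,E_1)_{\theta_0,\infty}}$, then $K(x,t;E_0,E_1)\le M t^{\theta_0}$ for all $t$, and in particular $\|x\|_{E_0}\le K(x,c;E_0,E_1)\le M c^{\theta_0}$. Splitting $\int_0^\infty t^{-\theta_1}K(x,t;E_0,E_1)\frac{dt}{t}$ at $t=1$: on $(0,1)$ use $K(x,t)\le M t^{\theta_0}$, giving $\int_0^1 M t^{\theta_0-\theta_1-1}\,dt=\frac{M}{\theta_0-\theta_1}$ (finite since $\theta_0>\theta_1$); on $(1,\infty)$ use $K(x,t)\le \|x\|_{E_0}\le M c^{\theta_0}$, giving $M c^{\theta_0}\int_1^\infty t^{-\theta_1-1}\,dt=\frac{M c^{\theta_0}}{\theta_1}$ (finite since $\theta_1>0$). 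Thus $\|x\|_{(E_0,E_1)_{\theta_1,1}}\le M\left(\frac{1}{\theta_0-\theta_1}+\frac{c^{\theta_0}}{\theta_1}\right)$, and composing the three embeddings finishes the proof. The \emph{main obstacle} is precisely this last step: controlling $K(x,t;E_0,E_1)$ for large $t$, where the embedding $E_1\hookrightarrow E_0$ is needed to dominate $K(x,t)$ by a fixed multiple of the $(\theta_0,\infty)$-seminorm, so that the otherwise wrong-direction move in the fine parameter from $\infty$ down to $1$ is paid for by the positive gap $\theta_0-\theta_1$.
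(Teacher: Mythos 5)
Your proof is correct. The paper states this proposition without proof, citing the standard monographs on real interpolation; your argument --- the scaling identity $K(x,t;E_0,E_1)=t\,K(x,t^{-1};E_1,E_0)$ for (1), the near-constancy of $t^{-\theta}K(x,t;E_0,E_1)$ on dyadic intervals giving the $L_{q_0}\to L_\infty\to L_{q_1}$ chain for (2), and the two-sided bound $K(x,t;E_0,E_1)\le\|x\|_{E_0}\le K(x,c;E_0,E_1)$ controlling the large-$t$ tail in (3) --- is exactly the standard one found there, and every estimate checks out.
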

\medskip

\st{If we assume that $E_0=L_p$ and that $E_1 \subseteq L_p$ is a subspace that describes certain regularity properties
with $\|\cdot\|_p \le \|\cdot \|_{E_1}$, then we are in the position of Proposition \ref{proposition:properties_real_interpolation}(3).
The parameter $\theta$ becomes the main regularity parameter, and for a fixed $\theta$, the parameter $q$ becomes another regularity parameter, 
that can be interpreted as a fine-tuning parameter. The ordering in Proposition \ref{proposition:properties_real_interpolation}(3) 
is also called {\em lexicographical ordering}.
}
\smallskip

\st{\subsection{Banach space valued random variables}
Given a separable Banach space $X$ and a probability space $(\Omega,\cF,\P)$, a map $F:\Omega \to X$ is 
measurable if it is measurable with respect to $(\cF,\cB(X))$, where  $\cB(X)$ is the Borel $\sigma$-algebra
generated by the norm open sets in $X$. For $p\in (0,\infty]$ we define
\equa
\| F \|_{L_p^X}    &= & \| F \|_{L_p^X(\Omega)} := \big \| \| F \|_X \big \|_p,\\
 \cL_p^X(\Omega) &:= & \{ F:\Omega \to X \mbox{ measurable},  \| F \|_{L_p^X} < \infty \},
\tion
and let $L_p^X(\Omega)$ be the corresponding space of equivalence classes where we identify random variables $F,G:\Omega\to X$
whenever $\P(F=G)=1$.
\index{space!$\cL_p^X$}
\index{space!$L_p^X$}
}

\smallskip
\subsection{Besov spaces on the abstract Wiener space}
We assume a separable Hilbert space $H$, a complete probability space $(\Omega,\cF,\P)$,  and an iso-normal 
family of Gaussian random variables $(g_h)_{h\in H}$, $g_h: \Omega\to \R$, i.e.  
\[ \E g_h = 0 \sptext{1}{and}{1} 
   \E g_h g_k = \langle h,k \rangle
    \sptext{.7}{for all}{.7}
    h,k \in H. \] 
For $\alpha_1,...,\alpha_n\in \R$ and $h_1,...,h_n\in H$ this implies 
\[   \alpha_1 g_{h_1} + \cdots + \alpha_n g_{h_n} \
   = g_{\alpha_1 h_1 + \cdots \alpha_n h_n} \mbox{ a.s.}, \]
that means that $(g_h)_{h\in H}$ is a Gaussian process. W.l.o.g. we may assume that
$\cF$ is the completion of $\sigma( g_h : h\in H)$. 
Let $(\he_n)_{n=0}^\infty$ be the normalised {\em Hermite-polynomials}, 
\index{Hermite polynomial}
i.e. $\he_n: \R\to \R$ with
$\he_0\equiv 1$ and 
\[ \he_n(x) := (-1)^n \frac{1}{\sqrt{n!}} e^{\frac{x^2}{2}} \frac{d^n}{d x^n} e^{-\frac{x^2}{2}}
   \sptext{1}{for}{1} n\ge 1. \]
Letting $\gamma_N$ be the standard Gaussian measure on $\R^N$, the Hermite polynomials form an orthogonal basis
in $L_2(\R,\gamma_1)$. Now we are in a position to define the Wiener chaos:

\begin{definition}
\index{Wiener chaos}
Let $(e_k)_{k\in I}\subseteq H$ be an orthogonal basis of $H$. Given $n\ge 1$, the space 
(of equivalence classes)
\[ \cH_n := \overline{\myspan \left \{ \prod_{k\in I} \he_{n_k}(g_{e_k}): \sum_{k\in I} n_k = n \right \}}
   \subseteq L_2, \]
where the closure is taken in $L_2$,
is the $n$-th Wiener chaos. For $n=0$ we let
$\cH_0$ be space of all equivalence classes that contain a constant.
\end{definition}
\bigskip

The space $\cH_n$ does not depend on the choice of the orthogonal basis $(e_k)_{k\in I}\subseteq H$. Moreover, 
one has the fundamental {\em Wiener chaos expansion}
\index{Wiener chaos expansion}
\[ L_2(\Omega,\cF,\P) = \oplus_{n=0}^\infty \cH_n, \] 
in particular the spaces $\cH_n$ and $\cH_m$  are orthogonal for $n\not = m$.
Letting
\[ \st{P_n} : L_2 \to \cH_n \subseteq L_2 \]
be the orthogonal projection onto the $n$-th chaos, we define \st{the Hilbert space}
\[ \D_{1,2} := \left \{ \xi\in L_2: \| \xi\|_{\D_{1,2}}^2:=\sum_{n=0}^\infty (n + 1) \| \st{P_n} \xi \|_2^2 < \infty \right \}. \]
As Malliavin derivative we take
$D: \D_{1,2}\to \st{L_2^H}$ with
\index{Malliavin derivative}
\[   D \left ( \prod_{k\in I} \he_{n_k}(g_{e_k}) \right )
   := \sum_{l\in I} \prod_{k\not = l} \he_{n_k}(g_{e_k}) h'_{n_l}(g_{e_l}) e_l. \] 
\st{By definition the elements of $\D_{1,2}$ are equivalence classes from $L_2$,
$D$ is defined on equivalence classes and maps to equivalence classes in $L_2^H$. 
When needed, we interpret $DF$ as an element of $\cL_2^H(\Omega)$ or of $\cL_2^{\R^d}(\Omega\times [0,T])$ if 
$H=L_2^{\R^d}([0,T])$.} It is known that
\[ Df(g_{h_1},...,g_{h_n}) = \sum_{k=1}^n \frac{\partial f}{\partial x_k}(g_{h_1},...,g_{h_n}) h_k \]
for (say) $f\in C_b^\infty(\R^n)$ and $h_1,...,h_n\in H$.
If $p\in (2,\infty)$, then we let
\[ \D_{1,p} := \{ \st{\xi}\in \D_{1,2}: \|\st{\xi} \|_{\D_{1,p}}^p:=\| \st{\xi}\|_p^p+ \|D\st{\xi}\|^p_\st{L_p^H} < \infty \} 
   \index{space!$\D_{1,p}$}, \]
which is consistent with the case $p=2$. 
\st{The spaces $\D_{1,p}$ are known to be  Banach spaces (as $p\in[2,\infty)$ one can use the completeness of $\D_{1,2}$ and $L_p$, and 
Fatou's lemma).} Moreover, we set
\begin{equation}\label{eqn:classical_Besov_spaces}
   \B_{p,q}^\theta := (L_p,\D_{1,p})_{\theta,q}. 
   \index{space!$B_{p,q}^\theta$} 
\end{equation}
In the case $\dim(H)=\st{n}$ we identify $L_2$ with $L_2(\R^{\st{n}},\mathcal{B}(\R^\st{n}),\gamma_{\st{n}})$ and use the family
$ g_{(\xi_1,...,\xi_{\st{n}})} : \R^\st{n} \to \R$ given by
\[ g_{(\xi_1,...,\xi_\st{n})}(x_1,...,x_\st{n}) := \xi_1 x_1 + \cdots + \xi_\st{n} x_\st{n}. \]
We denote these particular Besov spaces by  $\B_{p,q}^\theta (\R^\st{n},\gamma_\st{n})$.
\index{space!$\B_{p,q}^\theta (\R^\st{n},\gamma_\st{n})$}
To motivate the decoupling method and the corresponding Besov spaces introduced in Sections
\ref{sec:Besov:setting} and \ref{sec:Besov:definition} below, \st{we describe the spaces 
$\B_{p,q}^\theta (\R^\st{n},\gamma_\st{n})$ by decoupling:}
\medskip

\begin{theorem}[{\cite[Theorem 3.1]{Geiss:Toivola:14}}] 
\label{thm:BesovSpacesNormEquiv}
Let $p\in [2,\infty)$, $\theta\in (0,1)$, $q\in [1,\infty]$\st{,} and $f\in L_p(\R^\st{n},\gamma_\st{n})$. Then
\begin{equation}\label{eqn:thm:BesovSpacesNormEquiv}
  \| f \|_{\B_{p,q}^\theta(\R^\st{n},\gamma_\st{n})} 
 \sim_{c_{(\ref{thm:BesovSpacesNormEquiv})}} 
    \| f \|_p +
        \left \| (1-t)^{-\frac{\theta}{2}} \left \| \st{f(g) - f(t g + \sqrt{1-t^2} g')} \right \|_p 
          \right \|_{L_q\left ([0,1),\frac{dt}{1-t} \right )} 
\end{equation}
where $c_{(\ref{thm:BesovSpacesNormEquiv})} \geq 1$ depends uniquely on $(p,\theta,q)$, and \st{$g$ and $g'$ 
are independent $\R^\st{n}$-valued random variables with law $\gamma_\st{n}$.}
\end{theorem}	
\medskip

\begin{proof}
\st{To derive our formulation from that one in \cite{Geiss:Toivola:14} we consider an $\st{n}$-dimensional Brownian motion
$(W_t)_{t\in [0,1]}$ with respect to a filtration  $(\cF_t)_{t\in [0,1]}$
and notice (cf. Lemma \ref{lemma:exchange-conditional_expectation_new} below) that}
\begin{equation}\label{eqn:Geiss-Toivola_by_decoupling}
   \|f(W_1) - \E(f(W_1)|\cF_t)      \|_p \sim_2 
   \|f(W_1) -    f(W_t+[W'_1-W'_t]) \|_p,
\end{equation}
where $(W'_t)_{t\in [0,1]}$ is an independent copy of  $(W_t)_{t\in [0,1]}$. \st{If we set,}
\[ W_1^{(t,1]}:= W_t+[W'_1-W'_t], \]
\st{then we obtain from \cite[Theorem 3.1]{Geiss:Toivola:14} and \eqref{eqn:Geiss-Toivola_by_decoupling} 
the equivalence}
\begin{equation}\label{eqn:Besov_decoupling}
  \| f \|_{\B_{p,q}^\theta(\R^\st{n},\gamma_\st{n})} 
 \sim
    \| f \|_p +
        \left \| (1-t)^{-\frac{\theta}{2}} \left \| f(W_1)- f(W_1^{(t,1]}) \right \|_p 
          \right \|_{L_q\left ([0,1),\frac{dt}{1-t} \right )}.
\end{equation}
\st{Finally, because $(W_1,W_1^{(t,1]})$ and $(g,t g + \sqrt{1-t^2} g')$ have the same law,
we conclude the proof.}
\end{proof}

\st{Theorem \ref{thm:BesovSpacesNormEquiv}} generalizes results from \cite{Geiss:Hujo:07}.
\st{In the formulation of Theorem \ref{thm:BesovSpacesNormEquiv} we use an \emph{isotropic} decoupling,
which means that the Gaussian structure $g$ is uniformly replaced by $t g + \sqrt{1-t^2} g'$.
Instead, the right-hand side of \eqref{eqn:Besov_decoupling} uses an {\em anisotropic} decoupling
in the larger Wiener space based on the Brownian motion $(W_s)_{s\in [0,1]}$ as the replacement of
$(W_s)_{s\in [0,1]}$ is $(W_s^{(t,1]})_{s\in [0,1]}$,
i.e. only part of the Gaussian structure is decoupled. This anisotropic decoupling in \eqref{eqn:Besov_decoupling}
is the key idea of \cite{GGG:12} to obtain estimates for the variation of BSDEs, an isotropic decoupling  
in the larger Wiener space could not be used in this context as explained in Remark \ref{remark:reason_for_an-isotropic_BSDEs} below.
}

%%%%%%%%%%%%%%%%%%%%%%%%%%%%%%%%%%%%%%%%%%%%%%%%%%%%%%%%%%%%%%%%%%%%%%%%%%%%%%%%%%%%%%%%%%%%%%%

\section{Setting}
\label{sec:Besov:setting} 

For $d\ge 1$ and $T>0$ we fix two standard $d$-dimensional Brownian motions $W=(W_t)_{t\in [0,T]}$ and
$W'=(W'_t)_{t\in [0,T]}$, where all paths are assumed to be continuous \st{with $W_0\equiv 0$ and 
$W'_0\equiv 0$, that are} defined on complete probability spaces
$(\Om,\cF,\P)$ and $(\Om',\cF',\P')$, where $\cF$ and $\cF'$ are the completions
of $\sigma(W_t:t\in [0,T])$ and  $\sigma(W'_t:t\in [0,T])$, respectively. We let
\[ \overline{\Omega} := \Omega\times\Omega', \quad
   \overline{\P} := \P\times \P', \quad
   \overline{\cF} := \overline{\cF\otimes\cF'}^{\overline{\P}} \]
and extend the Brownian motions $W$ and $W'$
canonically to $\Omega\times\Omega'$.
Given a measurable function $\vph:(0,T]\to [0,1]$, we let
\begin{equation}\label{eqn:W_vph}
 W^\vph_t: = \int_0^t [1-\vph(u)^2]^\frac{1}{2} dW_u + \int_0^t \vph(u) dW'_u
\end{equation}
and \st{again assume} continuity for all trajectories and that $W_0^\vph\equiv 0$.
For example, for $0\le a < b \le T$, this definition yields to
\[ W_t^{\chi_{(a,b]}} = \left \{ \begin{array}{rcl}
                        W_t &:& 0\le t \le a \\
                        W_a+W'_t - W'_a &:& a\le t \le b \\
                        W_a + (W'_b-W'_a)+(W_t-W_b) &:& b\le t \le T
                        \end{array} \right . \overline{\P}\mbox{-a.s.}. \]
The process $W^\vph$ is a standard Brownian motion and  $(\cF_t^\vph)_{t \in [0,T]}$ will
denote its $\overline{\P}$-augmented natural filtration, i.e.
\[ \cF_t^\vph := \sigma(W_s^\vph: s\in [0,t]) \vee \overline{\cN}, \]
where $\overline{\cN}$ are the $\overline{\P}$-null-sets from $\overline{\cF}$.
Identifying $a\in [0,1]$ with the function $\vph:(0,T]\to [0,1]$ that is constant $a$, we agree to take the versions
\[ W^0 = W \sptext{1}{and}{1}
   W^1 = W'. \]
To apply the results from Chapter \ref{chapter:transference_sde} we use the pairing between
\[ (\overline{\Omega},\cF^0,\overline{\P},\F^0,W^0,\cB^0)
   \sptext{1}{and}{1}
  (\overline{\Omega},\cF^\vph,\overline{\P},\F^\vph,W^\vph,\cB^\vph), \]
where
$\cF^\psi := \cF^\psi_T$,
$\F^\psi =(\cF_t^\psi)_{t\in [0,T]}$,
and $\cB^\psi$ is defined like in (\ref{eqn:B})
for $\psi\in \{ 0,\varphi \}$.
The corresponding operators $\C_S$ and $\C_S^M$ from Definitions \ref{definition:C_S} and \ref{definition:cont_extension}
are denoted by $\C_S(\varphi)$ and $\C_S^M(\varphi)$, respectively.

\begin{convention}
\label{convention:extension_new}
\hspace*{0em}
\begin{enumerate}[{\rm (1)}]
\item If needed, we extend a random variable $\xi:\Omega \to\R$ to 
      $\widetilde {\xi} :\overline{\Omega}\to \R$ by
      $\widetilde {\xi} (\omega,\omega') := \xi(\omega)$. The extension $\widetilde {\xi}$
      is measurable with respect to $\cF^0$. In this sense we can apply the operator
      \[ \C_0(\vph) : L_0(\overline{\Omega}, \cF^0) \to L_0(\overline{\Omega}, \cF^\vph) \]
      to $\xi$.
      To simplify the notation, $\widetilde{\xi}$ will be usually denoted by $\xi$ as well. 
\item \st{For a random variable $\xi:\Omega\to\R$} we denote by  $\xi^\vph$ the elements of $\C_0(\vph)(\xi)$ 
      and, for $0\le a < b \le T$, by $\xi^{(a,b]}$ the random variable $\xi^{\chi_{(a,b]}}$, i.e.
      \[ \xi^\vph \in \C_0(\vph)(\xi)
         \sptext{1}{and}{1}
         \xi^{(a,b]} := \xi^{\chi_{(a,b]}}. \]
      Because of Lemma \ref{lemma:transference_BM_and adaptedness} this notation is consistent with the definition from 
      {\rm (\ref{eqn:W_vph})}.
\end{enumerate}
\end{convention}

%%%%%%%%%%%%%%%%%%%%%%%%%%%%%%%%%%%%%%%%%%%%%%%%%%%%%%%%%%%%%%%%%%%%%%%%%%%%%%%%%%%%%%%%%%%%%%%

\section{Definition of anisotropic Besov spaces}
\label{sec:Besov:definition}

We start by defining the parameter space
\[ \cD := \{ \psi \in \cL_2((0,T]) : 0 \le \psi \le 1 \} \]
equipped with the pseudo-metric
\footnote{\st{Here we only have a pseudo-metric as we do not work with the equivalence classes.}}
\[ \delta(\vph,\psi) := \| \vph-\psi\|_{L_2((0,T])}. \]
\index{space!$(\cD,\delta)$}
To define our Besov spaces we need some preparations.
\medskip

\begin{lemma}\label{lemma:L_2-continuity}
\hspace*{0em}
\begin{enumerate}
\item For $\vph,\psi\in \cD$, $k\ge 0$, and $i\in \{1,...,d\}$ one
      has that
      \[ \E | g_{k,i}^\vph - g_{k,i}^\psi |^2
	\le 2 \| h_k \|_\infty^2 \int_0^T |\varphi(t)^2 - \psi(t)^2| dt. \]
\item \st{If} $\vph_n,\vph\in \cD$ are such that $\lim_n \delta(\vph_n,\vph)=0$, then
      \[ \lim_n \E | g_{k,i}^{\vph_n} - g_{k,i}^\vph |^2 = 0. \]
\end{enumerate}
\end{lemma}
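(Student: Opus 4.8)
The plan is to estimate $\E|g_{k,i}^\vph - g_{k,i}^\psi|^2$ directly from the definitions. Recall from \eqref{eqn:W_vph} that $W^\vph_t = \int_0^t [1-\vph(u)^2]^{1/2}\,dW_u + \int_0^t \vph(u)\,dW'_u$, and that $g_{k,i}^\vph = \int_0^T h_k(t)\,\langle e_i, dW^\vph_t\rangle$. Plugging in the definition of $W^\vph$ and using that the $i$-th coordinates of $W$ and $W'$ are one-dimensional Brownian motions (with $W, W'$ independent on $\overline\Omega$), I would write
\[
   g_{k,i}^\vph = \int_0^T h_k(t)[1-\vph(t)^2]^{1/2}\,dW_{t,i} + \int_0^T h_k(t)\vph(t)\,dW'_{t,i},
\]
and likewise for $\psi$. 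Subtracting, the difference is a sum of two independent stochastic integrals (one against $W_{\cdot,i}$, one against $W'_{\cdot,i}$), so by the Itô isometry and independence,
\[
   \E|g_{k,i}^\vph - g_{k,i}^\psi|^2 = \int_0^T h_k(t)^2\Big([1-\vph(t)^2]^{1/2} - [1-\psi(t)^2]^{1/2}\Big)^2 dt + \int_0^T h_k(t)^2\big(\vph(t) - \psi(t)\big)^2 dt.
\]

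The main point is then a pointwise elementary inequality: for $a,b\in[0,1]$ one has both $\big([1-a^2]^{1/2}-[1-b^2]^{1/2}\big)^2 \le |a^2 - b^2|$ and $(a-b)^2 \le |a^2-b^2|$. The second is immediate since $(a-b)^2 \le |a-b|\,|a+b| = |a^2-b^2|$. For the first, I would use that $s\mapsto \sqrt s$ is $1$-Lipschitz... no — more carefully, $\big|\sqrt{1-a^2}-\sqrt{1-b^2}\big| = \frac{|(1-a^2)-(1-b^2)|}{\sqrt{1-a^2}+\sqrt{1-b^2}}$ is not obviously bounded by $\sqrt{|a^2-b^2|}$ near $a,b\to 1$; instead I would square and note $\big(\sqrt{1-a^2}-\sqrt{1-b^2}\big)^2 \le \big(\sqrt{1-a^2}-\sqrt{1-b^2}\big)\big(\sqrt{1-a^2}+\sqrt{1-b^2}\big) = |a^2-b^2|$, using that the difference and the sum have the same sign and that $\sqrt{1-a^2}+\sqrt{1-b^2}\ge |\sqrt{1-a^2}-\sqrt{1-b^2}|$. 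Bounding $h_k(t)^2 \le \|h_k\|_\infty^2$ and combining the two integrals gives exactly
\[
   \E|g_{k,i}^\vph - g_{k,i}^\psi|^2 \le 2\|h_k\|_\infty^2 \int_0^T |\vph(t)^2 - \psi(t)^2|\,dt,
\]
which is the claimed estimate.

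For the consequence, suppose $\lim_n \delta(\vph_n,\vph) = \lim_n \|\vph_n - \vph\|_{L_2((0,T])} = 0$. Since $0\le \vph_n,\vph\le 1$, we have $|\vph_n(t)^2 - \vph(t)^2| = |\vph_n(t)-\vph(t)|\,|\vph_n(t)+\vph(t)| \le 2|\vph_n(t)-\vph(t)|$, so by Cauchy–Schwarz $\int_0^T |\vph_n^2 - \vph^2|\,dt \le 2\sqrt T\,\|\vph_n-\vph\|_{L_2((0,T])} \to 0$, and the first part finishes the argument. The only mildly delicate step is the pointwise square-root inequality above; everything else is routine stochastic calculus (Itô isometry plus independence of $W$ and $W'$) and Cauchy–Schwarz.
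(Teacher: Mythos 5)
Your proof is correct and follows essentially the same route as the paper: the identical It\^o-isometry decomposition into the two independent integrals against $W$ and $W'$, followed by the elementary bounds $\big(\sqrt{1-a^2}-\sqrt{1-b^2}\big)^2\le |a^2-b^2|$ and $(a-b)^2\le|a^2-b^2|$ for $a,b\in[0,1]$. The only (cosmetic) difference is in the second assertion, where you conclude via Cauchy--Schwarz from $|\vph_n^2-\vph^2|\le 2|\vph_n-\vph|$, whereas the paper passes through convergence in probability and bounded convergence; both are fine.
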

\medskip

\begin{proof}
\st{(1)} Starting from the corresponding definitions we get
\equa
&   & \E | g_{k,i}^\vph - g_{k,i}^\psi |^2 \\
& = & \E \left |   \int_0^T h_k(t) \langle e_i , dW_t^\vph \rangle
                 - \int_0^T h_k(t) \langle e_i , dW_t^\psi \rangle \right |^2 \\
& = & \E \bigg |   \int_0^T h_k(t) \sqrt{1-\vph(t)^2} \langle e_i , dW_t \rangle
                 + \int_0^T h_k(t) \vph(t)            \langle e_i , dW'_t \rangle \\
&   &            - \int_0^T h_k(t) \sqrt{1-\psi(t)^2} \langle e_i , dW_t \rangle
                 - \int_0^T h_k(t) \psi(t)            \langle e_i , dW'_t \rangle       \bigg |^2 \\
& = &    \int_0^T h_k(t)^2 \left [ \sqrt{1-\vph(t)^2} - \sqrt{1-\psi(t)^2} \right ]^2 dt \\
&   & +  \int_0^T h_k(t)^2 \left [ \vph(t)            - \psi(t)            \right ]^2 dt \\
&\le&    \int_0^T h_k(t)^2 |\vph(t)^2 -\psi(t)^2 | dt
      +  \int_0^T h_k(t)^2 \left [ \vph(t)            - \psi(t)            \right ]^2 dt.
\tion
\st{(2)} If we assume that $\lim_n \delta(\vph_n,\vph)=0$, then $\vph_n\to \vph$ in probability
with respect to the normalized Lebesgue measure on $[0,T]$ and therefore
$|\vph_n^2 - \vph^2|\to 0$ in probability as well. The
boundedness $|\vph_n(t)|\le 1$ and $|\vph(t)|\le 1$ yields to
$\lim_n \int_0^T |\varphi(t)^2 - \varphi_n(t)^2| dt =0$ and we can apply part \st{(1)}.
\end{proof}

\begin{lemma}\label{lemma:exchange_continuity}
Let $p\in (0,\infty)$ and $\xi\in \cL_p\probsp$. Then $\delta(\vph_n,\vph)\to_n 0$ implies that
\[ \lim_n \| \xi^{\vph_n} - \xi^\vph \|_p = 0. \]
\end{lemma}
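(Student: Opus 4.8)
The plan is to use the operator $\C_0(\vph)$ introduced in Definition~\ref{definition:C_S}, together with its basic properties from Theorem~\ref{theorem:properties_C_T}, to reduce the claim to a continuity statement about the generating random variables $g_{k,i}^\vph$, which is exactly what Lemma~\ref{lemma:L_2-continuity} provides. First I would recall that, by Convention~\ref{convention:extension_new}, $\xi^\vph\in \C_0(\vph)(\xi)$ and that, since $\C_0(\vph)$ is a linear isometry (Theorem~\ref{theorem:properties_C_T}(1)) for the metric $d_S$ of \eqref{eqn:metric}, it preserves convergence in probability; moreover by Theorem~\ref{theorem:properties_C_T}(2) the law of $\xi^\vph$ equals the law of $\xi$, so that $\|\xi^{\vph_n}-\xi^\vph\|_p$ is a genuine $L_p$-quantity with $\xi^{\vph_n},\xi^\vph$ having common $L_p$-norm $\|\xi\|_p<\infty$.

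Next, I would reduce to the case where $\xi$ is a bounded continuous function of finitely many of the $g_{k,i}^0$. Indeed, given $\vare>0$, one can choose $\xi_\vare = G(g_{k,i}^0 : (k,i)\in F)$ for a finite index set $F$ and a bounded Lipschitz $G$ such that $\|\xi-\xi_\vare\|_p<\vare$; this is possible because the $g_{k,i}^0$ generate $\cF^0$ (up to null sets) and bounded cylindrical functions are dense in $L_p$. Applying the isometry and Theorem~\ref{theorem:properties_C_T}(3) (Borel functions commute with $\C_0(\vph)$), one gets $\xi_\vare^\vph = G(g_{k,i}^\vph : (k,i)\in F)$ up to a null set, and
\[
\|\xi^{\vph_n}-\xi^\vph\|_p
\le \|\xi^{\vph_n}-\xi_\vare^{\vph_n}\|_p + \|\xi_\vare^{\vph_n}-\xi_\vare^\vph\|_p + \|\xi_\vare^\vph-\xi^\vph\|_p
= \|\xi_\vare^{\vph_n}-\xi_\vare^\vph\|_p + 2\vare,
\]
the outer terms being controlled by the isometry applied to $\xi-\xi_\vare$ (more precisely: $\C_0(\vph_n)$ preserves the law of $\xi-\xi_\vare$ whose $L_p$-norm is $<\vare$, and similarly for $\vph$).

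For the middle term, Lemma~\ref{lemma:L_2-continuity} gives $\E|g_{k,i}^{\vph_n}-g_{k,i}^\vph|^2\to 0$ for each $(k,i)\in F$, hence convergence in probability of the finite vector $(g_{k,i}^{\vph_n})_{(k,i)\in F}$ to $(g_{k,i}^\vph)_{(k,i)\in F}$; by continuity of $G$ this yields $\xi_\vare^{\vph_n}\to \xi_\vare^\vph$ in probability, and by boundedness of $G$ and dominated convergence $\|\xi_\vare^{\vph_n}-\xi_\vare^\vph\|_p\to 0$. Thus $\limsup_n \|\xi^{\vph_n}-\xi^\vph\|_p\le 2\vare$ for every $\vare>0$, which proves the claim. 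The main obstacle I anticipate is the bookkeeping around null sets and equivalence classes when identifying $\xi_\vare^\vph$ with $G(g_{k,i}^\vph:(k,i)\in F)$: one has to be careful that the ``Borel function'' property of $\C_0(\vph)$ in Theorem~\ref{theorem:properties_C_T}(3) is applied to a fixed measurable representative, and that the approximation $\xi_\vare$ and its $\C_0(\vph)$-image are compatible across all $\vph_n$ simultaneously — but this is routine given the linearity and isometry of $\C_0(\vph)$.
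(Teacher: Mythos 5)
Your proposal is correct and follows essentially the same route as the paper: approximate $\xi$ by a bounded continuous cylindrical function of finitely many generators, control the outer terms of the triangle inequality via the distribution-invariance of $\C_0(\vph)$, and handle the middle term by Lemma~\ref{lemma:L_2-continuity} together with continuity and dominated convergence (the paper merely splits the approximation into a truncation step followed by a cylindrical-approximation step). The only point to tidy up is that for $p\in(0,1)$ the triangle inequality for $\|\cdot\|_p$ holds only up to a constant $c_p$, which is why the paper writes $\frac{1}{c_p}\|\xi^{\vph_n}-\xi^\vph\|_p$ on the left; this does not affect the conclusion.
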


\begin{proof}
(a) Assume that $\xi$ is bounded. Given $\vare>0$ we find
$N\ge 1$, $f\in C_b(\R^N)$, and $(\gamma_i)_{i=1}^N\subset \cB^0$
such that $\| \xi - f(\gamma_1,...,\gamma_N) \|_p < \vare$. Then, by \st{Proposition \ref{proposition:properties_C_T}},
\equa
&   &  \frac{1}{c_p} \| \xi^{\vph_n} - \xi^{\vph} \|_p \\
&\le& \| \xi^{\vph_n} - f(\gamma_1^{\vph_n},...,\gamma_N^{\vph_n}) \|_p
      + \| f(\gamma_1^{\vph_n},...,\gamma_N^{\vph_n}) - f(\gamma_1^{\vph},...,\gamma_N^{\vph}) \|_p \\
&   & + \| f(\gamma_1^{\vph},...,\gamma_N^{\vph}) - \xi^{\vph} \|_p \\
&\le& 2 \vare + \| f(\gamma_1^{\vph_n},...,\gamma_N^{\vph_n}) - f(\gamma_1^{\vph},...,\gamma_N^{\vph}) \|_p.
\tion
We can conclude by
$\lim_n \| f(\gamma_1^{\vph_n},...,\gamma_N^{\vph_n}) - f(\gamma_1^{\vph},...,\gamma_N^{\vph}) \|_p =0$ which
follows by Lemma \ref{lemma:L_2-continuity}.
\medskip

(b) Assuming a general $\xi\in \cL_p$, we let $\xi^L := (-L)\vee \xi \wedge L$ for $L>0$ and obtain,
again by \st{Proposition \ref{proposition:properties_C_T}},
\equa
&   & \frac{1}{c_p} \| \xi^{\vph_n} - \xi^\vph \|_p \\
&\le& \| \xi^{\vph_n}  - (\xi^L)^{\vph_n} \|_p +  \| (\xi^{L})^{\vph_n} - (\xi^L)^\vph \|_p
                          +  \| (\xi^L)^\vph  - \xi^\vph \|_p \\
& = & 2 \| \xi - \xi^L \|_p +  \| (\xi^L)^{\vph_n} - (\xi^L)^\vph \|_p.
\tion
Given $\vare>0$ we find an $L>0$ such that  $2 \| \xi - \xi^L \|_p\le \vare $, so that
\[ \frac{1}{c_p} \limsup_n \| \xi^{\vph_n} - \xi^\vph \|_p
   \le \vare +  \lim_n \| (\xi^L)^{\vph_n} - (\xi^L)^\vph \|_p
   \le \vare. \]
Because $\vare>0$ was arbitrary, $\lim_n \| \xi^{\vph_n} - \xi^\vph \|_p=0$.
\end{proof}
As a trivial by-product we get that $\xi^\vph=\xi^\psi$ $\overline{\P}$-a.s. if $\vph=\psi$ a.e. Now it is convenient to turn $\cD$ into a complete separable metric space. 
\bigskip

\begin{definition}
\index{space!$(\Delta,\delta)$}
We define the metric space $(\ws,\delta)$ as the equivalence classes of 
the pseudo-metric space $(\cD,\delta)$ with
\[ \cD = \{ \psi \in \cL_2((0,T]) : 0 \le \psi \le 1 \} 
   \sptext{1}{and}{1}
   \delta(\vph,\psi) = \| \vph-\psi\|_{L_2((0,T])}. \]
\end{definition}
\bigskip

Fixing $p\in (0,\infty)$ and $\xi\in\cL_p\probsp$, we obtain a well-defined map
\[ F_{\xi,p}:\ws\to [0,\infty)
   \sptext{1}{by}{1}
   \vph\to \| \xi - \xi^\vph\|_p. \]
Directly from Lemma \ref{lemma:exchange_continuity} we get

\begin{lemma}
For $p\in (0,\infty)$ and $\xi\in\cL_p\probsp$ the map $F_{\xi,p}:\ws\to [0,\infty)$
is continuous.
\end{lemma}

\begin{proof}
For $p\in [1,\infty)$ and $\vph_n\to\vph$ we get that
\[
       \left |  \| \xi - \xi^{\vph_n} \|_p -  \| \xi - \xi^\vph \|_p \right |
  \le  \| \xi^{\vph_n} - \xi^{\vph} \|_p
  \to 0 \]
as $n\to \infty$. In the case $p\in (0,1)$ we use
\[     \bet \E \left [ |\xi^{\vph_n} - \xi|^p -  |\xi^{\vph} - \xi|^p \right ] \rag
   \le \E |\xi^{\vph_n} - \xi^{\vph}|^p.\qedhere \]

\end{proof}

\medskip

\begin{definition}\label{definition:admissible}
\index{functional!admissible}
\index{coditions!(A1), (A2),\ldots}
Let $C^+(\ws)$ be the space of all non-negative continuous functions $F:\ws\to [0,\infty)$.
A functional $\Phi: C^+(\ws)\to [0,\infty]$ is called {\em admissible} provided that
\begin{enumerate}[{\rm({A}1)}]
\item $\Phi (F + G) \le \Phi(F) + \Phi(G)$,
\item $\Phi(\lambda F) = \lambda\Phi(F)$ for $\lambda\ge 0$,
\item $\Phi(F) \le \Phi(G)$ for $0\le F\le G$,
\item $\Phi(F) \le \limsup_n \Phi(F_n)$ for $\sup_{\vph\in \ws}|F_n(\vph)-F(\vph)|\to_n 0$.
\end{enumerate}
\end{definition}

\begin{example}
\label{example:admissible_weight}
Let $A\subseteq \Delta$ be non-empty and let $\alpha:A\to (0,\infty)$ be an arbitrary weight. Then
the functional
\[ \Phi(F) := \sup_{\vph\in A} \frac{F(\vph)}{\alpha(\vph)} \]
is admissible. As (A1)-(A3) are obvious, we only check (A4).
From $F(\vph) \le \limsup_n F_n(\vph)$ we complete the proof by
\[ \frac{F(\vph)}{\alpha(\vph)} \le  \limsup_n \left [ \sup_{\psi\in A} \frac{F_n(\psi)}{\alpha(\psi)} \right ]. \]
\end{example}
 
\begin{definition}
\label{definition:generalised_Besov_space}
\index{space!$\B_p^\Phi$}
For $p\in (0,\infty)$, $\xi\in L_p(\Omega)$, and an admissible $\Phi:C^+(\ws)\to [0,\infty]$
we let $\xi \in \B_p^\Phi$ provided that $\Phi (\vph \to \| \xi - \xi^\vph\|_p) < \infty$ and set
\[    \| \xi \|_{\B_p^\Phi}
   := \left [ \E|\xi |^p +\| \xi\|_{\Phi,p}^p \right ]^\frac{1}{p}
  \sptext{1}{with}{1}
  \| \xi\|_{\Phi,p} :=  \Phi (\vph \to \| \xi - \xi^\vph\|_p ). \]
\end{definition}
\medskip

\begin{proposition}
For $p\in [1,\infty)$ the  space  $\B_p^\Phi$ is a Banach space.
\end{proposition}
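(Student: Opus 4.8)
The plan is to verify the two standard Banach-space axioms: that $\|\cdot\|_{\B_p^\Phi}$ is indeed a norm, and that $\B_p^\Phi$ is complete. For the norm properties, note that $\xi\mapsto\|\xi\|_p$ is already a norm on $L_p(\Omega)$ for $p\in[1,\infty)$, so the work is in the seminorm $\|\cdot\|_{\Phi,p}$. Homogeneity follows from $(\lambda\xi)^\vph=\lambda\xi^\vph$ (a consequence of linearity of $\C_0(\vph)$ in Theorem \ref{theorem:properties_C_T}(1)) together with axiom $(C2)$ of admissibility. The triangle inequality follows because, for fixed $\vph$, one has $\|(\xi+\eta)-(\xi+\eta)^\vph\|_p\le\|\xi-\xi^\vph\|_p+\|\eta-\eta^\vph\|_p$ pointwise in $\vph$ (again using linearity of $\C_0(\vph)$ and the triangle inequality in $L_p$), so by monotonicity $(C3)$ and subadditivity $(C1)$ of $\Phi$ we get $\|\xi+\eta\|_{\Phi,p}\le\|\xi\|_{\Phi,p}+\|\eta\|_{\Phi,p}$; combining with Minkowski in $L_p$ (applied to the $\ell_p$-sum of the two terms $\E|\cdot|^p$ and $\|\cdot\|_{\Phi,p}^p$) yields the triangle inequality for $\|\cdot\|_{\B_p^\Phi}$. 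Definiteness is immediate: $\|\xi\|_{\B_p^\Phi}=0$ forces $\E|\xi|^p=0$, hence $\xi=0$ a.s.

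For completeness, I would take a Cauchy sequence $(\xi_n)$ in $\B_p^\Phi$. Since $\|\xi_n-\xi_m\|_p\le\|\xi_n-\xi_m\|_{\B_p^\Phi}$, the sequence is Cauchy in $L_p(\Omega)$, so there is a limit $\xi\in L_p(\Omega)$ with $\|\xi_n-\xi\|_p\to0$. It remains to show $\xi\in\B_p^\Phi$ and $\|\xi_n-\xi\|_{\B_p^\Phi}\to0$; for this it suffices to show $\|\xi_n-\xi\|_{\Phi,p}\to0$. Fix $\vare>0$ and choose $N$ so that $\|\xi_n-\xi_m\|_{\Phi,p}<\vare$ for $n,m\ge N$. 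For fixed $n\ge N$ consider the function $G_m:\vph\mapsto\|(\xi_n-\xi_m)-(\xi_n-\xi_m)^\vph\|_p$ and the candidate limit $G:\vph\mapsto\|(\xi_n-\xi)-(\xi_n-\xi)^\vph\|_p$. The key point is that $\sup_{\vph\in\ws}|G_m(\vph)-G(\vph)|\to0$ as $m\to\infty$: by the reverse triangle inequality $|G_m(\vph)-G(\vph)|\le\|(\xi_m-\xi)-(\xi_m-\xi)^\vph\|_p\le\|\xi_m-\xi\|_p+\|(\xi_m-\xi)^\vph\|_p=2\|\xi_m-\xi\|_p$, using that $\C_0(\vph)$ is an $L_p$-isometry (Theorem \ref{theorem:properties_C_T}(1) gives distributional equality of $\eta$ and $\eta^\vph$, hence $\|\eta^\vph\|_p=\|\eta\|_p$). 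This bound is uniform in $\vph$ and tends to $0$. Now axiom $(C4)$ of admissibility gives $\Phi(G)\le\limsup_m\Phi(G_m)$, i.e. $\|\xi_n-\xi\|_{\Phi,p}\le\limsup_m\|\xi_n-\xi_m\|_{\Phi,p}\le\vare$ for all $n\ge N$. Hence $\|\xi_n-\xi\|_{\Phi,p}\to0$; in particular $\|\xi\|_{\Phi,p}\le\|\xi_N\|_{\Phi,p}+\vare<\infty$, so $\xi\in\B_p^\Phi$, and $\|\xi_n-\xi\|_{\B_p^\Phi}\to0$.

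I expect the main obstacle to be the careful bookkeeping around the decoupling operation — specifically, justifying that for a fixed $\vph\in\ws$ the map $\eta\mapsto\eta^\vph$ is a linear $L_p$-isometry and interacts correctly with the $L_p$-limit, so that the uniform-in-$\vph$ estimate feeding into $(C4)$ is legitimate. Both ingredients are already packaged in Theorem \ref{theorem:properties_C_T}: part (1) gives linearity and the isometry property for the metric $d_S$, and the distributional invariance there upgrades the isometry from $d_S$ to every $\|\cdot\|_p$. Once those are in hand, the admissibility axioms $(C1)$–$(C4)$ do exactly the work one needs — $(C1)$–$(C3)$ for the norm axioms and $(C4)$ for closing up the Cauchy argument — and there is no further analytic difficulty.
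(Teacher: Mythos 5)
Your proof is correct and follows essentially the same route as the paper: completeness of $L_p$ gives the candidate limit, the reverse triangle inequality together with the linearity and distributional invariance of $\eta\mapsto\eta^\vph$ yields the uniform bound $\sup_\vph|F_{\xi_n-\xi_m,p}(\vph)-F_{\xi_n-\xi,p}(\vph)|\le 2\|\xi-\xi_m\|_p$, and axiom $(C4)$ closes the Cauchy argument. The only difference is that you spell out the norm axioms, which the paper dismisses as easily verified.
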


\begin{proof}
The norm properties can be easily verified, we only verify the completeness.
Assume a Cauchy sequence $(\xi_n)_{n\ge 1}$, we obtain by the completeness of $L_p$ a
limit $\xi=\lim_n \xi_n$ in $L_p$. To show that the convergence takes place
in $\B_p^\Phi$, let $\vare>0$ and find $n_\vare\ge 1$ such that for all
$m,n\ge n_\vare$ we have that
$\|\xi_n - \xi_m \|_p^p + \Phi (F_{\xi_n-\xi_m,p})^p <\vare^p$ with
$F_{\xi,p}(\vph)= \| \xi-\xi^\vph\|_p$. For all $m,n\ge 1$ we have that
\equa
&   & |F_{\xi_n-\xi_m,p}(\vph)-F_{\xi_n-\xi,p}(\vph)| \\
& = & \bet \| \xi_n-\xi_m - (\xi_n-\xi_m)^\vph \|_p - \| \xi_n-\xi - (\xi_n-\xi)^\vph \|_p \rag \\
&\le& \| \xi -\xi_m - (\xi-\xi_m)^\vph \|_p  \\
&\le& 2 \| \xi -\xi_m \|_p,
\tion
so that assumption (A4) implies for $n\ge n_\vare$ that
\[     \|\xi_n - \xi \|_p^p + \Phi (F_{\xi_n-\xi,p})^p
   \le \lim_m   \|\xi_n - \xi_m \|_p^p + \limsup_m \Phi (F_{\xi_n-\xi_m,p})^p
   \le 2 \varepsilon^p. \qedhere\]
\end{proof}

%%%%%%%%%%%%%%%%%%%%%%%%%%%%%%%%%%%%%%%%%%%%%%%%%%%%%%%%%%%%%%%%%%%%%%%%%%%%%%%%%%%%%%%%%%%%%%%

\section{Connection to real interpolation}

Besov spaces (or fractional order Sobolev spaces) on the Wiener space were studied by various authors, 
see for example  \cite{Watanabe:93}, \cite{Hirsch:99}, \st{\cite[Chapter 8.6]{Bogachev:10},  and \cite{Geiss:Toivola:14}}. In this section we relate our 
\st{definition of Besov spaces to the}
classical Gaussian Besov spaces obtained by the real interpolation method.

\st{
\subsection{The isotropic case}
In our intuition a functional $\Phi:C^+(\Delta) \to [0,\infty]$ is {\em isotropic},
provided that $\Phi$ depends on the constant functions in $C^+(\Delta)$ only. 
Instead of giving a formal definition, we introduce a class of such functionals:}
\medskip

\st{
\begin{definition}
\label{definition:isotropic_Besov_conditional_expectations}
\index{functional!$\Phi^{(K,\mu,q)}$}
Let $\mu$ be a measure on $\cB([0,1])$, $q\in [1,\infty]$, and let $K:[0,1] \to [0,\infty)$ 
be measurable. Let $\vph_r:(0,T]\to \R$, $r\in [0,1]$, denote the constant function
$\vph_r \equiv r$. 
For $F\in C^+(\Delta)$ we define
\[    \Phi^{(K,\mu,q)}(F) 
   := \left \| K(\cdot)\, F(\vph_\cdot) \right \|_{L_q([0,1],\mu)}\in [0,\infty]. \]
\end{definition}
\bigskip
Recalling that 
$W^{\varphi_r}=\sqrt{1-r^2} W +r W'$, we use for any $\xi\in \cL_0(\Omega)$ the notation
$\xi( \sqrt{1-r^2} W +r W'):= \xi^{\varphi_r}$. For $q\in [1,\infty)$
Definition \ref{definition:isotropic_Besov_conditional_expectations} yields to
\[ \| \xi \|_{\B_p^{\Phi^{(K,\mu,q)}}} 
   = \left [ \| \xi\|_p^p 
     +  \left( \int_0^1 \left [ K(r)\| \xi(W)-\xi( \sqrt{1-r^2} W +r W') \|_p \right ]^q d\mu(r) \right )^\frac{p}{q} \right ]^\frac{1}{p}. \]
}
\smallskip

\begin{lemma}
\label{lemma:isotropic_example_is_admissible}
The functional $\Phi^{(\st{K,\mu},q)}$ satisfies 
the conditions {\rm (A1), (A2), (A3)},  and {\rm (A4)}.
\end{lemma}
\st{
\begin{proof}
From the definitions it follows that the map $r\mapsto F(\vph_r)$ is continuous so that
$\Phi^{(K,\mu,q)}(F)$ is well-defined. The assumptions (A1), (A2), and (A3) are immediate. To verify (A4),
we assume $F_n,F:\Delta\to [0,\infty)$ to be continuous with
\[  \sup_{\vph\in \ws}|F_n(\vph)-F(\vph)|\to_n 0. \]
Then (A4) follows from the Fatou property of $L_q([0,1],\mu)$, because
\equa
      \left \| K(\cdot) F(\vph_\cdot) \right \|_{L_q([0,1],\mu)}
& = & \left \| \lim_n K(\cdot) F_n(\vph_\cdot) \right \|_{L_q([0,1],\mu)}  \\
&\le& \liminf_n \left \| K(\cdot) F_n(\vph_\cdot) \right \|_{L_q([0,1],\mu)}.
\tion
\end{proof}

\begin{theorem}
\label{theorem:real_interpolation_Gaussian_Hilbert}
For $\theta\in (0,1)$, $q\in [1,\infty]$, and $p\in [2,\infty)$ one has that
\[        \| \xi \|_{\B_{p,q}^\theta}
   \sim_c \| \xi \|_{\B_p^{\Phi^{(K,\mu,q)}}}
   \]
with $d\mu (r) := \frac{r}{\sqrt{1-r^2}(1-\sqrt{1-r^2})} \chi_{(0,1)}(r) d r$ and 
$K(r) := (1-\sqrt{1-r^2})^{-\frac{\theta}{2}}\chi_{(0,1]}(r)$,
and where $c\ge 1$ depends uniquely on $(p,q,\theta)$.
\end{theorem}

\begin{proof}
After a change of variables the assertion is equivalent to 
\begin{equation}\label{eqn:Besov_decoupling_abstract}
  \| \xi \|_{\B_{p,q}^\theta} 
 \sim_c
    \| \xi \|_p +
        \left \| (1-t)^{-\frac{\theta}{2}} \left \| \xi(W) - \xi(tW+\sqrt{1-t^2}W') \right \|_p 
          \right \|_{L_q\left ([0,1),\frac{dt}{1-t} \right )},
\end{equation}
which is the general form of \eqref{eqn:thm:BesovSpacesNormEquiv}. Because the proof  of  \eqref{eqn:Besov_decoupling} in \cite{Geiss:Toivola:14}
relies on a finite-dimensional argument, we still need to verify \eqref{eqn:Besov_decoupling_abstract}. First we remark the crucial fact, that the multiplicative constant 
in  \eqref{eqn:thm:BesovSpacesNormEquiv} does not depend on the dimension $n$. 
We use the proof of Proposition \ref{prop:approximation-D1p} in the appendix with the supporting Hilbert space 
$H:=L_2^{\R^d}([0,T])$ and take the orthonormal basis from Section \ref{sec:setting_transference_sde}. We enumerate 
this tensor-basis and rename it to $(e_i)_{i=1}^\infty$. The $\sigma$-algebras $\cH_n$ 
are defined as in the proof of Proposition \ref{prop:approximation-D1p}.
We also set $\xi_n := \E(\xi| \cH_n)$ and observe the following:
\medskip

\begin{enumerate}
\item $|K(\xi,t;L_p,\D_{1,p})-K(\eta,t;L_p,\D_{1,p})| \le \| \xi-\eta \|_p$ for $\xi,\eta\in L_p$.
\item $\| (\xi_0)_n \|_p          \le \| \xi_0 \|_p$          for $\xi_0\in L_p$.
\item $\| (\xi_1)_n \|_{\D_{1,p}} \le \| \xi_1 \|_{\D_{1,p}}$ for $\xi_1\in \D_{1,p}$.
\end{enumerate}
\medskip

Assertions (2) and (3) give 
\[ K(\xi_n,t;L_p,\D_{1,p}) \le K(\xi_{n+1},t;L_p,\D_{1,p}) \le  K(\xi,t;L_p,\D_{1,p}). \]
Together with (1) we obtain 
\[  K(\xi_n,t;L_p,\D_{1,p}) \uparrow_n K(\xi,t;L_p,\D_{1,p}), \]
and finally $ \| \xi_n \|_{\B_{p,q}^\theta} \uparrow_n  \| \xi \|_{\B_{p,q}^\theta}$.
On the other side, for $t\in [0,1)$ one has
\begin{equation}\label{eqn:monotonicity}
       \| \xi_n    (W) -     \xi_n    (tW+\sqrt{1-t^2} W') \|_p
   \le \| \xi_{n+1}(W) - \xi_{n+1}(tW+\sqrt{1-t^2} W') \|_p 
\end{equation}
which can be verified as follows: By Doob's factorization theorem we may write
\[ \xi_n = f_n(g_{e_1},\ldots,g_{e_n}) \]
where $f_n:\R^n\to \R$ is a Borel function. 
Then we get (note that $(g_{e_k})_{k=1}^\infty$ are independent standard Gaussian random variables)
for an independent copy $(g'_{e_k})_{k=1}^\infty$ that
\equa
&   &  \| \xi_n(W) -     \xi_n    (tW+\sqrt{1-t^2} W') \|_p^p \\
& = &  \| f_n(g_{e_1},\ldots,g_{e_n}) -  f_n (t g_{e_1}+ \sqrt{1-t^2} g'_{e_1} ,\ldots,t g_{e_n} + \sqrt{1-t^2}g'_{e_n}) \|_p^p \\
& = &  \Big \|    \int_\R f_{n+1}(g_{e_1},\ldots,g_{e_n},\xi)d\gamma_1(\xi) 
         -  \int_\R \int_\R f_{n+1} (t g_{e_1}+ \sqrt{1-t^2} g'_{e_1} ,\ldots,\\
&   & \hspace*{9em} t g_{e_n} + \sqrt{1-t^2}g'_{e_n}, t \xi + \sqrt{1-t^2}\xi') 
                d\gamma_1(\xi) d\gamma_1(\xi') \Big \|_p^p \\
&\le& \int_\R \int_\R  \Big \| f_{n+1}(g_{e_1},\ldots,g_{e_n},\xi) 
         -  f_{n+1} (t g_{e_1}+ \sqrt{1-t^2} g'_{e_1} ,\ldots,\\
&   & \hspace*{9em} t g_{e_n} + \sqrt{1-t^2}g'_{e_n}, t \xi + \sqrt{1-t^2}\xi') 
      \Big \|_p^p d\gamma_1(\xi) d\gamma_1(\xi') \\
& = & \| \xi_{n+1}(W) -     \xi_{n+1}  (tW+\sqrt{1-t^2} W') \|_p^p.
\tion
This proves \eqref{eqn:monotonicity}. Moreover,
\equa
&   & \| [\xi_n(W) -     \xi_n    (tW+\sqrt{1-t^2} W')]- [\xi(W) -     \xi    (tW+\sqrt{1-t^2} W')]  \|_p \\
&\le& 2 \| \xi_n - \xi \|_p \to 0
\tion
as $n\to \infty$. Together with \eqref{eqn:monotonicity} this yields to
\[  \| \xi_n(W) -     \xi_n    (tW+\sqrt{1-t^2} W') \|_p \uparrow_n  \| \xi(W) -     \xi    (tW+\sqrt{1-t^2} W') \|_p \]
and
\begin{multline*}
   \left \| (1-t)^{-\frac{\theta}{2}} \left \| \xi_n(W) - \xi_n(tW+\sqrt{1-t^2}W') \right \|_p  \right \|_{L_q\left ([0,1),\frac{dt}{1-t} \right )} \\
   \uparrow_n 
       \left \| (1-t)^{-\frac{\theta}{2}} \left \| \xi(W) - \xi(tW+\sqrt{1-t^2}W') \right \|_p 
          \right \|_{L_q\left ([0,1),\frac{dt}{1-t} \right )}
\end{multline*}
which completes the proof because for $\xi_n$ the equivalence \eqref{eqn:Besov_decoupling_abstract}
was verified in  \eqref{eqn:thm:BesovSpacesNormEquiv}.
\end{proof}
\smallskip

\begin{remark}
There are other approaches to fractional smoothness on the Wiener space: One can use the Ornstein-Uhlenbeck 
semi-group  which also gives a link to Mehler's formula (see \cite{Hirsch:99}, \cite[Section 8.6]{Bogachev:10}, 
\cite[Section 6.7]{Bergh:Loefstroem:76}). For relations about this approach to 
Theorem \ref{theorem:real_interpolation_Gaussian_Hilbert} the reader is referred to \cite[Remark 3.5]{Geiss:Toivola:14}.
Another approach can be found in \cite[Theorem 13]{Hirsch:99}. It uses an isotropic decoupling as we do, is formulated 
by means of the trace interpolation method (cf. \cite[Section 1.8]{Triebel:78}), corresponds to the special choice  
$p=q$ in our setting, but yields to an alternative expression compared to Theorem \ref{theorem:real_interpolation_Gaussian_Hilbert}.
\end{remark}
}

\subsection{An anisotropic example}

\begin{definition}
\label{definition:anisotropic_Besov_conditional_expectations}
\index{functional!$\Phi_{r_1,...,r_L}^{(\theta_1,q_1),...,(\theta_L,q_L)}$}
For $0=r_0<r_1<\cdots r_L=T$, $\theta_l\in (0,1)$, $q_l\in [1,\infty]$, and
$F\in C^+(\ws)$ we let
\[        \Phi_{r_1,...,r_L}^{(\theta_1,q_1),...,(\theta_L,q_L)}(F)
   := \sup_{l=1,...,L} \left \| (r_l-t)^{-\theta_l/2} F(\chi_{(t,r_l]}) \right \|_{L_{q_l}([r_{l-1},r_l), \frac{dt}{r_l-t})}.
   \]
\end{definition}
\medskip

This functional is admissible:
\medskip

\begin{lemma}
The functional $\Phi_{r_1,...,r_L}^{(\theta_1,q_1),...,(\theta_L,q_L)}$ satisfies 
the conditions {\rm (A1), (A2), (A3)},  and {\rm (A4)}.
\end{lemma}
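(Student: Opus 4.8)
The plan is to verify the four admissibility conditions (C1)--(C4) directly from the definition of $\Phi_{r_1,\ldots,r_L}^{(\theta_1,q_1),\ldots,(\theta_L,q_L)}$, exploiting that this functional is built as a supremum over $l=1,\ldots,L$ of weighted $L_{q_l}$-norms of $t\mapsto F(\chi_{(t,r_l]})$ over $[r_{l-1},r_l)$ with respect to $\frac{dt}{r_l-t}$. Since a finite supremum of seminorms inherits sublinearity, positive homogeneity and monotonicity, conditions (C1)--(C3) are essentially formal.

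\textbf{Conditions (C1)--(C3).} For (C1), fix $l$ and note that the map $t\mapsto (r_l-t)^{-\theta_l/2}(F+G)(\chi_{(t,r_l]}) = (r_l-t)^{-\theta_l/2}F(\chi_{(t,r_l]}) + (r_l-t)^{-\theta_l/2}G(\chi_{(t,r_l]})$ has $L_{q_l}([r_{l-1},r_l),\frac{dt}{r_l-t})$-norm bounded by the sum of the two individual norms, by the triangle inequality in $L_{q_l}$; taking the supremum over $l$ on both sides and using that $\sup_l (a_l+b_l)\le \sup_l a_l + \sup_l b_l$ gives $\Phi(F+G)\le\Phi(F)+\Phi(G)$. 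For (C2), $(\lambda F)(\chi_{(t,r_l]}) = \lambda F(\chi_{(t,r_l]})$ pulls the factor $\lambda\ge 0$ out of each $L_{q_l}$-norm and then out of the supremum. For (C3), if $0\le F\le G$ pointwise on $\Delta$, then in particular $0\le F(\chi_{(t,r_l]})\le G(\chi_{(t,r_l]})$ for every $t$ and $l$, so each weighted $L_{q_l}$-norm is monotone, and hence so is the supremum.

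\textbf{Condition (C4).} This is the one requiring a small argument. Suppose $\sup_{\vph\in\Delta}|F_n(\vph)-F(\vph)|\to_n 0$; I must show $\Phi(F)\le\limsup_n\Phi(F_n)$. Fix $l\in\{1,\ldots,L\}$. Uniform convergence on $\Delta$ gives in particular that $F_n(\chi_{(t,r_l]})\to F(\chi_{(t,r_l]})$ uniformly in $t\in[r_{l-1},r_l)$, hence pointwise a.e.; by Fatou's lemma applied to the measure $(r_l-t)^{-\theta_l q_l/2}\,\frac{dt}{r_l-t}$ on $[r_{l-1},r_l)$ (or, in the case $q_l=\infty$, by the elementary fact that a $\liminf$ of a pointwise-convergent sequence does not exceed the $\liminf$ of the sup-norms) one obtains
\[
  \left\| (r_l-t)^{-\theta_l/2} F(\chi_{(t,r_l]}) \right\|_{L_{q_l}([r_{l-1},r_l),\frac{dt}{r_l-t})}
  \le \liminf_n \left\| (r_l-t)^{-\theta_l/2} F_n(\chi_{(t,r_l]}) \right\|_{L_{q_l}([r_{l-1},r_l),\frac{dt}{r_l-t})}
  \le \liminf_n \Phi(F_n).
\]
Taking the supremum over the finitely many $l$ yields $\Phi(F)\le\liminf_n\Phi(F_n)\le\limsup_n\Phi(F_n)$, which is (C4).

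\textbf{Expected main obstacle.} The only place where care is genuinely needed is (C4), and specifically the interchange of limit and integral: the weight $(r_l-t)^{-\theta_l q_l/2 - 1}$ is integrable near $t=r_l$ precisely because $\theta_l\in(0,1)$, so Fatou's lemma applies cleanly, but one should phrase the argument so that it also covers the endpoint case $q_l=\infty$ (where the ``norm'' is an essential supremum and the Fatou step is replaced by the trivial inequality $\operatorname{esssup}_t g(t)\le\liminf_n\operatorname{esssup}_t g_n(t)$ whenever $g_n\to g$ uniformly). Everything else is bookkeeping with suprema of seminorms.
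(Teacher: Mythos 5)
Your proof is correct and follows essentially the same route as the paper: (C1)--(C3) are formal seminorm properties, and (C4) is obtained from uniform convergence via Fatou's lemma in each $L_{q_l}$ followed by exchanging the finite supremum with the $\liminf$. One minor remark: your closing comment that Fatou applies ``because the weight is integrable near $t=r_l$'' is beside the point (and the weight $(r_l-t)^{-\theta_l q_l/2-1}$ is in fact not integrable there) --- Fatou's lemma for nonnegative functions needs no finiteness of the underlying measure, so the step is valid regardless.
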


\begin{proof}
\st{Because the proof is a copy of the proof of Lemma \ref{lemma:isotropic_example_is_admissible}, 
we only check (A4). Assume $F_n,F\in C^+(\Delta)$} with
$\sup_{\vph\in \ws}|F_n(\vph)-F(\vph)|\to_n 0$. 
Then, by the Fatou property of the spaces $L_{q_l}$,
\equa
&   & \sup_{l=1,...,L} \left \| (r_l-t)^{-\theta_l/2} F(\chi_{(t,r_l]}) 
      \right \|_{L_{q_l}([r_{l-1},r_l), \frac{dt}{r_l-t})} \\
& = & \sup_{l=1,...,L} \left \| \lim_n \big [ (r_l-t)^{-\theta_l/2} F_n (\chi_{(t,r_l]}) \big ]
      \right \|_{L_{q_l}([r_{l-1},r_l), \frac{dt}{r_l-t})} \\
&\le&  \sup_{l=1,...,L} \liminf_n \left \| \big [ (r_l-t)^{-\theta_l/2} F_n (\chi_{(t,r_l]}) \big ]
      \right \|_{L_{q_l}([r_{l-1},r_l), \frac{dt}{r_l-t})} \\
&\le& \liminf_n 
      \sup_{l=1,...,L} \left \| \big [ (r_l-t)^{-\theta_l/2} F_n (\chi_{(t,r_l]}) \big ]
      \right \|_{L_{q_l}([r_{l-1},r_l), \frac{dt}{r_l-t})},
\tion
which proves (A4).
\end{proof}

\st{From} \eqref{eqn:Besov_decoupling} we obtain the following result 
about subspaces of $\B_p^{\Phi_{r_1,...,r_L}^{(\theta_1,q_1),...,(\theta_L,q_L)}}$
isomorphic to $\B_{p,q_l}^{\theta_l}(\R^d,\gamma_d)$:

\st{
\begin{proposition}
For a measurable function $f:\R^d \to\R$, 
$p\in [2,\infty)$, $0=r_0<r_1<\cdots r_L=T$, $\theta_l\in (0,1)$, $q_l\in [1,\infty]$, and
$l=1,\ldots,L$ we have
\[ f\in \B_{p,q_l}^{\theta_l}(\R^d,\gamma_d) 
   \sptext{1}{if and only if}{1}
   f\left ( \frac{W_{r_l}-W_{r_{l-1}}}{\sqrt{r_l-r_{l-1}}} \right ) 
   \in\B_p^{\Phi_{r_1,...,r_L}^{(\theta_1,q_1),...,(\theta_L,q_L)}}. \]
\end{proposition}
\bigskip

\begin{remark}
\label{remark:reason_for_an-isotropic_BSDEs}
Assume that  $p\in [2,\infty)$, $\theta_1,\ldots,\theta_L\in (0,1)$, $0=r_0 < r_1 < \cdots < r_L = T$,
and
\[ \xi \in \B_p^{\Phi_{r_1,...,r_L}^{(\theta_1,\infty),...,(\theta_L,\infty)}}. \]
If we let
\[ \cG_a^b := \sigma (W_t : t\in [0,a]) \vee \sigma (W_t - W_b:t\in [b,T])
   \sptext{1}{for}{1}
   0\le a \le b \le T, \]
then Lemma \ref{lemma:exchange-conditional_expectation_new} below implies that there is a constant $c\ge 0$ such that
\begin{equation} \label {eqn:pathwise_smoothness}
   \| \xi - \E(\xi|\cG_t^{r_l}) \|_p \le c (r_l-t)^\frac{\theta_l}{2}
   \sptext{1}{for}{1}
   t\in [r_{l-1},r_l)
   \sptext{.5}{and}{.5}
   l=1,\ldots,L.
\end{equation}
In other words, the conditional expectations $\E(\xi|\cG_t^{r_l})$ converge 
to $\xi$ in $L_p$  with the speed $(r_l-t)^\frac{\theta_l}{2}$ as
$t\uparrow r_l$. If $\theta_l<1$, then one can interpret
this as a singularity of order $1-\theta_l$ at $r_l$ because $(r_l-t)^\frac{1}{2}$ would be the speed for $\xi=W_T$. 
The concept from \eqref{eqn:pathwise_smoothness} was applied in \cite{GGG:12} in the context of BSDEs to obtain
path-dependent variational estimates. The setting of BSDEs, where we have a backward equation
with a pre-given terminal condition, did require the consideration of $\xi - \E(\xi|\cG_t^{r_l})$ rather 
than that one of $\E(\xi|\cF_{r_l})- \E(\xi|\cF_t)$, which could have been a first attempt.
The fact that in \cite{GGG:12} the $\theta_l$ are allowed to be different from each other is one reason to extend the isotropic
spaces $\B_{p,q}^\theta$ from \eqref{eqn:classical_Besov_spaces} to the spaces $\B_p^\Phi$ that might be 
anisotropic.
\end{remark}
}

%%%%%%%%%%%%%%%%%%%%%%%%%%%%%%%%%%%%%%%%%%%%%%%%%%%%%%%%%%%%%%%%%%%%%%%%%%%%%%%%%%%%%%%%%%%%%%%

\section{The space $\B_p^{\Phi_2}$}
\label{sec:Phi_2_Besov_spaces}

In this section we study the space $\B_p^{\Phi_2}$, where the functional
$\Phi_2:C^+(\ws) \to [0,\infty]$ is given by
\[ \Phi_2(F) := \sup_{0\le s < t \le T} \frac{F(\chi_{(s,t]})}{\sqrt{t-s}}. \]
To describe these spaces we let, for $p\in (0,\infty)$ and a measurable $\lambda:[0,T]\times \Omega \to \R^d$ with
$\E \int_0^T|\lambda_s|^2 ds <\infty$,
\index{space!$L_\infty([0,T] ;L_p(\Omega))$}
\index{space!$L_p^*   (\Omega;L_2([0,T]) )$}
\equa
 \lip {\lambda}{p} &:= &  {\rm esssup}_{s\in [0,T]} \||\lambda_s|\|_p, \\
 \lips{\lambda}{p} &:= &  \sup_{0\le a < b \le T} 
                          \left \| \left ( \frac{1}{b-a} \int_a^b |\lambda_s|^2 ds \right )^\frac{1}{2} \right \|_p.
\tion
To shorten the notation we also use $\|\lambda_s\|_p=\| |\lambda_s| \|_p$.
\st{We already introduced $L_q^X(\Omega)$ when $X$ is a separable Banach space. Above we 
use a different notation as we want to avoid a discussion about the separability of $L_p(\Omega)$,
which is not needed here.}
By the Lebesgue differentiation theorem (cf. Lemma \ref{lemma:Lebesgue_differentiation} below)
one has that
\equa
\lip {\lambda}{2} & = & \lips{\lambda}{2}, \\
\lips{\lambda}{p} &\le& \lip {\lambda}{p} \mbox{ for } 2\le p < \infty, \\
\lip {\lambda}{p} &\le& \lips{\lambda}{p} \,\, \mbox{ for } 0< p \le 2.
\tion

The next theorem, the main result of this section, is motivated as follows: If $\xi \in \B_2^{\Phi_2}$,
then $\xi \in \D_{1,2}$ and the quantity $\| \xi \|_{\Phi_2,p}$ enables us to access the Malliavin derivative
of $\xi$ without its explicit computation. As in Corollary \ref{cor:theorem:comparison_psi_phi_3_intro}
of Section \ref{sec:outline} announced, this can be exploited in the context of BSDEs to obtain the differentiability 
of the $Y$-process without differentiating the BSDE.
\bigskip

\begin{theorem}
\label{theorem:Phi_2}
One has that $\B_2^{\Phi_2}\subseteq \D_{1,2}$ and the following assertions hold true:
\begin{enumerate}[{\rm (1)}]
\item For $p\in [2,\infty)$ and $\xi\in \D_{1,2}\cap L_p$ one has
      \[ \| \xi\|_{\Phi_2,p} \sim_{c_{\eqref{theorem:Phi_2}(1),p}} \lips{D\xi}{p}, \]
      where $ c_{\eqref{theorem:Phi_2}(1),p} \ge 1$ depends on $p$ only.
\item For $p\in (1,2)$ and $\xi\in \D_{1,2}$ one has
      \[      \frac{1}{c_{\eqref{theorem:Phi_2}(2),p}} \lip{D\xi}{p}
          \le \| \xi\|_{\Phi_2,p}
          \le c_{\eqref{theorem:Phi_2}(2),p}  \lips{D\xi}{p}, \]
      where $ c_{\eqref{theorem:Phi_2}(2),p}\ge 1$ depends on $p$ only.
\item There is a $\xi\in \D_{1,2}$ such that for all $p\in [1,\infty)$ one has
      $ \xi \in L_p(\Omega)$,
      $D\xi \in L_p(\Omega;L_2([0,T]))$, and $\xi\not\in \B_p^{\Phi_2}$.
      \end{enumerate}
\end{theorem}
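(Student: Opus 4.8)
The plan is to prove the three parts in order, with part (1) being the substantive analytic core and part (3) the interesting counterexample.

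\textbf{Part (1).} The key identity is that the decoupling at level $\chi_{(s,t]}$ can be compared to a conditional-expectation increment as in \eqref{eqn:decoupling_vs_conditional_expectation}: by Lemma \ref{lemma:exchange-conditional_expectation_new} (referenced in the excerpt) one has $\|\xi-\xi^{(s,t]}\|_p\sim_2\|\xi-\E(\xi|\cF_s\vee\sigma(W_r-W_t:r\ge t))\|_p$, and the Clark--Ocone / Malliavin calculus machinery lets one write $\xi-\E(\xi|\cdots)$ as an Itô integral $\int_s^t \E(D_u\xi|\cdots)\,dW_u$ over the "missing" increments. First I would establish the upper bound $\|\xi-\xi^{(s,t]}\|_p\le c\,\|(\int_s^t|D_u\xi|^2du)^{1/2}\|_p$ via the Burkholder--Davis--Gundy inequality applied to this stochastic integral, together with Jensen/conditional-expectation contractivity to remove the inner conditional expectation; dividing by $\sqrt{t-s}$ and taking the supremum over $0\le s<t\le T$ gives $\|\xi\|_{\Phi_2,p}\le c\lips{D\xi}{p}$. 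For the reverse inequality I would test against short intervals: for fixed $a$ and $t\downarrow a$, $\frac{1}{t-a}\int_a^t|D_u\xi|^2du\to|D_a\xi|^2$ in an appropriate sense (Lebesgue differentiation, in $L_{p/2}$ after a density argument for nice $\xi$), so $\sup_{s<t}\frac{\|\xi-\xi^{(s,t]}\|_p^2}{t-s}\ge c\,\mathrm{esssup}_s\|D_s\xi\|_p^2$ at least; upgrading "esssup of $\|D_s\xi\|_p$" to the full "$\|\sup$-average$\|_p$" requires more care and is where I expect to invoke an $L_p$-valued maximal-function or stopping argument. The case $p=2$ then follows because BDG is an isometry, collapsing both bounds to $\mathrm{esssup}_s\|D_s\xi\|_2$.

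\textbf{Part (2).} For $p\in(1,2)$ BDG still gives the upper bound $\|\xi\|_{\Phi_2,p}\le c\lips{D\xi}{p}$ exactly as above. The lower bound is only claimed against $\lip{D\xi}{p}=\mathrm{esssup}_s\|D_s\xi\|_p$ (not the stronger mixed norm), which is precisely the weaker statement one gets from the short-interval test without needing a maximal inequality; so part (2) is essentially the "easy halves" of the part (1) argument, and I would present it by referencing the relevant estimates from the proof of (1).

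\textbf{Part (3).} Here I would construct an explicit $\xi$ lying in $\D_{1,2}$ with $D\xi\in L_p(\Omega;L_2([0,T]))$ for every $p<\infty$ but with $\|\xi\|_{\Phi_2,p}=\infty$. The natural candidate is a random variable whose Malliavin derivative is a bounded process in $L_2([0,T])$ but whose \emph{density in time} blows up, e.g. something like $\xi=\sum_k a_k\, \eta_k$ where $\eta_k$ depends only on the increment of $W$ over a short interval $(t_k,t_k+\delta_k]$ with $\delta_k\downarrow 0$, and the $a_k$ chosen so that $\sum a_k^2<\infty$ (giving $\xi\in\D_{1,2}$ with all moments, since each $\eta_k$ can be taken bounded or Gaussian with controlled tails) but $\sup_k \frac{\|\xi-\xi^{(t_k,t_k+\delta_k]}\|_p}{\sqrt{\delta_k}}=\infty$. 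Concretely, $\xi-\xi^{(t_k,t_k+\delta_k]}$ isolates the $k$-th block, contributing roughly $|a_k|$ times a fixed-law term, while $\sqrt{t-s}=\sqrt{\delta_k}\to 0$, so choosing $a_k$ decaying slower than $\sqrt{\delta_k}$ (say $a_k=1/k$, $\delta_k=1/k^4$) forces the ratio $\to\infty$. The main obstacle in part (3) is arranging that $\xi$ genuinely sits in $\D_{1,2}$ with the stated $L_p$-integrability of $D\xi$ \emph{simultaneously} with the divergence of the Besov seminorm — i.e. making sure the "mass" that escapes in the $\Phi_2$-seminorm is invisible to the $\D_{1,p}$-type norms; this is exactly the phenomenon that $\Phi_2$ sees the time-localization that $\int_0^T|D_s\xi|^2ds$ averages out, and I would make it quantitative by a careful choice of the sequences $(a_k)$, $(t_k)$, $(\delta_k)$.

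Overall, the hardest single step is the sharp lower bound in part (1) recovering the full mixed norm $\lips{D\xi}{p}$ (rather than just $\lip{D\xi}{p}$) for $p>2$, which presumably rests on an $L_p$-boundedness of an averaging/maximal operator along the decoupling family; everything else is either BDG plus conditional-expectation contractions or an explicit construction.
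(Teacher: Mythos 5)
Your part (3) is correct and is essentially the paper's construction: the paper takes $A_l=l\cos(W_{s_l}(1))(W_{t_l}(1)-W_{s_l}(1))$ on disjoint intervals of length $T4^{-l}$ and sums, which is exactly your ``blocks on shrinking disjoint intervals with coefficients decaying slower than $\sqrt{\delta_k}$'' scheme; your simplified Gaussian version works just as well since each block is killed or isolated by the decoupling over its own interval. The upper bounds in (1) and (2) are also recoverable along your route, with one caveat: ``Jensen/conditional-expectation contractivity'' is not enough to remove the conditional expectation inside an $L_p(\ell_2)$ square function for $p\neq 2$ --- you need Stein's martingale inequality (Lemmas \ref{lemma:stein-inequality}--\ref{lemma:Stein_continuous}); the paper itself takes a slightly different path (reduction to smooth cylindrical $\xi$ plus the Gaussian Poincar\'e-type Lemma \ref{lemma:PDE-Stein}), but that is a cosmetic difference.

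The genuine gap is the lower bound in part (1), which you yourself flag as the hardest step and then do not supply. Your ``short-interval test'' presupposes an interval-wise lower bound $\|\xi-\xi^{(s,t]}\|_p\ge c\,\|(\int_s^t|D_u\xi|^2\,du)^{1/2}\|_p$. This is not available: the martingale representation of $\xi-\E(\xi|\cG_s^t)$ has integrand $\mu_u=\E(D_u\xi\,|\,\cdot\,)$, a projection of $D_u\xi$ (the processes $\mu^b$ of Lemma \ref{lemma:BDG-chaos}), so BDG only lower-bounds the square function of $\mu$, and Stein's inequality goes in the wrong direction to replace $\mu$ by $D\xi$ from below. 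Moreover the conclusion you would draw, $\|\xi\|_{\Phi_2,p}\ge c\,\mathrm{esssup}_s\|D_s\xi\|_p$, is the wrong target for $p>2$: by Minkowski's integral inequality $\lips{D\xi}{p}\le\lip{D\xi}{p}$ when $p\ge 2$, and the two are not equivalent in general, so a lower bound by the esssup would be strictly stronger than the theorem and is incompatible with the (correct) upper bound $\|\xi\|_{\Phi_2,p}\le c\,\lips{D\xi}{p}$. Finally, the upgrade you hope to get from ``an $L_p$-valued maximal-function or stopping argument'' is obtained in the paper by a different and essential device: fix $(a,b]$, subdivide it dyadically, apply the two-sided estimate of Lemma \ref{lemma:BDG-chaos} on each subinterval, reassemble via $\bigl(\sum_k\|X_k\|_p^2\bigr)^{1/2}\ge\bigl\|\bigl(\sum_k X_k^2\bigr)^{1/2}\bigr\|_p$ (valid for $p\ge2$), and pass to the limit with Fatou using that the projected integrands $\mu_s^{b_n(s)}$ converge to $D_s\xi$ in $L_2(\lambda\otimes\P)$ along a subsequence as the mesh shrinks. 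It is this refinement --- not a short-interval limit --- that produces the full mixed norm $\lips{D\xi}{p}$; and it is precisely the reversal of the reassembly inequality for $p<2$ that explains why part (2) only yields the weaker bound $\lip{D\xi}{p}$.
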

\bigskip

In the inequalities of the theorem above the expressions might be infinite.
For the case $p\in (1,2)$ the result is still incomplete. However, if one is interested in
good moment estimates, then the case $p\in [2,\infty)$ seems to be of more interest than the
case $p\in (1,2)$.
To prove Theorem \ref{theorem:Phi_2} we let
\[   \cG_a^b          :=   \sigma (W_t     : t\in [0,a]) \vee
                           \sigma (W_t-W_b : t\in [b,T] ) \]
for $0\le a \le b \le T$ considered as $\sigma$-algebra in $(\Omega,\cF,\P)$.
\index{$\cG_a^b$}
\medskip

\begin{lemma}\label{lemma:exchange-conditional_expectation_new}
For $p \in [1,\infty]$, $\xi=(\xi_1,\ldots,\xi_m)$ with
$\xi_1,\ldots,\xi_m \in \cL_p\probsp$,  a norm  $\|\cdot\|$ on $\R^m$,
and $0 \le s < t \le T$ one has
 \[
	\half \left \|\|\xi - \xi^{(s,t]}\| \right\|_p
\le	\left\|\| \xi- \E^{\cG_s^t} \xi \| \right\|_p
\le	\left\|\| \xi - \xi^{(s,t]} \| \right\|_p,
 \]
where in the first and last expression $\xi_1,\ldots,\xi_m$ are extended to $\overline{\Omega}$
according to Convention \ref{convention:extension_new} and the conditional expectation is taken
coordinate-wise.
\end{lemma}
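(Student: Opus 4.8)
The plan is to compute the conditional expectation $\E^{\cG_s^t}\xi$ by realizing it as a decoupling-type average, and then compare it with $\xi^{(s,t]}$. First I would note that the $\sigma$-algebra $\cG_s^t$ is exactly the one generated by the Brownian increments \emph{outside} the interval $(s,t]$; equivalently, $\cG_s^t = \sigma\big(W_r^{(s,t]} : r\in[0,T]\big)$ restricted to information that does not see the increments inside $(s,t]$. The key identity is the \emph{decoupling formula}: for $\xi\in\cL_p$ extended to $\overline{\Omega}$,
\[
   \E^{\cG_s^t}\xi = \E_{W'}\big[\xi^{(s,t]}\big] = \int_{\Omega'} \xi^{(s,t]}(\cdot,\omega')\,d\P'(\omega'),
\]
i.e. conditioning on $\cG_s^t$ amounts to integrating out the fresh randomness $W'$ that replaces the increments of $W$ on $(s,t]$ in the definition of $\xi^{(s,t]}$. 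This holds because $\xi$ is a functional only of $W$, the increments of $W$ on $(s,t]$ are independent of $\cG_s^t$, and replacing those increments by an independent copy and then averaging reproduces precisely the conditional expectation (this is the standard ``decoupling equals conditional expectation'' fact, and is essentially the content of \eqref{eqn:decoupling_vs_conditional_expectation}).

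Granting this identity, the right-hand inequality $\|\xi-\E^{\cG_s^t}\xi\|_p \le \|\xi-\xi^{(s,t]}\|_p$ follows immediately from Jensen's inequality: since $\E^{\cG_s^t}\xi = \E_{W'}[\xi^{(s,t]}]$ and $\E^{\cG_s^t}\xi = \E^{\cG_s^t}[\xi^{(s,t]}]$ as well (because $\xi^{(s,t]}$ averaged over $W'$ gives the same thing and $\xi$ is $\cG_s^t$-measurable only through the outside increments), we get
\[
   \big\|\xi - \E^{\cG_s^t}\xi\big\|_p = \big\|\E^{\cG_s^t}(\xi - \xi^{(s,t]})\big\|_p \le \big\|\xi - \xi^{(s,t]}\big\|_p,
\]
using that conditional expectation is an $L_p$-contraction. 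For the left-hand inequality $\tfrac12\|\xi-\xi^{(s,t]}\|_p \le \|\xi-\E^{\cG_s^t}\xi\|_p$, I would use the triangle inequality
\[
   \big\|\xi-\xi^{(s,t]}\big\|_p \le \big\|\xi-\E^{\cG_s^t}\xi\big\|_p + \big\|\E^{\cG_s^t}\xi - \xi^{(s,t]}\big\|_p,
\]
and then observe that $\xi^{(s,t]}$ has the same distribution as $\xi$ (by Theorem~\ref{theorem:properties_C_T}(2), since it is obtained via $\C_0$), and $\E^{\cG_s^t}\xi = \E^{\cG_s^t}\xi^{(s,t]}$, so that $\|\E^{\cG_s^t}\xi - \xi^{(s,t]}\|_p = \|\xi^{(s,t]} - \E^{\cG_s^t}\xi^{(s,t]}\|_p = \|\xi - \E^{\cG_s^t}\xi\|_p$ by the distributional identity applied to the pair. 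Combining the last two displays gives $\|\xi-\xi^{(s,t]}\|_p \le 2\|\xi-\E^{\cG_s^t}\xi\|_p$, as desired.

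The main obstacle I anticipate is making the decoupling identity $\E^{\cG_s^t}\xi = \E_{W'}[\xi^{(s,t]}]$ fully rigorous at the level of equivalence classes and with the null-set bookkeeping that the paper is careful about: one must check that $\xi^{(s,t]}$, built through the functional representation $\r{\xi}$ and the operator $\C_0(\chi_{(s,t]})$, really does equal $\r\xi(W^{(s,t]})$ $\overline{\P}$-a.s., and that Fubini applies so that integrating out $\omega'$ gives a $\cG_s^t$-measurable version of the conditional expectation. The case $p=\infty$ also needs a small separate remark (replace $L_p$ norms by essential suprema; Jensen and the distributional identity still apply). Everything else is a routine application of the triangle inequality, Jensen's inequality, and the distribution-preserving property of $\C_0$ already established in Theorem~\ref{theorem:properties_C_T}.
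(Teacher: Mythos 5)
Your overall strategy coincides with the paper's: identify $\E^{\cG_s^t}\xi$ with a conditional expectation of the decoupled variable $\xi^{(s,t]}$, obtain the upper bound from the $L_p$-contractivity of conditional expectation, and obtain the lower bound with the factor $\half$ from the triangle inequality together with the fact that $(\xi^{(s,t]},\E^{\cG_s^t}\xi)$ has the same law as $(\xi,\E^{\cG_s^t}\xi)$. The paper disposes of the measure-theoretic bookkeeping you flag as the main obstacle by first reducing to cylinder functionals $\xi=f(W_{t_1}-W_{t_0},\dots,W_{t_n}-W_{t_{n-1}})$ with $f\in C_b(\R^{nd})$ and $s,t$ among the partition points, and treats $p=\infty$ by letting $p\to\infty$; this is a cleaner route than invoking Fubini for general $\xi$.

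There is, however, a concrete error in your displayed justification of the right-hand inequality. You write
\[ \left\|\xi-\E^{\cG_s^t}\xi\right\|_p=\left\|\E^{\cG_s^t}\big(\xi-\xi^{(s,t]}\big)\right\|_p. \]
Since $\E^{\cG_s^t}\xi=\E^{\cG_s^t}\xi^{(s,t]}$ (which you correctly note), the right-hand side of this display equals $\|\E^{\cG_s^t}\xi-\E^{\cG_s^t}\xi^{(s,t]}\|_p=0$, so the asserted equality fails except in trivial cases. The underlying issue is that $\xi$ is \emph{not} $\cG_s^t$-measurable, so it cannot be pulled through $\E^{\cG_s^t}$. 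The conditioning must instead be done with respect to a $\sigma$-algebra for which $\xi$ \emph{is} measurable, namely $\sigma(W_r:r\in[0,T])$ --- equivalently, one integrates out only $\omega'$, which is your operator $\E_{W'}$. One then has $\E\big(\xi^{(s,t]}\,\big|\,\sigma(W_r:r\in[0,T])\big)=\E^{\cG_s^t}\xi$ and
\[ \left\|\xi-\E^{\cG_s^t}\xi\right\|_p=\left\|\E\big(\xi-\xi^{(s,t]}\,\big|\,\sigma(W_r:r\in[0,T])\big)\right\|_p\le\left\|\xi-\xi^{(s,t]}\right\|_p, \]
which is exactly the paper's argument. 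So the idea is right and the repair is immediate, but the step as you have written it is false.
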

\medskip

\begin{proof}
By $p\to \infty$ it is sufficient to show the assertion for $p\in [1,\infty)$.
Assuming $p\in [1,\infty)$ it is sufficient to consider $\xi=(\xi_1,\dots,\xi_m)$ of the form
\[ \xi = f(W_{t_1}-W_{t_0},...,W_{t_n}-W_{t_{n-1}}) \]
where $0\le t_0<\cdots t_n \le T$ and $f:\R^{nd}\to \R^m$ is continuous and bounded. W.l.o.g. we can assume
that $s$ and $t$ belong to the partition points. Then
 \[
          \big \|\| \xi - \xi^{(s,t]}\| \big \|_p
\le       \big \|\| \xi - \E^{\cG_s^t} \xi \| \big \|_p + \big \|\| \widetilde{\E^{\cG_s^t} \xi}-\xi^{(s,t]}\|\big \|_p
=        2\big \|\| \xi - \E^{\cG_s^t} \xi \| \big \|_p
 \]
and
\[      \big \| \| \xi - \E^{\cG_s^t} \xi \| \big \|_p
     =  \big \| \| \xi - \E(\xi^{(s,t]}|\sigma(W_r: r\in [0,T])) \| \big \|_p
    \le \big \|\| \xi - \xi^{{(s,t]}} \| \big \|_p. \qedhere\]
\end{proof}
\smallskip

\begin{proof}[Proof of Theorem \ref{theorem:Phi_2}]
(3) For $l\ge 1$ we take disjoint intervals $(s_l,t_l]\subseteq (0,T]$ with $t_l-s_l=T 4^{-l}$ and $t_l<s_{l+1}$.
Define
\[ A_l := l \cos(W_{s_l,1})(W_{t_l,1}-W_{s_l,1})
   \sptext{1}{and}{1}
   \xi := \sum_{l=1}^\infty A_l. \]
The sum converges in any $L_p$, $p\in [1,\infty)$, as
\[     \sum_{l=1}^\infty \| l \cos(W_{s_l,1})(W_{t_l,1}-W_{s_l,1}) \|_p
   \le c_p \sum_{l=1}^\infty l \sqrt{t_l-s_l}
    < \infty, \]
where $c_p:=\|g\|_p$ with $g\sim N(0,1)$. Moreover,
\[ D A_l = l \big [ \cos(W_{s_l,1}) \chi_{(s_l,t_l]} -  \sin (W_{s_l,1})(W_{t_l,1}-W_{s_l,1})  \chi_{(0,s]} \big ] \]
so that
\[     \| DA_l \|_{L_q^{L_2((0,T])}}
   \le l \big [ \sqrt{t_l-s_l} + \sqrt{T} c_q  \sqrt{t_l-s_l} \big ] \]
for $q\in [2,\infty)$. This implies $\xi\in \D_{1,2}$ and $D\xi \in \st{L_q^{L_2([0,T])}(\Omega)}$.
On the other hand,
\begin{multline*}
      \frac{\| A_L- A_L^{(s_L,t_L]} \|_p}{\sqrt{t_L-s_L}} \\
  =   L \frac{\| \cos (W_{s_L,1}) (W_{t_L,1}-W_{s_L,1}-(W')_{t_L,1}+(W')_{s_L,1}) \|_p}{\sqrt{t_L-s_L}} \\
 \ge  L \sqrt{2} c_p \| \cos(W_{s_L,1})\|_p
 \ge  L \kappa_p
\end{multline*}
where $\kappa_p:=\sqrt{2} c_p \inf_{s\in [0,T]}  \| \cos(W_{s,1}) \|_p>0$.
For each $L\ge 1$ this implies
\[      \frac{\|\xi-\xi^{(s_L,t_L]}\|_p}               {\sqrt{t_L-s_L}}
    \ge \frac{\|\sum_{l=1}^L (A_l-A_l^{(s_L,t_L]})\|_p}{\sqrt{t_L-s_L}}
     =  \frac{\| A_L-A_L^{(s_L,t_L]}\|_p}              {\sqrt{t_L-s_L}}
\ge \kappa_p L \]
and therefore $\xi\not\in \B^{\Phi_2}_p$.

(1) and (2) \underline{Step (a)}:  We prove $\B_2^{\Phi_2}\subseteq \D_{1,2}$. Let $0 \le a < b \le T$, and define for
$n \ge 1$ the set
\[ D_n(a,b) := \{ (t_1,...,t_n)\in (0,T]^n :
                  \mbox{ there is a } k \mbox{ such that } t_k \in (a,b] \}. \]
Assume $\xi \in L_2$ with chaos decomposition
\[ \xi = \sum_{n=0}^\infty I_n(f_n) \]
with symmetric $f_n : ((0,T]\times \{ 1,...,d \})^n \to \R$,
cf. \cite[Example 1.1.2]{Nualart:06}. By Lemma \ref{lemma:exchange-conditional_expectation_new}
the condition $\xi\in \B_2^{\Phi_2}$ is equivalent to the condition
\[  \sum_{n=1}^\infty n! \| f_n \chi_{D_n(a,b)} \|_{L_2^n}^2 \le c^2 (b-a) \]
for all $0\le a < b \le T$, where $L_2^n := L_2 (((0,T]\times \{ 1,...,d \})^n,\mu^n)$ with
$\mu:= \lambda \otimes \left ( \sum_{i=1}^d \delta_{\{ i \}}\right )$ and $\lambda$ being the Lebesgue measure.
For $L\ge 1$, $l=1,...,2^L$, and $n\ge 1$ let
\[ D_n^{l,L} := D_n\left ( T \frac{l-1}{2^L}, T \frac{l}{2^L} \right ), \]
so that
\[  \sum_{n=1}^\infty n! \| f_n \chi_{D_n^{l,L}} \|_{L_2^n}^2 \le c^2 2^{-L}. \]
Summing up over $l$ gives for all $N\ge 1$ that
\[  \sum_{n=1}^N n! \sum_{l=1}^{2^L} \| f_n \chi_{D_n^{l,L}} \|_{L_2^n}^2 \le c^2. \]
Let $\Delta_n^L$ be the union of all dyadic half-open cubes
\[ \left ( T \frac{l_1-1}{2^L}, T \frac{l_1}{2^L} \right ] \times \cdots \times
   \left ( T \frac{l_n-1}{2^L}, T \frac{l_n}{2^L} \right ] \]
with $l_1,...,l_n\in \{ 1,...,2^L \}$ pair-wise distinct. Then
\[ \Delta_n^L \subseteq \bigcup_{l=1}^{2^L} D_n^{l,L} \]
and
\[ {\rm card}\big \{ l\in \{1,...,2^L\} : (t_1,...,t_n) \in D_n^{l,L} \big \} = n
   \sptext{1}{for all}{1}
   (t_1,...,t_n)\in  \Delta_n^L.\]
Now we get that
\[     \sum_{n=1}^N n! n \| f_n \chi_{\Delta_n^L} \|_{L_2^n}^2
   \le \sum_{n=1}^N n! \sum_{l=1}^{2^L} \| f_n \chi_{D_n^{l,L}} \|_{L_2^n}^2
   \le c^2. \]
By $L\to\infty$ it follows that
\[     \sum_{n=1}^N n! n \| f_n \|_{L_2^n}^2 \le c^2. \]
Finally, $N\to \infty$ gives $\xi\in \D_{1,2}$.
\medskip

\underline{Step (b)}: Let $\xi \in \D_{1,2}$ with chaos expansion $\xi = \sum_{n=0}^\infty I_n (f_n)$ obtained by 
symmetric $f_n$ and fix $b\in (0,T]$. Consider the processes $(\mu_t^b(i))_{t\in [0,b]}$ from 
Lemma \ref{lemma:BDG-chaos}, so that for $p\in (1,\infty)$ and $a\in [0,b)$ we have that
\begin{equation}\label{eqn:BDG-chaos}
                  \left \| \xi - \xi^{(a,b]} \right \|_p
   \sim_2          \left \| \xi - \E\left (\xi| \cG_a^b \right ) \right \|_p
   \sim_{c_\eqref{lemma:BDG-chaos}} \left \| \left ( \int_a^b
                   |\mu_s^b|^2 ds \right )^\frac{1}{2}  \right \|_p\st{,}
\end{equation}
where Lemma \ref{lemma:exchange-conditional_expectation_new} is exploited in the first equivalence.
For $s\in [a,b]$ and $n\ge 0$ let $t_k^n := a + (k/2^n) (b-a)$ for $k=0,...,2^n$ and
\[ b_n(s) := \inf \left \{ t_k^n : s \le t_k^n,  k=0,...,2^n
                  \right \}. \]
Using (\ref{eqn:BDG-chaos}) and Lemma \ref{lemma:exchange-conditional_expectation_new} we get that
\equa
      c_{(\ref{lemma:BDG-chaos})} \sqrt{b-a}  \| \xi \|_{\Phi_2,p}
&\ge& c_{(\ref{lemma:BDG-chaos})} \left (   \sum_{k=1}^{2^n}
      \left \| \xi - \E\left (\xi| \cG_{t_{k-1}^n}^{t_k^n} \right )
      \right \|_p^2 \right )^\frac{1}{2} \\
&\ge& \left (   \sum_{k=1}^{2^n}
      \left \|  \left ( \int_{(t_{k-1}^n, t_k^n]}
      |\mu_s^{b_n(s)}|^2 ds \right )^\frac{1}{2} \right \|_p^2 \right )^\frac{1}{2}.
\tion
For $p\in [2,\infty)$ we continue by Fatou's lemma to
\equa
      c_{(\ref{lemma:BDG-chaos})} \sqrt{b-a}  \| \xi \|_{\Phi_2,p} 
&\ge& \liminf_n \left (   \sum_{k=1}^{2^n}
      \left \|  \left ( \int_{(t_{k-1}^n,t_k^n]}
      |\mu_s^{b_n(s)}|^2 ds \right )^\frac{1}{2} \right \|_p^2 \right )^\frac{1}{2} \\
&\ge& \liminf_n \left \|  \left ( \sum_{k=1}^{2^n} \int_{(t_{k-1}^n,t_k^n]}
      |\mu_s^{b_n(s)}|^2 ds \right )^\frac{1}{2} \right \|_p \\
& = & \liminf_n \left \|  \left ( \int_{(a,b]} |\mu_s^{b_n(s)}|^2 ds \right )^\frac{1}{2} \right \|_p \\
&\ge& \left \| \liminf_n \left ( \int_{(a,b]}  |\mu_s^{b_n(s)}|^2 ds \right )^\frac{1}{2} \right \|_p \\
&\ge& \left \| \left ( \int_{(a,b]} \liminf_n |\mu_s^{b_n(s)}|^2 ds \right )^\frac{1}{2} \right \|_p.
\tion
For $p\in (1,2)$ we get
\equa
&   & c_{(\ref{lemma:BDG-chaos})} \sqrt{b-a}  \| \xi \|_{\Phi_2,p} \\
&\ge& \liminf_n \left (   \sum_{k=1}^{2^n}
      \left \|  \left ( \int_{(t_{k-1}^n,t_k^n]}
      |\mu_s^{b_n(s)}|^2 ds \right )^\frac{1}{2} \right \|_p^2 \right )^\frac{1}{2} \\
&\ge& \liminf_n  \left ( \int_{(a,b]}
       \| \mu_s^{b_n(s)} \|_p^2 ds \right )^\frac{1}{2}  \\
&\ge&  \left ( \int_{(a,b]}
       \| \liminf_n |\mu_s^{b_n(s)}| \|_p^2 ds \right )^\frac{1}{2}.
\tion
Summarizing, this yields to
\begin{equation}\label{eqn:liminf_mu}
       \| \xi \|_{\Phi_2,p}
   \ge \frac{1}{c_{(\ref{lemma:BDG-chaos})}}
       \left \{ 
               \begin{array}{lcr}
                   \left \| \left ( \frac{1}{b-a} \int_{(a,b]} \liminf_n |\mu_s^{b_n(s)}|^2 ds \right )^\frac{1}{2} \right \|_p \!\!
               &:& \!\!  p \in [2,\infty) \\
                   \left ( \frac{1}{b-a} \int_{(a,b]} \| \liminf_n |\mu_s^{b_n(s)}| \|_p^2 ds \right )^\frac{1}{2} \!\!
               &:& \!\! p \in (1,2)
               \end{array}
       \right . .
\end{equation}
Now we observe that 
\equa
&   & \lim_n \int_{(a,b]} \E | \mu_s^{b_n(s)}(i) - D(s,i)\xi |^2 ds \\
& = & \lim_n \int_{(a,b]} \sum_{k=1}^\infty k^2 (k-1)! \| f_k((s,i),\cdot) (\chi_{((0,s] \cup (b_n(s),T])^{k-1}}-1) \|_{L_2^{k-1}}^2 ds \\
& = & 0
\tion
which follows by dominated convergence since 
\equa
&   & \int_{(a,b]} \sum_{k=1}^\infty k^2 (k-1)! \| f_k((s,i),\cdot) (\chi_{((0,s] \cup     
      (b_n(s),T])^{k-1}}-1) \|_{L_2^{k-1}}^2 ds \\
&\le& \int_{(0,1]} \sum_{k=1}^\infty k^2 (k-1)! \| f_k((s,i),\cdot) \|_{L_2^{k-1}}^2 ds \\
&\le& \| \xi \|^2_{\D_{1,2}}.
\tion
Hence there is a sub-sequence $(n_l)_{l=1}^\infty$ such that $\lim_l \mu_s^{b_{n_l}(s)} = D(s,\cdot)\xi$
$\lambda\otimes \P$ a.e. on $(a,b]\times \Omega$. Observing that (\ref{eqn:liminf_mu}) holds for the sub-sequence $(n_l)_{l=1}^\infty$ without modification as well, the desired lower bounds of 
$\| \xi \|_{\Phi_2,p}$ follow.

\underline{Step (c)}:
We verify the upper bounds of (1) and (2). Let us first assume that $\xi$ is smooth like
in Proposition \ref{prop:approximation-D1p}, i.e. by using the Haar system as orthogonal basis
we may assume that
\[ \xi = f(W_{t_1}-W_{t_0},...,W_{t_n}-W_{t_{n-1}}), \]
where $0=t_0<\cdots < t_n = T$ and $f\in C^\infty(\R^{nd})$ is bounded with bounded derivatives of all orders
(the bounds for the derivatives can depend on their order). By a possible redefinition of $f$ we can assume w.l.o.g. that $a=t_{k}<t_l=b$. We get
\[ D\xi = \sum_{i=1}^n \nabla_i  f(W_{t_1}-W_{t_0},...,W_{t_n}-W_{t_{n-1}})\chi_{(t_{i-1},t_i]}, \]
where $\nabla_i$ is the $d$-dimensional gradient acting on the $i$-block of variables. 
We fix $\xi_1,...,\xi_k,\xi_{l+1},...,\xi_n\in \R^d$ and let
\equa
f_\xi   (\eta_{k+1},...,\eta_l) &:= & f(\xi_1,...,\xi_k,\eta_{k+1},...,\eta_l,\xi_{l+1},...,\xi_n),\\
f_\xi^0 (\eta_{k+1},...,\eta_l) &:= & f_\xi\left (\eta_{k+1}\sqrt{\delta_{k+1}},...,
                                                  \eta_l    \sqrt{\delta_l    } \right )
\tion
for $\delta_i := t_i-t_{i-1}$. Moreover, we note that
\begin{multline*}
      \| \xi - \E (\xi|\cG_a^b) \|_p 
   =  \| f(W_{t_1}-W_{t_0},...,W_{t_n}-W_{t_{n-1}}) - \\
         \E_{k+1}^l f(W_{t_1}-W_{t_0},...,W_{t_n}-W_{t_{n-1}}) \|_p,
\end{multline*}
where $\E_{k+1}^l$ is the expected value with respect to the increments 
\[ (W_{t_{k+1}}-W_{t_k}, ...,W_{t_l}-W_{t_{l-1}}). \]
Applying Lemma \ref{lemma:PDE-Stein} yields to
\equa
&   &      \|       f_\xi (W) - \E  f_\xi (W) \|_p \\
& = & \bigg \|       f_\xi^0 \left (\frac{W_{t_{k+1}}- W_{t_k}}{\sqrt{\delta_{k+1}}},...,\frac{W_{t_l}
                                        - W_{t_{l-1}}}{\sqrt{\delta_l}}\right ) \\
&    &         - \E     f_\xi^0 \left (\frac{W_{t_{k+1}}- W_{t_k}}{\sqrt{\delta_{k+1}}},...,\frac{W_{t_l}
                                        - W_{t_{l-1}}}{\sqrt{\delta_l}}\right )  \bigg \|_p \\
&\le& c_{(\ref{lemma:PDE-Stein})}
      \left \| \left (  \sum_{i=1}^{l-k} \left |\nabla_i f_\xi^0 \left (\frac{W_{t_{k+1}}- W_{t_k}}
      {\sqrt{\delta_{k+1}}},...,\frac{W_{t_l}- W_{t_{l-1}}}{\sqrt{\delta_l}}\right ) \right     |^2  \right )^\frac{1}{2} \right \|_p \\
& = & c_{(\ref{lemma:PDE-Stein})}
      \bigg \| \bigg ( \sum_{i=k+1}^l  \delta_i 
      |\nabla_i f (\xi_1,...,\xi_k,W_{t_{k+1}}- W_{t_k},\\
&   & \hspace*{12em}...,W_{t_l}- W_{t_{l-1}},\xi_{l+1},...,\xi_n) |^2  \bigg )^\frac{1}{2} \bigg \|_p
\tion
and
\equa
&   & \| \xi - \E (\xi|\cG_a^b) \|_p \\
& = & \| f(W_{t_1}-W_{t_0},...,W_{t_n}-W_{t_{n-1}}) - \\
&   & \hspace*{6em}         \E_{k+1}^l f(W_{t_1}-W_{t_0},...,W_{t_n}-W_{t_{n-1}}) \|_p \\
&\le&  c_{(\ref{lemma:PDE-Stein})}
      \left \| \left ( \sum_{i=k+1}^l \delta_i 
      \left |\nabla_i f (W) \right |^2  \right )^\frac{1}{2} \right \|_p \\
& = & c_{(\ref{lemma:PDE-Stein})} 
      \left \| \left ( \sum_{i=k+1}^l \int_{(t_{i-1},t_i]} |\nabla_i f(W)|^2 ds \right )^\frac{1}{2} \right \|_p \\
& = & c_{(\ref{lemma:PDE-Stein})}  \left \| \left ( \int_{(a,b]} |D_s\xi|^2 ds \right )^\frac{1}{2} \right \|_p.
\tion
Now we assume the general case and let $q:= p \vee 2$. Our assumptions in (1) and (2) and under the assumption
that the right-hand sides in (1) and (2) are finite, we have that $\xi\in \D_{1,2}\cap L_q$ and $D\xi\in \st{L_q^{L_2([0,T])}(\Omega)}$. Using Proposition \ref{prop:approximation-D1p} we find smooth $\xi_n$ such that
\[ \xi_n\to \xi \sptext{1}{in}{1} L_q
   \sptext{2}{and}{2} 
   D\xi_n \to D \xi \sptext{1}{in}{1} \st{L_q^{L_2([0,T])}(\Omega)}. \]
Therefore by approximation,
\begin{equation}\label{eqn:variation_upper_bounded_integral_Ds}
  \| \xi - \E (\xi|\cG_a^b) \|_p 
   \le 
   c_{(\ref{lemma:PDE-Stein})}  \left \| \left ( \int_{(a,b]} |D_s\xi|^2 ds \right )^\frac{1}{2} \right \|_p
\end{equation}
under the assumptions (1) and (2). Dividing by $\sqrt{b-a}$ and taking the supremum over
$0\le a < b \le T$ gives the upper bound of $\| \xi \|_{\Phi_2,p}$.
\end{proof}

%%%%%%%%%%%%%%%%%%%%%%%%%%%%%%%%%%%%%%%%%%%%%%%%%%%%%%%%%%%%%%%%%%%%%%%%%%%%%%%%%%%%%%%%%%%%%%%

\section{An embedding theorem for functionals of bounded variation}
\label{sec:embedding_theorem}

We extend the approach from Section \ref{sec:Phi_2_Besov_spaces} to the functionals
$\Phi_r: C^+(\ws)\to [0,\infty]$, $r\in [2,\infty)$, given by
\begin{equation}\label{eqn:Phi_r}
\Phi_r(F) := \sup_{0\le s < t \le T} \frac{F(\chi_{(s,t]})}{(t-s)^\frac{1}{r}}.
\end{equation}
\index{space! $\B_p^{\Phi_r}$}
\begin{definition}
\index{bounded variation}
A Borel function $g:\R\to\R$ is of bounded variation provided that
\[ V(g) := \sup_{-\infty < x_0 < \cdots < x_n < \infty} 
           \sum_{k=1}^n |\st{g}(x_k)-\st{g}(x_{k-1})| < \infty. \]
\end{definition}
It follows from the definition that a function of bounded variation is bounded.
A typical example is $g=\chi_{[K,\infty)}$ where
$V(\chi_{[K,\infty)}) =1$. Now we get the following embedding:

\begin{theorem}
Let $r\in [2,\infty)$, $p\in [1,\infty)$, $\xi \in \B_p^{\Phi_r}$  and $g:\R\to \R$
be of bounded variation. Assume that the  law of $\xi$ has a bounded
density $\rho$. Then, for all $q\in [1,\infty)$, 
\[ g(\xi) \in \B_q^{\Phi_{\tilde r}}
   \sptext{1}{with}{1}
   \tilde r := \frac{p+1}{p} q r. \]
\end{theorem}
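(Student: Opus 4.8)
The plan is to bound $\|g(\xi)-g(\xi)^{(s,t]}\|_q$ directly for every $0\le s<t\le T$, with a constant not depending on $s,t$; since $\Phi_{\tilde r}$ only tests the functions $\chi_{(s,t]}$, this together with $g(\xi)\in L_\infty\subseteq\cL_q(\Omega)$ (a function of bounded variation is bounded) yields $g(\xi)\in\B_q^{\Phi_{\tilde r}}$ via Definition~\ref{definition:generalised_Besov_space}. First I would record two identifications. By Theorem~\ref{theorem:properties_C_T}(3) applied to the Borel function $g$ and to $\xi^{(s,t]}\in\C_0(\chi_{(s,t]})(\xi)$, one has $g(\xi)^{(s,t]}=g(\xi^{(s,t]})$ $\overline{\P}$-a.s.; and by Theorem~\ref{theorem:properties_C_T}(2), $\xi^{(s,t]}\stackrel{d}{=}\xi$, so $\xi^{(s,t]}$ also has density $\rho$. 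Passing to the right-continuous modification of $g$ changes it only on a countable, hence Lebesgue-null, set, and so, since $\xi$ and $\xi^{(s,t]}$ have densities, it does not alter $g(\xi)$ or $g(\xi^{(s,t]})$ a.s.; I may therefore assume $g$ right-continuous and write $g(x)=g(-\infty)+\int_{\R}\chi_{[u,\infty)}(x)\,d\mu(u)$ for the associated finite signed Borel measure $\mu$ with $|\mu|(\R)=V(g)$.

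From this representation $g(\xi)-g(\xi^{(s,t]})=\int_{\R}\big[\chi_{[u,\infty)}(\xi)-\chi_{[u,\infty)}(\xi^{(s,t]})\big]\,d\mu(u)$, and since $\big|\chi_{[u,\infty)}(\xi)-\chi_{[u,\infty)}(\xi^{(s,t]})\big|$ is the indicator that $u$ lies in the half-open interval with endpoints $\xi$ and $\xi^{(s,t]}$, I get the pointwise bound
\[
 \big|g(\xi)-g(\xi^{(s,t]})\big|
 \le |\mu|\big((\xi\wedge\xi^{(s,t]},\,\xi\vee\xi^{(s,t]}]\big).
\]
Now I split on the event $A_\vare:=\{|\xi-\xi^{(s,t]}|\le\vare\}$, for a parameter $\vare>0$ to be chosen. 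On $A_\vare^{c}$ I use the crude bound $|g(\xi)-g(\xi^{(s,t]})|\le V(g)$ together with Chebyshev's inequality and $\xi\in\B_p^{\Phi_r}$: $\P(A_\vare^{c})\le\vare^{-p}\|\xi-\xi^{(s,t]}\|_p^p\le\vare^{-p}\|\xi\|_{\Phi_r,p}^p(t-s)^{p/r}$, so this part contributes at most $V(g)^q\|\xi\|_{\Phi_r,p}^p\,\vare^{-p}(t-s)^{p/r}$ to $\|g(\xi)-g(\xi^{(s,t]})\|_q^q$. On $A_\vare$ the interval above is contained in $(\xi-\vare,\xi+\vare]$, so $|g(\xi)-g(\xi^{(s,t]})|\le|\mu|((\xi-\vare,\xi+\vare])$; using $|\mu|((\xi-\vare,\xi+\vare])\le V(g)$ to lower the exponent and then Fubini with the bounded density $\rho$,
\[
 \E\big[|\mu|((\xi-\vare,\xi+\vare])^q\big]
 \le V(g)^{q-1}\int_{\R}\Big(\int_{u-\vare}^{u+\vare}\rho(x)\,dx\Big)\,d|\mu|(u)
 \le 2\|\rho\|_\infty V(g)^q\,\vare.
\]

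Combining the two parts gives $\|g(\xi)-g(\xi^{(s,t]})\|_q^q\le V(g)^q\big(2\|\rho\|_\infty\,\vare+\|\xi\|_{\Phi_r,p}^p\,\vare^{-p}(t-s)^{p/r}\big)$. The two terms on the right balance for $\vare$ of order $(t-s)^{p/(r(p+1))}$, and with this choice the bound becomes $\|g(\xi)-g(\xi^{(s,t]})\|_q^q\le c\,(t-s)^{p/(r(p+1))}$ with $c$ depending only on $p,q,r,V(g),\|\rho\|_\infty$ and $\|\xi\|_{\Phi_r,p}$; hence $\|g(\xi)-g(\xi^{(s,t]})\|_q\le c^{1/q}(t-s)^{1/\tilde r}$ with $\tilde r=\frac{p+1}{p}qr$. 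Taking the supremum over $0\le s<t\le T$ yields $\|g(\xi)\|_{\Phi_{\tilde r},q}<\infty$, so $g(\xi)\in\B_q^{\Phi_{\tilde r}}$.

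The Lebesgue--Stieltjes representation and the half-open interval bookkeeping are routine. The one genuinely load-bearing idea is the two-scale split, in which the total variation $V(g)$ is used twice: on the complementary event it converts the $\B_p^{\Phi_r}$-control into a probability estimate, and on the small event it reduces the $q$-th power of the local variation $|\mu|((\xi-\vare,\xi+\vare])$ to its first power, which is precisely what makes the bounded-density Fubini estimate usable; the exponent $1/\tilde r$ then drops out of balancing $\vare$ against $\vare^{-p}(t-s)^{p/r}$. The main point requiring attention is keeping the constant uniform in $(s,t)$, which it is since it involves only $p,q,r$, the fixed quantities $V(g)$ and $\|\rho\|_\infty$, and the finite seminorm $\|\xi\|_{\Phi_r,p}$.
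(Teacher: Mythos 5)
Your proof is correct, and it reaches the same exponent $\tilde r=\frac{p+1}{p}qr$ as the paper, but by a genuinely different (more self-contained) route. The paper's proof is a one-line application of Avikainen's inequality \cite[Theorem 2.4]{Avikainen:09}, which states precisely that $\left(\E|g(X)-g(Y)|^q\right)^{1/q}\le 3^{(q+1)/q}\,\|\rho\|_\infty^{\frac{1}{q}\frac{p}{p+1}}V(g)\,\|X-Y\|_p^{\frac{1}{q}\frac{p}{p+1}}$ for identically distributed $X,Y$ with bounded density; combined with $g(\xi)^{(s,t]}=g(\xi^{(s,t]})$ (Theorem \ref{theorem:properties_C_T}(3)) and $\|\xi-\xi^{(s,t]}\|_p\le\|\xi\|_{\Phi_r,p}(t-s)^{1/r}$, this gives the claim immediately. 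What you do instead is essentially reprove the relevant case of Avikainen's inequality from scratch: the Lebesgue--Stieltjes representation, the bound $|g(\xi)-g(\xi^{(s,t]})|\le|\mu|\bigl((\xi\wedge\xi^{(s,t]},\xi\vee\xi^{(s,t]}]\bigr)$, and the two-scale split in which Chebyshev converts the $L_p$-control into a probability estimate on $\{|\xi-\xi^{(s,t]}|>\vare\}$ while the bounded density and the reduction $a^q\le V(g)^{q-1}a$ handle the complementary event; balancing $\vare\sim(t-s)^{p/(r(p+1))}$ recovers the exponent $1/\tilde r$. All the individual steps check out (in particular the identification $g(\xi)^{(s,t]}=g(\xi^{(s,t]})$, the equality of densities via Theorem \ref{theorem:properties_C_T}(2), and the harmless passage to a right-continuous modification of $g$). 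What your version buys is transparency about where each hypothesis enters and an explicit, elementary constant depending only on $p,q,r,V(g),\|\rho\|_\infty,\|\xi\|_{\Phi_r,p}$; what the paper's version buys is brevity. Since your two-scale argument is in essence the proof of Avikainen's theorem itself, the mathematical content is the same, just inlined rather than cited.
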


\begin{proof}
We use \cite[Theorem 2.4]{Avikainen:09} and get that 
\equa
      \left ( \E |g(\xi) - (g(\xi))^{(s,t]}|^q \right )^\frac{1}{q}
& = & \left ( \E |g(\xi) - (g(\xi^{(s,t]}))|^q \right )^\frac{1}{q} \\
&\le& 3^\frac{q+1}{q} \left (\sup_{x\in \R} \rho(x)\right )^{\frac{1}{q}\frac{p}{p+1}} V(g) 
      \| \xi - \xi^{(s,t]} \|_p^{\frac{1}{q}\frac{p}{p+1}}.
\tion
Dividing by $(t-s)^{\frac{1}{r}\frac{1}{q}\frac{p}{p+1}}$ gives the assertion.
\end{proof}
\bigskip

In view of Example \ref{example:BpPhi2_diffusion} the following limiting case is  
important:

\begin{cor}
If $r\in [2,\infty)$, $\xi \in \bigcap_{p\in [1,\infty)} \B_p^{\Phi_r}$ has a bounded
density, and if $g:\R\to \R$ is of bounded variation, then
\[ g(\xi) \in \bigcap_{q\in [1,\infty)} \bigcap_{\tilde r \in (qr,\infty)} \B_q^{\Phi_{\tilde r}}. \]
\end{cor}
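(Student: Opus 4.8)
The plan is to deduce this directly from the preceding Theorem by choosing the auxiliary exponent $p$ appropriately and then using a scale-splitting argument to pass from the single value of $\tilde r$ that the Theorem produces to all larger values.

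First I would fix $q\in[1,\infty)$ and $\tilde r\in(qr,\infty)$ and set $\tilde r_0:=\min\{\tilde r,\,2qr\}$, so that $\tilde r_0\in(qr,2qr]$ and $\tilde r_0\le\tilde r$. Define $p:=\frac{qr}{\tilde r_0-qr}$. Then $p\in[1,\infty)$: indeed $p<\infty$ because $\tilde r_0>qr$, and $p\ge1$ because $\tilde r_0\le 2qr$; moreover a one-line computation gives $\frac{p+1}{p}\,qr=\tilde r_0$, since $\frac{p+1}{p}=1+\frac1p=1+\frac{\tilde r_0-qr}{qr}=\frac{\tilde r_0}{qr}$. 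Since by hypothesis $\xi\in B_p^{\Phi_r}$ for this $p$ (as $\xi\in\bigcap_{p\in[1,\infty)}B_p^{\Phi_r}$) and $\xi$ has a bounded density, the Theorem applies with exponents $p$ and $r$ and yields $g(\xi)\in B_q^{\Phi_{\tilde r_0}}$; in particular $\|g(\xi)\|_{\Phi_{\tilde r_0},q}<\infty$, and $\E|g(\xi)|^q<\infty$ because a function of bounded variation is bounded.

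It remains to upgrade $B_q^{\Phi_{\tilde r_0}}$-membership to $B_q^{\Phi_{\tilde r}}$-membership, i.e.\ to bound $\Phi_{\tilde r}(F)$ for the function $F(\vph)=\|g(\xi)-(g(\xi))^\vph\|_q$ on $\ws$. Here I would split the supremum defining $\Phi_{\tilde r}(F)$ according to whether $t-s\le 1$ or $t-s>1$. On $\{t-s\le 1\}$ one has $(t-s)^{-1/\tilde r}\le(t-s)^{-1/\tilde r_0}$, because $x\mapsto x^{a}$ is non-increasing on $(0,1]$ and $1/\tilde r\le1/\tilde r_0$, so this part of the supremum is at most $\|g(\xi)\|_{\Phi_{\tilde r_0},q}$. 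On $\{t-s>1\}$ one uses $(t-s)^{-1/\tilde r}\le 1$ together with $\|g(\xi)-(g(\xi))^{(s,t]}\|_q\le 2\|g\|_\infty$ (again using that $g$ is bounded, and that $(g(\xi))^{(s,t]}=g(\xi^{(s,t]})$ by Theorem \ref{theorem:properties_C_T}(3)) to bound this part by $2\|g\|_\infty$. Hence $\Phi_{\tilde r}(F)\le\max\{\|g(\xi)\|_{\Phi_{\tilde r_0},q},\,2\|g\|_\infty\}<\infty$, and together with $\E|g(\xi)|^q<\infty$ this gives $g(\xi)\in B_q^{\Phi_{\tilde r}}$ by Definition \ref{definition:generalised_Besov_space}. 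Since $q$ and $\tilde r$ were arbitrary, the claim follows.

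I do not expect a genuine obstacle here beyond bookkeeping. The two points that require a little care are: (i) the auxiliary $p$ produced by $\frac{p+1}{p}qr=\tilde r_0$ must land in $[1,\infty)$, which forces the truncation $\tilde r_0\le 2qr$ and is precisely why the Theorem alone does not immediately hand one all $\tilde r\in(qr,\infty)$; and (ii) the monotonicity $\Phi_{\tilde r}\le\Phi_{\tilde r_0}$ is valid only on small scales $t-s\le 1$, so the large-scale part of the supremum (relevant when $T>1$) has to be absorbed separately using the boundedness of $g$.
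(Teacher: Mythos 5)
Your argument is correct and is precisely the one the paper leaves implicit: as $p$ runs over $[1,\infty)$ the theorem produces exactly the exponents $\tilde r=\frac{p+1}{p}qr\in(qr,2qr]$, and the remaining $\tilde r$ are reached by monotonicity of the scale $\Phi_{\tilde r}$. As a cosmetic remark, the case split at $t-s=1$ can be avoided by the single estimate $(t-s)^{-1/\tilde r}\le T^{\frac{1}{\tilde r_0}-\frac{1}{\tilde r}}(t-s)^{-1/\tilde r_0}$, valid for all $0<t-s\le T$ because $\frac{1}{\tilde r_0}-\frac{1}{\tilde r}\ge 0$, which yields $\Phi_{\tilde r}(F)\le T^{\frac{1}{\tilde r_0}-\frac{1}{\tilde r}}\,\Phi_{\tilde r_0}(F)$ without invoking the boundedness of $g$ a second time.
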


%%%%%%%%%%%%%%%%%%%%%%%%%%%%%%%%%%%%%%%%%%%%%%%%%%%%%%%%%%%%%%%%%%%%%%%%%%%%%%%%%%%%%%%%%%%%%%%

\section{Examples}

\subsection{Forward diffusions}
\label{subsec:forward_diffusions}

The Malliavin differentiability of diffusions is well investigated,
see for example \cite{Nualart:06}. So the following is expected:

\begin{example}
\label{example:BpPhi2_diffusion}
Let 
\[ X_t = x_0 + \int_0^t \sigma(s,X_s) dW_s + \int_0^t b(s,X_s) ds \]
where $\sigma:[0,T]\times \R^d \to \R^d\times \R^d$ and $b:[0,T]\times \R^d\to \R^d$ are bounded and continuous, and satisfy 
\[ | \sigma(t,x)-\sigma(t,y)| + |b(t,x)-b(t,y)| \le L |x-y|
   \sptext{1}{for some}{1}
   L\ge 0. \]
By the proof of \cite[Theorem  3]{GGG:12} this implies for 
$p\in [2,\infty)$ that 
\[ \| X_T^\varphi - X_T \|_p \le c \left ( \int_0^T \varphi(r)^2 dr \right )^\frac{1}{2} \]
with $c=c(p,T,b,\sigma)>0$. In particular, for $X_T=(X_T^1,...,X_T^d)$,
\[ X_T^i \in \bigcap_{p\in (0,\infty)}  \B_p^{\Phi_2} \]
which follows by using $\varphi=\chi_{(s,t]}$ for $0\le s < t \le T$.
\end{example}

\subsection{Local time}

One can look at the fractional smoothness of local times 
$(L_t^\alpha)_{t\in (0,T],\alpha\in \R}$ of a one-dimensional Brownian motion
from different points of view: In 
\cite{Boufoussi:Roynette:93, Boufoussi:96} the smoothness with respect
to the state variable $\alpha$ is under consideration, whereas in 
\cite{Nualart:Vives:92,Airault:Ren:Zhang:00} (with a generalization 
in \cite{Xie:Zhang:07}) the smoothness in $\omega$ for fixed $(t,\alpha)$ is investigated
within the interpolation spaces generated by the Ornstein-Uhlenbeck operator.
Our result complements \cite[Theorem 1]{Airault:Ren:Zhang:00}.
 The smoothness obtained in \cite{Airault:Ren:Zhang:00} 
is strictly smaller than $1/2$. 
In Theorem \ref{thm:smoothness_N_L^L} and 
Corollary \ref{cor:smoothness_local_time} below we show that in the class
of Besov spaces $\B_p^\Phi$ the function $\Phi_r$ defined in 
\eqref{eqn:Phi_r} with $r=4$ is the correct one. Interpreting 
$\Phi_2$ as smoothness 1, the function $\Phi_4$ corresponds to the smoothness
$1/2$. Our approach is similar to \cite{Airault:Ren:Zhang:00}: First we 
investigate the functional $N_T^L$ and then the local
time itself  by Tanaka's formula.
\bigskip

\begin{theorem}\label{thm:smoothness_N_L^L}
\index{local time ! {$N_T^\alpha$}}
Let $d=1$, $\alpha \in \R$, and
\[ N_T^\alpha := \int_{(0,T]} \chi_{\{W_t > \alpha \}} dW_t. \]
Then, for all $p\in (1,\infty)$, one has that
\[ N_T^\alpha \in \B_p^{\Phi_4} \setminus \left [ \bigcup_{r\in [2,4)}\B_p^{\Phi_r} \right ]. \]
\end{theorem}
\medskip

\begin{remark}
\begin{enumerate}
\item The natural range for the parameter $r$ in $\Phi_r$ is $r\in [2,\infty)$ so that we used the condition 
      $r\in [2,4)$ instead of the equivalent one $r\in (0,4)$.
\item It follows that  $N_T^\alpha \in \B_p^{\Phi_4}$ for all $p \in (0,\infty)$, but for the part 
      $N_T^\alpha \not \in \bigcup_{r\in [2,4)}\B_p^{\Phi_r}$ our argument uses $p>1$.
\end{enumerate}
\end{remark}
\smallskip

\begin{proof}[Proof of Theorem \ref{thm:smoothness_N_L^L}]
(a) Denote $\xi = N_T^\alpha$. For the part $N_T^\alpha \in \B_p^{\Phi_4}$
we only need to consider the case $p\in [2,\infty)$ and let $0\le a < b \le T$. Then, a.s., 
\equa
        \xi - \xi^{(a,b]} 
& = &   \int_{(a,b]} \chi_{\{W_t > \alpha \}} dW_t 
      - \int_{(a,b]} \chi_{\{W_t^{(a,b]} > \alpha \}} dW_t^{(a,b]} \\
&   & + \int_{(b,T]} \left [ \chi_{\{W_t > \alpha \}} -  \chi_{\{W_t^{(a,b]} > \alpha \}}
                     \right ] dW_t 
\tion
where we use that  
$(\int_{(0,T]} \chi_{\{W_t > \alpha \}} dW_t)^{(a,b]}=  \int_{(0,T]} \chi_{\{W_t^{(a,b]} > \alpha \}} dW_t^{(a,b]}$ a.s.
which can be proved by approximating the stochastic integral by Riemann sums that converge in $L_2$ towards the original
integral and to apply the $\cdot^{(a,b]}$-operation to the Riemann sums.
Then, by the Burkholder-Davis-Gundy inequalities,
\equa
&   & \|  \xi - \xi^{(a,b]} \|_p \\
&\le& 2 \left \| \int_{(a,b]} \chi_{\{W_t > \alpha \}} dW_t \right \|_p 
       + \left \| \int_{(b,T]} \left [ \chi_{\{W_t > \alpha \}} -  \chi_{\{W_t^{(a,b]} > \alpha \}}
                     \right ] dW_t \right \|_p \\
&\le& \beta_p \left [ 2 \sqrt{b-a} + 
      \left \| \left ( \int_b^T \left| \chi_{\{W_t > \alpha \}} -  \chi_{\{W_t^{(a,b]} > \alpha \}}\right |^2 dt\right )^\frac{1}{2} \right \|_p 
      \right ] \\
& = & \beta_p \left [ 2 \sqrt{b-a} + 
      \left \| \int_b^T \chi_{I_\alpha(W_b,W_b^{(a,b]})}(W_t-W_b) dt \right \|_q ^\frac{1}{2} \right ] \\     
\tion
for $q:=p/2\in [1,\infty)$ and 
\[ I_\alpha(u,v) := (\alpha-u,\alpha-v] 
                        \cup  (\alpha-v,\alpha-u]
                  = (\alpha-\max\{u,v\},\alpha-\min\{u,v\}]. \]
Let $-\infty < A < B < \infty$ and define the function $f_{A,B}: \R\to \R$ by
\[ f_{A,B}(x) := \begin{cases}
                 0 & : x\le A \\
                 (x-A)^2 & : A < x  < B \\
                 (B-A)^2 + 2 (B-A)(x-B) & : B \le x
                 \end{cases}. \]
By the It\^o-Tanaka formula and the occupation times formula 
(see \cite[VI.1.5 and VI.1.6]{Revuz:Yor:99}) we get that, a.s.,
\[ f_{A,B}(W_T-W_b) = f_{A,B}(0) + \int_{(b,T]} f'_{A,B}(W_t-W_b) dW_t
                                   + \int_b^T \chi_{(A,B]}(W_t-W_b) dt. \] 
This gives that
\equa
&   & \left \|  \int_b^T \chi_{(A,B]}(W_t-W_b) dt \right \|_q \\
&\le& \left \| f_{A,B}(W_T-W_b) -  f_{A,B}(0)\right \|_q +  \left \| \int_{(b,T]} f'_{A,B}(W_t-W_b) dW_t \right \|_q\\
&\le& \| f'_{A,B} \|_\infty\left [  \left \| W_T-W_b\right \|_q + \beta_q\sqrt{T-b} \right ]\\
& = & 2(B-A) \left [  \left \| W_T-W_b\right \|_q + \beta_q\sqrt{T-b} \right ] \\
&\le& 4\beta_q\sqrt{T-b} (B-A).
\tion
Then
\equa
      \left \| \int_b^T \chi_{I_\alpha(W_b,W_b^{(a,b]})}(W_t-W_b) dt \right \|_q
&\le& 4\beta_q\sqrt{T-b} 
      \left \| W_b-W_b^{(a,b]} \right \|_q \\
&\le& 8 \beta_q\sqrt{T-b} 
      \left \| W_b-W_a \right \|_q \\ 
&\le& 8 \beta_q^2 \sqrt{T-b} \sqrt{b-a}.
\tion
Summarizing gives
\[     \|  \xi - \xi^{(a,b]} \|_p 
   \le \beta_p \left [ 2 \sqrt{b-a} + 
               (8 \beta_q^2 \sqrt{T-b} \sqrt{b-a})^\frac{1}{2} \right ]. \]
(b) Let us turn to the lower bound, where we assume $p\in (1,\infty)$. We obtain 
\equa
&   & \|  \xi - \xi^{(a,b]} \|_p \\
&\ge& -2 \left \| \int_{(a,b]} \chi_{\{W_t > \alpha \}} dW_t \right \|_p 
       + \left \| \int_{(b,T]} \left [ \chi_{\{W_t > \alpha \}} -  \chi_{\{W_t^{(a,b]} > \alpha \}}
                     \right ] dW_t \right \|_p \\
&\ge& - 2 \beta_p  \sqrt{b-a} + \frac{1}{\beta_p}
      \left \| \int_b^T \chi_{I_\alpha(W_b,W_b^{(a,b]})}(W_t-W_b) dt \right \|_q ^\frac{1}{2}.
\tion
Let $a=0$ and observe that on
$\{ W_b \le - \sqrt{b}, W_b' \ge \sqrt{b} \}$ one has that
\[ I_\alpha(W_b,W_b^{(0,b]}) = (\alpha-W'_b,\alpha-W_b] 
   \supseteq (\alpha-\sqrt{b},\alpha+\sqrt{b}). \]
Therefore, for $b\in (0,T/2)$,
\equa
&   & \|  \xi - \xi^{(0,b]} \|_p \\
&\ge& - 2 \beta_p  \sqrt{b}  
      + \frac{1}{\beta_p} \overline{\P} (W_b \le - \sqrt{b}, W_b' \ge \sqrt{b})^\frac{1}{2q}
      \left \| \int_b^T \chi_{(\alpha - \sqrt{b},\alpha +\sqrt{b})}(W_t-W_b) dt \right \|_q ^\frac{1}{2}\\
& = & - 2 \beta_p  \sqrt{b} 
      + \frac{1}{\beta_p} \overline{\P} (W_1 \le - 1, W_1' \ge 1)^\frac{1}{2q}
      \left \| \int_b^T \chi_{(\alpha - \sqrt{b},\alpha +\sqrt{b})}(W_t-W_b) dt \right \|_q ^\frac{1}{2} \\
&\ge& - 2 \beta_p  \sqrt{b}  
      + \frac{1}{\beta_p} \overline{\P} (W_1 \le - 1, W_1' \ge 1)^\frac{1}{2q}
      \left \| \int_b^{\frac{T}{2}+b} \chi_{(\alpha - \sqrt{b},\alpha +\sqrt{b})}(W_t-W_b) dt \right \|_q ^\frac{1}{2} \\
& = & - 2 \beta_p  \sqrt{b}  
      + \frac{1}{\beta_p} \overline{\P} (W_1 \le - 1, W_1' \ge 1)^\frac{1}{2q}
      \left \| \int_0^{\frac{T}{2}} \chi_{(\alpha - \sqrt{b},\alpha +\sqrt{b})}(W_t) dt \right \|_q ^\frac{1}{2}.
\tion
For the local time of the Brownian motion one has 
(see \cite[Corollary VI.1.9]{Revuz:Yor:99})
\[ L_t^\alpha = \lim_{\varepsilon \downarrow 0} \frac{1}{2\varepsilon} \int_0^t \chi_{(\alpha-\varepsilon,\alpha+\varepsilon)}(W_s) ds 
   \mbox{ a.s.} \]
Therefore, by Fatou's Lemma,
\[ \liminf_{b\downarrow 0} \frac{1}{\sqrt[4]{b}} 
   \left \| \int_0^{\frac{T}{2}} \chi_{(\alpha - \sqrt{b},\alpha +\sqrt{b})}(W_t) dt \right \|_q^\frac{1}{2}
   \ge \sqrt{2 \| L^\alpha_{\frac{T}{2}} \|_q} > 0. \]
\end{proof}

Because the local time $L_t^\alpha$ 
\index{local time ! {$L_T^\alpha$}}
can be expressed by Tanaka's formula by
\[ \frac{1}{2} L_T^\alpha = (W_T-\alpha)^+ - (W_0-\alpha)^+ - N_T^\alpha, \] 
see \cite[Theorem VI.1.2]{Revuz:Yor:99}, and because $(W_T-\alpha)^+\in \B_p^{\Phi_2}$ for all
$p\in (0,\infty)$ we immediately get the following corollary:

\begin{cor}\label{cor:smoothness_local_time}
For all $\alpha\in \R$ and $p\in (1,\infty)$ one has that
\[ L_T^\alpha \in \B_p^{\Phi_4}\setminus \left [ \bigcup_{r\in [2,4)} \B_p^{\Phi_r} \right ]. \]
\end{cor}

%%%%%%%%%%%%%%%%%%%%%%%%%%%%%%%%%%%%%%%%%%%%%%%%%%%%%%%%%%%%%%%%%%%%%%%%%%%%%%%%%%%%%%%%%%%%%%%%%
%%%%%%%%%%%%%%%%%%%%%%%%%%%%%%%%%%%%%%%%%%%%%%%%%%%%%%%%%%%%%%%%%%%%%%%%%%%%%%%%%%%%%%%%%%%%%%%%%

\chapter{\st{Continuous} BMO-Martingales}
\label{chapter:BMO}

The theory of BMO-martingales has become an important tool in the investigation of BSDEs.
For an account on this topic the reader is referred, for example, to \cite[p. 298]{Delbaen:Tang:10} 
and \cite[p. 2922]{Briand:Elie:13}. In particular, there are two key ingredients
that we will use as well: Fefferman's inequality and their generalizations, and the notion of reverse H\"older inequalities.  
In addition to these two ingredients, we exploit the concept of sliceable 
BMO-martingales which can be seen as a natural enhancement for the previous techniques.  
Sliceable BMO-martingales were used by Emery \cite{Emery:78,Emery:79} and Schachermayer \cite{Schachermayer:96}, and 
in the context of backward stochastic differential equations by 
Delbaen and Tang \cite{Delbaen:Tang:10} and Frei \cite{Frei:14}.
\smallskip

Throughout this chapter we assume a stochastic basis 
$(A,\cA,\Q,(\cA_t)_{t\in [0,T]})$, $T>0$, where $(A,\cA,\Q)$ is complete, $(\cA_t)_{t\in [0,T]}$ is 
right-continuous, $\cA_0$ contains all null-sets, and $\cA=\cA_T$. 

%%%%%%%%%%%%%%%%%%%%%%%%%%%%%%%%%%%%%%%%%%%%%%%%%%%%%%%%%%%%%%%%%%%%%%%%%%%%%%%%%%%%%%%%%%%%%

\section{\st{Continuous BMO-martingales and sliceable numbers}}

First we recall the notion of a BMO-martingale.

\begin{definition}\label{def:bmo}
\index{BMO!$\bmo_2$}
\index{BMO!$\bmo$}
A \st{\em continuous} martingale $M=(M_t)_{t\in [0,T]}$ is of {\it bounded mean oscillation} 
(we write $M\in \bmo$) provided that 
$M_0\equiv 0$ and there
is constant $c>0$ such that for all stopping times $\tau:A\to [0,T]$
one has that
\[ \E (|M_T-M_\tau|^2 | \cA_\tau) \le c^2 \mbox{ a.s.}. \]
We let $\| M\|_{\bmo_2} := \inf c$ where the infimum is taken over all $c>0$ as
above.
\end{definition}
\medskip

Next we introduce the  {\em sliceable numbers}. Without being defined 
explicitly, these numbers have their origin in an article of Schachermayer \cite{Schachermayer:96}
and will be used via Theorem \ref{theorem:scliceable_rh} below in our article.
Before giving the definition let us recall the
notation
\[ ^\sigma M^\tau := (M_{\tau \wedge t} - M_{\sigma \wedge t})_{t\in [0,T]} \]
\st{for random times $\sigma,\tau:A\to [0,T]$ with $0\le \sigma \le \tau\le T$.}
\index{$^\sigma M^\tau$}

\pagebreak

\begin{definition}
\label{definition:sliceable}
\index{sliceable}
\index{BMO!$\sli_N(M)$}
\index{BMO!$\sli_\infty(M)$}
For a \st{(continuous)} \bmo-martingale $M=(M_t)_{t\in [0,T]}$ and $N\ge 1$ we let
\[ \sli_N(M) := \inf \vare, \]
where the infimum is taken over all $\vare>0$ such that there are stopping times
$0=\tau_0\le \tau_1 \le \cdots \le \tau_N = T$ with
\[  \sup_{k=1,...,N} \| ^{\tau_{k-1}}M^{\tau_k} \|_{\bmo_2} \le \varepsilon. \]
Moreover, we let
\[ \sli_\infty(M):= \lim_N \sli_N(M). \]
We call $\sli_N(M)$ the {\em $N$-sliceable number} of $M$.
The \st{(continuous)} \bmo-martingale $M$ is called {\em sliceable} provided that
$\sli_\infty(M) =0$.
\end{definition}
\bigskip

Before we summarise some simple properties of the sliceable numbers we need 
the following lemma:

\begin{lemma}\label{lemma:refinement_bmo}
Let $0\le \sigma \le \tau \le T$ be stopping times and
$0=\tau_0\le \tau_1 \le \cdots \le \tau_N=T$ be a net of stopping times such that for all
$\omega\in A$ there is a $k\in \{1,...,N\}$ such that
\[ (\sigma(\omega),\tau(\omega)] \subseteq (\tau_{k-1}(\omega),\tau_k(\omega)]. \]
Then, for a \st{(continuous)} BMO-martingale $N$, one has that
\[ 
\| ^\sigma N^\tau \|_{\bmo_2} \le \sup_{k=1,...,N}  \| ^{\tau_{k-1}} N^{\tau_k} \|_{\bmo_2}.
\]
\end{lemma}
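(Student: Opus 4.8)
\emph{Proof plan.} Write $L:={}^{\sigma}N^{\tau}$, $M^{(k)}:={}^{\tau_{k-1}}N^{\tau_k}$ and $C:=\sup_{k=1,\dots,N}\|M^{(k)}\|_{\bmo}$, which we may assume finite. Since $N$ is a continuous $\bmo$-martingale it is a continuous square-integrable martingale, so $\langle N\rangle$ exists, and for any stopping time $\rho$ one has $\E(|L_T-L_\rho|^2\mid\cA_\rho)=\E(\langle L\rangle_T-\langle L\rangle_\rho\mid\cA_\rho)$; hence the plan is to show that this is $\le C^2$ a.s.\ for every $\rho$, and then to take the infimum over the constants in Definition~\ref{def:bmo}. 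First I would record the elementary identity: from $\langle L\rangle_t=\langle N\rangle_{\tau\wedge t}-\langle N\rangle_{\sigma\wedge t}$ one gets, by checking the three cases $\rho\le\sigma$, $\sigma\le\rho\le\tau$, $\rho\ge\tau$ and using $\sigma\le\tau$, that
\[ \langle L\rangle_T-\langle L\rangle_\rho=\langle N\rangle_\tau-\langle N\rangle_{\rho'}=\int_{(\rho',\tau]}d\langle N\rangle,\qquad \rho':=(\sigma\vee\rho)\wedge\tau, \]
where $\rho'$ is a stopping time with $\sigma\le\rho'\le\tau$, so in particular $(\rho',\tau]\subseteq(\sigma,\tau]$.

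The one genuine idea is that the hypothesis forces the whole slice $(\sigma,\tau]$ to lie in a \emph{single} block of the net. Set $\kappa:=\max\{k\in\{1,\dots,N\}:\tau_{k-1}\le\sigma\}$, which is well defined since $\tau_0=0\le\sigma$. If $k$ satisfies $(\sigma,\tau]\subseteq(\tau_{k-1},\tau_k]$, i.e.\ $\tau_{k-1}\le\sigma$ and $\tau\le\tau_k$, then $k\le\kappa$, hence $\tau\le\tau_k\le\tau_\kappa$; thus $\tau_{\kappa-1}\le\sigma$ and $\tau\le\tau_\kappa$, so $(\sigma,\tau]\subseteq(\tau_{\kappa-1},\tau_\kappa]$ and a fortiori $(\rho',\tau]\subseteq(\tau_{\kappa-1},\tau_\kappa]$. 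On $\{\kappa=k\}$ one has $\tau_{k-1}\le\sigma\le\rho'\le\tau\le\tau_k$ and $\langle M^{(k)}\rangle_t=\langle N\rangle_{\tau_k\wedge t}-\langle N\rangle_{\tau_{k-1}\wedge t}$, so a short computation gives $\langle M^{(k)}\rangle_T-\langle M^{(k)}\rangle_{\rho'}=\langle N\rangle_{\tau_k}-\langle N\rangle_{\rho'}\ge\langle N\rangle_\tau-\langle N\rangle_{\rho'}$. Introducing $\nu:=\rho\vee\rho'$ (so that $\nu=\rho'$ on $\{\rho\le\tau\}$, while on $\{\rho>\tau\}$ the left-hand side below vanishes and the right-hand side is $\ge 0$ since $\langle M^{(k)}\rangle$ is nondecreasing), this yields the pointwise domination
\[ \langle L\rangle_T-\langle L\rangle_\rho\;\le\;\sum_{k=1}^N\chi_{\{\kappa=k\}}\bigl(\langle M^{(k)}\rangle_T-\langle M^{(k)}\rangle_{\nu}\bigr)\qquad\text{a.s.} \]

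Finally I would take conditional expectations given $\cA_\rho$. The event $\{\kappa=k\}$ depends only on $\sigma$ and on the net, so $\{\kappa=k\}\in\cA_\sigma$; since $\sigma\le\rho'\le\nu$ and $\rho\le\nu$, both $\cA_\sigma\subseteq\cA_\nu$ and $\cA_\rho\subseteq\cA_\nu$, hence $\{\kappa=k\}\in\cA_\nu$ and one may apply the tower property. Using $\E(\langle M^{(k)}\rangle_T-\langle M^{(k)}\rangle_\nu\mid\cA_\nu)=\E(|M^{(k)}_T-M^{(k)}_\nu|^2\mid\cA_\nu)\le\|M^{(k)}\|_{\bmo}^2\le C^2$ (valid for the stopping time $\nu$ by the definition of the $\bmo$-norm) and $\sum_k\chi_{\{\kappa=k\}}=1$, one obtains
\[ \E\bigl(\langle L\rangle_T-\langle L\rangle_\rho\mid\cA_\rho\bigr)\le\E\!\left(\sum_{k=1}^N\chi_{\{\kappa=k\}}\,\E\bigl(\langle M^{(k)}\rangle_T-\langle M^{(k)}\rangle_\nu\mid\cA_\nu\bigr)\,\Big|\,\cA_\rho\right)\le C^2, \]
which is exactly what is needed.

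The main obstacle is not conceptual but is the stopping-time bookkeeping: the auxiliary stopping time $\nu=\rho\vee\rho'$ must be chosen so that the "relevant block index" $\kappa$ is measurable with respect to the $\sigma$-algebra over which one conditions (this is why $\cA_\rho\cup\cA_\sigma\subseteq\cA_\nu$ is arranged), and one must treat separately the degenerate situations $\sigma=\tau$ and $\rho\notin[\sigma,\tau]$, where both sides of the key inequality collapse. Everything else — the case checks for the two quadratic-variation identities and the fact that $\{\kappa=k\}\in\cA_\sigma$ — is routine.
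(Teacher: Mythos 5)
Your proof is correct and follows essentially the same route as the paper's: both reduce the conditional second moment at $\rho$ to a conditioning at a stopping time dominating $\sigma\vee\rho$, partition according to which block of the net contains $(\sigma,\tau]$, and apply the blockwise BMO bound via the tower property. The only cosmetic difference is that you phrase the block-by-block comparison as a pointwise domination of quadratic-variation increments, whereas the paper rewrites $N_{\bar\tau}-N_{\bar\sigma}$ directly as an increment of the sliced martingale ${}^{\tau_{k-1}}N^{\tau_k}$ on each cell $A_k=\{\sigma\vee\rho\in[\tau_{k-1},\tau_k)\}$; the two formulations are interchangeable via $\E(|M_T-M_\nu|^2\mid\cA_\nu)=\E(\langle M\rangle_T-\langle M\rangle_\nu\mid\cA_\nu)$.
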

\begin{proof}
Let $\rho:A\to [0,T]$ be a stopping time. Then
\equa
      \E (|^\sigma N^\tau_T  - ^\sigma N^\tau_\rho|^2 | \cA_\rho )
& = & \E (|N_{\tau\vee \rho} - N_{\sigma\vee \rho}|^2 | \cA_\rho ) \\
& = & \E \left ( \E (|N_{\tau\vee \rho} - N_{\sigma\vee \rho}|^2 | \cA_{\sigma\vee \rho} )
         | \cA_\rho \right ).
\tion
Now we observe that $(\bar\sigma,\bar\tau)$ with 
$\bar\sigma:=\sigma \vee \rho$ and $\bar\tau:=\tau\vee \rho$ shares the same property as
$(\sigma,\tau)$.
We let $A_{N+1} := \{ \bar\sigma =T \}$, and  for $k=1,...,N$, 
\[ A_k:= \{ \bar\sigma \in [\tau_{k-1},\tau_k) \}. \]
This gives a partition $A = \bigcup_{k=1}^{N+1} A_k$ with $A_k \in \cA_{\bar\sigma}$ and
we have that
\equa
      \E (|N_{\bar\tau} - N_{\bar\sigma}|^2 | \cA_{\bar\sigma} ) 
& = & \sum_{k=1}^N  \E (\chi_{A_k}|N_{\bar\tau} - N_{\bar\sigma}|^2 | \cA_{\bar\sigma} ) \\
& = & \sum_{k=1}^N\E 
      (  \chi_{A_k}  |N_{\bar\tau\wedge \tau_k} - N_{\bar\sigma\vee \tau_{k-1}}|^2 |\cA_{\bar\sigma}) \\
& = & \sum_{k=1}^N \E 
      (  \chi_{A_k} \E (   |N_{\bar\tau\wedge \tau_k} - N_{\bar\sigma\vee \tau_{k-1}}|^2 | \cA_{\bar\sigma\vee\tau_{k-1}})|\cA_{\bar\sigma}) \\
&\le& \sup_{k=1,...,N}  \| ^{\tau_{k-1}} N^{\tau_k} \|^2_{\bmo_2}.
\tion
\end{proof}

To formulate the next result we recall the space $\H_\infty$:

\begin{definition}
\label{definition:H_infty}
\index{space!$\H_\infty$}
\index{$(\langle N \rangle_t)_{t\in [0,T]}$}
We let $\H_\infty$ be the space of all \st{continuous} martingales $N=(N_t)_{t\in [0,T]}$ such that $N_0\equiv 0$
and 
\[ \| N \|_{\H_\infty} := \esssup_{\omega\in A} \st{\langle N \rangle}_T(\omega)  < \infty, \]
\st{where $(\langle N \rangle_t)_{t\in [0,T]}$ denotes the quadratic variation of $(N_t)_{t\in [0,T]}$
(see, for example, \cite[Section IV.1]{Revuz:Yor:99}).}

\end{definition}

It follows directly from the definition that $\H_\infty \subseteq \bmo$.
\medskip

\begin{lemma}
\label{lemma:properties_sliceable_numbers}
\index{BMO!$d_{\bmo_2}(M,\H_\infty)$}
For \st{(continuous)} BMO-martingales $M$, $M_1$, and $M_2$ one has the following:
\begin{enumerate}
\item $\sli_1(M) = \| M \|_{\bmo_2}$.
\item $\sli_1(M) \ge \sli_2(M) \ge \cdots \ge 0$.
\item $\sli_{N_1+N_2-1}(M_1+M_2) \le \sli_{N_1}(M_1) + \sli_{N_2}(M_2)$.
\item $\sli_\infty(M) = d_{\bmo_2}(M,\H_\infty)$, where
      \[ d_{\bmo_2}(M,\H_\infty) := \inf \{ \| M-N \|_{\bmo_2} : N\in \H_\infty \}. \]
\end{enumerate}
\end{lemma}

\begin{proof}
(1) and (2) are obvious. To prove (3), we assume $\eta>0$ and find nets
$0=\tau_0^i\le \cdots \le \tau_{N_i}^i = T$ such that
\[  \sup_{k=1,...,N_i} \left \| ^{\tau_{k-1}^i}M_i^{\tau_k^i} \right \|_{\bmo_2} \le \sli_{N_i}(M_i)+\eta. \] 
Now we let $(\sigma_k)_{k=0}^{N_1+N_2-1}$ be the union of 
$(\tau_k^1)_{k=0}^{N_1}$ and $(\tau_k^2)_{k=0}^{N_2}$ and 
define the new net $(\tau_k)_{k=0}^{N_1+N_2-1}$ to be the order statistics
of $(\sigma_k)_{k=0}^{N_1+N_2-1}$, i.e.
\equa
\tau_0 &:=& \min_k \sigma_k = 0, \\
\tau_{N_1+N_2-1} &:=& \max_k \sigma_k =T,\\
\tau_k &:=& \min_{\genfrac[]{0pt}{0}{I\subseteq \{1,...,N_1+N_2-2\}}{{\rm card}(I)=k}}
            \max_{l\in I} \sigma_l.
\tion
With this definition and Lemma \ref{lemma:refinement_bmo} we  get for
$k=1,...,N_1+N_2-1$ that
\equa
&   & \left \| ^{\tau_{k-1}}(M_1+M_2)^{\tau_k} \right \|_{\bmo_2} \\
&\le& \left \|^{\tau_{k-1}} M_1^{\tau_k} \right \|_{\bmo_2} 
       + \left \| ^{\tau_{k-1}}M_2^{\tau_k} \right \|_{\bmo_2} \\
&\le& \sup_{k_1=1,...,N_1} 
      \left \|^{\tau^1_{k_1-1}} M_1^{\tau^1_{k_1}} \right \|_{\bmo_2} 
       + \sup_{k_2=1,...,N_2}
         \left \| ^{\tau^2_{k_2-1}}M_2^{\tau^2_{k_2}} \right \|_{\bmo_2} \\
&\le& \sli_{N_1}(M_1)+ \sli_{N_2}(M_2)+2\eta.
\tion
By $\eta \downarrow 0$ the assertion follows.
\medskip

(4) This part \st{is exactly \cite[Theorem 1.1, Corollary 1.2]{Schachermayer:96},} where we have to observe that our setting of a bounded 
time interval $[0,T]$ does not make a difference compared to $[0,\infty)$ from \cite{Schachermayer:96}.
\end{proof}

The next example will be used later:

\begin{example}
\label{example:abstract_2variation}
\st{For a continuous martingale $M=(M_t)_{t\in [0,T]}$} assume that
\[ \langle M \rangle_t = \int_0^t c_s^2 ds, \quad t\in [0,T], \quad \mbox{a.s.} \]
for some predictable process $c=(c_t)_{t\in [0,T]}$ and that 
there is a $\delta>0$ and some $\kappa \in [0,\infty)$ such that
\[ \left [ \E \left ( \int_\tau^T |c_s|^{2+\delta} ds | \cA_\tau \right ) \right ]^\frac{1}{2+\delta}
   \le \kappa \mbox{ a.s.} \]
for all stopping times $\tau:A\to [0,T]$. Then, for 
$\alpha := \frac{1}{2} - \frac{1}{2+\delta} >0$, and $N\ge 1$,
\[ \sli_N(M) \le \kappa \left ( \frac{T}{N} \right )^\alpha. \]
\end{example}
\begin{proof}
For $0\le a < b \le T$ we simply get a.s. that
\equa
      \left [ \E \left ( \int_\tau^T \chi_{(a,b]}(s)|c_s|^2 ds | \cA_\tau \right ) \right ]^\frac{1}{2}
& = & \left [ \E \left ( \int_{\tau\vee a}^{\tau\vee b} |c_s|^2 ds | \cA_\tau \right ) \right ]^\frac{1}{2} \\
&\le& \left [ \E \left ( \int_{\tau\vee a}^{\tau\vee b} |c_s|^{2+\delta} ds | \cA_\tau \right ) \right ]^\frac{1}{2+\delta} 
      (b-a)^{\alpha} \\
&\le& \left [ \E \left ( \int_\tau^T |c_s|^{2+\delta} ds | \cA_\tau \right ) \right ]^\frac{1}{2+\delta} 
      (b-a)^{\alpha} \\
&\le& \kappa (b-a)^{\alpha}. 
\tion
Choosing an equidistant partition of $[0,T]$ consisting of $N$ intervals
concludes the proof.
\end{proof}

%%%%%%%%%%%%%%%%%%%%%%%%%%%%%%%%%%%%%%%%%%%%%%%%%%%%%%%%%%%%%%%%%%%%%%%%%%%%%%%%%%%%%%%%%%%%%

\section{Fefferman's inequality and $\bmo(S_{2\theta})$ spaces}
\label{sec:BMO_S2eta}

In this section we slightly change the point of view: Instead of considering  martingales we think in terms of the
quadratic variation which is more convenient in the sequel for us.
The BMO-spaces, related to backward stochastic differential equations with generators
satisfying condition (B3) of Section \ref{sec:setting_bsdes} below, are defined 
as follows:

\begin{definition}
\label{definition:bmo(S_2theta)}
\index{BMO!$\bmo(S_{2\theta})$}
For $\theta\in (0,\infty)$  and an $\R$-valued progressively measurable process $Z=(Z_t)_{t\in [0,T]}$
with $\E \int_0^T |Z_s|^{2\theta} ds <\infty$ we let
$Z\in \bmo(S_{2\theta})$ provided that
\equa
      \| Z\|_{\bmo(S_{2\theta})}
&:= &  \sup_{t\in [0,T]} \left \| \E \left (  \int_t^T |Z_s|^{2\theta} ds | \cA_t \right ) 
                        \right \|_\infty^\frac{1}{2\theta} <  \infty.
\tion
\end{definition}

\st{
Before we continue  we rephrase Definition \ref{definition:sliceable}
in terms of $\bmo(S_2)$ for the usage in Theorem \ref{theorem:comparison_psi_phi} below:

\begin{definition}
\label{definition:sliceable_S2}
\index{BMO!$\sli_N^{S_2,\A}(c)$}
\index{BMO!$\sli_N^{S_2}(c)$}
For an $\R$-valued progressively measurable process $c=(c_t)_{t\in [0,T]}$, and $N\ge 1$, we let
\[ \sli_N^{S_2}(c) = \sli_N^{S_2,\A}(c) := \inf \vare, \]
where the infimum is taken over all $\vare>0$ such that there are stopping times
$0=\tau_0\le \tau_1 \le \cdots \le \tau_N = T$ with
\[  \sup_{k=1,...,N} \| (\chi_{(\tau_{k-1},\tau_k]}(t) c_t)_{t\in [0,T]} \|_{\bmo(S_2)} \le \varepsilon. \]
\end{definition}}
\medskip

The notation $S_{2\theta}$ \st{in Definition \ref{definition:bmo(S_2theta)}} is chosen to indicate that $\bmo(S_{2\theta})$ deals with
a modified square function.
For $\theta\in (1,\infty)$ we obtain a condition that is stronger than the classical 
\bmo-condition $\| Z\|_{\bmo(S_2)}$, whereas for $\theta\in (0,1)$ the condition gets weaker.
If we define
\[ \st{Y}_t := \int_0^t |Z_s|^{2\theta} ds, \]
then $Z\in \bmo(S_{2\theta})$ if and only if
\[ \sup_\tau \| \E (\st{Y}_T - \st{Y}_\tau | \cF_\tau) \|_\infty < \infty \]
with the supremum taken over all stopping times $\tau:A\to [0,T]$. This opens the path to apply known results  
about \bmo-spaces to the $\bmo(S_{2\theta})$-spaces. Therefore, by the John-Nirenberg Theorem we get that 
$Z\in \bmo(S_{2\theta})$ implies that 
\begin{equation}\label{eqn:JN_BMO(2theta)}
\int_0^T |Z_s|^{2\theta} ds \in L_{\exp}, 
\end{equation}
where the Orlicz space $L_{\exp}$ \index{space!$L_{\exp}$} is given by
\[ \| F\|_{L_{\exp}} := \inf \left \{ \lambda >0 : \E e^{\frac{|F|}{\lambda}} \le 2 \right \} \]
for a random variable $F$ taking values in $\R$, see \cite{Stroock:73,Garsia:73,Kazamaki:94} 
and \cite[Corollary 1]{Geiss:05}.
\medskip

For the next example the notion of a Banach function space is convenient:

\begin{definition}
\index{space!Banach function space}
\index{space!$E_\rho$}
A map $\rho:\cL_0^+(A,\cA,\Q) \to [0,\infty]$ defined on the non-negative random variables 
of $\cL_0(A,\cA,\Q)$ is a Banach function norm \index{space!Banach function norm} 
provided that the following conditions
are satisfied:
\begin{enumerate}
\item $\rho(X)=0$ if and only if $X=0$ a.s.
\item $\rho(X+Y) \le \rho(X) + \rho(Y)$.
\item $\rho(\alpha X) = \alpha \rho(X)$ for $\alpha\ge 0$.
\item $0\le X \le Y $ a.s. implies $\rho(X) \le \rho(Y)$.
\item $0\le X_n \uparrow X$ a.s. implies $\rho(X_n) \uparrow \rho(X)$.
\item $\rho(1) < \infty$.
\item There is a $c>0$ such that $\| X \|_1 \le c \rho(X)$
      for all $X\in \cL_0^+(A,\cA,\Q)$.
\end{enumerate}
The function $\rho$ is extended to 
$\|\cdot\|_{\st{E_\rho}}:L_0(A,\cA,\Q) \to [0,\infty]$ by $\|X\|_{\st{E_\rho}} := \rho(|X|)$ and we let
\[ E_\rho := \{ X\in L_0(A,\cA,\Q) : \| X \|_{\st{E_\rho}} < \infty \}. \]
\end{definition}
\medskip

The spaces $[E_\rho,\|\cdot\|_{\st{E_\rho}}]$ are Banach spaces having the Fatou property,
see \cite[Theorem 1.1.7]{Bennett:Sharpley:88}.

\begin{example}\label{example:weaker_bmo_is_weaker_new}
Let $T=1$ and assume that $\rho:L_0(A,\cA,\Q) \to [0,\infty]$ is a Banach function norm such that
for all $t\in (0,1]$ one has that 
\[ \sup \{ \| X \|_\infty : \|X\|_{\st{E_\rho}} \le 1, X\in L_0(A,\cA_t,\Q) \} = \infty. \]
Then for all $0< \theta < \eta \le 1$ there is a progressively measurable process $Z=(Z_t)_{t\in [0,T]}$ 
such that
\begin{enumerate}
\item $\int_0^T |Z_t|^{2\eta} dt \in E_\rho$,
\item $Z\in  \bmo(S_{2\theta})\setminus \bmo(S_{2\eta})$.
\end{enumerate}
\end{example}
\begin{proof}
Let $t_n := 1 -\frac{1}{2^n}$ for $n\ge 0$, take
\[ 0< \varepsilon < \frac{1}{2\theta} - \frac{1}{2\eta}, \]
and choose, for $n \ge 1$ random variables  $v_n: A\to\R$
that are  $\cA_{t_n}$-measurable and satisfy 
\[ 
   \| v_n \|_\infty = 2^{(n+1) \left [ \frac{1}{2\eta} + \varepsilon\right ]}
   \sptext{1}{but}{1}
   \| |v_n|^{2\eta} \|_{E_\rho} \le 1.
\]
Define the stochastic process $Z=(Z_t)_{t\in [0,1]}$ by
\[ Z_t := \sum_{n=2}^\infty \chi_{\left ( t_{n-1},t_n \right ]}(t) v_{n-1}. \]
Then we get the following three estimates:
\begin{enumerate}
\item For $n\ge 2$ we have 
      \[  \| Z \|_{\bmo(S_{2\eta})} \ge \| v_{n-1} \|_{\infty} (t_n-t_{n-1})^\frac{1}{2\eta}
                                     = 2^{n\left [ \frac{1}{2\eta} + \varepsilon \right ]} 2^{- \frac{n}{2\eta}} \to \infty
                    \]
      as $n\to \infty$, so that $\| Z \|_{\bmo(S_{2\eta})} = \infty$.
\item We have that
      \equa
            \| Z \|_{\bmo(S_{2\theta})}^{2\theta}
      &\le& \sum_{n=2}^\infty  \| v_{n-1}\|_{{\infty}}^{2\theta} (t_n-t_{n-1})  \\
      & = & \sum_{n=2}^\infty 2^{n \left [ \frac{\theta}{\eta} + 2 \vare \theta - 1 \right ]}
        <   \infty. \\
      \tion
\item On the other side, we have that
      \equa
            \left \| \int_0^T |Z_t|^{2\eta} dt \right \|_{E_\rho}
      &\le& \sum_{n=2}^\infty \| |v_{n-1}|^{2\eta} \|_{E_\rho} (t_n-t_{n-1}) \\
      &\le& \sum_{n=2}^\infty 2^{-n}
        <   \infty.
      \tion
\end{enumerate}
\end{proof}
\medskip

In the following we give a version of the generalized Fefferman's inequality 
that can be found in \cite[Lemma 1.6]{Delbaen:Tang:10}, see also \cite[Theorem 1.1]{Banuelos:Bennett:88}. 
Our contribution in Theorem \ref{thm:generalized_fefferman_measure} below 
consists in improving the asymptotic behavior of the constant from $p$ to $\sqrt{p}$
in Corollary \ref{cor:AB_generalized_Fefferman_inequality_new_constant} and
that the left-hand side in \eqref{eqn:thm:generalized_fefferman_measure}
is stronger than the left-hand side in \eqref{eqn:cor:AB_generalized_Fefferman_inequality_new_constant}.
\bigskip

We start with the definition of the $\H_p(S_2)$-spaces and continue by some elementary lemmas.
 
\begin{definition}
\index{space!$\H_p(S_2)$}
\label{definition:H_p(S_2)}
For $p\in (0,\infty]$ we define 
$\H_p(S_2)$ to be the space of all progressively measurable $\R$-valued process $\st{Z}=(\st{Z}_t)_{t\in [0,T]}$ such that
\[ \| \st{Z} \|_{\H_p(S_2)} := \left \| \left ( \int_0^T |\st{Z}_s|^2 ds \right )^\frac{1}{2} \right \|_p < \infty. \]
\end{definition}

\begin{lemma}
\label{lemma:partial_theta_integration}
Let $\mu$ be a finite measure on $\cB([0,T])$ with $\mu([0,T])>0$, $\theta \in (0,1)$, and let
\[ t_0 := \inf \{ t\in [0,T] : \mu ([0,t])>0 \}. \]
Then one has that
\[ \int_{[t_0,T]} \mu ([0,t])^{\theta-1} d\mu(t)
   \le \frac{1}{\theta} \mu ([0,T])^\theta. \]
\end{lemma}
The proof is standard and we leave it to the reader.
\bigskip

\begin{lemma}
\label{lemma:upper_bound_p-th_moment}
Let $p\in (1,\infty)$, $\nu$ be a finite measure on $\cB([0,T])$, and $f:[0,T]\to [0,\infty)$ be non-decreasing
and right-hand side continuous. Then
\[ \left | \int_{[0,T]} f(s) d\nu(s) \right |^p
   \le p \int_{[0,T]} \left | \int_{[0,t]} f(s) d\nu(s) \right |^{p-1} f(t) d\nu(t). \]
\end{lemma}

\begin{proof}
For $n\ge 1$ take the equi-spaced grid
\[ 0=t_0^n < t_1^n < \cdots < t_{2^n}^n =T. \]
By dominated  convergence it is enough to show that
\equa
&   & \left | f(0)  \nu(\{0\})
      + \sum_{i=1}^{2^n}  f(t_i^n) \nu ((t_{i-1}^n,t_i^n]) \right |^p \\
&\le& p \left | f(0) \nu(\{0\}) \right |^{p-1} f(0) \nu(\{0\}) + \\
&   & p \sum_{i=1}^{2^n}
      \left ( f(0) \nu(\{0\}) + \sum_{j=1}^i f(t_j^n) \nu ((t_{j-1}^n,t_j^n]) \right )^{p-1}f(t_i^n) \nu ((t_{i-1}^n,t_i^n]). 
\tion
Setting $a_0 := f(0)  \nu(\{0\}) $ and $a_i:= f(t_i^n)\nu ((t_{i-1}^n,t_i^n]) $ for $i=1,...,2^n$, this reads as
\[    \left | \sum_{i=0}^{2^n} a_i \right |^p 
  \le p \sum_{i=0}^{2^n}
      \left ( \sum_{j=0}^i a_j \right )^{p-1} a_i \]
which follows by writing the left-hand side as telescoping sum and applying 
the mean-value theorem from calculus.
\end{proof}

\begin{remark}
In Lemmas \ref{lemma:partial_theta_integration} and \ref{lemma:upper_bound_p-th_moment}
the factors $1/\theta$ and $p$ are sharp, but one does not have equalities in general
(one can check the cases where $\mu$ and $\nu$ are either the Lebesgue measure or the Dirac measure at (say) 
$T$, and $f\equiv 1$).
\end{remark}

\begin{definition}
\index{adapted random measure}
We call a map 
\[ \nu:A \times \cB([0,T])\to [0,\infty) \]
adapted random measure provided that 
\begin{enumerate}
\item the map $\nu(\omega,\cdot):\cB([0,T])\to [0,\infty)$ is a measure for all $\omega\in A$,
\item the map $\nu(\cdot,[0,t]):A \to \R$ is $\cA_t$-measurable for all
      $t\in [0,T]$.
\end{enumerate}
Moreover, we let
\[ \| \nu \|_{\bmo} := \sup_{t\in [0,T]} \| \E(\nu([t,T])|\cA_t) \|_\infty. \]
\end{definition}
Given any non-negative, non-decreasing, left-hand side continuous, and adapted process
$(f(s))_{s\in [0,T]}$, the process
$(\int_{[0,t]} f(s) d\nu(s))_{t\in [0,T]}$ is well defined, non-decrea\-sing, right-hand side continuous, and adapted.

\begin{lemma}
\label{lemma:fefferman_for_measures}
Let $\nu$ be an adapted random measure,
$(f(t))_{t\in [0,T]}$ be non-decreasing, adapted, non-negative, and left-hand side continuous. Then,
one has that
\[ \E \int_{[0,T]} f(s) d\nu(s) \le \E f(T) \| \nu \|_{\bmo}. \]
\end{lemma}

\begin{proof}
We can assume that $ \E f(T) \| \nu \|_{\bmo}<\infty$, otherwise there is nothing to prove.
Assuming the equi-spaced net 
\[ 0=t_0^n < \cdots < t_{2^n}^n=T, \]
it is sufficient to show that 
\[ \E \sum_{i=0}^{2^n-1} f(t_i^n)\nu ([t_i^n,t_{i+1}^n)) + \E f(T)\nu (\{T\})
   \le \E f(T) \sup_{j=0,...,2^n} \| \E( \nu ([t_j^n,T]) | \cA_{t_j^n} ) \|_{\infty}. \]
Letting $q_i^n:= \nu ([t_i^n,t_{i+1}^n))$ for $i=0,...,2^n-1$, $q_{2^n}^n := \nu (\{T\})$, and 
$a_0^n + \cdots + a_i^n = f(t_{i}^n)$, we get that
\equa
&   & \hspace*{-8em}
      \E \left [ \sum_{i=0}^{2^n-1} f(t_i^n)\nu ([t_i^n,t_{i+1}^n)) + f(T)\nu (\{T\}) \right ] \\
& = & \E \left [ \sum_{0\le j \le i \le 2^n} a_j^n q_i^n \right ] \\
& = & \sum_{j=0}^{2^n} \E \left [ a_j^n \E (q_j^n+\cdots+q_{2^n}^n|\cA_{t_j^n}) \right ] \\     
&\le& \E f(T) \sup_{j=0,...,2^n} \| \E( \nu ([t_j^n,T]) | \cA_{t_j^n} ) \|_{\infty}.
\tion      
\end{proof}

\begin{lemma}
\label{lemma:snd_term}
Let $\mu$ and $\nu$ be adapted random measures such that 
$(\mu(\cdot,[0,t]))_{t\in [0,T]}$ and $(\nu(\cdot,[0,t]))_{t\in [0,T]}$ are continuous processes.
Let $\eta\in (0,1)$, $p\in (1,\infty)$, and assume that
\[ \E  \left | \int_{[0,T]} \mu([0,t])^{\eta} d\nu (t) 
       \right |^p < \infty. \]
Then we have that 
\[  \left \| \int_{[0,T]} \mu([0,t])^{\eta} d\nu(t)
 \right \|_p 
    \le  p  \| \mu([0,T])^{\eta} \|_p
      \| \nu \|_{\bmo}. \]
\end{lemma}

\begin{proof}
For $p\in (1,\infty)$ we use 
Lemma \ref{lemma:upper_bound_p-th_moment} and 
Lemma \ref{lemma:fefferman_for_measures}
to get that
\equa
&   & \E \left |\int_{[0,T]} \mu([0,t])^{\eta} d\nu(t) \right |^p \\
&\le& p \E \int_{[0,T]} \left | \int_{[0,t]} \mu([0,s])^{\eta} d\nu(s)  \right |^{p-1}
      \mu([0,t])^{\eta}   d\nu(t) \\
&\le& p \E \left [ \left | \int_{[0,T]} \mu([0,s])^{\eta} d\nu(s)  \right |^{p-1}
                   \mu([0,T])^{\eta} \right ]
      \| \nu \|_{\bmo} \\
&\le& p \left [ \E  \left | \int_{[0,T]} \mu([0,t])^{\eta} d\nu (t) 
      \right |^p\right ]^\frac{p-1}{p}  \| \mu([0,T])^{\eta} \|_p
      \| \nu \|_{\bmo}.
\tion
Dividing by $\left [ \E  \left | \int_{[0,T]} \mu([0,t])^{\eta} d\nu (t) \right |^p\right ]^\frac{p-1}{p}$
in the case this expression is positive (otherwise there is nothing to prove),
gives the desired inequality.
\end{proof}

\begin{theorem}
\label{thm:generalized_fefferman_measure} 
Let $\mu$, $\nu$ be adapted random measures such that 
$(\mu(\cdot,[0,t]))_{t\in [0,T]}$ and $(\nu(\cdot,[0,t]))_{t\in [0,T]}$ are continuous processes
and $\mu(\omega,\{0\})>0$ for all $\omega\in A$.
Let $p\in (1,\infty)$ and assume that
\[ \E  \left | \int_{[0,T]} \mu([0,t])^{\frac{1}{2}} d\nu (t) 
       \right |^p < \infty. \]
Then we have that 
\begin{equation}\label{eqn:thm:generalized_fefferman_measure}
 \left \| \int_{[0,T]} \mu ([0,t])^{-\frac{1}{2}} d\mu(t) \right \|_p
 \left \| \int_{[0,T]} \mu([0,t])^{\frac{1}{2}} d\nu(t)
 \right \|_p 
\le  2p \| \mu ([0,T]) \|_{\frac{p}{2}}      \| \nu \|_{\bmo}.
\end{equation}
\end{theorem}
\smallskip

\begin{proof}
For $\theta = 1/2$ Lemma \ref{lemma:partial_theta_integration} gives that
\[ \left \| \int_{[0,T]} \mu ([0,t])^{-\frac{1}{2}} d\mu(t) \right \|_p
   \le 2 \left [ \E \mu ([0,T])^{\frac{p}{2}}\right ]^\frac{1}{p}
    =  2 \sqrt{ \| \mu ([0,T]) \|_{\frac{p}{2}}}. \]
Moreover, by Lemma \ref{lemma:snd_term} applied to $\eta=1/2$,
\[      \left \| \int_{[0,T]}\mu([0,t])^{\frac{1}{2}} d\nu(t) \right \|_p 
   \le  p  \| \mu([0,T])^{\frac{1}{2}} \|_p
        \| \nu \|_{\bmo} 
    =   p \sqrt{ \| \mu ([0,T]) \|_{\frac{p}{2}}}  \| \nu \|_{\bmo}. \qedhere\]
\end{proof}
\medskip

\begin{cor}
\label{cor:AB_generalized_Fefferman_inequality_new_constant}
\index{inequality!Fefferman's inequality}
Let $(A_t)_{t\in [0,T]}$ and $(B_t)_{t\in [0,T]}$ be progressively measurable $\R$-valued processes such that
$\E \int_0^T |B_t|^2 dt < \infty$ and $p\in [1,\infty)$. Then one has that
\begin{equation}\label{eqn:cor:AB_generalized_Fefferman_inequality_new_constant}
         \left \| \int_0^T | A_t B_t | dt \right \|_p 
     \le c_{\eqref{cor:AB_generalized_Fefferman_inequality_new_constant},p} \| A \|_{\H_p(S_2)} \| B \|_{\bmo(S_2)}
\end{equation}
with $c_{\eqref{cor:AB_generalized_Fefferman_inequality_new_constant},p}:=\sqrt{2p}$.
\st{If the optimal constant in \eqref{eqn:cor:AB_generalized_Fefferman_inequality_new_constant} is denoted by
$c^{\rm opt}_{\eqref{cor:AB_generalized_Fefferman_inequality_new_constant},p}$, then 
\begin{equation}\label{eqn:cor:AB_generalized_Fefferman_inequality_new_constant_opt}
\inf_{p\in [2,\infty)} \frac{c_{\eqref{cor:AB_generalized_Fefferman_inequality_new_constant},p}^{\rm opt}}{\sqrt{p}}>0,
\end{equation}
i.e. the order of magnitude $\sqrt{p}$ of $c_{\eqref{cor:AB_generalized_Fefferman_inequality_new_constant},p}$ as $p\to \infty$
is optimal.
}
\end{cor}
\medskip

\begin{proof}
\st{(1) We verify the inequality \eqref{eqn:cor:AB_generalized_Fefferman_inequality_new_constant}.}
We first assume that there is a $c>0$ such that $|A_s(\omega)| \le c$ and $|B_s(\omega)| \le c$ 
for all $(s,\omega)\in [0,T]\times A$.
For $\vare >0$ and the Dirac measure $\delta_0$ in $0$ define
\[ d\mu_\vare(t) := \vare d\delta_0(t) + A_t^2 dt 
   \sptext{1}{and}{1}
   d\nu (t) := B_t^2 dt. \]
Then, by Theorem \ref{thm:generalized_fefferman_measure},
\equa
 &   & \left \| \int_0^T | A_t B_t | dt \right \|_p \\
 &\le& \left \| \sqrt{\int_0^T\left |\vare + \int_0^t A_s^2ds\right |^{-\frac{1}{2}}  
                      | A_t |^2 dt} \right \|_{2p}
       \left \| \sqrt{ \int_0^T\left | \vare + \int_0^t A_s^2 ds \right |^\frac{1}{2} | B_t |^2 dt} \right \|_{2p} \\
 &\le& 
       \left \| \int_{[0,T]}|\mu_\vare([0,t])|^{-\frac{1}{2}}  d\mu_\vare(t) \right \|_{p}^\frac{1}{2}
       \left \| \int_{[0,T]} |\mu_\vare([0,t])|^\frac{1}{2} d\nu(t) \right \|_{p}^\frac{1}{2} \\
 &\le& \sqrt{ 2p \| \mu_\vare ([0,T]) \|_{\frac{p}{2}}      \| \nu \|_{\bmo}} \\
 & = & \sqrt{2p} \left \| \left (\vare + \int_0^T |A_t|^2 dt \right )^\frac{1}{2} \right \|_p 
        \sup_{t\in [0,T]} \left \| \E \left ( \int_t^T |B_s|^2 ds | \cA_t \right ) \right \|_\infty^\frac{1}{2}.
 \tion
By $\vare\downarrow 0$ we get that
\[ \left \| \int_0^T | A_s B_s | ds \right \|_p \le \sqrt{2p} \left \| \left (\int_0^T |A_t|^2 dt \right )^\frac{1}{2} \right \|_p 
        \sup_{t\in [0,T]} \left \| \E \left ( \int_t^T |B_s|^2 ds | \cA_t \right ) \right \|_\infty^\frac{1}{2} \]
whenever $|A_s(\omega)| \le c$ and $|B_s(\omega)| \le c$ for all $(s,\omega)\in [0,T]\times A$.
By monotone convergence we can omit the restriction on $A$ first, and finally we can do so for $B$ as well.
\medskip

\st{(2) We verify the inequality \eqref{eqn:cor:AB_generalized_Fefferman_inequality_new_constant_opt} and assume w.lo.g. that $T=1$
(otherwise we apply a re-scaling).
Let $A=B\in \bmo(S_2)$ and set $C_t := A_t^2$. Then  
\[       \left \| \int_0^T C_t dt \right \|_p 
     \le c^{\rm opt}_{\eqref{cor:AB_generalized_Fefferman_inequality_new_constant},p} \left \| \int_0^T C_t dt \right \|_{\frac{p}{2}}^\frac{1}{2}   
         \sup_{t\in [0,T]} \left \| \E \left ( \int_t^T C_s ds | \cA_t \right ) \right \|_\infty^\frac{1}{2}. \] 
Assume that we can choose $A=B\in \bmo(S_2)$ with
\begin{enumerate}
\item $M:=\sup_{t\in [0,T]} \left \| \E \left ( \int_t^T C_s ds | \cA_t \right ) \right \|_\infty < \infty$
\item and such that there exists a $c\in [1,\infty)$ such that for all $p\in [1,\infty)$ one has
      \[  \frac{p}{c}  \le  \left \| \int_0^T C_t dt \right \|_p \le c p. \]
\end{enumerate}
Then we would get that $\frac{p}{c} \le c^{\rm opt}_{\eqref{cor:AB_generalized_Fefferman_inequality_new_constant},p} \sqrt{c \frac{p}{2}}\sqrt{M}$ for $p\in [2,\infty)$ and therefore
\[ c_{\eqref{cor:AB_generalized_Fefferman_inequality_new_constant},p}^{\rm opt} \ge \frac{\sqrt{p}}{c \sqrt{c M}} \sqrt{2}. \]
\smallskip

Now we construct the process $C$. The probability space $(A,\cA,\Q)$ we define by
$A:=\{1,2,3,\ldots\}$ and $\Q(\{k\}) := 2^{-k}$ for $k\ge 1$,
where $\cA$ is the system of all subsets of $A$.
The right continuous filtration is constructed    in two steps. First we set
$\cA_{t_0} := \{\emptyset,A \}$ and
$\cA_{t_l} := \sigma (\{1\},\ldots,\{l\})$ for $l\ge 1$,
where $t_l := 1-2^{-l}$  for $l=0,1,2,\ldots$. Then this is extended 
to $(\cA_t)_{t\in [0,1]}$ by
$\cA_t     := \cA_{t_l}$ if $t\in [t_l,t_{l+1})$.
Finally we define the 
progressively measurable process $(C_t)_{t\in [0,1]}$ by
\[ C_t := \sum_{l=1}^\infty 2^l 1_{(t_{l-1},t_l]}(t) 1_{\{l,l+1,\ldots\}}
   \sptext{1}{for}{1}
   t\in [0,1]. \] 
For $\xi:= \int_0^1 C_t dt$ one gets 
$\Q (|\xi| = k) = \frac{1}{2^k}$ for $k=1,2,\ldots$ so that (by a standard computation using the Gamma function and Stirling's formula) one has
the two-sided estimate
$\frac{p}{c}  \le  \| \xi \|_p \le c p$ for all $p\in [1,\infty)$ and some $c\in [1,\infty)$.
On the other hand,
\[ \int_{\{l,l+1,\ldots\}} \left [ \int_{t_{l-1}}^1 C_s ds \right ] d\Q \le 2 \Q(\{l,l+1,\ldots\}) 
   \sptext{1}{for}{1}
   l\ge 1 \]
which implies that 
$\sup_{t\in [0,T]} \left \| \E \left ( \int_t^T C_s ds | \cA_t \right ) \right \|_\infty \le 2$.}
\end{proof}
\medskip

\begin{remark}
There is a connection to the  {\em Bhattacharyya coefficient} (also called 
{\em Hellinger coefficient}) of two measures, see \cite{Bhattacharyya:43}.
Assume two Borel measures $\mu,\nu$ on $\cB([0,T])$ and a reference measure 
$\sigma$ such that $\mu$ and $\nu$ are absolutely continuous with respect to $\sigma$. Then 
\[ B(\mu,\nu):= \int_{[0,T]} \sqrt{ \frac{d\mu}{d\sigma}\frac{d\nu}{d\sigma}} d\sigma, \]
which is independent from the particular choice of the reference measure,
is called {\rm Bhattacharyya coefficient}.
Under the assumptions of Corollary \ref{cor:AB_generalized_Fefferman_inequality_new_constant}, with 
$d\mu(t):= A_t^2 dt$ and $d\nu(t) = B_t^2 dt$, we have
\[  B(\mu(\omega,\cdot),\nu(\omega,\cdot)) = \int_0^T | A_s(\omega) B_s(\omega) | ds. \]
\end{remark}
\smallskip

\begin{cor}
\label{cor:Z_generalized_Fefferman_inequality_new_constant}
For $\theta \in (0,1]$, $p\in [1,\infty)$, and $Z\in \H_p(S_2) \cap \bmo(S_{2\theta})$one has
\[  \E \left | \int_0^T |Z_t|^{1+\theta} dt \right |^p < \infty \]
with 
\[
       \left \| \int_0^T |Z_t|^{1+\theta} dt \right \|_p 
   \le c_{\eqref{cor:AB_generalized_Fefferman_inequality_new_constant},p}  \| Z \|_{\H_p(S_2)} 
                   \| Z \|_{\bmo(S_{2\theta})}^\theta. 
\]
\end{cor}
\medskip

\begin{remark}
\begin{enumerate}
\item For $\theta=1$ we have that $\bmo(S_{2\theta})\subseteq  \H_p(S_2)$ because of 
      relation \eqref{eqn:JN_BMO(2theta)}.
\item In general, for $\theta \in (0,1)$ we do not have 
      \st{$\bmo(S_{2\theta})\subseteq \H_p(S_2)$ (here one can take deterministic processes)
      nor 
      $\H_p(S_2) \subseteq \bmo(S_{2\theta})$ (see Example \ref{example:weaker_bmo_is_weaker_new})}.
\item In general, neither the condition $Z\in \H_p(S_2)$ implies $\E | \int_0^T |Z_s|^{1+\theta} ds|^p <\infty$
      for $\theta \in (0,1]$, nor $Z\in \bmo(S_{2\theta})$  does for $\theta\in (0,1)$.
\end{enumerate}
\end{remark}

%%%%%%%%%%%%%%%%%%%%%%%%%%%%%%%%%%%%%%%%%%%%%%%%%%%%%%%%%%%%%%%%%%%%%%%%%%%%%%%%%%%%%%%%%%%%%

\section{Reverse H\"older inequalities}
\label{sec:reverse_hoelder_inequalities}

\st{So far, we assumed a stochastic basis 
$(A,\cA,\Q,(\cA_t)_{t\in [0,T]})$, $T>0$, where $(A,\cA,\Q)$ is complete, $(\cA_t)_{t\in [0,T]}$ is 
right-continuous, $\cA_0$ contains all null-sets, and $\cA=\cA_T$. To be in accordance with  \cite{Kazamaki:94}, we 
additionally assume now
that all local martingales are continuous. As we work on a closed time-interval we have to explain our 
understanding of a local martingale: we require that the localizing sequence of stopping times 
$0\le \tau_1 \le \tau_2 \le \cdots \le T$ satisfies $\lim_n \P(\tau_n=T)=1$. 
So we extend the filtration by $\cA_T$ to $(T,\infty)$, i.e. $\cA_t:= \cA_T$ for $t\in (T,\infty)$,
and extend all local martingales
$(N_t)_{t\in [0,T]}$  (in our setting) by $N_T$ to $(T,\infty)$. This yields the
standard notion of a local martingale.}
\smallskip

The probabilistic Muckenhoupt weights provide a natural way to verify various martingale inequalities 
after a change of measure, see  exemplary \cite{Izum:Kaza:77,Bona:Lepi:79,Kazamaki:94}. This change 
of measure will appear in our setting in terms of a Girsanov transformation that removes
a sub-quadratic or quadratic drift term in $Z$ that originates 
from the generator of our BSDE, \st{see Section \ref {sec:a_priori_estimate_new}}.
\smallskip

\begin{definition}
\index{inequality!reverse H\"older inequality}
\index{inequality!$\rh_\beta(\lambda)$}
\label{definition:A_p}
Assume a martingale $M\!=\!(M_t)_{t\in [0,T]}$ with $M_0\equiv 0$ such that
$\mathcal{E}(M)$ with
\[ \mathcal{E}(M)_t = e^{M_t-\frac{1}{2}\langle M \rangle_t} \]
for $t\in [0,T]$ is a martingale as well.
For $\beta\in (1,\infty)$ we let  $\mathcal{E}(M) \in \rh_\beta$ provided that there is a 
constant $c>0$ such that for all stopping times $\tau:A\to [0,T]$ one has that
\[          \E (|\mathcal{E}(M)_T|^\beta |\cA_\tau)^\frac{1}{\beta}
         \le c \mathcal{E}(M)_\tau \mbox{ a.s. } \]
The smallest possible $c\ge 0$ is denoted by $\rh_\beta(\mathcal{E}(M))$.
\end{definition}

It is known \cite[Theorem 2.3]{Kazamaki:94} that 
$\mathcal{E}(M)$ is a martingale for $M\in \bmo$. Moreover, we have the following result:
\bigskip

\begin{proposition}[{\cite[Theorems 2.4 and 3.4]{Kazamaki:94}}]
\label{proposition:RH-A-BMO}
Let $M$ be a martin\-gale  with $M_0\equiv 0$ such that $\mathcal{E}(M)$ is a martingale.
Then $M\in \bmo$ if and only if 
$\mathcal{E}(M)\in \bigcup_{\beta\in (1,\infty)} \rh_\beta$.
\end{proposition}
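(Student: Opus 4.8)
Since this is precisely Kazamaki's characterisation, in the paper it is simply imported from \cite{Kazamaki:94}; but here is the line one would follow. The statement is an equivalence, and the two implications are of unequal difficulty. The implication ``$\mathcal{E}(M)\in\rh_\beta$ for some $\beta>1$ $\Rightarrow$ $M\in\bmo$'' is the softer one: it follows from the self-improvement of reverse H\"older weights and the equivalence between reverse H\"older and martingale Muckenhoupt $A_q$ conditions, which yield conditional exponential integrability of $\langle M\rangle_T-\langle M\rangle_\tau$; since $M$ is continuous, $\E(|M_T-M_\tau|^2\mid\cA_\tau)=\E(\langle M\rangle_T-\langle M\rangle_\tau\mid\cA_\tau)$ is then bounded uniformly in $\tau$, i.e.\ $M\in\bmo$, with an explicit bound on $\|M\|_\bmo$ in terms of $\beta$ and $\rh_\beta(\mathcal{E}(M))$.

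For the harder implication ``$M\in\bmo$ $\Rightarrow$ $\mathcal{E}(M)\in\rh_\beta$ for $\beta$ sufficiently close to $1$'', write $Z:=\mathcal{E}(M)$, fix $\beta\in(1,\infty)$ and a stopping time $\tau$. Applying It\^o's formula to $Z^\beta$ and using $dZ_t=Z_t\,dM_t$, $d\langle Z\rangle_t=Z_t^2\,d\langle M\rangle_t$, one gets for $t\ge\tau$
\[ Z_t^\beta \;=\; Z_\tau^\beta \;+\; \beta\!\int_\tau^t Z_s^\beta\,dM_s \;+\; \tfrac{\beta(\beta-1)}{2}\!\int_\tau^t Z_s^\beta\,d\langle M\rangle_s. \]
After a truncation/localisation argument to justify integrability and the vanishing of the local-martingale term, taking $\E(\,\cdot\mid\cA_\tau)$ yields
\[ \E\big(Z_T^\beta\mid\cA_\tau\big)\;=\;Z_\tau^\beta\;+\;\tfrac{\beta(\beta-1)}{2}\,\E\Big(\int_\tau^T Z_s^\beta\,d\langle M\rangle_s\,\Big|\,\cA_\tau\Big). \]
Because $Z^\beta$ is a submartingale and $\langle M\rangle$ is continuous, one has $\E(\int_\tau^T Z_s^\beta\,d\langle M\rangle_s\mid\cA_\tau)\le\E\big(Z_T^\beta(\langle M\rangle_T-\langle M\rangle_\tau)\mid\cA_\tau\big)$; a $\bmo$-energy estimate then bounds this by $K\,\E(Z_T^\beta\mid\cA_\tau)$ with $K=K(\|M\|_\bmo,\beta)<\infty$. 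Plugging back and using $\tfrac{\beta(\beta-1)}{2}K\to0$ as $\beta\downarrow1$, one picks $\beta>1$ with $\tfrac{\beta(\beta-1)}{2}K<1$ and absorbs, obtaining $\E(Z_T^\beta\mid\cA_\tau)\le\big(1-\tfrac{\beta(\beta-1)}{2}K\big)^{-1}Z_\tau^\beta$ uniformly in $\tau$, that is $\mathcal{E}(M)\in\rh_\beta$. This is always possible since $\|M\|_\bmo<\infty$; optimising the estimate gives Kazamaki's threshold function $\Phi$ with $\sup_{\beta>1}\Phi(\beta)=\infty$, which is exactly what the sliceable-number refinement in Theorem~\ref{theorem:scliceable_rh_intro} builds on.

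The main obstacle is the quantitative $\bmo$-energy estimate used above: bounding $\E\big(Z_T^\beta(\langle M\rangle_T-\langle M\rangle_\tau)\mid\cA_\tau\big)$ by a constant times $\E(Z_T^\beta\mid\cA_\tau)$ with the constant controlled explicitly by $\|M\|_\bmo$ and $\beta$ (this is where the John--Nirenberg inequality for $\bmo$ martingales enters), and carrying it out sharply enough that the fixed-point absorption closes for $\beta$ near $1$. By comparison the It\^o computation, the truncation, and the soft implication are routine. Since all of this is standard, in the paper it is invoked as a black box from \cite{Kazamaki:94}.
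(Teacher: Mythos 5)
The paper offers no proof of this proposition: it is quoted verbatim from \cite[Theorems 2.4 and 3.4]{Kazamaki:94}, so you are right that it enters the text as a black box, and your sketch of the soft implication ($\rh_\beta\Rightarrow\bmo$ via the martingale $(A_q)$ classes) points at the standard route in that reference.

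Your sketch of the hard implication, however, has a genuine gap at precisely the step you flag as the main obstacle. The ``$\bmo$-energy estimate''
\[ \E\big(Z_T^\beta(\langle M\rangle_T-\langle M\rangle_\tau)\mid\cA_\tau\big)\le K(\|M\|_\bmo,\beta)\,\E\big(Z_T^\beta\mid\cA_\tau\big) \]
is not a consequence of the John--Nirenberg inequality. For a general nonnegative weight in place of $Z_T^\beta$ such a bound fails badly: taking $\tau=0$ and $\xi=\chi_{\{\langle M\rangle_T>\lambda\}}$ gives $\E(\xi\langle M\rangle_T)\ge\lambda\,\E\xi$, so a uniform $K$ would force $\langle M\rangle_T$ to be essentially bounded, which is far stronger than $M\in\bmo$. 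For the specific weight $Z_T^\beta$ the estimate says that $\langle M\rangle$ has uniformly bounded conditional increments under the tilted measure $Z_T^\beta\,d\mu$, i.e.\ a weighted $\bmo$ property of $M$ of exactly the kind Kazamaki derives \emph{from} the reverse H\"older inequality (the Girsanov invariance of $\bmo$, \cite[Theorem 3.3]{Kazamaki:94}); so the absorption scheme is circular --- the constant $K$ you need is equivalent in strength to the conclusion. The actual argument decouples multiplicatively instead of absorbing: for conjugate exponents $r,r'$ one writes
\[ \frac{\mathcal{E}(M)_T^\beta}{\mathcal{E}(M)_\tau^\beta}
 = \left(\frac{\mathcal{E}(r\beta M)_T}{\mathcal{E}(r\beta M)_\tau}\right)^{1/r}
   \exp\left(\frac{r\beta^2-\beta}{2}\big(\langle M\rangle_T-\langle M\rangle_\tau\big)\right), \]
applies the conditional H\"older inequality, kills the first factor by the supermartingale property of $\mathcal{E}(r\beta M)$, and bounds the second by John--Nirenberg provided $r'(r\beta^2-\beta)\|M\|_\bmo^2/2$ is small enough; optimising over $r$ is what produces the threshold $\Phi$ of \eqref{eqn:kazamaki_function} and the constant $\Psi$ of \eqref{eqn:kazamaki_function_b}. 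If you want to keep the It\^o-plus-absorption scheme you must first prove the weighted energy estimate, and that is where all the work lies.
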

\bigskip

Later in our application we need to know whether a 
certain martingale $M$ generates a Dol\'ean-Dade exponential that
satisfies a reverse H\"older inequality. Here the $\bmo_2$-distance to $L_\infty$ would be a natural candidate
for the extreme case that the reverse H\"older inequality is satisfied for all parameters $\beta \in (1,\infty)$, as
Kazamaki
\cite[Theorem 3.8]{Kazamaki:94} provides the characterization $M\in \overline{L_\infty}^{[\bmo,\|\cdot\|_{\bmo_2}]}$ 
for this case. On the other hand, Grandits \cite{Grandits:96} has shown that a positive $\bmo_2$-distance to 
$L_\infty$ does not provide a reasonable estimate for the critical value of $\beta$ such that one has a reverse H\"older inequality
(see also the {\em Note added in Proof} of \cite{Schachermayer:96}). This is our reason to use the concept {\em sliceable}
(which describes the $\bmo_2$-distance to $\H_\infty$ due to the result of Schachermayer \cite{Schachermayer:96})
because the following observation yields explicit estimates for the critical exponent $\beta$ and the corresponding multiplicative constants 
in the reverse H\"older inequalities:
\medskip

\begin{theorem}\label{theorem:scliceable_rh}
Let $\Phi:(1,\infty)\to (0,\infty)$ be a non-increasing function and let
\[ \Psi: \Big \{(\gamma,\beta)\in [0,\infty)\times (1,\infty): 0\le\gamma < \Phi(\beta)<\infty  \Big \}\to [0,\infty) \]
be right-continuous in its first argument and such that
\[ \Psi(\gamma_1,\beta) \le \Psi(\gamma_2,\beta) 
   \sptext{1}{for}{1}
   0\le \gamma_1 \le \gamma_2 < \Phi(\beta), \]
with the property that
\[ \| M \|_{\bmo_2}  < \Phi(\beta) 
   \sptext{1}{implies}{1}
   \rh_\beta(\mathcal{E}(M)) \le \Psi(\| M \|_{\bmo_2},\beta). \]
Then, for $\sli_N(M) < \Phi(\beta)$ we have that
$\rh_\beta (\mathcal{E}(M)) \le \big [\Psi(\sli_N(M),\beta)\big ]^N$.
\end{theorem}
\bigskip
\begin{proof}
The proof is based on a simple recursion argument that uses the concept of a sliceable \bmo-martingale.
For $\sli_N(M) < \Phi(\beta)$ we choose $0=\tau_0 \le \cdots \le \tau_N =T$ such that
\[   \| ^{\tau_{k-1}}M^{\tau_k}\|_{\bmo_2}  
   < \sli_N(M) + \eta 
   < \Phi(\beta) \]
for some $\eta>0$ and all $k=1,\ldots,N$.
Therefore, 
\[     \rh_\beta(\mathcal{E}(^{\tau_{k-1}}M^{\tau_k}))
   \le \Psi( \| ^{\tau_{k-1}}M^{\tau_k}\|_{\bmo_2},\beta)
   \le \Psi(\sli_N(M)+\eta,\beta). \]
Letting $\tau:A\to [0,T]$ be a stopping time and 
$\sigma_k := \tau_k \vee \tau$
gives that
\equa
&   & \E_{\cA_\tau} \left ( e^{\beta \left ( M_T - \frac{1}{2} \langle M \rangle _T 
      \right )} \right ) \\
& = & \left ( e^{\beta \left ( M_\tau - \frac{1}{2} \langle M \rangle _\tau \right )}
      \right )
       \E_{\cA_\tau} \left ( e^{\beta \left ( [M_T-M_\tau] - 
       \frac{1}{2}[\langle M \rangle _T - \langle M \rangle _\tau] \right )} \right )\\
& = & \left ( e^{\beta \left ( M_\tau - \frac{1}{2} \langle M \rangle _\tau \right )}
      \right )
       \E_{\cA_\tau} \left ( \prod_{k=1}^N e^{\beta \left ( [M_{\sigma_k}-M_{\sigma_{k-1}}] - 
       \frac{1}{2}[\langle M \rangle _{\sigma_k} - \langle M \rangle _{\sigma_{k-1}}] \right )} \right ).
\tion
Next we observe that
\begin{equation}\label{eqn:one_step_rh_estimate}
  \E_{\cA_{\sigma_{k-1}}} \left ( e^{\beta \left ( [M_{\sigma_k}-M_{\sigma_{k-1}}] - 
       \frac{1}{2}[\langle M \rangle _{\sigma_k} - \langle M \rangle _{\sigma_{k-1}}] \right )} \right ) \le \big [ \Psi\left ( \sli_N(M) +\eta,\beta \right ) \big ]^{\beta}
\end{equation}
for $k=1,...,N$ which follows from 
\[     \| ^{\sigma_{k-1}}M^{\sigma_k} \|_{\bmo_2}
    =  \| ^{\tau\vee \tau_{k-1}}M^{\tau\vee\tau _k} \|_{\bmo_2}
  \le  \sup_{l=1,...,N} \| ^{\tau_{l-1}}M^{\tau_l} \|_{\bmo_2}
  < \sli_N(M) + \eta, \]
where we use Lemma \ref{lemma:refinement_bmo}.
Applying \eqref{eqn:one_step_rh_estimate} inductively backwards beginning with $k=N$
and using the projection property of the conditional expectation gives that
\[     \rh_\beta(\mathcal{E}(M))^\beta 
      \le \big [ \Psi\left ( \sli_N(M) +\eta,\beta \right ) \big ]^{\beta N}.
   \]
We conclude by $\eta\downarrow 0$.
\end{proof}
\bigskip

According to \cite[Proof of Theorem 3.1]{Kazamaki:94} possible choices of $(\Phi,\Psi)$ are
\begin{eqnarray}
        \Phi(\beta) 
&:= & \left ( 1+ \frac{1}{\beta^2} \log \left ( 1+ \frac{1}{2\beta-2} \right ) \right )^\frac{1}{2} - 1,
      \label{eqn:kazamaki_function}  \\
      \Psi(\gamma,\beta) 
&:= & \left ( \frac{2}{1-\frac{2\beta-2}{2\beta-1} e^{\beta^2 [\gamma^2 + 2\gamma]}}\right )^\frac{1}{\beta},
      \label{eqn:kazamaki_function_b}
\end{eqnarray}
where $\Phi$ is decreasing with
$\lim_{\beta\to \infty} \Phi(\beta) = 0$ and
$\lim_{\beta\to 1     } \Phi(\beta) = \infty$.

%%%%%%%%%%%%%%%%%%%%%%%%%%%%%%%%%%%%%%%%%%%%%%%%%%%%%%%%%%%%%%%%%%%%%%%%%%%%%%%%%%%%%%%%%%%%%

\section{\st{An application to BSDEs}}
\label{sec:a_priori_estimate_new}

In this section we follow the ideas of \cite[Proof of Proposition 2.3]{Briand:Elie:13} but adapt and extend the ideas for our purpose. 
Let $B=(B_t)_{t\in [0,T]}$ be an $n$-dimensional 
standard Brownian motion (where all paths are continuous)
on a basis $(A,\cA,\Q,(\cA_t)_{t\in [0,T]})$, where 
$(A,\cA,\Q)$ is complete, $(\cA_t)_{t\in [0,T]}$ \st{is} the augmentation of the
natural filtration of $B$, and $\cA_T=\cA$. 
\st{It is known (see \cite[Section IV.3]{Protter:04})
that the conditions of Section \ref{sec:reverse_hoelder_inequalities} are satisfied.}
We consider the two backward equations
\equa
Y_t^0 & = & \xi^0 + \int_t^T f^0(s,Y_s^0,Z_s^0) ds - \int_t^T Z_s^0 d B_s, \\
Y_t^1 & = & \xi^1 + \int_t^T f^1(s) ds - \int_t^T Z_s^1 d B_s,
\tion
where we assume the following conditions:
\medskip

\index{coditions!(D1), (D2),\ldots}
\begin{enumerate}[(D1)]
\item The processes $f^1$, $Z^0$ and $Z^1$ are predictable and 
      the processes $Y^0$ and $Y^1$ continuous and adapted,
\item $\E |\xi^i|^2<\infty$ and $\E \int_0^T |Z_s^i|^2 ds < \infty$ for $i=0,1$,
\item $\E \left | \int_0^T |f^0(s,Y_s^0,Z_s^0)| ds\right |^2 < \infty$ and
      $\E \left | \int_0^T |f^1(s)| ds            \right |^2 < \infty$,
\item the generator $f^0:\Om_T\times\R\times\R^{\st{n}}\to\R$ is such that 
      $(t,\om)\mapsto f^0(t,\om,y,z)$ is predictable for all $(y,z)$,
      $(y,z)\to f^0(t,\om,y,z)$ is continuous for all $(t,\om)$,
      and there is an $L_Y\ge 0$ such that,  for all $(t,\om,y_0,y_1,z)$,
      \[     |f^0(t,\om,y_0,z)-f^0(t,\om,y_1,z)| \\
         \le L_Y |y_0-y_1|. \]
\end{enumerate}
\medskip

We let $\Delta \xi :=  \xi^1 - \xi^0$, and for $s\in [0,T]$,
\equa
\Delta Y_s &:= & Y_s^1 - Y_s^0, \\
\Delta Z_s &:= & Z_s^1 - Z_s^0, \\
a_s &:= & f^1(s) - f^0(s,Y_s^1,Z_s^1), \\
c_s &:= & \frac{f^0(s,Y_s^0,Z_s^1) - f^0(s,Y_s^0,Z_s^0)}{|\Delta Z_s|^2} 
          \chi_{\{ \Delta Z_s \not = 0\}}  \Delta Z_s, \\
\Xi_s &:= & |\Delta \xi| +\int_s^T |a_r| dr.
\tion

\begin{lemma}
\label{lemma:briand:elie_new}
Assume that $c=(c_t)_{t\in [0,T]}\in \bmo(S_2)$ with $\| |c| \|_{\bmo(S_2)}\le \gamma < \infty$,
$\lambda_t :=\exp(\int_0^t c_s d B_s - \frac{1}{2} \int_0^t |c_s|^2 ds)$ and $p_0\in (1,\infty)$ such that
$\rh_{p_0'}(\lambda) \le \rho <\infty$ with
$1=(1/p_0)+(1/p_0')$. Assume $p\in [2,\infty)$ with $p>p_0$ such that 
\[ \E \left | \int_0^T |\Delta Z_s |^2 ds \right |^\frac{p}{2} <\infty. \]
Then there is a $c_\eqref{lemma:briand:elie_new}\in (0,\infty)$, depending at most on 
$(T,L_Y,p,p_0,\gamma,\rho,n)$, 
such that for all $t \in [0,T]$ one has that
\[     \left \| \sup_{s\in [t,T]} |\Delta Y_s| \right \|_p +  \left \|  \left ( \int_t^T |\Delta Z_s |^2 ds \right )^\frac{1}{2} \right \|_p 
   \le c_\eqref{lemma:briand:elie_new} \|  \Xi_t \|_p. \]
\end{lemma}

\begin{remark}
\label{remark:lemma:briand:elie}
As already mentioned before, Lemma \ref{lemma:briand:elie_new} continues the work done in \cite[Proof of Proposition 2.3]{Briand:Elie:13},
but also the work done in \cite[Theorem 5.1]{Ankirchner:Imkeller:DosReis:07}. The main new contribution consist in the fact that 
using the extension of Fefferman's inequality (Corollary \ref{cor:AB_generalized_Fefferman_inequality_new_constant})
we are able to get an $L_p$-$L_p$-estimate in contrast to a weaker $L_p$-$L_r$-estimate for $r>p$.
\end{remark}

\begin{proof}[Proof of lemma \ref{lemma:briand:elie_new}]
Let $d\Q^*:=  \lambda_T d\Q$. To distinguish between the integration with respect to $\Q$
and $\Q^*$, but not to overload the notation, we agree that $\|\cdot\|_p$ always means that we integrate with
respect to $\Q$. 
By Girsanov's theorem, $(B_s^*)_{s\in [0,T]}$ with 
$B_s^* := B_s - \int_0^s c_r dr$ is a standard $\Q^*$-Brownian motion. Now let us fix
$t\in [0,T]$ and assume that $\| \Xi_t \|_p<\infty$, otherwise there is nothing 
to prove. Additionally introducing
\[ b _s :=  \frac{f^0(s,Y_s^1,Z_s^1) - f^0(s,Y_s^0,Z_s^1)}{\Delta Y_s} 
          \chi_{\{ \Delta Y_s \not = 0\}}, \]
we get that 
\equa
&   & \Delta Y_t \\
& = & \Delta \xi + \int_t^T a_s ds + \int_t^T b_s \Delta Y_s ds +
      \int_t^T \langle c_s , \Delta Z_s  \rangle ds - \int_t^T \Delta Z_s dB_s \\
& = & \Delta \xi + \int_t^T a_s ds + \int_t^T b_s \Delta Y_s ds 
       - \int_t^T \Delta Z_s dB_s^*
\tion
where our conditions assure that all terms are well-defined.
Because of 
\[ \E_{\Q^*} \left ( \int_0^T |\Delta Z_s|^2 ds \right )^\frac{1}{2} 
   \le \left ( \E_\Q \lambda_T^{p'} \right )^\frac{1}{p'} 
       \left ( \E_\Q \left ( \int_0^T |\Delta Z_s|^2 ds \right )^\frac{p}{2} \right )^\frac{1}{p} < \infty \]
and the Burkholder-Davis-Gundy inequalities $(\int_0^t \Delta Z_s dB_s^*)_{t\in [0,T]}$ is of
class DL and therefore a $\Q^*$-martingale (see \cite[IV.1.7]{Revuz:Yor:99}). Applying
It\^o's formula implies that
\[    e^{\int_0^t b_s ds} \Delta  Y_t 
   =  e^{\int_0^T b_s ds} \Delta \xi + \int_t^T e^{\int_0^s b_r dr}a_s ds 
       - \int_t^T  e^{\int_0^s b_r dr}\Delta Z_s dB_s^* \]
and
\[   \Delta  Y_t
   = \E_{\Q^*} \left ( e^{\int_t^T b_s ds} \Delta \xi+\int_t^T e^{\int_t^s b_r dr}a_s ds  | \cA_t \right ). \]
Using $p_0\in (1,p)$ we continue with
\[
         |\Delta  Y_t |
    \le e^{(T-t) L_Y} \E_{\Q^*} (\Xi_t|\cA_t) 
    \le e^{(T-t) L_Y} \rho 
        \left ( \E_\Q \left ( \Xi_t^{p_0} | \cA_t \right ) \right )^\frac{1}{p_0}
        \mbox{ a.s.}
\]
By Doob's maximal inequality,
\begin{equation}\label{eqn:upper_bound_Y}
       \left \| \sup_{s\in [t,T]} |\Delta Y_s| \right \|_p 
   \le c_{\eqref{eqn:upper_bound_Y}} \|  \Xi_t \|_p
\end{equation}
with $c_{\eqref{eqn:upper_bound_Y}} :=  e^{(T-t) L_Y}  \rho \left (\frac{p}{p-p_0}\right )^\frac{1}{p_0}$.
Letting
\[ \Delta f_s :=  f^1(s)-f^0(s,Y_s^0,Z_s^0),\]
we also have that
\[     |\Delta f_s| 
   \le |a_s| + |b_s| |\Delta Y_s| + |c_s| |\Delta Z_s| 
   \le |a_s| + L_Y |\Delta Y_s| + |c_s| |\Delta Z_s| 
\]
and
\equa
&   & \int_t^T | \Delta Y_s \Delta f_s| ds \\
&\le& \int_t^T | \Delta Y_s | [|a_s| + L_Y |\Delta Y_s| + |c_s| |\Delta Z_s|] ds \\
&\le& \sup_{s\in [t,T]} |\Delta Y_s| \int_t^T  |a_s| ds
      + L_Y \int_t^T |\Delta Y_s|^2 ds + \int_t^T [|c_s| | \Delta Y_s | |\Delta Z_s|] ds \\
&\le& \frac{1}{2} \sup_{s\in [t,T]} |\Delta Y_s|^2 + \frac{1}{2} \left [ \int_t^T  |a_s| ds\right ]^2
      \\
&   & \hspace*{5em} + L_Y \int_t^T |\Delta Y_s|^2 ds + \int_t^T [|c_s| | \Delta Y_s | |\Delta Z_s|] ds \\
&\le& \Gamma^2 \sup_{s\in [t,T]} |\Delta Y_s|^2 
      + \frac{1}{2} \left [ \int_t^T |a_s|  ds \right ]^2
      + \int_t^T [|c_s| | \Delta Y_s | |\Delta Z_s|] ds 
\tion
with
$\Gamma^2 := \frac{1}{2} + T L_Y$.
Now for
$S_t(Z)^2 := \int_t^T |\Delta Z_s |^2 ds$
and
$^*Y_t := \sup_{s\in [t,T]} |\Delta Y_s|$
using 
It\^o's formula, 
the Burkholder-Davis-Gundy inequalities \eqref{eqn:BDG},
and Corollary \ref{cor:AB_generalized_Fefferman_inequality_new_constant},
we get that
\equa
&   & \| S_t(Z) \|_p \\
&\le& \left \| \left ( |\Delta \xi |^2 
       + 2 \left | \int_t^T \Delta Y_s \Delta Z_s dB_s \right |
               + 2 \int_t^T |\Delta Y_s \Delta f_s | ds \right )^\frac{1}{2} \right \|_p \\
&\le& \bigg \| \bigg ( |\Delta \xi |^2 
       + 2 \left | \int_t^T \Delta Y_s \Delta Z_s dB_s \right |
               + 2 \Gamma^2 {^*Y_t^2} 
      + \left [ \int_t^T |a_s|  ds \right ]^2 \\
&   & \hspace*{10em}  + 
                        2 \int_t^T [|c_s| | \Delta Y_s | |\Delta Z_s|] ds  \bigg )^\frac{1}{2} \bigg \|_p \\
&\le& \| \Xi_t \|_p + \sqrt{2}\left  \| \int_t^T [|c_s| | \Delta Y_s | |\Delta Z_s|] ds \right \|_{\frac{p}{2}}^\frac{1}{2}
      + \sqrt{2} \bigg \| \int_t^T \Delta Y_s \Delta Z_s dB_s \bigg \|_\frac{p}{2}^\frac{1}{2} \\
&   &  +  \sqrt{2}\Gamma \bigg \| {^*Y_t} \bigg \|_p \\
&\le& \| \Xi_t \|_p +  \sqrt{2 c_{\eqref{cor:AB_generalized_Fefferman_inequality_new_constant},\frac{p}{2}}}  
      \| |c| \|_{\bmo(S_2)}^\frac{1}{2} 
      \left  \| \left ( \int_t^T [ | \Delta Y_s | |\Delta Z_s|]^2 ds \right )^\frac{1}{2} \right \|_\frac{p}{2}^\frac{1}{2} \\
&   & + \sqrt{2\beta_{p/2}} \bigg \|   
        \left ( \int_t^T [|\Delta Y_s|| \Delta Z_s|]^2 ds \right )^\frac{1}{2} \bigg \|_\frac{p}{2}^\frac{1}{2} 
          +  \sqrt{2}  \Gamma \bigg \| {^*Y_t} \bigg \|_p \\
& = &  \| \Xi_t \|_p 
      + \left [ \sqrt{2 c_{\eqref{cor:AB_generalized_Fefferman_inequality_new_constant},\frac{p}{2}}}  
      \| |c| \|_{\bmo(S_2)}^\frac{1}{2} 
      +  \sqrt{2 \beta_{p/2}} \right ] \times \\
&   & \times \left  \| \left ( \int_t^T [ | \Delta Y_s | |\Delta Z_s|]^2 ds \right )^\frac{1}{2} 
                 \right \|_\frac{p}{2}^\frac{1}{2}
       +  \sqrt{2} \Gamma \bigg \| {^*Y_t} \bigg \|_p.
\tion
Therefore, for 
$\kappa:= \sqrt{2 c_{\eqref{cor:AB_generalized_Fefferman_inequality_new_constant},\frac{p}{2}} \gamma} +  \sqrt{2\beta_{p/2}}$ 
and $\lambda>0$ we obtained that
\equa
      \| S_t(Z) \|_p 
&\le& \| \Xi_t \|_p + \kappa  \left  \| \left ( \int_t^T [ | \Delta Y_s | |\Delta Z_s|]^2 ds \right )^\frac{1}{2} \right \|_\frac{p}{2}^\frac{1}{2} 
        + \sqrt{2} \Gamma \bigg \| {^*Y_t} \bigg \|_p \\
&\le& \| \Xi_t \|_p + \kappa  \left  \| {^*Y_t} S_t(Z) \right \|_\frac{p}{2}^\frac{1}{2} 
        + \sqrt{2} \Gamma \bigg \| {^*Y_t} \bigg \|_p \\    
&\le& \| \Xi_t \|_p + \kappa  \left  \| \frac{\lambda}{2} {^*Y_t}^2 + \frac{1}{2\lambda} S_t(Z)^2 \right \|_\frac{p}{2}^\frac{1}{2} 
        + \sqrt{2} \Gamma \bigg \| {^*Y_t} \bigg \|_p \\    
&\le& \| \Xi_t \|_p + \kappa   \sqrt{\frac{\lambda}{2}} \|^*Y_t\|_p  + 
      \kappa\sqrt{\frac{1}{2\lambda}} \|  S_t(Z) \|_p 
        + \sqrt{2} \Gamma \bigg \| {^*Y_t} \bigg \|_p.
\tion
Choosing $\lambda:= 2\kappa^2$ and using \eqref{eqn:upper_bound_Y} gives that
\equa  
      \| S_t(Z) \|_p 
&\le& 2 \| \Xi_t \|_p + \left [ 2 \kappa^2 + 2 \sqrt{2} \Gamma \right ]  \bigg \| {^*Y_t} \bigg \|_p \\
&\le& 2 \| \Xi_t \|_p + \left [ 2 \kappa^2 + 2 \sqrt{2} \Gamma \right ]   c_{\eqref{eqn:upper_bound_Y}} 
      \|  \Xi_t \|_p 
\tion
which concludes the proof.
\end{proof}

%%%%%%%%%%%%%%%%%%%%%%%%%%%%%%%%%%%%%%%%%%%%%%%%%%%%%%%%%%%%%%%%%%%%%%%%%%%%%%%%%%%%%%%%%%%%%
%%%%%%%%%%%%%%%%%%%%%%%%%%%%%%%%%%%%%%%%%%%%%%%%%%%%%%%%%%%%%%%%%%%%%%%%%%%%%%%%%%%%%%%%%%%%%

\chapter{Applications to BSDEs}
\label{chapter:BSDE}

In this chapter we consider a solution to the BSDE
\begin{equation}\label{equation:BSDE1_intro}
Y_t = \xi + \int_t^T f(s,Y_s,Z_s)ds - \int_t^T Z_s dW_s, \qquad t \in [0,T], \mbox{ a.s.},
\end{equation}
and will proceed as follows: Firstly, we extend equation \eqref{equation:BSDE1_intro} from 
$\probsp$ to  $(\overline{\Omega},\cF^0,\overline{\P})$ and follow 
Chapter \ref{chapter:transference_sde} to transform this extended BSDE from 
$(\overline{\Omega},\cF^0,\overline{\P})$
to $(\overline{\Omega},\cF^\varphi,\overline{\P})$ and 
$(\overline{\Omega},\cF^\psi,\overline{\P})$, respectively, and consider  for $\rho\in \{\varphi,\psi\}$ the two solutions
\begin{equation}\label{eqn:BSDE-rotated_intro}
Y_t^\rho = \xi^\rho + \int_t^T f^\rho(s,Y_s^\rho,Z_s^\rho) ds - \int_t^T Z_s^\rho d W_s^\rho, \qquad t \in [0,T], \mbox{ a.s.}
\end{equation}
Therefore  \eqref{eqn:BSDE-rotated_intro} describes two copies of \eqref{equation:BSDE1_intro}, parametrised with 
$\varphi$ and $\psi$, by transforming the underlying Gaussian structure.
Secondly, we interpret \eqref{eqn:BSDE-rotated_intro} as equations driven by the 
{\em joint Brownian motion} $\overline{W}=(\overline{W_t})_{t\in [0,T]}$ and apply an a priori estimate to
      obtain Theorem \ref{theorem:comparison_psi_phi} to describe the 
      stability of \eqref{equation:BSDE1_intro}.
From the stability we obtain non-linear embeddings for Besov spaces in Section \ref{sec:formulation_anisotropic_Besov}
and upper bounds for the $L_p$-variation of solution processes $(Y,Z)$ to our BSDE
\eqref{equation:BSDE1_intro} in Section \ref{sec:L_p-variation}.
\st{To explain by means of Section \ref{sec:L_p-variation} the usage of our general framework,
let us assume for the moment that the generator $f$ in \eqref{equation:BSDE1_intro} depends only on $(s,y,z)$. For $p\in [2,\infty)$ and
$0\le s<t \le T$ Theorem \ref{thm:L_p-variation} provides an upper bound for $\left \|\sup_{r\in [s,t]}|Y_r-Y_s|\right \|_p$ 
that mainly depends on $\|\xi-\xi^{(s,t]} \|_p$. In other words,
local estimates on $\xi$ imply local estimates for the variation of the process $Y$ , if local is understood as {\em local in time}.
To illustrate this further, assume a partition $0=r_0<r_1<\cdots<r_L=T$, again $p\in [2,\infty)$, and suppose
for $l=1,\ldots,L$ that $\xi_l\in \cL_p$ is a measurable functional of finitely many increments 
$W_b-W_a$ with $(a,b]\subseteq (r_{l-1},r_l]$. Consider  
\[ \xi := g(\xi_1,\ldots,\xi_L), \]
where $g:\R^L\to \R$ is a Lipschitz function with constant $L\ge 0$. Then
\[ \| \xi - \xi^{(s,t]}\|_p \le L \| \xi_l - \xi_l^{(s,t]} \|_p 
   \sptext{1}{whenever}{1}
  (s,t]\subseteq (r_{l-1},r_l]. \]
Therefore, the variation of $Y$ on $[r_{l-1},r_l]$ is mainly determined by properties of $\xi_l$.
This idea was first developed in \cite{GGG:12} and then extended to the framework of L\'evy processes in
\cite{CGeiss:Steinicke:16}.
}

%%%%%%%%%%%%%%%%%%%%%%%%%%%%%%%%%%%%%%%%%%%%%%%%%%%%%%%%%%%%%%%%%%%%%%%%%%%%%%%%%%%%%%%%%%%%%%%%%

\section{The setting}
\label{sec:setting_bsdes}

In this section we assume a stochastic basis $(\Omega,\cF,\P,(\cF_t)_{t\in [0,T]})$ with $\cF=\cF_T$
satisfying the usual conditions,
where $\F=(\cF_t)_{t\in [0,T]}$ is the augmentation of the natural filtration of the $d$-dimensional Brownian motion 
$(W_t)_{t\in [0,T]}$. We consider a solution to the BSDE \eqref{equation:BSDE1_intro} under the following set 
of assumptions, that describe the generators we will use and ensure that all expressions do exist:

\begin{assumption}
\index{coditions!(B1), (B2),\ldots}
\label{assumption:BSDE} 
\hspace*{1em}
\begin{enumerate}[{(B1)}]
\item The process $Z$ is predictable such that
      \[ \P \left ( \int_0^T |Z_s|^2 ds < \infty \right ) = 1. \]
\item The process $Y$ is adapted and path-wise continuous.
\item The generator $f:\Om_T\times\R\times\R^{d}\to\R$ is such that 
      $(t,\om)\mapsto f(t,\om,y,z)$ is predictable for all $(y,z)$ and there are
      $L_Y,L_Z\ge 0$ and $\theta\in [0,1]$ such that
      \[         |f(t,\om,y_0,z_0)-f(t,\om,y_1,z_1)| \\
         \le L_Y |y_0-y_1| + L_Z [1+|z_0|+|z_1|]^\theta |z_0-z_1| \]
   for all $(t,\om,y_0,y_1,z_0,z_1)$.
\item $\P \left ( \int_0^T | f(s,Y_s,Z_s)|ds < \infty \right ) = 1$.
\end{enumerate}
\end{assumption}

The case $\theta=0$ is the standard Lipschitz case, the case $\theta=1$ the standard quadratic 
case, and $\theta\in (0,1)$ can be seen as sub-quadratic case (see for example \cite{Cheridito:Nam:14_b}).
Our strategy for the first step is to impose in Lemma \ref{lemma:Y_in_Lp} below conditions on
the gradient process $Z$ and $f(s,0,0)$, only, but not on $\xi$, in order to verify that we deal with an
$L_p$-solution to our BSDE. This might also help to find more general conditions on $(\xi,f)$
that ensure the existence of $L_p$-solutions (see Section \ref{subsec:forward_setting} below).
Our conditions on $Z$ can be verified by results from 
Section \ref{sec:classes_quadratic_subquadratic_BSDEs}
below. In the following we assume that $p \in [2,\infty)$ because 
this assumption will be used in some steps of the proofs and because this case is more interesting 
with respect to the tail-behavior of $|Y_t-Y_s|$ than the case $p<2$.

\begin{lemma}
\label{lemma:Y_in_Lp}
In addition to the conditions {\rm (B1)-(B4)} we assume for $p\in [2,\infty)$ that
\begin{enumerate}
\item [{\rm (B5)}] $\int_0^T |f(s,0,0)| ds \in \cL_p$,
\item [{\rm (B6)}] $  \left ( \int_0^T |Z_s|^2 ds \right )^\frac{1}{2} \in \cL_p$,
\item [{\rm (B7)}] $  \int_0^T |Z_s|^{1+\theta} ds  \in \cL_p$.
\end{enumerate}
Then
$\int_0^T | f(s,Y_s,Z_s)|ds + \sup_{t\in [0,T]} | Y_t|  \in \cL_p$.
\end{lemma}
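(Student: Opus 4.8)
The plan is to reduce the whole statement to a \emph{pathwise} Gronwall argument. The point is that by (C2) the process $Y$ has continuous paths, so for $\P$-a.e.\ $\omega$ the function $t\mapsto |Y_t(\omega)|$ is bounded on the compact interval $[0,T]$ and in particular integrable; this is exactly what makes the Gronwall iteration legitimate without assuming any $L_p$-control in advance. The other ingredient is that $\cF_0$ is $\P$-trivial (since $\F$ is the augmented filtration of $W$), so $Y_0$ is a.s.\ a finite constant.

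First I would record the elementary pointwise bound on the generator obtained from (C3) with $(y_1,z_1)=(0,0)$: since $[1+|Z_s|]^\theta|Z_s|\le[1+|Z_s|]^{1+\theta}\le 2^\theta(1+|Z_s|^{1+\theta})\le 2(1+|Z_s|^{1+\theta})$ (using $1+\theta\le 2$ and convexity), one gets
\[
   |f(s,Y_s,Z_s)|\le |f(s,0,0)|+L_Y|Y_s|+2L_Z\big(1+|Z_s|^{1+\theta}\big).
\]
Integrating over $s$ and using (C5) and (C7) shows that
\[
   V:=\int_0^T|f(s,0,0)|\,ds+2L_ZT+2L_Z\int_0^T|Z_s|^{1+\theta}\,ds\ \in\ \cL_p,
\]
and that $\int_0^t|f(s,Y_s,Z_s)|\,ds\le V+L_Y\int_0^t|Y_s|\,ds$ for every $t\in[0,T]$.

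Next I would handle the stochastic integral. From (C6) and $p\ge 2$ we have $(\int_0^T|Z_s|^2\,ds)^{1/2}\in\cL_p$, hence $\int_0^T|Z_s|^2\,ds\in\cL_{p/2}\subseteq\cL_1$; so $N_t:=\int_0^tZ_s\,dW_s$ is a genuine martingale and, by the Burkholder--Davis--Gundy inequality, $N^\ast:=\sup_{t\in[0,T]}|N_t|\in\cL_p$ with $\|N^\ast\|_p\le c_p\|(\int_0^T|Z_s|^2\,ds)^{1/2}\|_p$. Since $Y$ is $\F$-adapted and $\cF_0$ contains only null sets and their complements, $Y_0$ equals $\P$-a.s.\ a finite constant $c_0$. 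Subtracting the BSDE at time $0$ from the BSDE at a general time $t$ (the identity holds simultaneously for all $t$ on a full-measure set by path-continuity of both sides) gives $Y_t=c_0-\int_0^tf(s,Y_s,Z_s)\,ds+N_t$, and combining this with the previous step,
\[
   |Y_t|\le G+L_Y\int_0^t|Y_s|\,ds,\qquad t\in[0,T],\quad\text{a.s.},\qquad G:=|c_0|+V+N^\ast\in\cL_p .
\]

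The main (and essentially only nontrivial) step is now the pathwise Gronwall estimate: for a.e.\ $\omega$ the map $t\mapsto|Y_t(\omega)|$ is continuous, hence bounded on $[0,T]$, so the displayed inequality together with Gronwall's lemma yields $|Y_t(\omega)|\le G(\omega)e^{L_Yt}\le e^{L_YT}G(\omega)$ for all $t$; taking the supremum gives $\sup_{t\in[0,T]}|Y_t|\le e^{L_YT}G\in\cL_p$. Finally, inserting this into $\int_0^T|f(s,Y_s,Z_s)|\,ds\le V+L_YT\sup_{t\in[0,T]}|Y_t|$ shows that the right-hand side, and hence $\int_0^T|f(s,Y_s,Z_s)|\,ds+\sup_{t\in[0,T]}|Y_t|$, lies in $\cL_p$. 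The only points requiring care are the a priori pathwise boundedness of $t\mapsto Y_t(\omega)$ (which is where (C2) enters and which avoids circularity) and the remark that $Y_0$ is constant; everything else is routine.
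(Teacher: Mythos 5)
Your proof is correct and follows essentially the same route as the paper: rewrite the BSDE forward from time $0$, bound the generator via (C3) with $(y_1,z_1)=(0,0)$, control $\sup_t|\int_0^t Z_s\,dW_s|$ by Burkholder--Davis--Gundy, and conclude with Gronwall. The only difference is the localization: the paper truncates with the stopping times $\tau_N=\inf\{t:|Y_t-Y_0|=N\}\wedge T$ and applies Gronwall to the deterministic function $t\mapsto\|M_t^N\|_p$ before letting $N\to\infty$, whereas you apply Gronwall pathwise, using path-continuity of $Y$ from (C2) for the a priori local boundedness, and take $L_p$-norms only at the end -- both are valid, and your variant dispenses with the stopping-time argument.
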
 

\begin{proof}
We rewrite \eqref{equation:BSDE1_intro} as
\[ Y_t = Y_0 - \int_0^t f(s,Y_s,Z_s)ds + \int_0^t Z_s dW_s \]
for $t \in [0,T]$. For an integer $N\ge 1$ let
\[ \tau_N := \inf \{ t\in [0,T] : | Y_t - Y_0| = N \} \wedge T \]
with $\inf\emptyset := \infty$. Then
\[ Y_{t\wedge \tau_N}  = Y_0 - \int_0^{t\wedge \tau_N}  f(s,Y_s,Z_s)ds 
                             + \int_0^{t\wedge \tau_N} Z_s dW_s. \]
Because of
\begin{equation} \label{eqn:upper_bound_generator}
|f(s,y,z)| \le |f(s,0,0)| + L_y |y| + L_z [1+|z|]^\theta |z|
\end{equation}
we conclude that
\equa
      |Y_{t\wedge\tau_N}|
&\le& \bigg [ |Y_0| + \int_0^T |f(s,0,0)|ds + L_z \int_0^T [1+|Z_s|]^\theta |Z_s| ds \\
&   & + \sup_{r\in [0,T]} \left | \int_0^r Z_s dW_s \right | \bigg ] 
      + L_y \int_0^{t\wedge \tau_N} |Y_{s\wedge \tau_N}| ds \\
& =:& A + L_y \int_0^{t\wedge \tau_N} |Y_{s\wedge \tau_N}| ds
\tion
and
\[ M_t^N \le A + L_y \int_0^t M_s^N ds \]
with
\[ M_s^N := \sup_{r\in [0,s]} |Y_{r\wedge \tau_N}| = \sup_{r\in [0,s\wedge \tau_N]} |Y_r|. \]
The process $(M_t^N)_{t\in [0,T]}$ is continuous, adapted and bounded by $|Y_0|+N$. 
The inequality
\[ \| M_t^N\|_p \le \|A\|_p + L_y \int_0^t \| M_s^N \|_p ds \]
implies by Gronwall's lemma that
\[ \| M_T^N \|_p \le e^{L_yT} \| A \|_p. \]
Letting $N\to \infty$ gives $\sup_{t\in [0,T]} | Y_t|  \in \cL_p$ because $A\in \cL_p$ which follows from 
conditions (B5), (B6), and (B7). Finally, using \eqref{eqn:upper_bound_generator} the part
$\int_0^T | f(s,Y_s,Z_s)|ds \in \cL_p$ follows.
\end{proof}
\bigskip

Condition (B5) is a condition on the initial data of the BSDE, whereas (B6) and (B7) are
implicit conditions on the solution. For $\theta=0$ condition (B6) implies (B7).
Conversely, for $\theta=1$ condition (B7) implies (B6).
A sufficient condition for both, (B6) and (B7), is
$\left ( \int_0^T |Z_s|^2 ds \right )^{{1}/{2}}  \in \cL_{(1+\theta)p}$.
\bigskip

%%%%%%%%%%%%%%%%%%%%%%%%%%%%%%%%%%%%%%%%%%%%%%%%%%%%%%%%%%%%%%%%%%%%%%%%%%%%%%%%%%%%%%%%%%%%%%%%%

\section[Stability of BSDEs with respect to perturbations]
        {Stability of BSDEs with respect to perturbations of the Gaussian structure}

Now we substantiate the procedure explained in the beginning of this chapter:
we assume the setting of Section \ref{sec:Besov:setting} and follow
Convention \ref{convention:extension_new}(1) to extend 
(\ref{equation:BSDE1_intro}) to $\overline{\Omega}$ and find
\begin{equation}\label{equation:BSDE1-extended}
  \tilde Y_t = \tilde \xi + \int_t^T \tilde f(s,\tilde Y_s,\tilde Z_s)ds - \int_t^T \tilde Z_s dW_s^0, \qquad t \in [0,T].
\end{equation}
We remark that for a  $(\cP,\cB(C(M)))$-measurable $h:[0,T]\times \Omega\to C(M)$
the extension $\widetilde{h}:[0,T]\times \overline{\Omega}\to C(M)$ is $(\cP^0,\cB(C(M)))$-measu\-ra\-ble, and that 
there is a $\overline{\Omega}_0\in \overline{\cF}$ with $\overline{\P}(\overline{\Omega}_0)=1$, 
such that $(\int_0^t \widetilde{Z}_s dW_s^0)(\omega,\omega') = (\int_0^t Z_s dW_s)(\omega)$ for $t\in [0,T]$ and
$(\omega,\omega')\in \overline{\Omega}_0$.
Moreover, it is clear that the inequality from (B3) transfers directly.
Therefore we assume that  (\ref{equation:BSDE1_intro}) is extended to 
(\ref{equation:BSDE1-extended}) where we simplify the notation by denoting $(\tilde \xi,\tilde f,\tilde Y, \tilde Z)$ again by
$(\xi,f,Y,Z)$.
Using Theorem \ref{theorem:change_multi_SDE} in the setting of Section \ref{sec:Besov:setting}  
we obtain \eqref{eqn:BSDE-rotated_intro}. 
We also know that the transformed generator $f^\rho$ can be taken such that (B3) is satisfied, i.e.
\begin{multline*}
  | f^\rho(t,\bar\omega,y_0,z_0) - f^\rho(t,\bar\omega,y_1,z_1)| \\
   \le L_Y |y_0-y_1| + L_Z[1+|z_0|+|z_1|]^\theta |z_0-z_1| 
    =:H((y_0,z_0),(y_1,z_1)),
\end{multline*}
which follows from Remark \ref{rem:quantitative_bounds_transformed_generator}.
\medskip

Now let us turn to our basic result. Our strategy is to impose the conditions (B1)-(B6) and an
extra condition on $Z$ on equation \eqref{equation:BSDE1_intro} in the context of the
stochastic basis $(\Omega,\cF,\P,(\cF_t)_{t\in [0,T]})$ we did start from, and then to deduce 
by \st{Lemma \ref{lemma:briand:elie_new}} the
moment estimates in the extended setting of $(\overline \Omega,\overline \P)$. 
\st{
For the following we remind the reader that the number 
$\sli_N^{S_2,\A}(c)$ for an $\R$-valued progressively measurable process $c$, $N\ge 1$, 
and a filtration $\A$ was defined in Definition \ref{definition:sliceable_S2}.}
\medskip

\begin{theorem}
\label{theorem:comparison_psi_phi}
\index{$s_\infty$}
Assume $\theta\in [0,1]$, for equation \eqref{equation:BSDE1_intro} 
conditions {\rm (B1)-(B4)}, and additionally $|Z|\in \bmo(S_{2\theta})$ in the case $\theta\in (0,1]$.
Suppose that there is a non-increasing 
sequence $\st{(s_N)}_{N\ge 1} \subseteq [0,\infty)$, where
$s_\infty := \lim_N s_N$, such that
\[ \sli_N^{S_2,\F} (|Z|^\theta) \le s_N. \]
Suppose that conditions {\rm (B5)-(B6)} are satisfied for $p \in [2,\infty)$
where in the case $s_\infty>0$ we additionally assume that 
\[ p>p_0:= \frac{\Phi^{-1}(2\sqrt{2} L_Z s_\infty)  }
                {\Phi^{-1}(2\sqrt{2} L_Z s_\infty)-1}\in (1,\infty) \]
with the function $\Phi$ defined in \eqref{eqn:kazamaki_function}.
Then, one has for the extended equations for all $t \in [0,T]$ that
%\label{eqn:theorem:comparison_psi_phi}
\equa
&&    \left \| \sup_{s\in [t,T]} | Y_s^\varphi - Y_s^\psi| \right \|_p 
   +  \left \| \left ( \int_t^T D\big [\varphi(s),\psi(s)\big ] \, |Z_s|^2 ds \right )^\frac{1}{2} 
       \right \|_p \label{eqn:theorem:comparison_psi_phi} \\
&&  \hspace*{9em} +  \left \| \left ( \int_t^T |Z_s^\varphi - Z_s^\psi |^2 ds \right )^\frac{1}{2} 
                \right \|_p  \\  
&\le& c_\eqref{theorem:comparison_psi_phi}  
      \left [ \|\xi^\vph-\xi^\psi \|_p 
    + \left \| \int_t^T | f^\varphi(s,Y^\psi_s,Z^\psi_s) - f^\psi(s,Y^\psi_s,Z^\psi_s)| ds 
            \right \|_p \right ], 
\tion
where $\varphi,\psi\in \Delta$,
$D[\eta_1,\eta_2] := 1- \sqrt{1-\eta_1^2} \sqrt{1-\eta_2^2} - \eta_1\eta_2$,
and $c_\eqref{theorem:comparison_psi_phi} >0$ depends at most on $(L_Y,L_Z,T,(s_N)_{N=1}^\infty,p,d)$.
\end{theorem}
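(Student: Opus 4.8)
The plan is to reduce the statement to a standard a priori estimate for BSDEs driven by a common Brownian motion, after writing the two transformed equations \eqref{eqn:BSDE-rotated_intro} in terms of the joint Brownian motion $\overline W$. First I would use the transference result Theorem \ref{theorem:change_multi_SDE} (together with Lemma \ref{lemma:change_Ito} and Theorem \ref{theorem:properties_CTM}) to guarantee that the decoupled data $(\xi^\rho,f^\rho,Y^\rho,Z^\rho)$ indeed solve \eqref{eqn:BSDE-rotated_intro} with the stated measurability, and then express both stochastic integrals $\int Z^\varphi dW^\varphi$ and $\int Z^\psi dW^\psi$ against $d\overline W$ and $d\overline W'$: from \eqref{eqn:W_vph} one has $dW^\rho = \sqrt{1-\rho^2}\,d\overline W + \rho\, d\overline W'$, so the difference process $\delta Y := Y^\varphi - Y^\psi$, $\delta Z := Z^\varphi - Z^\psi$ satisfies a BSDE of the form
\[
 \delta Y_t = \delta\xi + \int_t^T \big[ f^\varphi(s,Y^\varphi_s,Z^\varphi_s) - f^\psi(s,Y^\psi_s,Z^\psi_s)\big] ds - \int_t^T U_s\, d\overline W_s - \int_t^T V_s\, d\overline W'_s,
\]
where $U_s = \sqrt{1-\varphi^2}\,Z^\varphi_s - \sqrt{1-\psi^2}\,Z^\psi_s$ and $V_s = \varphi\,Z^\varphi_s - \psi\,Z^\psi_s$. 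The point of introducing $D[\eta_1,\eta_2]$ is exactly that $|U_s|^2 + |V_s|^2 = |\delta Z_s|^2 + 2 D[\varphi(s),\psi(s)]\, |Z^\psi_s|^2$ (a short algebraic identity using $\langle \sqrt{1-\varphi^2}e_1 + \varphi e_2, \sqrt{1-\psi^2}e_1 + \psi e_2\rangle = \sqrt{1-\varphi^2}\sqrt{1-\psi^2} + \varphi\psi$), which is where the middle term on the left-hand side of \eqref{eqn:theorem:comparison_psi_phi} comes from.

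Next I would split the generator difference as
\[
 f^\varphi(s,Y^\varphi_s,Z^\varphi_s) - f^\psi(s,Y^\psi_s,Z^\psi_s)
 = \big[f^\varphi(s,Y^\varphi_s,Z^\varphi_s) - f^\varphi(s,Y^\psi_s,Z^\psi_s)\big]
 + \big[f^\varphi(s,Y^\psi_s,Z^\psi_s) - f^\psi(s,Y^\psi_s,Z^\psi_s)\big].
\]
The second bracket is the inhomogeneity that appears on the right-hand side of the claim. For the first bracket I would use the Lipschitz/sub-quadratic structure (C3), which transfers to $f^\varphi$ by the Remark following Theorem \ref{theorem:change_multi_SDE}: it is bounded by $L_Y|\delta Y_s| + L_Z[1+|Z^\varphi_s|+|Z^\psi_s|]^\theta |U_s - \sqrt{1-\psi^2}\,\delta Z_s \cdot(\ldots)|$ — more precisely by $L_Y|\delta Y_s| + L_Z\,\beta_s\,|\delta Z_s|$ after writing things in terms of $U,V$, where $\beta_s := L_Z[1+|Z^\varphi_s|+|Z^\psi_s|]^\theta$ controls a BMO-type coefficient. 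This turns the $\delta$-BSDE into a \emph{linear} BSDE with a bounded-in-$L_Y$ drift and a stochastic-integral term whose integrand is multiplied by the process $\beta$, whose $S_2$-BMO norm is governed by $\|Z\|_{\bmo(S_{2\theta})}$-type quantities and, crucially, whose sliceable number $\sli_N^{S_2}(|Z|^\theta)$ is dominated by $s_N$. A Girsanov change of measure removes the $\delta Z$-drift, and Theorem \ref{theorem:scliceable_rh} (with the Kazamaki pair \eqref{eqn:kazamaki_function}–\eqref{eqn:kazamaki_function_b}) supplies, precisely under the condition $p > p_0$ when $s_\infty > 0$, a reverse Hölder inequality for the associated Doléans–Dade exponential with an explicit constant depending only on $(L_Z, (s_N), p)$.

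Then I would run the by-now-standard $L_p$ a priori machinery for quadratic/sub-quadratic BSDEs (Itô's formula applied to $|\delta Y|^2$ or to $|\delta Y|^p$, Burkholder–Davis–Gundy for the martingale part, Doob's inequality for $\sup_{s\in[t,T]}|\delta Y_s|$, Gronwall in the $L_Y$-term, and the energy estimate for $\int_t^T (|U_s|^2+|V_s|^2)\,ds$), performing the estimate under the new measure $\overline{\mathbb Q}$ and transferring back to $\overline{\mathbb P}$ via the reverse Hölder inequality and Hölder's inequality with exponents $p$ and its conjugate relative to $p_0$. Lemma \ref{lemma:Delbaen:Tang} (the Fefferman-type inequality) takes care of terms like $\int |\beta_s|\,|\delta Z_s|\,ds$ appearing after applying Itô. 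Assembling the three pieces — the $\sup|\delta Y|$ bound, the $\int D[\varphi,\psi]|Z|^2$ bound, and the $\int|\delta Z|^2$ bound — and absorbing the small constants coming from $\sli_N$ across the $N$ slices yields \eqref{eqn:theorem:comparison_psi_phi} with $c_{\eqref{theorem:comparison_psi_phi}}$ depending only on the listed parameters.

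The main obstacle I anticipate is handling the non-Lipschitz ($\theta>0$) coefficient $\beta_s = L_Z[1+|Z^\varphi_s|+|Z^\psi_s|]^\theta$ in the $\delta Z$-drift: one must verify that its relevant BMO-norm across each slice of a well-chosen partition is $\le 2\sqrt 2 L_Z s_\infty + \eta$, so that the Kazamaki threshold $\Phi(\beta)$ in Theorem \ref{theorem:scliceable_rh} can be beaten for all $p>p_0$, and one must combine the $N$ per-slice reverse Hölder constants multiplicatively (the $[\Psi(\cdot,\beta)]^N$ factor) without the final constant blowing up — this is exactly the step where the sliceability of $|Z|^\theta$, rather than merely its BMO boundedness, is indispensable, and where the delicate bookkeeping of constants lives. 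The remaining steps are, modulo care with the two independent Brownian motions $\overline W, \overline W'$, routine BSDE estimates.
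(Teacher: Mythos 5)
Your plan follows essentially the same route as the paper: rewrite both rotated equations against the joint Brownian motion $\overline W$ with $\overline Z^\rho_s = (Z^\rho_s\sqrt{1-\rho^2(s)},\, Z^\rho_s\rho(s))$, linearize the generator difference to obtain a drift coefficient $c_s$ bounded by $\sqrt2\,L_Z[1+|Z^\varphi_s|^\theta+|Z^\psi_s|^\theta]$, dominate its sliceable numbers by $s_N$ (after transferring them to $\overline\Omega$ and to the rotated processes), invoke Theorem \ref{theorem:scliceable_rh} with the Kazamaki pair \eqref{eqn:kazamaki_function}--\eqref{eqn:kazamaki_function_b} to obtain the reverse H\"older inequality precisely for $p>p_0$, and conclude with the Girsanov/Doob/BDG/Fefferman a priori machinery, which the paper packages as Lemma \ref{lemma:briand:elie}. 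One small correction: the algebraic identity is $|\overline Z^\varphi_s-\overline Z^\psi_s|^2 = D[\varphi(s),\psi(s)]\,\big[|Z^\psi_s|^2+|Z^\varphi_s|^2\big] + \big(1-D[\varphi(s),\psi(s)]\big)\,|Z^\varphi_s-Z^\psi_s|^2$ rather than $|\delta Z_s|^2 + 2D|Z^\psi_s|^2$ (your version misses the cross term $2D\langle\delta Z_s,Z^\psi_s\rangle$), but the corrected form still yields both lower bounds needed for the middle and last terms on the left-hand side of \eqref{eqn:theorem:comparison_psi_phi}.
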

\bigskip

The applications of Theorem \ref{theorem:comparison_psi_phi} are at least two-fold: 
Firstly, we obtain a non-linear embedding theorem for Besov spaces in Section \ref{sec:formulation_anisotropic_Besov}
(Corollary \ref{cor:theorem:comparison_psi_phi}).
Secondly, we deduce in Section \ref{sec:L_p-variation} upper bounds for the $L_p$-variation of solution processes $(Y,Z)$ to our BSDE
\eqref{equation:BSDE1_intro}.
\pagebreak

\begin{remark}\label{remark:theorem:comparison_psi_phi}
\hspace{0em}
\begin{enumerate}
\item The function $D[\eta_1,\eta_2]:[0,1]^2\to [0,1]$  measures the distance between
      $\eta_1$ and $\eta_2$, by projecting the vector $(\eta_1,\sqrt{1-\eta_1^2})$ onto 
      the linear subspace generated by
      $(\eta_2,\sqrt{1-\eta^2_2})$, and by comparing the projection to 
      $(\eta_2,\sqrt{1-\eta^2_2})$. In particular, $D[\eta_1,\eta_2]=0$
      if and only if $\eta_1=\eta_2$.

\item Because the case $\lim_N s_N=0$ is of particular importance
      in Theorem \ref{theorem:comparison_psi_phi}, as it enables us to use 
      the full range $p\in [2,\infty)$, we give some examples for this situation: 

      \begin{enumerate}
      \item For $\theta = 0$  we have that 
            \[    \sli_N^{S_2,\F}(|Z|^\theta) \le \sqrt{\frac{T}{N}} \]
            if we take equidistant time-nets.

      \item Let $0 < \theta < \eta \le 1$ and assume that $\| |Z| \|_{\bmo(S_{2\eta})} <\infty$. Then, similarly 
            to Example \ref{example:abstract_2variation}, we obtain
            \equa
            &   &  \| (\chi_{(a,b]}(t)|Z_t|)_{t\in [0,T]} \|_{\bmo(S_{2\theta})} \\
            &\le& (b-a)^{\frac{1}{2\theta}-\frac{1}{2\eta}} 
                    \| (\chi_{(a,b]}(t)|Z_t|)_{t\in [0,T]} \|_{\bmo(S_{2\eta})}
            \tion
            and, by using equidistant \st{time-nets}, that
            \[ \sli_N^{S_2,\F}(|Z|^\theta) \le \left ( \frac{T}{N} \right )^{\frac{1}{2}\left (1-\frac{\theta}{\eta}\right ) }
               \| |Z| \|_{\bmo(S_{2\eta})}^\theta. \]

\end{enumerate}
\item The usage of $(\sli_N^{S_2,\F} (|Z|^\theta))_{N\ge 1}$ might not be optimal in extremal cases
      as we mainly need the reverse H\"older inequality for the Dol\'ean-Dade exponential \eqref{eqn:lambda_rh_sufficient}
      in the proof of Theorem \ref{theorem:comparison_psi_phi} below: 
      If one would have $\int_0^\cdot  c_s  d\overline{W}_s \in \overline{L_\infty}^{\bmo_2}$, then according to the 
      remarks following Proposition \ref{proposition:RH-A-BMO} the reverse H\"older inequality for all
      exponents would be satisfied.
      It is part of future work to check conditions on the gradient $Z$ which guarantee this.
      On the other hand, if $\int_0^\cdot  c_s  d\overline{W}_s \not\in \overline{L_\infty}^{\bmo_2}$, then
      our approach yields explicit bounds for $c_\eqref{theorem:comparison_psi_phi}>0$ and the threshold $p_0$ 
      in terms of $(s_N)_{N\ge 1}$ which is implicitly a novelty of this statement. As shown in 
      Section \ref{sec:classes_quadratic_subquadratic_BSDEs} below, the usage 
      of the sliceable numbers gives $s_\infty=0$ in our relevant cases.

\item \st{In \cite{Frei:14} the sliceability condition is applied directly to $\xi$, instead of to $|Z|^\theta$ as 
      in our Theorem \ref{theorem:comparison_psi_phi}. This is done to consider a new concept of a solution to a BSDE, 
      called {\em split solution}, to solve multidimensional quadratic BSDEs.}

\end{enumerate}
\end{remark}
\medskip

\begin{proof}[Proof of Theorem \ref{theorem:comparison_psi_phi}]
(a) By Corollary \ref{cor:Z_generalized_Fefferman_inequality_new_constant} 
the assumptions (B6) and \linebreak $\| |Z| \|_{\bmo(S_{2\theta})}<\infty$
imply (B7) in the case $\theta>0$, whereas
for $\theta=0$ condition (B6) implies (B7) directly. Therefore we have 
\begin{equation}\label{eqn:Lp_boundedness_bsde}
\int_0^T | f(s,Y_s,Z_s)|ds + \sup_{t\in [0,T]} | Y_t|  \in \cL_p
\end{equation} by Lemma \ref{lemma:Y_in_Lp} for equation \eqref{equation:BSDE1_intro}. 
This yields the validity of conditions (B1)-(B7) and \eqref{eqn:Lp_boundedness_bsde} for 
the canonical extension to $\overline \Omega$. 
\medskip

(b) Now we define $h_1,h_2:[0,1]^2\to[0,1]$ by
 \equa
  h_1(x,z) &:=& \frac{x\sqrt{1-z^2}+z\sqrt{1-x^2}}{x+z}, \\
  h_2(x,z) &:=& \frac{x\sqrt{1-z^2}+z\sqrt{1-x^2}}{\sqrt{1-z^2}+\sqrt{1-x^2}},
 \tion
where for $x=z=0$ we set $h_1:=1$ and $h_2:=0$, analogously  
for $x=z=1$ we set $h_1:=0$ and $h_2:=1$, so that
\[ \left (
   \begin{matrix}
   \sqrt{1-x^2} & x \\
   \sqrt{1-z^2} & z \\
   \end{matrix}
   \right ) \binom{h_1(x,z)}{h_2(x,z)} = \binom{1}{1}
    \]
for all $x,z \in [0,1]$.
For $\rho\in \{ \varphi,\psi \}$ we let
\equa
\overline{Z}^\rho_s &:= & (Z_s^\rho \sqrt{1-\rho^2(s)}, Z_s^\rho \rho(s)),\\
 \overline{f}^\rho(s,y,(z,z')) &:= & f^\rho\left(s,y,
     h_1(\varphi(s),\psi(s))z
   + h_2(\varphi(s),\psi(s))z'\right), 
\tion
which leads to 
$ \overline{f}^\rho(s,Y_s^\rho,\overline{Z}_s^\rho)
=           f ^\rho(s,Y_s^\rho,          Z _s^\rho)$
and 
\begin{equation}\label{eqn:bsde_rho_bar}
 Y_t^\rho = \xi^\rho + \int_t^T \overline{f}^\rho(s,Y_s^\rho,\overline{Z}_s^\rho) ds
         - \int_t^T \overline{Z}_s^\rho d\overline{W}_s.
\end{equation}
Observe that
\begin{eqnarray}
&   & |\overline{Z}_s^\varphi - \overline{Z}_s^\psi|^2 \nonumber \\
& = & D[\varphi(s),\psi(s)]  [|Z_s^{\psi}|^2 + |Z_s^\varphi|^2] + [1-D[\varphi(s),\psi(s)]]
      |Z_s^\psi-Z_s^\varphi|^2 \label{eqn:zbar_1}
      \\
&\ge& \frac{2-D[\varphi(s),\psi(s)]}{2} |Z_s^\psi-Z_s^\varphi|^2 \nonumber \\
&\ge& \frac{|Z_s^\psi-Z_s^\varphi|^2}{2} \label{eqn:zbar_2}
\end{eqnarray}
and therefore we get for
\equa
      c_s 
&:= & \frac{\overline{f}^\varphi(s,Y_s^\varphi,\overline{Z}_s^\varphi) - 
                \overline{f}^\varphi(s,Y_s^\varphi,\overline{Z}_s^\psi)}
               {|\overline{Z}_s^\varphi - \overline{Z}_s^\psi|^2}
               \chi_{\{ \overline{Z}_s^\varphi \not = \overline{Z}_s^\psi\}} 
               [\overline{Z}_s^\varphi - \overline{Z}_s^\psi] \\
& = & \frac{f^\varphi(s,Y_s^\varphi,Z_s^\varphi) - 
            f^\varphi(s,Y_s^\varphi,Z_s^\psi)}
               {|\overline{Z}_s^\varphi - \overline{Z}_s^\psi|^2}
               \chi_{\{\overline{Z}_s^\varphi \not = \overline{Z}_s^\psi\}} 
               [\overline{Z}_s^\varphi - \overline{Z}_s^\psi] \\
\tion
that
\equa
      |c_s|
&\le& \sqrt{2} 
      \frac{|f^\varphi(s,Y_s^\varphi,Z_s^\varphi) - f^\varphi(s,Y_s^\varphi,Z_s^\psi)|}
           {|Z_s^\varphi-Z_s^\psi|}  \chi_{\{Z_s^\varphi \not = Z_s^\psi\}} \\
&\le& \sqrt{2} L_Z \big [1+|Z_s^\psi|+|Z_s^\varphi| \big ]^\theta \\
&\le& \sqrt{2} L_Z \big [1+|Z_s^\psi|^\theta+|Z_s^\varphi|^\theta \big ]. 
\tion
Lemma \ref{lemma:properties_sliceable_numbers} 
(to come into the setting of Lemma \ref{lemma:properties_sliceable_numbers} one can pass
from an $\R$-valued progressively measurable process $\alpha=(\alpha_t)_{t\in [0,T]}$ with
$\E\int_0^T |\alpha_t|^2 dt < \infty$ to a martingale by, for example,
$M_t:= \int_0^t \alpha_s d\overline{W}_{s,1}$) gives that
\begin{equation}\label{eqn:sli3}
       \sli_{3N-2}^{S_2,\overline \F}(|c|) 
  \le  \sqrt{2} L_Z [ \sli_N^{S_2,\overline\F}(1) + \sli_N^{S_2,\overline \F}(|Z^\psi|^\theta) 
       + \sli_N^{S_2,\overline\F}(|Z^\varphi|^\theta) ].
\end{equation}

\bigskip
(c) We return to the stochastic basis $(\Omega,\cF,\P,(\cF_t)_{t\in [0,T]})$,
take $\eta>0$ and find a sequence of stopping times $0=\tau_0 \le \cdots \le \tau_N = T$
such that
\[
       \sup_{k=1,...,N} \| (\chi_{(\tau_{k-1},\tau_k]}(t) |Z_t|^\theta)_{t\in [0,T]} \|_{\bmo(S_2)} 
   \le \sli_N^{S_2,\F}(|Z|^\theta) + \eta
   \le s_N + \eta.
\]
Letting 
\[ Z_t^k := \chi_{(\tau_{k-1},\tau_k]}(t) Z_t, \]
one can quickly check that
\[ \E \left ( \int_t^T |Z_s^k|^{2\theta} ds | \cF_t^0 \right ) \le (s_N+\eta)^2 \]
for all deterministic $t\in [0,T]$, where $Z^k$ is canonically extended to $\overline \Omega$. Assuming an 
$(\cF_t^0)_{t\in [0,T]}$-stopping time $\tau:\overline \Omega\to [0,T]$, and 
using the decomposition
\[    \E \left ( \int_\tau^T |Z_s^k|^{2\theta} ds | \cF_\tau^0 \right )
   =  \E \left ( \int_0^T    |Z_s^k|^{2\theta} ds | \cF_\tau^0 \right ) 
      -          \int_0^\tau |Z_s^k|^{2\theta} ds \]
and the optional stopping theorem, we may deduce that
\[ \E \left ( \int_\tau^T |Z_s^k|^{2\theta} ds | \cF_\tau^0 \right ) \le (s_N+\eta)^2. \]
Consequently,
\[   \sup_{k=1,...,N} \| (\chi_{(\tau_{k-1},\tau_k]}(t) |Z_t|^\theta)_{t\in [0,T]} \|_{\bmo(S_2)} 
     \le s_N + \eta \]
also after extending $Z$ and $(\tau_k)_{k=0}^N$ to $\overline \Omega$ where the filtration
$\st{\F^0=}(\cF^0_t)_{t\in [0,T]}$ is used.
This means that 
\begin{equation}
\label{eqn:sli_Ztheta}
\sli_N^{S_2,\st{\F^0}}(|Z|^\theta)\le s_N.
\end{equation}

(d) For any stopping time $\tau:\Omega\to [0,T]$ relative to $(\Omega,\cF,\P,(\cF_t)_{t\in [0,T]})$ and
for $\rho \in \{ \psi,\varphi \}$ consider $\tau^\rho:\overline\Omega\to \R$ and
take a representative such that $\tau^\rho:\overline\Omega\to [0,T]$. It is easy to check that 
$\tau^\rho$ is a stopping time with respect to the filtration $(\cF^\rho_t)_{t\in [0,T]}$.
Using 
$\E \left ( A^\rho | \overline{\cF}_t \right ) 
  = \left ( \E \left ( A | {\cF}_t^0 \right ) \right )^\rho$
$\overline{\P}$-a.s.
for $A\in \cL_1(\overline \Omega,\cF^0,\overline \P)$ 
(which can be checked by taking simple $A$ that depend only on finitely many increments of 
 the Brownian motion $W$ and then passing in $L_1$ to the limit), \st{Proposition \ref{proposition:properties_C_T}},
and Remark \ref{remark:consistent_new}(2) yield that
\[     \int_t^T \chi_{(\tau_{k-1}^\rho,\tau_k^\rho]}(s) |Z_s^\rho|^{2\theta} ds 
   =  \left ( \int_t^T \chi_{(\tau_{k-1},\tau_k]}(s) |Z_s|^{2\theta} ds \right )^\rho
   \mbox{ $\overline{\P}$-a.s.} \]
and
\equa
      \E \left ( \int_t^T \chi_{(\tau_{k-1}^\rho,\tau_k^\rho]}(s) |Z_s^\rho|^{2\theta} ds |
      \overline \cF_t \right )    
& = & \left ( \E \left ( \int_t^T \chi_{(\tau_{k-1},\tau_k]}(s) |Z_s|^{2\theta} ds |
      \cF_t^0 \right )   \right )^\rho \\
&\le& (s_N+\eta)^2.
\tion
Therefore,  we obtain
$\sli_N^{S_2,\overline\F}(|Z^\rho|^\theta)\le s_N +\eta$ as a complement of \eqref{eqn:sli_Ztheta}
(where we use the same optional stopping argument
as in step (c)) and can continue from 
\eqref{eqn:sli3} to 
\[     \sli_{3N-2}^{S_2,\overline{\F} }(|c|) 
   \le \sqrt{2} L_Z \left [ \sqrt{\frac{T}{N}} + 2 s_N + 2\eta \right ] \]
and
\[     \sli_{3N-2}^{S_2,\overline{\F}}(|c|) 
   \le \sqrt{2} L_Z \left [ \sqrt{\frac{T}{N}} + 2 s_N \right ] \]
by $\eta \downarrow 0$.
In the case $s_\infty = 0$ take $p_0 \in (1,2)$, say $p_0:= 3/2$, and in the case
$ s_\infty > 0$, define
\[ p_0:= \frac{ \Phi^{-1} (2\sqrt{2} L_Z s_{\infty} )     }
              { \Phi^{-1} (2\sqrt{2} L_Z s_{\infty} ) - 1 } \in (1,\infty) \]
and $p_1:= (p+p_0)/2$ so that
\[ 1< p_0<p_1<p<\infty. \]
Let 
\begin{equation}\label{eqn:lambda_rh_sufficient}
 \lambda_t :=\exp\left (\int_0^t c_s d \overline{W}_s - \frac{1}{2} \int_0^t |c_s|^2 ds \right ).
\end{equation}
We find an $N\ge 1$ such that
\[ \sli_{3N-2}^{S_2,\overline\F} (|c|) \le \sqrt{2} L_Z \left [ \sqrt{\frac{T}{N}} + 2 s_N \right ] 
                       < \Phi(p_1'). \]
This $N$ depends at most on $((s_N)_{N=1}^\infty, L_Z,T,p)$. 
Theorem \ref{theorem:scliceable_rh} implies that
\[ \rh_{p_1'}(\lambda)
   \le \left [\Psi \left (\sqrt{2} L_Z \left [ \sqrt{\frac{T}{N}} + 2 s_N \right ],p_1'
                   \right ) \right ]^{3N-2}<\infty \]
with $\Psi$ taken from \eqref{eqn:kazamaki_function_b}.
By assumption (B6) we have that 
\[ \left ( \int_0^T |Z_s|^2 ds \right )^\frac{1}{2} \in \cL_{p}. \]
Finally, fixing $t\in [0,T]$, we can assume for this $t$ that
\[  \left \| \int_t^T | f^\varphi(s,Y^\psi_s,Z^\psi_s)-f^\psi(s,Y^\psi_s,Z^\psi_s)| ds 
            \right \|_p < \infty, \]
otherwise there is nothing to prove. So we can apply \st{Lemma \ref{lemma:briand:elie_new}} to
the equations \eqref{eqn:bsde_rho_bar}  for $\rho \in \{ \varphi,\psi\}$
and conclude by using \eqref{eqn:zbar_1}
and \eqref{eqn:zbar_2}.
\end{proof}

%%%%%%%%%%%%%%%%%%%%%%%%%%%%%%%%%%%%%%%%%%%%%%%%%%%%%%%%%%%%%%%%%%%%%%%%%%%%%%%%%%%%%%%%%%%%%%%%%

\section{On classes of quadratic and sub-quadratic BSDEs}
\label{sec:classes_quadratic_subquadratic_BSDEs}

In this section we present results about particular classes of quadratic and sub-quadratic BSDEs that might be of independent 
interest. At the same time we check whether we may apply  Theorem \ref{theorem:comparison_psi_phi} to 
these BSDEs and what we can say about the critical value $s_\infty$.
\medskip

There are various articles that describe the existence and quantitative properties of solutions 
to BSDEs and provide comparison results. For the case $\theta=0$ the reader is referred to
\cite{Briand:Del:Hu:Par:Sto:03} and the references therein, and for the quadratic case 
we refer to 
\cite{Kobylanski:00,
      Lepel:Martin:98,
      Lepel:Martin:02,
      Hu:Imkeller:Mueller:05,
      Briand:Hu:06,
      Ankirchner:Imkeller:DosReis:07,
      Briand:Hu:08,
      Briand:Lepel:Martin:07,
      Morlais:09,
      Imke:Reis:10,
      Delbaen:Hu:Richou:11,
      Mocha:Westray:12,
      Barrieu:Karoui:13,
      Delbaen:Hu:Richou:15}.
We are mainly interested in the sub-quadratic and quadratic case, i.e. the case when $\theta\in (0,1]$. 
In Table 1 below we describe how we will embed these cases in the framework of this
article. Table 1 should be read in the way that we first choose $(\xi,\theta,f)$, then 
we obtain the integrability of the gradient process $Z=(Z_t)_{t\in [0,T]}$ and 
the conclusion for $s_\infty= \lim_N s_N$ \st{that are required for} Theorem \ref{theorem:comparison_psi_phi}. In the cases where the uniqueness of 
the solution is not known there exists a solution with the stated properties. \st{In (IV)-(V)} we 
leave out the range for $s_\infty$ as we do not have general results for these cases
\st{(see Remark \ref{remark:s_infty_in_(IV-V)} below).
     Moreover, for (II)-(V) we need the following additional condition:}
\begin{enumerate}
\item [{\rm (B8)}] \st{One has} $\sup_{(t,\omega)\in \st{[0,T]\times \Omega}} |f(t,\omega,0,0)| <\infty$, $L_Y>0$, 
                   and $L_Z>0$,
\end{enumerate}
\st{where the constants $L_Y,L_Z\ge 0$ were introduced in condition (B3) of
Section \ref{sec:setting_bsdes}.}

\begin{table}[H]\caption*{Table 1}
\label{table}
\begin{tabular}{|r||l|l|l||l|l|}\hline 
   & $\xi$                                        & $\theta$& $f$    & $|Z|$       & $s_\infty$  \\ \hline\hline
 \st{(I)} & $\xi \in L_p$                                & $0$     & (B3), (B5) & $\H_p(S_2)$ & $0$  \\  
   & for some $p\in [2,\infty)$                   &         &               &              &              \\ \hline \hline
\st{(II)} & $\xi \in \bh$                                & $(0,1)$ & (B3), (B8) & $\H_2(S_2) \cap\bigcap\limits_{\eta \in (0,1)} $ & $0$ \\
   &                                              &         &               & $ \bmo(S_{2\eta})$  &     \\ \hline 
\st{(III)}& $|\xi|_{\bh(\eta,\mu)}\!< \!\infty$ for some & 1       & (B3), (B8)  & $\H_2(S_2)\cap$ & $[0,\infty)$\\
   & $\eta\in (0,1]$, $\mu>\gamma e^{\beta T}$          &         &                  &  $\bmo(S_{2\eta})$ & if $\eta=1$ \\ \hline \hline
\st{(IV)} & $\E e^{\mu |\xi|}< \infty$                   & (0,1)   & (B3), (B8)& $\bmo^{\sqrt{\Psi}}(S_2)$  &     \\
   & for some  $\mu > 0$                          &         &               &     &        \\ \hline
\st{(V)}  & $\E e^{\mu |\xi|}< \infty$                   & 1       & (B3), (B8) & $\bmo^{\sqrt{\Psi}}(S_2)$ &     \\
   & for some $\mu > \gamma e^{\beta T}$          &         &               &     &        \\ \hline
\end{tabular}
\end{table}

\st{The spaces $\bmo^{\sqrt{\Psi}}(S_2)$ used in (IV) and (V) are explained in Theorem  \ref{theorem:Z_BMO_Psi} and the remark following it.}
We note that $|Z|\in \bmo^{\sqrt{\Psi}}(S_2)$ also implies $|Z|\in \H_2(S_2)$.
The case (I) follows from \cite[Theorem 4.2]{Briand:Del:Hu:Par:Sto:03} that gives (B6) and 
Remark \ref{remark:theorem:comparison_psi_phi}(2a) yields to $s_\infty=0$. In the following we verify our
contribution (II)-(V).
\medskip

\paragraph{\bf Notation and setting}
There is a series of papers dealing with the quadratic case where the terminal condition is unbounded, see 
\cite{Briand:Hu:06,Briand:Hu:08,Delbaen:Hu:Richou:11,Delbaen:Hu:Richou:15}. Below we use the setting of the initial article 
\cite{Briand:Hu:06}. For future work some extensions of \cite{Briand:Hu:06} done 
in \cite{Morlais:09} might be of interest for our context
as well.
To use the setting of \cite{Briand:Hu:06} we introduce constants $\alpha\ge 0$ and 
$\beta,\gamma>0$ such that, for all \st{$(s,\omega)\in [0,T]\times \Omega$},
\begin{equation}\label{eqn:BH:H1}
|f(s,\omega,y,z)| \le \alpha + \beta |y| + \frac{\gamma}{2} |z|^2
\sptext{1}{and}{1}
\alpha\ge \frac{\beta}{\gamma}.
\end{equation}
In our framework \st{we suppose, for the remainder of this section, 
{\bf that condition (B8) is satisfied}. Moreover, we choose $(\alpha,\beta,\gamma)$ to be}
\begin{eqnarray}
\alpha & := & \max\left \{\sup_{(t,\omega)\in \Omega\times [0,T]} |f(t,\omega,0,0)| + L_Z,
                    \frac{L_Y}{4 L_Z}\right \},\label{eqn:alpha} \\
\beta  & := & L_Y, \label{eqn:beta}\\
\gamma & := & 4 L_Z\label{eqn:gamma}.
\end{eqnarray}
As in \cite{Briand:Hu:06} we use the function $\Phi_t:[0,\infty)\to (0,\infty)$ given by
\[ \Phi_t(y) := e^{\gamma \alpha \frac{e^{\beta (T-t)}-1}{\beta}}
                e^{y\gamma e^{\beta (T-t)}}. \]
Moreover, we set
\[ \mu_T:=  \gamma e^{\beta T} > \gamma \]
which plays the role of a critical exponent in the case $\theta=1$.
Applying \cite[Theorem 2]{Briand:Hu:06} and inspecting its proof gives the following statement:

\begin{theorem}[\cite{Briand:Hu:06}]
\label{theorem:BH}
If there exists a $\mu > \mu_T$ such that 
\[ \E e^{\mu |\xi|} < \infty, \]
then there is a solution to the BSDE  \eqref{equation:BSDE1_intro} such that
\begin{enumerate}
\item $e^{\gamma |Y_t|} \le \E \left ( \Phi_t(|\xi|)|\cF_t \right )$ a.s. for $t\in [0,T]$,
\item $|Z| \in \H_2(S_2)$,
\item for $0\le s < t \le T$ and $\vare>0$ with $\gamma+\vare<\mu$ one has
      \[ \E \left ( \int_s^t |Z_r|^2 dr | \cF_s \right )
          \le c_{\eqref{theorem:BH}}^2 \E \left ( \sup_{r\in [s,t]} e^{(\gamma+\vare)|Y_r|} | \cF_s \right )
              \mbox{ a.s.} \]
       for $c_{\eqref{theorem:BH}}^2 := 2 \left [ \frac{1}{\gamma^2} + \frac{T}{\gamma} \max \{ \alpha,\frac{\beta}{\vare} \} \right ]$.
\end{enumerate}
\end{theorem}
\medskip

\paragraph{\bf Verification of \st{(IV)-(V)}} Here our main observation consists in
\medskip

\begin{theorem}
\label{theorem:Z_BMO_Psi}
Let $\theta \in (0,1]$ and assume that $\mu>\mu_T$ if $\theta=1$ and $\mu>0$ if $\theta\in (0,1)$. 
If $\E e^{\mu |\xi|} < \infty$, then there is a solution to the BSDE \eqref{equation:BSDE1_intro} such that
\begin{equation}\label{eqn:Z-bmo_Psi}
         \E \left (\int_s^T |Z_r|^2 dr| \cF_s \right)  
     \le  c^2_{\eqref{theorem:Z_BMO_Psi}} \Psi_s 
     \sptext{1}{with}{1}  
     \Psi_s := \E \left ( e^{\mu |\xi|}  | \cF_s \right )
\end{equation}
for all $s\in [0,T]$ and  $  c_{\eqref{theorem:Z_BMO_Psi}} = c(\alpha,\beta,\gamma,T,\theta,\mu)\in (0,\infty)$,
where we may assume  $(\Psi_s)_{s\in [0,T]}$ to be path-wise continuous.
Moreover, for all stopping times $\tau:\Omega\to [0,T]$, $B\in \cF_\tau$ of positive measure, and
      $\lambda,\nu>0$, one has 
      \[
      \P_B \left (\int_\tau^T |Z_r|^2 dr > \lambda \nu \right ) 
      \le e^{1-\lambda} + \delta \P_B \left ( \sup_{s\in [\tau,T]} \Psi_s > \frac{\nu}{D} \right ), 
      \]
      where $\P_B$ is the normalized restriction of $\P$ to $B$,  
      $D=D(\alpha,\beta,\gamma,T,\theta,\mu)>0$,
      and $\delta>0$ is an absolute constant.
\end{theorem}
\smallskip

In the spirit of \cite[Definition 1]{Geiss:05} the inequality \eqref{eqn:Z-bmo_Psi} could be abbreviated by
\index{BMO!$\bmo^{\sqrt{\Psi}}(S_2)$}
\[ \| |Z| \|_{\bmo^{\sqrt{\Psi}}(S_2)} \le c_{\eqref{theorem:Z_BMO_Psi}}. \]

\begin{proof}[Proof of Theorem \ref{theorem:Z_BMO_Psi}]
\underline{Case $\theta=1$:}
We choose $\vare>0$ and $p\in (1,\infty)$ such that
\[ \mu = p \mu_T = \frac{\gamma+\vare}{\gamma}\mu_T  \]
which implies by $\beta>0$ that $\gamma + \vare < \mu$.
Assuming $0\le s \le T$ and applying Theorem \ref{theorem:BH} gives, a.s., that
\equa
      \E \left (\int_s^T |Z_r|^2 dr| \cF_s \right) 
&\le& c^2_{\eqref{theorem:BH}}  \E \left ( \sup_{r\in [s,T]} e^{(\gamma + \vare)|Y_r|} | \cF_s \right ) \\
& = &  c^2_{\eqref{theorem:BH}}
      \E \left ( \sup_{r\in [s,T]} e^{p \gamma|Y_r|} | \cF_s \right ) \\
&\le& c^2_{\eqref{theorem:BH}} \E \left ( \sup_{r\in [s,T]} \left [ \E\Big (\Phi_s(|\xi|) |\cF_r\Big ) \right ]^p  | \cF_s \right ) \\
&\le& c^2_{\eqref{theorem:BH}} \left | \frac{p}{p-1} \right |^p 
      \E \left ( \Phi_s(|\xi|)^p  | \cF_s \right ) \\
&\le&  c^2_{\eqref{theorem:BH}} \left | \frac{p}{p-1} \right |^p 
      \kappa_T^p \E \left ( e^{\mu |\xi|}  | \cF_s \right ),
\tion
where $\kappa_T :=  e^{\gamma \alpha \frac{e^{\beta T}-1}{\beta}}$ and for 
$(\E(\Phi_s(|\xi|)|\cF_r))_{r\in [0,T]}$ a continuous modification is taken.
Therefore, letting 
\[ c^2 
   = c^2(\alpha,\beta,\gamma,T,\mu)
  := c^2_{\eqref{theorem:BH}} \left |\kappa_T \frac{p}{p-1} \right |^p, \]
we proved
\[  \E \left (\int_s^T |Z_r|^2 dr| \cF_s \right)  \le c^2 \Psi_s
    \mbox{ a.s.} \]
Using an optional stopping argument, this can be extended to
\[  \E \left (\int_\tau^T |Z_r|^2 dr| \cF_\tau \right)  \le c^2 \Psi_\tau
    \mbox{ a.s.} \]
for any stopping time $\tau:\Omega \to [0,T]$.
Given $\nu>0$ we get 
\equa
       \P_B \left (\int_\tau^T |Z_r|^2 dr > 3 \nu \right ) 
&\le&  \P_B \left (\int_\tau^T |Z_r|^2 dr > 3 c^2 \Psi_\tau \right ) 
       + \P_B \left (c^2 \Psi_\tau > \nu \right ) \\
&\le& \frac{1}{3} + \P_B \left (c^2 \Psi_\tau > \nu \right ).
\tion
If we define
\[    W(B,\nu;\tau) 
   := \P \left ( B \cap \left  \{ \sup_{r\in [\tau,T]} 
      3 c^2 \Psi_r > \nu \right \} \right ), \]
then we can directly apply \cite[Theorem 1]{Geiss:05}. 
\medskip

\underline{Case $\theta\in (0,1)$:}
This case can be  considered exactly as the case $\theta=1$.
In fact, with our choice of parameters $(\alpha,\beta,\gamma)$ in
\eqref{eqn:alpha}, \eqref{eqn:beta}, and \eqref{eqn:gamma} we obtain  the estimate
\[ |f(s,\omega,y,z)| \le \alpha + \beta |y| + \frac{\gamma}{2} |z|^{1+\theta}. \]
But now, for any given $\tilde \gamma>0$ we find an $\tilde \alpha\ge 0$ such that
\[ \alpha + \frac{\gamma}{2} |z|^{1+\theta} \le  \tilde \alpha + \frac{\tilde \gamma}{2} |z|^2 \]
for all $z\in \R^d$. In other words, we can arrange the parameters such that
$\mu > \tilde \gamma e^{\beta T}$ (and have an additional dependence of the constants
on $\theta$).
\end{proof}
\smallskip

\st{
\begin{remark}
\label{remark:s_infty_in_(IV-V)}
Assume equation \eqref{equation:BSDE1_intro} with $T=d=1$, $f\equiv 0$, and suppose
that $\E e^{\mu |\xi|} < \infty$ for all $\mu >0$. Then there is a unique solution $(Y,Z)$ 
under the assumption $Z\in \H_2(S_2)$. As 
we may choose any $\theta\in (0,1]$, we are in the setting of (IV) and (V).
Given $\eta\in (0,1]$, we will construct a $\xi$ as above with $Z\not\in \bmo(S_{2\eta})$.
This means, without any additional assumptions one cannot expect results about finite $s_\infty$
in (IV) and (V) of Table 1. The construction is as follows: 
For $\alpha \in [1,\infty)$ we recall the definition of the Orlicz spaces 
\index{space!$L_{\exp_\alpha}$}
$L_{\exp_\alpha}$ (see \cite{Bennett:Sharpley:88}),
\[   L_{\exp_\alpha}(\Omega,\cF,\P) 
  := \left \{ F \in L_0(\Omega,\cF,\P) :
         \| F\|_{L_{\exp_\alpha}} := \inf \{ \lambda>0 : \E e^{\left (\frac{|F|}{\lambda} \right )^\alpha} \le 2 \} 
    \right \}. \]
We fix $0<\eta\le 1 < \gamma < 2$, determine $\alpha\in (2,\infty)$
by $\frac{1}{\gamma}=\frac{1}{\alpha} + \frac{1}{2}$, and let $t_n:= 1-\frac{1}{2^n}$ for $n\ge 0$.
For $\vare>0$, $n\ge 1$, and $c_n\in (0,\infty)$ we set
\[ v_n(\omega) := 2^{(n+1) \left [\frac{1}{2\eta} + \vare \right ]}\chi_{\{|W_{t_n}(\omega)|\ge c_n \}} \]
so that $\|v_n\|_\infty = 2^{(n+1) \left [\frac{1}{2\eta} + \vare \right ]}$. We choose 
$c_n$ such that $\|v_n\|_{L_{\exp_\alpha}} \le 1$ and define, as in Example \ref{example:weaker_bmo_is_weaker_new},
the process
\[ Z_t := \sum_{n=2}^\infty \chi_{(t_{n-1},t_n]}(t) v_{n-1}. \]
The proof of Example \ref{example:weaker_bmo_is_weaker_new} confirms that 
$Z\not \in \bmo(S_{2\eta})$. On the other hand,
\equa
      \left \| \int_0^1 Z_s dW_s \right \|_{L_{\exp_\gamma}}
&\le& \sum_{n=2}^\infty  \left \| v_{n-1}(W_{t_n}-W_{t_{n-1}}) \right \|_{L_{\exp_\gamma}} \\
&\le& \sum_{n=2}^\infty  \left \| v_{n-1}\right \|_{L_{\exp_\alpha}} \left \| W_{t_n}-W_{t_{n-1}} \right \|_{L_{\exp_2}} \\
& = & \left \| W_1 \right \|_{L_{\exp_2}} \sum_{n=2}^\infty  \left \| v_{n-1}\right \|_{L_{\exp_\alpha}} \sqrt{\frac{1}{2^n}}\\
& < & \infty.
\tion
Therefore it holds that $\xi\in L_{\exp_\gamma}$ with $\gamma>1$, so that 
$\E e^{\mu |\xi|}<\infty$ for all $\mu>0$.
\end{remark}
}

\paragraph{\bf Verification of \st{(II)-(III)}}
The next definition will allow us to deduce that the gradient process $Z$ belongs to $\bmo(S_{2\eta})$:

\begin{definition}
\label{definition:bh_xi}
\index{space!$\bh(\eta,\mu)$}
For $\eta \in (0,1]$ and $\mu \in (0,\infty)$ we let 
\[
       |\xi|_{\bh(\eta,\mu)} 
    := \sup_{t\in [0,T)} (T-t)^{\frac{1}{\eta}-1} \left \| \E(e^{\mu |\xi|}|\cF_t) \right \|_\infty. 
 \]
\end{definition}
In the notation $\bh$ above, 'c' stands for {\em conditional} and 'Exp' for {\em \st{exponential}.}

\begin{remark}
\label{remark:properties_class_bh}
\hspace*{0em}
\begin{enumerate}
\item For $\eta=1$ we have that $|\xi|_{\bh(1,\mu)} = e^{\mu \|\xi\|_\infty}$.
\item For $\xi \in L_2$, $0<\eta<\tilde\eta<1$, and $0<\tilde\mu<\mu<\infty$ with
      $\mu (\frac{1}{\tilde \eta} - 1) = \tilde \mu \left ( \frac{1}{\eta} - 1 \right )$
      one has
      $     |\xi|_{\bh(\tilde \eta,\tilde \mu)}^\mu
        \le |\xi|_{\bh(\eta,\mu)}^{\tilde \mu}$.
\item For $\xi \in L_2$ and $\eta_0,\eta_1\in (0,1)$ one has 
      $|\xi|_{\bh(\eta_0,\mu_0)}<\infty$ for some $\mu_0 \in (0,\infty)$ if and only if 
      $|\xi|_{\bh(\eta_1,\mu_1)}<\infty$ for some $\mu_1\in (0,\infty)$.
\end{enumerate}
\end{remark} 
\smallskip

\begin{proof} Part (1) is obvious, (3) follows directly from (2). The assertion (2) is a consequence of
\equa
      |\xi|_{\bh(\tilde \eta,\tilde \mu)} 
& = & \sup_{t\in [0,T]} (T-t)^{\frac{1}{\tilde \eta}-1} \left \| \E(e^{\tilde \mu |\xi|}|\cF_t) \right \|_\infty \\
& = & \sup_{t\in [0,T]} (T-t)^{\frac{1}{\tilde \eta}-1} \left \| \E(e^{\mu \frac{\tilde \mu}{\mu}
      |\xi|}|\cF_t) \right \|_\infty \\
&\le& \sup_{t\in [0,T]} (T-t)^{\frac{1}{\tilde \eta}-1} \left \| \E(e^{\mu |\xi|}|\cF_t) \right \|_\infty^\frac{\tilde \mu}{\mu}\\
& = & \left [
      \sup_{t\in [0,T]} (T-t)^{\frac{1}{\eta}-1} \left \| \E(e^{\mu |\xi|}|\cF_t) \right \|_\infty
      \right ]^\frac{\tilde \mu}{\mu} \\
& = & |\xi|_{\bh(\eta,\mu)}^\frac{\tilde \mu}{\mu}.
\tion
\end{proof}

Directly from Theorem \ref{theorem:Z_BMO_Psi} we deduce
\smallskip

\begin{cor}
Assume $\theta =1$, $\eta\in (0,1]$, and in addition to the assumptions made in
Theorem \ref{theorem:Z_BMO_Psi} that $|\xi|_{\bh(\eta,\mu)} <\infty$ for some $\mu \in (0,\infty)$.
Then $|Z|\in \bmo(S_{2\eta})$ with
   \[      \| |Z| \|_{\bmo(S_{2\eta})} 
         \le  c_{\eqref{theorem:Z_BMO_Psi}} |\xi|^{\frac{1}{2}}_{\bh(\eta,\mu)}. \] 
\end{cor}
\smallskip

\begin{proof}
We simply have that
\[       
         \E \left (\int_s^T |Z_r|^2 dr| \cF_s \right)  
    \le  c^2_{\eqref{theorem:Z_BMO_Psi}} \E \left ( e^{\mu |\xi|}  | \cF_s \right )
    \le  c^2_{\eqref{theorem:Z_BMO_Psi}} |\xi|_{\bh(\eta,\mu)}  (T-s)^{1-\frac{1}{\eta}}
    \mbox{ a.s.}
\] 
for all $s\in [0,T]$ and therefore, a.s.,
\[
      \E \left ( \left ( \int_s^T |Z_r|^{2\eta} dr \right )^\frac{1}{\eta} | \cF_s \right ) 
 \le  (T-s)^{\frac{1}{\eta}-1} \E \left (\int_s^T |Z_r|^2 dr| \cF_s \right) 
 \le  c^2_{\eqref{theorem:Z_BMO_Psi}} |\xi|_{\bh(\eta,\mu)}.
\]
\end{proof}
\bigskip
The above corollary explains the case (III) from Table 1. 
It turns out that in the remaining case (II) the particular choice of parameter 
$\eta$ in $|\cdot|_{\bh(\eta,\mu)}$ does not have an impact. This is reflected by the following notation:

\begin{definition}
\label{definition:BH}
\index{space!$\bh(\eta,\mu)$}
\index{space!$\bh$}
\hspace*{0em}
\begin{enumerate}
\item For a c\`adl\`ag process $Y=(Y_t)_{t\in [0,T]}$ and 
      $(\eta,\mu) \in (0,1)\times (0,\infty)$ we let 
      \[ 
             |Y|_{\bh(\eta,\mu)} 
         :=  \sup_{t\in [0,T)} (T-t)^{\frac{1}{\eta}-1} \left \| \E(e^{\mu \sup_{s\in [t,T]} |Y_s|}|\cF_t) \right \|_\infty. 
      \]
We say that $Y\in \bh$ provided that $|Y|_{\bh(\eta,\mu)} < \infty$ for some $(\eta,\mu) \in (0,1)\times (0,\infty)$.
\item We say  
$\xi\in \bh$ provided that $|\xi|_{\bh(\eta,\mu)} < \infty$ for some $(\eta,\mu) \in (0,1)\times (0,\infty)$.
\end{enumerate}
\end{definition}
The definition of $|Y|_{\bh(\eta,\mu)}$ is consistent with Definition \ref{definition:bh_xi} as for
a random variable $\xi$ we may let $Y_t:=\xi$ and get $|Y|_{\bh(\eta,\mu)}=|\xi|_{\bh(\eta,\mu)}$. 

\begin{remark}
Exactly as in Remark \ref{remark:properties_class_bh} one can show that for $\eta_0,\eta_1\in (0,1)$ one has 
      $|Y|_{\bh(\eta_0,\mu_0)}<\infty$ for some $\mu_0 \in (0,\infty)$ if and only if 
      $|Y|_{\bh(\eta_1,\mu_1)}<\infty$ for some $\mu_1\in (0,\infty)$.
Therefore, $Y\in \bh$ if and only if there is some $\mu\in (0,\infty)$ such that
\[
    \sup_{t\in [0,T)} (T-t) \left \| \E(e^{\mu \sup_{s\in [t,T]} |Y_s|}|\cF_t) \right \|_\infty < \infty.
 \]
\end{remark}
\medskip

\begin{theorem}
\label{theorem:existence_uniqueness_bh_subquadratic}
Assume that $\theta\in (0,1)$ and $\xi \in \bh$. Then there is a unique solution $(Y,Z)$ to the 
{\rm BSDE} \eqref{equation:BSDE1_intro} in the class where $Y\in \bh$ and $|Z| \in \H_2(S_2)$.
Moreover, for this solution we have that
\begin{enumerate}
\item $s_\infty=0$ for $s_\infty$ defined as in Theorem \ref{theorem:comparison_psi_phi},
\item $|Z|\in \bmo(S_{2\eta})$ for all $\eta\in (0,1)$.
\end{enumerate}
\end{theorem}
\smallskip 

For the uniqueness in the above theorem we do not assume convexity properties of the generator. Instead of that, we
use $|Z| \in \bmo(S_{2\theta})$ and follow the methodology that BMO-properties of the $Z$ process give uniqueness, see
for example \cite{Hu:Imkeller:Mueller:05}. The difference to previous settings is that we exploit that the generator is
sub-quadratic and get therefore a weaker condition than the standard BMO-condition $|Z| \in \bmo(S_2)$.
Note that according to Example \ref{example:weaker_bmo_is_weaker_new} the spaces $\bmo(S_{2\eta})$ do not
coincide for different $\eta \in (0,1]$ in general.

\begin{proof}[Proof of Theorem \ref{theorem:existence_uniqueness_bh_subquadratic}]
\underline{Existence:}
The condition $\xi\in \bh$ implies that there are $(\eta,\mu)\in (0,1)\times (0,\infty)$ such that
\[ |\xi|_{\bh(\eta,\mu)} = \sup_{t\in [0,T)} (T-t)^{\frac{1}{\eta}-1} \left \| \E(e^{\mu |\xi|}|\cF_t )\right \|_\infty < \infty. \]
Because of $\theta<1$ we use the argument for the case $\theta \in (0,1)$ 
from the proof of Theorem \ref{theorem:Z_BMO_Psi} to replace $(\alpha,\beta,\gamma)$ by $(\tilde \alpha,\beta,\tilde \gamma)$ such that
\[ \mu > \tilde \mu_T := \tilde \gamma e^{\beta T}> \tilde \gamma. \]
We apply Theorem \ref{theorem:BH} and obtain a solution with
\begin{enumerate}
\item $e^{\tilde \gamma |Y_t|} \le \E(\tilde \Phi_t (|\xi|)|\cF_t)$ a.s. for $t\in [0,T]$,
\item $|Z|\in \H_2(S_2)$,
\end{enumerate}
where $\tilde\Phi _t$ is defined as $\Phi_t$ with $(\alpha,\beta,\gamma)$ replaced by $(\tilde \alpha,\beta,\tilde \gamma)$.
Let $\tilde p:= \mu/\tilde \mu_T\in (1,\infty)$ and assume $\tilde \gamma + \vare < \mu$ for some
$\vare>0$. Assuming $s \in [0,T)$, the arguments from the proof of Theorem \ref{theorem:Z_BMO_Psi} give, a.s., that
\equa
      \E \left ( \sup_{r\in [s,T]} e^{(\tilde \gamma + \vare)|Y_r|} | \cF_s \right ) 
&\le& \left | \frac{\tilde p}{\tilde p-1} \right |^{\tilde p}
      \tilde \kappa_T^{\tilde p} \E \left ( e^{\mu |\xi|}  | \cF_s \right ) \\
&\le& \left | \frac{\tilde p}{\tilde p-1} \right |^{\tilde p}
      \tilde \kappa_T^{\tilde p} 
      |\xi|_{\bh(\eta,\mu)} (T-s)^{1-\frac{1}{\eta}} 
\tion
where $\tilde \kappa_T :=  e^{\tilde \gamma \tilde \alpha \frac{e^{\beta T}-1}{\beta}}$.
Therefore, $|Y|_{\bh(\eta,\tilde \gamma +\vare)}< \infty$ and $Y\in \bh$.
\smallskip

\underline{Uniqueness:} Assume two solutions 
$(Y^0,Z^0)$ and $(Y^1,Z^1)$ with $Y^0,Y^1\in \bh$ and $Z^0,Z^1\in \H_2(S_2)$.
Let us fix $\eta \in (0,1)$ and find 
$\mu_0,\mu_1\in (0,\infty)$ such that 
\[
 |Y^i|_{\bh(\eta,\mu_i)} 
 =  \sup_{t\in [0,T)} (T-t)^{\frac{1}{\eta}-1} \left \| \E(e^{\mu_i \sup_{s\in [t,T]} |Y_s^i|}|\cF_t) \right \|_\infty
 < \infty.
 \]
Again exploiting $\theta<1$, we change in \eqref{eqn:BH:H1} the parameters $(\alpha,\beta,\gamma)$ 
to  $(\tilde \alpha,\beta,\tilde \gamma)$ such that 
\[ \mu:=\min \{ \mu_0,\mu_1\} > \tilde \gamma e^{\beta T}. \]
Analyzing the proof of \cite[Theorem 2, pp. 609-610]{Briand:Hu:06} gives 
for $0\le s < T$ and $\vare>0$ with $\tilde \gamma+\vare<\mu$ that
      \[ \E \left ( \int_s^T |Z_r^i|^2 dr | \cF_s \right )
          \le 2 \left [ \frac{1}{\tilde \gamma^2} + \frac{T}{\tilde \gamma} \max \left \{ \tilde \alpha,\frac{\tilde \beta}{\vare} 
              \right \} 
              \right ] \E \left ( \sup_{r\in [s,T]} e^{(\tilde \gamma+\vare)|Y_r^i|} | \cF_s \right )
              \mbox{ a.s.} \]
We continue with
\[
      \E \left ( \sup_{r\in [s,t]} e^{(\tilde \gamma+\vare)|Y_r^i|} | \cF_s \right ) 
  \le \E \left ( \sup_{r\in [s,t]} e^{\mu_i|Y_r^i|} | \cF_s \right ) \\
  \le |Y^i|_{\bh(\eta,\mu_i)}  (T-s)^{1-\frac{1}{\eta}}.
\]
Therefore, for 
$\tilde c^2 := 2 \left [ \frac{1}{\tilde \gamma^2} + \frac{T}{\tilde \gamma} 
               \max \left \{ \tilde \alpha,\frac{\tilde \beta}{\vare} \right \} 
               \right ]$, a.s.,
\[
      \E \left ( \left ( \int_s^T |Z_r^i|^{2\eta} dr \right )^{\frac{1}{\eta}} | \cF_s \right ) 
  \le (T-s)^{\frac{1}{\eta}-1} \E \left (\int_s^T |Z_r^i|^2 dr| \cF_s \right)
  \le \tilde c^2  |Y^i|_{\bh(\eta,\mu_i)}. 
\]
This implies that $Z^0,Z^1\in \bmo(S_{2\eta})$ for all $\eta \in (0,1)$. 
In particular, we have that $Z^0,Z^1\in \bmo(S_{2\theta})$ and this enables us to apply 
Lemma \ref{lemma:briand:elie_new}. Here we set
\equa
f^0(s,y,z) &:=& f(s,y,z), \\
f^1(s)     &:=& f(s,Y_s^1,Z_s^1).
\tion
The assumptions (D1), (D2), and (D4) are obviously satisfied, for (D3) we use that
\[ \left ( \E \left | \int_0^T |Z_s^i|^{1+\theta} ds \right |^2 \right )^\frac{1}{2}
   \le c_{\eqref{cor:AB_generalized_Fefferman_inequality_new_constant},2} \| |Z^i|\|_{\H_2(S_2)} \| |Z^i|^\theta \|_{\bmo(S_2)} \]
where $\| |Z^i|^\theta \|_{\bmo(S_2)}<\infty$ because of $|Z^i|\in \bmo(S_{2\theta})$.
The above definitions guarantee that $\Xi_s\equiv 0$. A straightforward computation
gives also that
\[     \E \left ( \int_t^T |c_s|^2 ds |\cF_t\right )
   \le L_Z^2 3^{2\theta} \left [ T + \| |Z^0| \|_{\bmo(S_{2\theta})}^{2\theta} + \| |Z^1| \|_{\bmo(S_{2\theta})}^{2\theta} \right ] 
       \mbox{ a.s.} \]
so that $\|c\|_{\bmo(S_2)} < \infty$. It remains to show that $p_0$ can be chosen such that $p_0\in (1,2)$.
Here we repeat the above argument and check, for $0\le a < b \le T$ and $\eta\in (\theta,1)$, that
\begin{multline*}
       \E \left ( \int_a^b |c_s|^2 ds |\cF_a\right ) \\
   \le L_Z^2 3^{2\theta} \left [ (b-a) 
       + (b-a)^{1-\frac{\theta}{\eta}}[ \| |Z^0| \|_{\bmo(S_{2\eta})}^{2\theta} + \| |Z^1| \|_{\bmo(S_{2\eta})}^{2\theta} ]\right ]
       \mbox{ a.s.}
\end{multline*}
This yields $\lim_N \sli_N^{S_2}(c) =0$ and we can choose $p_0\in (1,2)$. Therefore we may apply 
Lemma \ref{lemma:briand:elie_new} with $p=2$ and this yields uniqueness.
\medskip

The \underline{conclusion $s_\infty=0$} follows by Remark \ref{remark:theorem:comparison_psi_phi} (2b),
which is the same reasoning as used for $\lim_N \sli_N^{S_2}(c) =0$ above.
\end{proof}
\medskip

\begin{remark}
\label{remark:existence_uniqueness_bh_quadratic}
Theorem \ref{theorem:existence_uniqueness_bh_subquadratic} is an extension of the known case $\theta=1$
(cf. \cite{Hu:Imkeller:Mueller:05,Morlais:09}).
For $\theta=1$ and $\xi\in L_\infty$ Theorem \ref{theorem:BH} gives a solution $(Y,Z)$ with 
$\sup_{t\in [0,T]}\| Y_t \|_\infty < \infty$ and $|Z|\in \bmo(S_2)$. Assuming two such solutions, we may follow 
the (second half of the) part about uniqueness in the proof of Theorem \ref{theorem:existence_uniqueness_bh_subquadratic}. Here the 
difference is that we only get {\em some} $p_0\in (1,\infty)$ for applying Lemma \ref{lemma:briand:elie_new}. 
However, $|Z^0-Z^1|\in \bmo(S_2)$ implies that all moments of $\int_0^T |Z^0_s - Z^1_s|^2 ds$ exist and
Lemma \ref{lemma:briand:elie_new} is applicable for any $p\in (p_0,\infty)\cap [2,\infty)$. 
Therefore, in the case $\theta=1$ and $\xi\in L_\infty$ the solution 
$(Y,Z)$ is unique when $\sup_{t\in [0,T]}\| Y_t \|_\infty < \infty$ and $|Z|\in \bmo(S_2)$.
\end{remark}
\medskip

We finish by an example illustrating $\xi\in\bh$.

\begin{example}
Let $d=1$, $\eta\in (0,1)$, 
\[ \varphi_\eta(t) := \log \left ( 1 + (T-t)^{1-\frac{1}{\eta}} \right )
   \sptext{1}{for}{1}
   t\in [0,T), \]
so that $\varphi_\eta(t) \uparrow \infty$ as $t\to T$
and define the stopping time
\[ \tau_\eta := \inf \left \{ t\in [0,T) :
                W_t = \varphi_\eta(t) \right \} \wedge T.\]
Let 
\[ e^\xi := 1 + e^{W_{\tau_\eta}-\frac{\tau_\eta}{2}} \]
so that $\xi(\omega)\in(0,\infty)$ and
\[ \E \left ( e^\xi | \cF_t\right ) =  1 + e^{W_{\tau_\eta\wedge t}-\frac{\tau_\eta\wedge t}{2}}
                            \le 2 + (T-t)^{1-\frac{1}{\eta}} \mbox{ a.s.} \]
for $t\in [0,T)$. On the other hand, $\xi\not \in L_\infty$ because for all $c>0$ one has that
$\P(W_{\tau_\eta}>c)>0$. The latter fact can be checked by taking any 
$0<\vare < \varphi_\eta(0)<c<\infty$ and $S\in (0,T)$ with $c<\varphi_\eta(S)$
and using the known fact that $\P(\sup_{t\in [0,S]} |W_t| \le \vare )>0$ so that the probability that the Brownian motion 
exceeds $\varphi_\eta$ on $[S,(S+T)/2]$ is positive.
\end{example}

%%%%%%%%%%%%%%%%%%%%%%%%%%%%%%%%%%%%%%%%%%%%%%%%%%%%%%%%%%%%%%%%%%%%%%%%%%%%%%%%%%%%%%%%%%%%%%%%%

\section{Settings for the stability theorem}
\label{sec:settings_for_theorem:comparison_psi_phi}

The aim of this section is to discuss some settings for the stability Theorem \ref{theorem:comparison_psi_phi}.

\subsection{Forward setting} 
\label{subsec:forward_setting}

This setting corresponds to the setting of stochastic integration. If the generator $f$ does not depend on $Y$, then the process $Y$ computes directly as
\[ Y_t = Y_0 - \int_0^t f(s,Z_s) ds + \int_0^t Z_s dW_s. \]
This enables us to construct examples to understand what the correct conditions on $Z$ in the
quadratic case might be. Let us mention two cases:

\begin{enumerate}[(a)]
\item  Taking $Z$ from Example \ref{example:weaker_bmo_is_weaker_new} for $0<\theta < \eta =1$,
       we have examples where the $Z$-process fails to be in $\bmo(S_2)$ but satisfies $Z\in \bmo(S_{2\theta})$ and
       $\int_0^T |Z_s|^2 ds \in L_{\exp}$. The latter enables us to apply
       Lemma \ref{lemma:Y_in_Lp} under suitable integrability conditions on 
       $\int_0^T |f(s,0)| ds$ (note that $L_{\exp} \subseteq L_p$ for all $p\in (0,\infty$)).
\item Similarly, for $\theta=1$ we obtain an $L_p$-solution of our BSDE
      under (B3), (B5), and $\left ( \int_0^T |Z_t|^2 dt \right )^\frac{1}{2} \in L_{2p}$
      (see the arguments at the end of Section \ref{sec:setting_bsdes}).
      Therefore we can take any $Z\in \bmo(S_2)$, in particular, $Z$ can be an unbounded BMO-process 
      in the quadratic setting.
\end{enumerate}
\bigskip

%%%%%%%%%%%%%%%%%%%%%%%%%%%%%%%%%%%%%%%%%%%%%%%%%%%%%%%%%%%%%%%%%%%%%%%%%%%%%%%%%%%%%%%%%%%%%%%%%

\subsection{Potential estimates for the generator}
\label{sec:potential_for_generator}

In applications of Theorem \ref{theorem:comparison_psi_phi} one might need to estimate
\[
 \left \| \int_t^T | f^\varphi(s,Y^\psi_s,Z^\psi_s) - f^\psi(s,Y^\psi_s,Z^\psi_s)| ds 
            \right \|_p
\]
from above. One way to do this (we do not consider the remaining assumptions for
Theorem \ref{theorem:comparison_psi_phi}) is to find a potential estimate 
\[ |f^\varphi(s,y,z) - f^\psi(s,y,z)| 
 \le | \langle  (1,|y|,|z|,|z|^{1+\theta}), V_s^\varphi - V_s^\psi \rangle | \]
for all $(s,y,z)$ where the potential $(V_s)_{s\in [0,T]}$ is a predictable process
\[ V_s : \Omega \to \R^4.\]
Below we illustrate some special cases for $V$.
The general construction is as follows: We consider a continuous 
\[ h : [0,T]\times \R^N \times \R \times \R^d \to \R, \]
where $N\ge 1$,
and a predictable $\R^N$-valued process $A=(A_t)_{t\in [0,T]}$ on $\Omega$ to let
\[ f(t,\omega,y,z) := h(t,A_t(\omega),y,z). \]
Then $f$ is $(\cP, \cB(C(\R^{1+d})))$-measurable.
Assume that $A^\vph=(A_t^\varphi)_{t\in [0,T]}$ is a $\cP^\varphi$-measurable
\st{representative of $\widetilde{A}^\varphi$}, where
$\widetilde{A}$ is the canonical extension of $A$ to $\overline{\Omega}$. We get that 
\[ f^\varphi(t,\overline{\omega},y,z) := h(t,A_t^\varphi(\overline{\omega}),y,z) \]
is $(\cP^\varphi, \cB(C(\R^{1+d})))$-measurable and, for any fixed $(y,z)\in \R^{1+d}$, that
$f^\varphi(\cdot,\cdot,y,z\st{)}: [0,T]\times \overline{\Omega}\to \R$ is a 
\st{representative of $\widetilde{f}^\varphi$, where $\widetilde{f}$ is}
the canonical extension $\widetilde{f}(\cdot,\cdot,y,z\st{)}: [0,T]\times \overline{\Omega}\to \R$
(see \st{Proposition \ref{proposition:properties_C_T}(4)} applied to $\st{X}_{t,1}=t$ and 
$(\st{X}_{t,2}(\overline{\omega}),...,\st{X}_{t,{N+1}}(\overline{\omega}))=\widetilde{A}_t(\overline{\omega})$).
Therefore we will take in the sequel as transformed generator the map
$f^\varphi$ as defined above.

\begin{example}
\label{example:driver_that_depends_on_X}
Let
\[ f(s,\omega,y,z):= h(s,A_s(\omega),y,z), \]
where $h: [0,T]\times \R \times \R \times \R^d\to \R$ is continuous with
\[ |h(t,x_0,y_0,z_0)-h(t,x_1,y_1,z_1)|
   \le L_X |x_0-x_1| + L_Y|y_0-y_1| + L_Z [1+|z_0|+|z_1|] |z_0-z_1| \] 
for all $(t,x_0,x_1,y_0,y_1,z_0,z_1)$ and $(A_t)_{t\in [0,T]}$ is a predictable process. Then we get
\[ |f^\varphi(s,y,z) - f^\psi(s,y,z)|  \le L_X|A_s^\varphi - A_s^\psi |
   \sptext{1}{and}{1} V_s:= (L_X A_s,0,0,0) \]
and  
\[ 
\left \| \int_t^T | f^\varphi(s,Y^\psi_s,Z^\psi_s) - f^\psi(s,Y^\psi_s,Z^\psi_s)| ds 
            \right \|_p \le 
L_X \left \| \int_t^T | A_s^\varphi - A^\psi_s| ds 
            \right \|_p. \]
\end{example}
\bigskip

The next example indicates the case of random Lipschitz constants for $y$:
\medskip

\begin{example}
Assume that
\[ f(s,\omega,y,z) := A_s(\omega) g(y) \]
where $g:\R\to \R$ is a Lipschitz function and $(A_s)_{s\in [0,T]}$ is predictable
and uniformly bounded in $(s,\omega)$.
Then
\[ |f^\varphi(s,y,z) - f^\psi(s,y,z)|  \le |g(y)| |A_s^\varphi - A_s^\psi |
   \le [ |g(0)| + \Lip(g) |y| ] |A_s^\varphi - A_s^\psi | \]
and $V_s:= (|g(0)|A_s,\Lip(g) A_s,0,0)$.
Here we get (for example) that
\equa
&    & \left \| \int_t^T | f^\varphi(s,Y^\psi_s,Z^\psi_s) - f^\psi(s,Y^\psi_s,Z^\psi_s)| ds 
            \right \|_p \\
&\le&  |g(0)| \left \| \int_t^T | A_s^\varphi - A^\psi_s| ds 
            \right \|_p +
  \Lip(g)  \left \| \int_t^T |Y_s^\psi| | A_s^\varphi - A^\psi_s| ds 
            \right \|_p  \\
&\le& 
|g(0)| \left \| \int_t^T | A_s^\varphi - A^\psi_s| ds 
            \right \|_p +
  \Lip(g)  \left \| {^* (Y^\psi)}_t \int_t^T |A_s^\varphi - A^\psi_s| ds 
            \right \|_p  \\
&\le& 
|g(0)| \left \| \int_t^T | A_s^\varphi - A^\psi_s| ds 
            \right \|_p +
  \Lip(g)  \| {^* (Y^\psi)}_t \|_{p_0} \left \| \int_t^T |A_s^\varphi - A^\psi_s| ds 
            \right \|_{p_1} \\
&\le& 
[ |g(0)| +  \Lip(g)  \| {^* Y}_t \|_{p_0}] 
  \left \| \int_t^T |A_s^\varphi - A^\psi_s| ds 
            \right \|_{p_1}             
\tion
for any $\frac{1}{p}=\frac{1}{p_0}+\frac{1}{p_1}$ with $p<p_0,p_1<\infty$ and 
\index{${^* C}_t$}
\[ {^* C}_t := \sup_{s\in [t,T]} |C_s|, \]
where we used
that ${^* (Y^\psi)}_t$ and ${^* Y}_t$ have the same distribution which follows from \st{Proposition \ref{proposition:properties_C_T}(3)}.
\end{example}
\bigskip

The last example concerns the $Z$ component.
\medskip

\begin{example}
Assume that
\[ f(s,\omega,y,z) := A_s(\omega) |z|^{1+\theta} \]
with $\theta\in (0,1)$,
where $(A_s)_{s\in [0,T]}$ is predictable 
and uniformly bounded in $(s,\omega)$.
Then
\[ |f^\varphi(s,y,z) - f^\psi(s,y,z)|  \le |z|^{1+\theta} |A_s^\varphi - A_s^\psi | \]
and $V_s:= (0,0,0,A_s)$. Because \st{of}
\equa
      |f(s,\omega,y_0,z_0)- f(s,\omega,y_1,z_1)|
&\le& |A_s(\omega)| \Big | | z_0|^{1+\theta} - | z_1|^{1+\theta} \Big | \\
&\le& [1+\theta] |A_s(\omega)| \Big | | z_0| - | z_1| \Big | [1+|z_0|+|z_1|]^\theta \\
&\le& [1+\theta] |A_s(\omega)|  | z_0 -  z_1  | [1+|z_0|+|z_1|]^\theta\st{,} 
\tion
\st{the} condition (B3) is satisfied. Then an upper bound is obtained by
\equa
&    & \left \| \int_t^T | f^\varphi(s,Y^\psi_s,Z^\psi_s) - f^\psi(s,Y^\psi_s,Z^\psi_s)| ds 
            \right \|_p \\
&\le& \left \| \int_t^T |Z_s^\psi|^{1+\theta} |A_s^\varphi - A_s^\psi|  ds 
            \right \|_p \\
&\le& \left \| \left ( \int_t^T |Z_s^\psi|^2 ds \right )^{\frac{1+\theta}{2}}
               \left ( \int_t^T |A_s^\varphi - A_s^\psi|^\frac{2}{1-\theta}  ds  \right )^{\frac{1-\theta}{2}}
            \right \|_p \\
&\le& \left \| \left ( \int_t^T |Z_s|^2 ds \right )^{\frac{1}{2}}
                \right \|_{(1+\theta)p_0}^{1+\theta}
            \left \| 
               \left ( \int_t^T |A_s^\varphi - A_s^\psi|^\frac{2}{1-\theta}  ds  \right )^{\frac{1-\theta}{2}}
            \right \|_{p_1}
\tion
for any $\frac{1}{p}=\frac{1}{p_0}+\frac{1}{p_1}$ with $p<p_0,p_1<\infty$, where we use Remark \ref{remark:consistent_new}(2)
in the last step.
\end{example}

%%%%%%%%%%%%%%%%%%%%%%%%%%%%%%%%%%%%%%%%%%%%%%%%%%%%%%%%%%%%%%%%%%%%%%%%%%%%%%%%%%%%%%%%%%%%%%%%%

\subsection{Theorem \ref{theorem:comparison_psi_phi}
         for the perturbation $(\varphi,\psi)=(\chi_{(a,b]},0)$}

The importance of the pair $(\varphi,\psi)=(\chi_{(a,b]},0)$ follows from the fact 
that
\[ \| Y_t -Y_t^{(t-\vare,t]} \|_p \sim_2 
   \| Y_t -\E(Y_t|\cF_{t-\vare}) \|_p \]
for $p\in [1,\infty]$, i.e. the fractional smoothness of $Y_t$ is measured in terms of the speed of convergence
of the conditional expectations. In the case $(\varphi,\psi)=(\chi_{(a,b]},0)$ we have that 
\eqref{eqn:theorem:comparison_psi_phi} implies two inequalities that give different
information about the $L_p$-variation of the processes $Y=(Y_t)_{t\in [0,T]}$ and $Z=(Z_t)_{t\in [0,T]}$: Firstly, for $0<\vare<t$
we have that
\begin{multline}\label{eqn:comparison_inequality_1}
      \left \|  \st{Y_t - Y_t^{(t-\vare,t]}}  \right \|_p 
   +  \left \| \left ( \int_t^T |Z_s - Z_s^{(t-\vare,t]} |^2 ds \right )^\frac{1}{2} 
                \right \|_p \\
\le c_\eqref{theorem:comparison_psi_phi}  \left [ \|\xi-\xi^{(t-\vare,t]} \|_p + 
            \left \| \int_t^T | f(s,Y_s,Z_s) - f^{(t-\vare,t]}(s,Y_s,Z_s)| ds 
            \right \|_p \right ]
\end{multline}
and, secondly,
\begin{multline}\label{eqn:comparison_inequality_2}
       \left \| \left ( \int_{t-\vare}^t |Z_s|^2 ds \right )^\frac{1}{2} 
                \right \|_p \\
\le c_\eqref{theorem:comparison_psi_phi}  \left [ \|\xi-\xi^{(t-\vare,t]} \|_p + 
            \left \| \int_{t-\vare}^T | f(s,Y_s,Z_s) - f^{(t-\vare,t]}(s,Y_s,Z_s)| ds 
            \right \|_p \right ].
\end{multline}

%%%%%%%%%%%%%%%%%%%%%%%%%%%%%%%%%%%%%%%%%%%%%%%%%%%%%%%%%%%%%%%%%%%%%%%%%%%%%%%%%%%%%%%%%%%%%%%%%

\subsection{Theorem \ref{theorem:comparison_psi_phi} and Besov spaces}
\label{sec:formulation_anisotropic_Besov}

We want to transform Theorem \ref{theorem:comparison_psi_phi} into an embedding theorem
for the Besov spaces $\B^\Phi_p$. As the BSDEs we consider might be even quadratic we have - in some sense -
a non-linear embedding theorem. To handle the assumption on the generator we need a slight 
extension of our anisotropic Besov spaces:
\smallskip

\begin{definition}
\index{functional!$\indexnorm \cdot \indexnorm_{\Phi,q}^{r,t}$}
For $q,r\in [1,\infty)$, a predictable process $(A_t)_{t\in [0,T]}$ with
\[ \left \| \left ( \int_0^T |A_s|^r ds \right )^\frac{1}{r} \right \|_q < \infty, \]
for $t\in [0,T]$, and for an admissible functional $\Phi$ we let
\[ \| A \|_{\Phi,q}^{r,t}
   := \Phi \left ( \psi \to \left \| \left ( \int_t^T |A_s-A^\psi_s|^r ds \right )^\frac{1}{r} \right \|_q\right ).\]
\end{definition}
\bigskip

First we show that this definition is possible:

\begin{lemma}
The map 
\[  \psi \to \left \| \left ( \int_t^T |A_s-A^\psi_s|^r ds \right )^\frac{1}{r} \right \|_q \]
is continuous as a map from $\ws$ into $[0,\infty)$.
\end{lemma}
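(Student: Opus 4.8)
The plan is to mimic the continuity arguments already used for the map $F_{\xi,p}$ in Section~\ref{sec:Besov:definition}, now carried out at the level of the $r$-th power integral in time and then the $q$-th moment in $\omega$. First I would fix $t\in[0,T]$ and, for $\psi\in\ws$, abbreviate
\[ G(\psi) := \left \| \left ( \int_t^T |A_s-A^\psi_s|^r ds \right )^\frac{1}{r} \right \|_q
            = \left \| A - A^\psi \right \|_{L_q(\Om;L_r([t,T]))}, \]
which is finite by the standing integrability assumption on $A$ together with Theorem~\ref{theorem:properties_C_T}(1) (which guarantees $A^\psi$ has the same law as $A$, hence lies in the same Bochner space). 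The claim is that $\psi_n\to\psi$ in $(\ws,\delta)$ implies $G(\psi_n)\to G(\psi)$, and by the (reverse) triangle inequality in the Banach space $L_q(\Om;L_r([t,T]))$ this reduces to showing
\[ \left \| A^{\psi_n} - A^{\psi} \right \|_{L_q(\Om;L_r([t,T]))} \to_n 0. \]

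The key steps are then exactly the two-stage truncation/approximation scheme from the proof of Lemma~\ref{lemma:exchange_continuity}. First, for a \emph{simple} integrand in time of the form $A_s=\sum_{j=1}^m \xi_j \chi_{(u_{j-1},u_j]}(s)$ with $\xi_j\in\cL_q(\overline\Omega)$ (or even $\xi_j$ bounded), the decoupled process is $A^\psi_s=\sum_j \xi_j^\psi\chi_{(u_{j-1},u_j]}(s)$, so
\[ \|A^{\psi_n}-A^\psi\|_{L_q(\Om;L_r([t,T]))} \le c \sum_{j=1}^m \|\xi_j^{\psi_n}-\xi_j^\psi\|_q \left(u_j-u_{j-1}\right)^{1/r} \to_n 0 \]
by Lemma~\ref{lemma:exchange_continuity} applied to each $\xi_j$. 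Second, to pass to a general $A$, I would approximate in $L_q(\Om;L_r([t,T]))$: choose a simple-in-time process $\tilde A$ (after truncating $A$ in absolute value and approximating the resulting bounded process by step processes in $s$, exactly as in step~(b) of Lemma~\ref{lemma:exchange_continuity}) with $\|A-\tilde A\|_{L_q(\Om;L_r([t,T]))}<\vare$, and note that since $\C_0(\vph)$ is a linear isometry on each $L_q$-fibre and preserves the time-structure (Theorem~\ref{theorem:properties_C_T} and the consistent-functional Remark~\ref{remark:consistent_new}(2) applied to $\phi(s,x)=|x|^r\wedge L$), one also gets $\|A^\psi-\tilde A^\psi\|_{L_q(\Om;L_r([t,T]))}=\|A-\tilde A\|_{L_q(\Om;L_r([t,T]))}<\vare$ uniformly in $\psi$. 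A triangle-inequality estimate
\[ \|A^{\psi_n}-A^\psi\| \le \|A^{\psi_n}-\tilde A^{\psi_n}\| + \|\tilde A^{\psi_n}-\tilde A^\psi\| + \|\tilde A^\psi - A^\psi\| \le 2\vare + o_n(1) \]
then lets $\vare\downarrow 0$ finish the argument. Finally the reverse triangle inequality $|G(\psi_n)-G(\psi)|\le \|A^{\psi_n}-A^\psi\|_{L_q(\Om;L_r([t,T]))}$ yields continuity of $G$, as desired.

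I expect the main (mild) obstacle to be the bookkeeping in the second stage: one must justify that the decoupling operator really does commute with the $L_r$-in-time norm and with truncation in a way that produces the uniform-in-$\psi$ isometry $\|\tilde A^\psi-A^\psi\|=\|\tilde A-A\|$. This is precisely the content of Remark~\ref{remark:consistent_new}(2) (the functional $\Phi(g)=\int_0^T(|g(s)|^r\wedge L)\,ds$ is consistent, so $\int_0^T(|A^\psi_s|^r\wedge L)\,ds\in\C_0(\int_0^T(|A_s|^r\wedge L)\,ds)$), combined with the fact that $\C_0(\vph)$ preserves $L_q$-norms of random variables; taking $L\to\infty$ and using monotone convergence upgrades this to the genuine $L_r$-integral. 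Once that identity is in hand, everything else is a routine $3\vare$-argument modeled on Lemma~\ref{lemma:exchange_continuity}. A small point worth stating explicitly is that one may restrict to $r$-integrable, $q$-integrable $A$ throughout because that is exactly the hypothesis under which the seminorm $\|A\|_{\Phi,q}^{r,t}$ is defined.
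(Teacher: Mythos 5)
Your argument is correct, but it follows a genuinely different route from the paper's after the common first step (truncating $A$ in value to $A^N$ and using that decoupling preserves the law of $\int_t^T|A_s-A_s^N|^r\,ds$, via Remark \ref{remark:consistent_new}(2), to make the truncation error uniform in $\psi$). From there you discretize in time, reducing the bounded case to finitely many applications of Lemma \ref{lemma:exchange_continuity} to the coefficients $\xi_j$; the paper instead keeps the continuum of times and uses the elementary estimate
\[ \left\|\Big(\int_t^T |X_s|^r\,ds\Big)^{\frac1r}\right\|_q \le c_{q,r,T}\Big(\int_t^T \|X_s\|_u^u\,ds\Big)^{\frac1u},\qquad u:=q\vee r, \]
valid for bounded $X$, to move the $\Omega$-norm inside the time integral; it then applies Lemma \ref{lemma:exchange_continuity} pointwise in $s$ and concludes by dominated convergence, invoking Theorem \ref{theorem:properties_C_T}(6) to get one full-measure set of times $s$ on which $(A^N)_s^\rho$ is simultaneously the $\C_0(\rho)$-image of $(A^N)_s$ for all $\rho\in\{\psi,\psi_1,\psi_2,\dots\}$. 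The paper's route buys you freedom from any density statement for step processes; your route buys you freedom from the Minkowski/H\"older interchange, at the price of needing that bounded predictable processes can be approximated by step-in-time processes in $L_q(\Omega;L_r([t,T]))$. That density fact is standard, but your pointer for it is slightly off: step (b) of Lemma \ref{lemma:exchange_continuity} only performs the value-truncation; the time-discretization is of the kind used in Lemma \ref{lemma:change_Ito} via \cite[Lemma 3.2.4]{Karatzas:Shreve:91}, and you should also record (as the paper does via Theorem \ref{theorem:properties_C_T}(6)) why $\sum_j\xi_j^\psi\chi_{(u_{j-1},u_j]}$ is indeed a representative of $\C_T(\psi)(\tilde A)$. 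With those two points made explicit, your proof is complete.
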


\begin{proof}
We fix an $N\ge 1$ and consider the truncation  $A_t^N := (-N) \vee (A_t \wedge N)$. 
For $u:= q\vee r$ and $\psi_n,\psi\in \ws$ we get
\equa
&   & \left \| \left ( \int_t^T |A_s^{\psi_n}-A^\psi_s|^r ds \right )^\frac{1}{r} \right \|_q \\
&\le& \left \| \left ( \int_t^T |A_s^{\psi_n}-(A^N)^{\psi_n}_s|^r ds \right )^\frac{1}{r} \right \|_q 
      + \left \| \left ( \int_t^T |(A^N)_s^{\psi_n}-(A^N)^\psi_s|^r ds \right )^\frac{1}{r} \right \|_q \\
&   & + \left \| \left ( \int_t^T |(A^N)_s^{\psi}-A^\psi_s|^r ds \right )^\frac{1}{r} \right \|_q \\
& = & 2 \left \| \left ( \int_t^T |A_s-(A^N)_s|^r ds \right )^\frac{1}{r} \right \|_q 
      + \left \| \left ( \int_t^T |(A^N)_s^{\psi_n}-(A^N)^\psi_s|^r ds \right )^\frac{1}{r} \right \|_q \\
&\le& 2    
      \left \| \left ( \int_t^T |A_s-(A^N)_s|^r ds \right )^\frac{1}{r} \right \|_q 
      +c_{q,r,T}   \left ( \int_t^T \|(A^N)_s^{\psi_n}-(A^N)^\psi_s\|^u_u ds \right )^\frac{1}{u}  \\  
\tion
where we used for the equality Remark \ref{remark:consistent_new}(2). Applying dominated convergence twice
we get that
\[ \lim_N  \left \| \left ( \int_t^T |A_s-(A^N)_s|^r ds \right )^\frac{1}{r} \right \|_q = 0. \]
Moreover, using \st{Proposition \ref{proposition:properties_C_T}(7)} we find a Borel set $B\subseteq [0,T]$ of Lebesgue measure 
$T$ such that
$A_t^\rho$ is the transformation of $A_t$ for any $t\in B$ and $\rho \in \{ \psi,\psi_1,\psi_2,...\}$.
In case $\psi_n\to\psi$ we can therefore apply Lemma \ref{lemma:exchange_continuity} to conclude the proof because this implies that
\[ \lim_n  \left ( \int_t^T \|(A^N)_s^{\psi_n}-(A^N)^\psi_s\|^u_u ds \right )^\frac{1}{u}  = 0. \qedhere \]
\end{proof}

Now we obtain the following embedding theorem:

\begin{cor}\label{cor:theorem:comparison_psi_phi}
Assume that the assumptions of Theorem \ref{theorem:comparison_psi_phi} are satisfied,
$t\in [0,T]$, and that there are predictable processes $(V_s^l)_{s\in [t,T]}$ 
such that, for all $\psi\in \ws$,
\[ \left \| \int_t^T | f(s,Y_s^\psi,Z_s^\psi) - f^\psi(s,Y_s^\psi,Z_s^\psi)| ds 
            \right \|_p
   \le \sum_{l=1}^L \| V_\cdot^l - (V_\cdot^l)^\psi \|_{L_{q_l}(L_{r_l}([t,T]))} \] 
for some $q_l\in [p,\infty)$ and $r_l\in [1,\infty)$
\footnote{The $V^l$ may depend on $(\xi,f,Y,Z,p,q_l,r_l)$.}. 
Let $\Phi: C^+(\Delta)\to [0,\infty]$ be admissible in the sense of 
Definition \ref{definition:admissible}. Then we have that
\[  \| Y_t \|_{\Phi,p} + \| Z \|_{\Phi,p}^{2,t}
                      \le 2 c_{\eqref{theorem:comparison_psi_phi}}
    \left [ \| \xi \|_{\Phi,p}  + \sum_{l=1}^L \| V^l \|_{\Phi,q_l}^{r_l,t}
    \right ]. \]
\end{cor}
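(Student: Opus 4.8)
The plan is to apply Theorem \ref{theorem:comparison_psi_phi} with the specific rotation $(\varphi,\psi)$ where $\psi$ is an arbitrary element of $\ws$ and $\varphi=0$ is the trivial rotation (so that $Y^\varphi=Y$, $Z^\varphi=Z$, $\xi^\varphi=\xi$, $f^\varphi=f$), and then to compose the resulting pointwise inequality with the admissible functional $\Phi$. Concretely, first I would note that under conditions (C1)--(C6) the extended equation on $\overline\Omega$ has a solution and Theorem \ref{theorem:comparison_psi_phi} applies to the pair $(0,\psi)$; for that pair the left-hand side of \eqref{eqn:theorem:comparison_psi_phi} contains the two terms $\|Y_t - Y_t^\psi\|_p$ and $\left\|\left(\int_t^T |Z_s - Z_s^\psi|^2 ds\right)^{1/2}\right\|_p$ (one simply drops the non-negative middle term involving $D[\varphi(s),\psi(s)]$), while the right-hand side is
\[
 c_{\eqref{theorem:comparison_psi_phi}}\left[\|\xi - \xi^\psi\|_p + \left\|\int_t^T |f(s,Y_s^\psi,Z_s^\psi) - f^\psi(s,Y_s^\psi,Z_s^\psi)|\,ds\right\|_p\right].
\]
By the hypothesis of the corollary, the second summand in the bracket is bounded by $\sum_{l=1}^L \|V_\cdot^l - (V_\cdot^l)^\psi\|_{L_{q_l}(L_{r_l}([t,T]))}$, so for every $\psi\in\ws$ we obtain
\[
 \|Y_t - Y_t^\psi\|_p + \left\|\left(\int_t^T |Z_s - Z_s^\psi|^2 ds\right)^{1/2}\right\|_p \le c_{\eqref{theorem:comparison_psi_phi}}\left[\|\xi-\xi^\psi\|_p + \sum_{l=1}^L \|V_\cdot^l-(V_\cdot^l)^\psi\|_{L_{q_l}(L_{r_l}([t,T]))}\right].
\]

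Next I would read this as an inequality between the continuous functions $F_1,F_2: \ws\to[0,\infty)$ given by $F_1(\psi):=\|Y_t-Y_t^\psi\|_p$, $F_2(\psi):=\left\|\left(\int_t^T|Z_s-Z_s^\psi|^2 ds\right)^{1/2}\right\|_p$ (these are continuous in $\psi$ by Lemma \ref{lemma:exchange_continuity} and the lemma preceding the corollary), and similarly $G(\psi):=\|\xi-\xi^\psi\|_p$ and $H_l(\psi):=\|V_\cdot^l-(V_\cdot^l)^\psi\|_{L_{q_l}(L_{r_l}([t,T]))}$. Since $F_1 + F_2 \le c_{\eqref{theorem:comparison_psi_phi}}(G + \sum_l H_l)$ pointwise on $\ws$, monotonicity (C3) of $\Phi$ gives $\Phi(F_1+F_2) \le \Phi\big(c_{\eqref{theorem:comparison_psi_phi}}(G+\sum_l H_l)\big)$; then subadditivity (C1) and positive homogeneity (C2) yield
\[
 \Phi(F_1) + \Phi(F_2) \le \Phi(F_1+F_2) \le c_{\eqref{theorem:comparison_psi_phi}}\left(\Phi(G) + \sum_{l=1}^L \Phi(H_l)\right).
\]
By Definition \ref{definition:generalised_Besov_space} and the definition of $\|\cdot\|_{\Phi,q}^{r,t}$, one has $\Phi(F_1)=\|Y_t\|_{\Phi,p}$, $\Phi(F_2)=\|Z\|_{\Phi,p}^{2,t}$, $\Phi(G)=\|\xi\|_{\Phi,p}$, and $\Phi(H_l)=\|V^l\|_{\Phi,q_l}^{r_l,t}$, so we get $\|Y_t\|_{\Phi,p} + \|Z\|_{\Phi,p}^{2,t} \le c_{\eqref{theorem:comparison_psi_phi}}(\|\xi\|_{\Phi,p} + \sum_l \|V^l\|_{\Phi,q_l}^{r_l,t})$, which is even slightly stronger than the claimed bound with constant $2c_{\eqref{theorem:comparison_psi_phi}}$.

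The one genuine subtlety — the step I expect to be the main obstacle — is verifying that $F_2(\psi)=\left\|\left(\int_t^T |Z_s - Z_s^\psi|^2 ds\right)^{1/2}\right\|_p$ is well-defined and continuous on $\ws$, i.e. that the object $Z^\psi$ appearing through the transference of the BSDE is consistent across the different rotations $\psi$ (so that $\psi\mapsto Z^\psi$ behaves like the decoupling maps of Chapter \ref{chapter:Besov_spaces}). This requires invoking Theorem \ref{theorem:properties_C_T}(6) to pin down a common null-set of times where $Z^\rho$ is the transformation of $Z$ for all the relevant $\rho$, exactly as in the proof of the lemma preceding the corollary, and then Lemma \ref{lemma:exchange_continuity} applied along that time-set together with dominated convergence; the same remark applies to the $H_l$. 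Once these continuity/measurability points are in place, the rest is the purely formal application of the admissibility axioms (C1)--(C3) and the bookkeeping of which term of \eqref{eqn:theorem:comparison_psi_phi} survives after discarding the $D$-term. The factor $2$ in the statement leaves comfortable slack, so no fine-tuning of constants is needed.
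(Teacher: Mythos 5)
Your overall route is exactly the paper's: the official proof consists of the single sentence that the statement follows from Theorem \ref{theorem:comparison_psi_phi} applied to the pair $(0,\psi)$, and your plan fills in precisely the bookkeeping that sentence leaves implicit (dropping the non-negative $D$-term, invoking the hypothesis on the generator, and composing with the admissible functional $\Phi$). However, one step in your final chain is wrong as written: you claim
\[ \Phi(F_1) + \Phi(F_2) \le \Phi(F_1+F_2), \]
attributing it to subadditivity (C1). Subadditivity gives the \emph{reverse} inequality $\Phi(F_1+F_2)\le\Phi(F_1)+\Phi(F_2)$, and superadditivity is not among the axioms of Definition \ref{definition:admissible}; for the model example $\Phi(F)=\sup_{\vph\in A}F(\vph)/\alpha(\vph)$ the inequality you wrote generally fails. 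The repair is immediate and is exactly what produces the constant in the statement: from $F_i \le c_{\eqref{theorem:comparison_psi_phi}}\big(G+\sum_{l} H_l\big)$ for $i=1,2$, monotonicity (C3), homogeneity (C2) and subadditivity (C1) give $\Phi(F_i)\le c_{\eqref{theorem:comparison_psi_phi}}\big(\Phi(G)+\sum_{l}\Phi(H_l)\big)$ for each $i$ separately, and adding the two estimates yields the asserted bound with the factor $2 c_{\eqref{theorem:comparison_psi_phi}}$. Your concluding remark that you obtain a ``slightly stronger'' bound with constant $c_{\eqref{theorem:comparison_psi_phi}}$ should therefore be dropped. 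The continuity considerations you flag for $F_2$ and the $H_l$ are indeed the content of the lemma immediately preceding the corollary in the text, so they are already available and need not be re-proved.
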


\begin{proof}
The statement follows directly from Theorem \ref{theorem:comparison_psi_phi} applied to the pair
$(0,\psi)$.
\end{proof}

Examples, how to obtain processes $(V_s^l)_{s\in [t,T]}$, can be found in Section 
\ref{sec:potential_for_generator}.
For the sake of illustration we first combine Corollary \ref{cor:theorem:comparison_psi_phi} with 
Theorem \ref{theorem:existence_uniqueness_bh_subquadratic} (note that we use conditions (B3) and (B8)) so 
that the assumptions of Theorem \ref{theorem:comparison_psi_phi} are automatically satisfied with $p=2$ and $s_\infty=0$:
\medskip

\begin{cor}\label{cor:theorem:comparison_psi_phi_2}
Assume that $\theta\in (0,1)$, $t\in [0,T]$, $\xi \in \bh$, and
that $(Y,Z)$ is the unique solution to the BSDE \eqref{equation:BSDE1_intro} obtained in
Theorem \ref{theorem:existence_uniqueness_bh_subquadratic}.
Suppose a predictable process $(V_s)_{s\in [t,T]}$ such that
\[ \left \| \int_t^T \sup_{y,z} | f(s,y,z) - f^\psi(s,y,z)| ds 
            \right \|_2
   \le \| V_\cdot - (V_\cdot)^\psi \|_{L_2(L_{1}([t,T]))} \] 
for all $\psi\in \ws$. Let $\Phi: C^+(\Delta)\to [0,\infty]$ be admissible in the sense of 
Definition \ref{definition:admissible}. Then we have that
\[  \| Y_t \|_{\Phi,2} + \| Z \|_{\Phi,2}^{2,t}
                      \le 2 c_{\eqref{theorem:comparison_psi_phi}}
    \left [ \| \xi \|_{\Phi,2}  + \| V \|_{\Phi,2}^{1,t}
    \right ]. \]
\end{cor}
\bigskip
Taking also Theorem \ref{theorem:Phi_2} into the account we obtain another version of
Corollary \ref{cor:theorem:comparison_psi_phi_2} that only uses that $\xi$ is locally in $\D_{1,2}$ in the sense 
to check perturbations of the Gaussian structure up to time $t$ only. This confirms the smoothing effect of a BSDE 
as this already implies the smoothness of $Y_t$. More precisely we get:
\smallskip

\begin{cor}\label{cor:theorem:comparison_psi_phi_3}
Assume that $\theta\in (0,1)$, $t\in [0,T]$, $\xi \in \bh$, and
that $(Y,Z)$ is the unique solution to the {\rm BSDE} \eqref{equation:BSDE1_intro} 
obtained in Theorem \ref{theorem:existence_uniqueness_bh_subquadratic}.
Then we have
\begin{multline}\label{eqn:cor:theorem:comparison_psi_phi_3}
        \esssup_{s\in [0,t]} \| D_s Y_t \|_2 \\
    \le c \sup_{0\le a < b \le t} \frac{1}{\sqrt{b-a}} 
        \left [  \| \xi - \xi^{(a,b]} \|_2  
               + \left \| \int_t^T \sup_{y,z} | f(s,y,z) - f^{(a,b]}(s,y,z)| ds 
                 \right \|_2 \right ]
\end{multline}
with $c:=  c_{\eqref{theorem:comparison_psi_phi}} c_{\eqref{theorem:Phi_2}(1),2}$
and $c_{\eqref{theorem:Phi_2}(1),2}\ge 1$ taken from Theorem \ref{theorem:Phi_2} in the sense that
if the right-hand side is finite, then $Y_t\in \D_{1,2}$ and \eqref{eqn:cor:theorem:comparison_psi_phi_3} holds.
\end{cor}
\bigskip

\begin{proof}
We apply Theorems \ref{theorem:comparison_psi_phi} and \ref{theorem:Phi_2}, where for the latter we use 
\[ Y_t^{(a,b]} = Y_t^{(a\wedge t,b\wedge t]} \mbox{ a.s.} \]
because $Y_t$ is $\cF_t$-measurable.
\end{proof}

If $\xi\in \D_{1,2}$, then we have that
\[ \sup_{0\le a < b \le t} \frac{\| \xi - \xi^{(a,b]} \|_2}{\sqrt{b-a}} 
   \le  2 c_{(\ref{lemma:PDE-Stein})} \esssup_{s\in [0,t]} \| D_s \xi \|_2 \]
by Corollary \ref{cor:decoupling_upper_bounded_D12}, but for Corollary \ref{cor:theorem:comparison_psi_phi_3} the assumption 
$\xi\in \D_{1,2}$ is not necessary. 

%%%%%%%%%%%%%%%%%%%%%%%%%%%%%%%%%%%%%%%%%%%%%%%%%%%%%%%%%%%%%%%%%%%%%%%%%%%%%%%%%%%%%%%%%%%%%%%%%

\section{On the $L_p$-variation of BSDEs}
\label{sec:L_p-variation}

In this section we show how Theorem \ref{theorem:comparison_psi_phi} can be applied in order to
obtain information about the $L_p$-variation of our BSDE. The link between the
$L_p$-variation of the $Y$-process and the stability result Theorem \ref{theorem:comparison_psi_phi}
consists in the observation
\[     \| A_t - A_s \|_p 
   \le \| A_t - \E(A_t|\cF_s)\|_p + \|  \E(A_t|\cF_s) - A_s \|_p
   \le 3 \| A_t - A_s \|_p, \]
where $p\in [1,\infty]$, $(A_t)_{t\in [0,T]}\subseteq \cL_p$ is adapted,
and  $0\le s \le t \le T$. Our estimate for the $Z$-process will follow directly 
from Theorem \ref{theorem:comparison_psi_phi}.
\medskip

In Remark \ref{remark:L_p-variation}(1) below we show that under the conditions
$\int_0^T \| Z_r\|_p^2 dr < \infty$ and $\int_0^T \| f(r,Y_r,Z_r)\|_p dr < \infty$, 
{\em and under the a-priori} knowledge of the behaviour of the functions $r\to\| Z_r\|_p$ and 
$r\to \| f(r,Y_r,Z_r)\|_p$ one gets a rate of $1/\sqrt{n}$ for the $L_p$-variation of $Y$ and $Z$
by adapted time-nets. In \st{Corollary \ref{cor:thm:L_p-variation:cor_2_new}} 
below we will deduce estimates with explicit adapted time-nets where we only assume conditions on the initial data $(\xi,f)$. 
Regarding the case $p\in (2,\infty)$ there is another aspect:
In Remark \ref{remark:L_p-variation}(2) we show that even for the zero generator case
one might have situations where one cannot achieve the rate $1/\sqrt{n}$ for the variation
of $Y$, i.e. the variation of $Y$ is asymptotically higher. Our sufficient conditions give 
cases where one gets the rate $1/\sqrt{n}$ for the case $p\in (2,\infty)$.
\medskip

In the following the random variables are considered on the product space 
$\overline \Omega$ if necessary. In particular, random variables defined on $\Omega$ are automatically 
extended to $\overline \Omega$ in the natural way when needed.
\bigskip

\begin{theorem}\label{thm:L_p-variation}
Suppose that the assumptions of Theorem \ref{theorem:comparison_psi_phi}
are satisfied. Then,
for $c_{\eqref{thm:L_p-variation}}:= c_\eqref{theorem:comparison_psi_phi}[1 +  
    c_{\eqref{cor:AB_generalized_Fefferman_inequality_new_constant},p} L_Z (\sqrt{T}+1)]$ and
$0\le s < t \le T$, one has 
\equa
&   & \left \|\sup_{r\in [s,t]}|Y_r-Y_s|\right \|_p 
      + \left \| \left ( \int_s^t |Z_r|^2 dr \right )^\frac{1}{2}\right \|_p  \\
&\le& \left\| \int_s^t |f(r,0,0)| dr\right\|_p + L_Y (t-s) \sup_{r\in [0,T]} \|Y_r\|_p  
      + c_{\eqref{thm:L_p-variation}} \left [ 1 + \| |Z|^\theta \|_{\bmo(S_2)} \right ] 
      \times \\
&   & \times \left [ \|\xi-\xi^{(s,t]} \|_p + 
      \left \| \int_s^T | f(r,Y_r,Z_r) - f^{(s,t]}(r,Y_r,Z_r)| dr \right \|_p \right ].
\tion      
\end{theorem}
\smallskip

\begin{proof}
We fix $0\le s < t \le T$ and remark that  
$\left\| \int_s^t |f(r,0,0)| dr\right\|_p<\infty$ according to condition (B5).
We let $(q_l)_{l=1}^\infty$ be an enumeration of the rational numbers from $(s,t]$ so that
\[ \left \|\sup_{r\in [s,t]}|Y_r-Y_s|\right \|_p
   = \sup_{m=1,2,\ldots} \left \|\sup_{q\in \{q_1,\ldots,q_m\}}|Y_q-Y_s|\right \|_p \]
by monotone convergence. 
Using Lemma \ref{lemma:exchange-conditional_expectation_new}, the fact that 
the $Y_{q_l}$ are $\cF_t$-mea\-surable, and Theorem \ref{theorem:comparison_psi_phi} we obtain that
\pagebreak

\equa
&   & \left \|\sup_{q\in \{q_1,\ldots,q_m\}}|Y_q-Y_s|\right \|_p 
      + \left \| \left ( \int_s^t |Z_r|^2 dr \right )^\frac{1}{2}\right \|_p  \\
&\le& \left \|\sup_{q\in \{q_1,\ldots,q_m\}} |Y_q-\E^{\cF_s}Y_q| \right \|_p 
      + \left \|\sup_{q\in \{q_1,\ldots,q_m\}} |\E^{\cF_s}Y_q-Y_s|\right \|_p \\
&   & \hspace*{18em}  + \left \| \left ( \int_s^t |Z_r|^2 dr \right )^\frac{1}{2}\right \|_p \\
&\le& \left \|\sup_{q\in \{q_1,\ldots,q_m\}} |Y_q-Y_q^{(s,t]}| \right \|_p 
      +  \left \| \left ( \int_s^t |Z_r|^2 dr \right )^\frac{1}{2}\right \|_p \\
&   & \hspace*{16em} + \left \|\sup_{q\in \{q_1,\ldots,q_m\}} |\E^{\cF_s}Y_q-Y_s|\right \|_p \\
&\le& c_\eqref{theorem:comparison_psi_phi}  \left [ \|\xi-\xi^{(s,t]} \|_p + 
      \left \| \int_s^T | f(r,Y_r,Z_r) - f^{(s,t]}(r,Y_r,Z_r)| dr \right \|_p \right ] \\
&   & \hspace*{16em} + \left \|\sup_{q\in \{q_1,\ldots,q_m\}} |\E^{\cF_s}Y_q-Y_s|\right \|_p.
\tion
By Corollary \ref{cor:AB_generalized_Fefferman_inequality_new_constant}
and \eqref{eqn:comparison_inequality_2}
we bound the last term by

\equa 
&   & \left \|\sup_{q\in \{q_1,\ldots,q_m\}} |\E^{\cF_s}Y_q-Y_s|\right \|_p \\
& = & \left \|\sup_{q\in \{q_1,\ldots,q_m\}} \left |\E^{\cF_s} \int_s^q f(r,Y_r,Z_r) dr \right |\right \|_p \\
&\le& \left \| \int_s^t |f(r,Y_r,Z_r)| dr \right \|_p \\ 
&\le& \left\| \int_s^t |f(r,0,0)| dr\right\|_p
      + L_Y \left\|\int_s^t |Y_r| dr\right\|_p
      + L_Z \left\|\int_s^t [1+|Z_r|]^\theta|Z_r| dr\right\|_p \\
&\le& \left\| \int_s^t |f(r,0,0)| dr\right\|_p
      +  L_Y (t-s) \sup_{r\in [s,t]} \|Y_r\|_p  \\
&   & + L_Z c_{\eqref{cor:AB_generalized_Fefferman_inequality_new_constant},p} 
      \left\|\left( \int_s^t |Z_r|^2 dr\right )^\frac{1}{2} \right\|_p 
      \left\| (\chi_{(s,t]}(r) [1+|Z_r|]^\theta)_{r\in [0,T]} \right\|_{\bmo(S_2)} \\
&\le& \left\| \int_s^t |f(r,0,0)| dr\right\|_p
      +  L_Y (t-s) \sup_{r\in [s,t]} \|Y_r\|_p  \\
&   &      + L_Z c_{\eqref{cor:AB_generalized_Fefferman_inequality_new_constant},p}
                 c_\eqref{theorem:comparison_psi_phi}  \left [ \|\xi-\xi^{(s,t]} \|_p + 
            \left \| \int_s^T | f(r,Y_r,Z_r) - f^{(s,t]}(r,Y_r,Z_r)| dr 
            \right \|_p \right ]  \\
&   & \times [\sqrt{t-s} +  \| |Z|^\theta \|_{\bmo(S_2)}].
\tion
As remarked in the beginning of the proof of Theorem \ref{theorem:comparison_psi_phi} we have 
$\sup_{r\in [0,T]} |Y_r| \in \cL_p$ so that $\alpha_p:=L_Y \sup_{r\in [0,T]} \|Y_r\|_p < \infty$. Therefore,
\begin{multline*}
      \left \|\sup_{q\in \{q_1,\ldots,q_m\}} |\E^{\cF_s}Y_q-Y_s|\right \|_p
 \le  \left\| \int_s^t |f(r,0,0)| dr\right\|_p
      + \alpha_p (t-s) \\
      + \beta_p \left [ \|\xi-\xi^{(s,t]} \|_p + 
            \left \| \int_s^T | f(r,Y_r,Z_r) - f^{(s,t]}(r,Y_r,Z_r)| dr 
            \right \|_p
\right ]
\end{multline*}
for
$\beta_p :=  L_Z c_{\eqref{cor:AB_generalized_Fefferman_inequality_new_constant},p}
             c_\eqref{theorem:comparison_psi_phi} 
             [\sqrt{T} + \| |Z|^\theta \|_{\bmo(S_2)}]$.
\end{proof}
\medskip

The variation of our BSDE we measure by the following quantity:

\begin{definition}
\label{definition:var_p_tau}
\index{$\var_p([A,C] \vline \, \tau)$}
Let $p\in [1,\infty)$, $A=(A_t)_{t\in [0,T]}$ be a measurable c\`adl\`ag process
$A:[0,T]\times\Omega\to \R$, and $C=(C_t)_{t\in [0,T]}$ be a measurable
process $C:[0,T]\times \Omega \to \R^d$. For a deterministic time-net $\tau=(t_i)_{i=0}^n$ with $0=t_0\le t_1 \le \cdots \le t_n=T$ we let
\[ \var_p([A,C]|\tau):= 
   \sup_{i=1,...,n}  \left \| \sup_{t_{i-1}\le s \le t \le t_i} |A_t - A_s| \right \|_p +
   \sup_{i=1,...,n} \left \| \left ( \int_{t_{i-1}}^{t_i} |C_r|^2 dr \right )^\frac{1}{2} \right \|_p. \]
\end{definition}
\medskip

The variation  $\var_p([A,C]|\tau)$ behaves sub-additive as expected:

\begin{lemma}
For $p\in [1,\infty)$, families $((A_t^j,C_t^j))_{t\in [0,T]}$ and time-nets $\tau^j$, $j=0,1$, 
as in Definition \ref{definition:var_p_tau}, one has that 
\[ \var_p([A^0+A^1,C^0+C^1]|\tau^0\cup\tau^1 )\le \var_p([A^0,C^0]|\tau^0) + \var_p([A^1,C^1]|\tau^1). \]
\end{lemma}

\begin{proof}
Assume that $\tau=(t_i)_{i=0}^{n_0+n_1-1}$ is an ordering of the union of
$\tau^0=(t_i^0)_{i=0}^{n_0}$ and $\tau^1=(t_i^1)_{i=0}^{n_1}$. Then
one has that the interval $[t_{i-1},t_i]$ is contained in a closed interval of $\tau^0$ and, at the same time, in a
closed interval of $\tau^1$, so that
\equa
&   & \left \| \sup_{t_{i-1}\le s \le t \le t_i} |(A_t^0+A_t^1) - (A_s^0+A_s^1)| \right \|_p +
      \left \| \left ( \int_{t_{i-1}}^{t_i} |C_r^0+C_r^1|^2 dr \right )^\frac{1}{2} \right \|_p \\
&\le& \left \| \sup_{t_{i-1}\le s \le t \le t_i} | A_t^0- A_s^0 |\right \|_p +
      \left \| \left ( \int_{t_{i-1}}^{t_i} |C_r^0|^2 dr \right )^\frac{1}{2} \right \|_p \\
&   & \hspace*{8em} + \left \| \sup_{t_{i-1}\le s \le t \le t_i} | A_t^1 - A_s^1| \right \|_p +
      \left \| \left ( \int_{t_{i-1}}^{t_i} |C_r^1|^2 dr \right )^\frac{1}{2} 
      \right \|_p \\
&\le& \var_p([A^0,C^0]|\tau^0) + \var_p([A^1,C^1]|\tau^1). 
\tion
\end{proof}

Now we formulate consequences of Theorem \ref{thm:L_p-variation} in two different scenarios:
The first \st{Corollary \ref{cor:thm:L_p-variation:cor_1}} still relies on the assumptions of 
Theorem \ref{theorem:comparison_psi_phi}.
In the next step \st{Corollary \ref{cor:thm:L_p-variation:cor_1}} will be combined with the results from 
Section \ref{sec:classes_quadratic_subquadratic_BSDEs}
to guarantee the validity of the assumptions 
of Theorem \ref{theorem:comparison_psi_phi}. This yields to \st{Corollary \ref{cor:thm:L_p-variation:cor_2_new}}.
\medskip

To shorten the formulation of the statements we 
work with the following two definitions. 
\medskip

The first definition extends the spaces $\B_p^\Phi$ to the initial data $(\xi,f)$ of the 
BSDE:

\begin{definition}
\label{definition:potential_BSDE}
\index{space!$\B^{\Phi_{\gamma,\Gamma}}_p$}
We say that $(\xi,f)\in \B^{\Phi_{\gamma,\Gamma}}_p$,
where 
$p\in (0,\infty)$,
$\gamma\in [2,\infty)$, and 
$\Gamma:[0,T]\to [0,\infty)$ is integrable,
provided that $\xi \in L_p$ and 
for all $0\le a < b \le T$, 
\[ \| \xi - \xi^{(a,b]} \|_p + \left \| \int_a^T  \sup_{(y,z) \in \R^{d+1}} 
   |f(r,y,z)-f^{(a,b]}(r,y,z)| dr \right \|_p 
   \le \left ( \int_a^b \Gamma(r)dr \right )^\frac{1}{\gamma}. \] 
\end{definition}
\medskip
The term  
$\int_a^T \sup_{(y,z) \in \R^{d+1}} |f(r,y,z)-f^{(a,b]}(r,y,z)| dr$ is an extended random
variable on $(\overline{\Omega},\overline{\cF},\overline{\P})$.
Concerning the generator, the above definition  reflects the situation described in 
Example \ref{example:driver_that_depends_on_X}, where the
generator $f$ is obtained from some appropriate $h$ with 
\[ f(r,\omega,y,z) := h(r,A_r(\omega),y,z). \]

The second definition recalls a \st{well-known} principle to generate adapted time-nets:
\medskip

\begin{definition}   
\index{$\tau_n^\Lambda$}
Letting $\Lambda:[0,T]\to (0,\infty)$ be integrable and $n\ge 1$, the time-net
$\tau_n^\Lambda$ consists of $0=t_0 < \cdots < t_n=T$ such that, for all $i=1,...,n$,
\[ \int_{t_{i-1}}^{t_i} \Lambda(r) dr = \frac{1}{n} \int_0^T \Lambda(r) dr. \]
\end{definition}
\bigskip

The following corollary, which follows directly from 
Lemma \ref{lemma:exchange-conditional_expectation_new} and \eqref{eqn:variation_upper_bounded_integral_Ds},
yields  to {\em the} fundamental example for $\gamma=2$ 
concerning the part $\| \xi - \xi^{(a,b]} \|_p$ in 
Definition \ref{definition:potential_BSDE} above:
\medskip

\begin{cor}
\label{cor:decoupling_upper_bounded_D12}
For $p\in [2,\infty)$ and $\xi\in \D_{1,2}\cap L_p$ with $\int_{(0,T]} \| D_r \xi\|_p^2 dr <\infty$ one has for all $0\le a < b \le T$ that
\[ \|\xi - \xi^{(a,b]} \|_p  \le 
       2 
      c_{(\ref{lemma:PDE-Stein})}  \left ( \int_{(a,b]} \|D_r\xi\|^2_p dr \right )^\frac{1}{2}.       \]
\end{cor}
\medskip

\begin{remark}
\index{functional!$\Phi_{\gamma,\Gamma}$}
For $\Gamma >0$ Definition \ref{definition:potential_BSDE} and 
Example \ref{example:admissible_weight} 
yield to the admissible functional
\[    \Phi_{\gamma,\Gamma}(F) 
   := \sup_{0\le a < b \le T} \frac{F(\chi_{(a,b]})}{\sqrt[\gamma]{\int_a^b \Gamma(r) dr}} \]
that recovers the functional $\Phi_\gamma$ from \eqref{eqn:Phi_r} by $\Gamma\equiv 1$.
\end{remark}

Our first corollary of Theorem \ref{thm:L_p-variation} is
\medskip

\begin{cor}
\label{cor:thm:L_p-variation:cor_1}
Let $p,\gamma\in [2,\infty)$ and $\Gamma:[0,T]\to [0,\infty)$ be integrable. 
Suppose the assumptions of Theorem \ref{theorem:comparison_psi_phi}, 
$(\xi,f) \in\B^{\Phi_{\gamma,\Gamma}}_p$ and 
$\int_0^T \|f(r,0,0)\|_p dr<\infty$. Then
\[     \var_p([Y,Z]|\tau_n^\Lambda) 
   \le   \frac{c_\eqref{cor:thm:L_p-variation:cor_1}}{n}
       + \frac{d_\eqref{cor:thm:L_p-variation:cor_1}}{\sqrt[\gamma]{n}} \]
for $\Lambda(r) := 1+  \|f(r,0,0)\|_p + \Gamma(r)$ and
\equa  
      c_\eqref{cor:thm:L_p-variation:cor_1} 
&:= & 2 \| \Lambda \|_{L_1([0,T])} [1+ L_Y \sup_{t\in [0,T]} \|Y_t\|_p], \\
      d_\eqref{cor:thm:L_p-variation:cor_1} 
&:= & 2 c_\eqref{thm:L_p-variation} \| \Lambda\|_{L_1([0,T])}^\frac{1}{\gamma} 
      \left [ 1 + \| |Z|^\theta \|_{\bmo(S_2)} \right ].
\tion
\end{cor}

\begin{proof}
For $0\le s < t \le T$ Theorem \ref{thm:L_p-variation} implies that
\equa
&   & \hspace*{-3em}
      \left \|\sup_{r\in [s,t]}|Y_r-Y_s|\right \|_p + \left \| \left ( \int_s^t |Z_r|^2 dr \right )^\frac{1}{2}\right \|_p  \\
&\le& \left\| \int_s^t |f(r,0,0)| dr\right\|_p + L_Y (t-s) \sup_{r\in [0,T]} \|Y_r\|_p  \\
&   & \hspace*{2em} 
      + c_{\eqref{thm:L_p-variation}} \left [ 1 + \| |Z|^\theta \|_{\bmo(S_2)} \right ] \times\\
&   & \hspace*{3em} \times \left [ \|\xi-\xi^{(s,t]} \|_p + 
        \left \| \int_s^T | f(r,Y_r,Z_r) - f^{(s,t]}(r,Y_r,Z_r)| dr \right \|_p \right ] \\
&\le& \int_s^t \|f(r,0,0)\|_p dr + L_Y (t-s) \sup_{r\in [0,T]} \|Y_r\|_p  \\
&   & \hspace*{10em}
      + c_{\eqref{thm:L_p-variation}} \left [ 1 + \| |Z|^\theta \|_{\bmo(S_2)} \right ] 
      \left ( \int_s^t \Gamma(r) dr \right )^\frac{1}{\gamma}.
\tion
Assuming $0=t_0 \le t_1 \le \cdots \le t_n=T$ we conclude by
\equa
&   &   \left \|\sup_{t_{t-1}\le s \le t \le t_i}|Y_t-Y_s|\right \|_p 
      + \left \| \left ( \int_{t_{i-1}}^{t_i} |Z_r|^2 dr \right )^\frac{1}{2}\right \|_p  \\
&\le&  2 \left \|\sup_{r\in [t_{t-1},t_i]}|Y_r-Y_{t_{i-1}}|\right \|_p 
      + \left \| \left ( \int_{t_{i-1}}^{t_i} |Z_r|^2 dr \right )^\frac{1}{2}\right \|_p  \\
&\le& 2 \Bigg [
      \int_{t_{i-1}}^{t_i} \|f(r,0,0)\|_p dr + L_Y (t_i-t_{i-1}) \sup_{r\in [0,T]} \|Y_r\|_p  \\
&   & \hspace*{10em}
      + c_{\eqref{thm:L_p-variation}} \left [ 1 + \| |Z|^\theta \|_{\bmo(S_2)} \right ] 
      \left ( \int_{t_{i-1}}^{t_i} \Gamma(r) dr \right )^\frac{1}{\gamma} \Bigg ].
\tion
\end{proof}

\begin{cor}
\label{cor:thm:L_p-variation:cor_2_new}
Assume $\gamma \in [2,\infty)$, \st{an integrable $\Gamma:[0,T]\to [0,\infty)$,} and that one of the following sets of conditions is satisfied:
\begin{enumerate}
\item $\theta=0$, $p\in [2,\infty)$, $\xi\in L_p$, (B3), $\int_0^T \| f(r,0,0)\|_p dr <\infty$,       $(\xi,f) \in\B^{\Phi_{\gamma,\Gamma}}_p$.
\item $\theta\in (0,1)$, $\xi\in \bh$, (B3), (B8), and
      $(\xi,f) \in\B^{\Phi_{\gamma,\Gamma}}_2$.
\item $\theta=1$, $\xi\in L_\infty$, (B3), (B8), and
      $(\xi,f) \in\bigcap_{q\in [2,\infty)} \B^{\Phi_{\gamma,\Gamma}}_q$.
\end{enumerate}
Define the weight function 
\[ \Lambda(r) :=  1+ \|f(r,0,0)\|_u + \Gamma(r) \]
where 
$u=p$ for $\theta=0$,
$u=2$ for $\theta \in (0,1)$, and
$u=\infty$ for $\theta=1$.
Then one has that
\[ \sup_{n\ge 1} \sqrt[\gamma]{n} \var_v([Y,Z]|\tau_n^{\Lambda}) < \infty \]
for 
$v=p$ if $\theta = 0$,
$v=2$ if $\theta \in (0,1)$, and
for all $v\in (0,\infty)$ if $\theta=1$,
where 
for $\theta =0$       the solution is taken from \cite[Theorem 4.2]{Briand:Del:Hu:Par:Sto:03},
for $\theta\in (0,1)$ from Theorem \ref{theorem:existence_uniqueness_bh_subquadratic}, and 
for $\theta=1$        from Remark \ref{remark:existence_uniqueness_bh_quadratic}.
\end{cor}

\begin{proof}
The statement follows by a combination of Table 1 (cases \st{(I), (II), and (III)}) and \st{Corollary \ref{cor:thm:L_p-variation:cor_1}}.
For part (3) we remark that we first deduce our statement for $v\in [2,\infty) \cap (p_0,\infty)$ with $p_0$ taken from
Theorem \ref{theorem:comparison_psi_phi}, and then (obviously) the conclusion follows for all $v\in (0,\infty)$.
\end{proof}

\begin{remark}
\label{remark:L_p-variation}
\hspace*{0em}
\begin{enumerate}
\item For $p\in [2,\infty)$ assume for our BSDE the conditions $\int_0^T \| Z_r\|_p^2 dr < \infty$ and 
      $\int_0^T \| f(r,Y_r,Z_r)\|_p dr < \infty$.
      Take a net $\tau^n =(t_i^n)_{i=1}^n$ that satisfies
      \[   \int_{t_{i-1}^n}^{t_i^n} 
           \left [ \| f(r,Y_r,Z_r) \|_p + \| Z_r\|_p^2 \right ] dr 
         = \frac{1}{n}  
           \int_0^T \left [ \| f(r,Y_r,Z_r) \|_p + \| Z_r\|_p^2 \right ] dr. \]
      Given $i\in \{1,\ldots,n\}$ we derive
      \equa
      &   & \left \| \sup_{t_{i-1}^n \le s \le t \le t_i^n}|Y_t - Y_s| \right \|_p 
            + \left \| \left ( \int_{t_{i-1}^n}^{t_i^n} |Z_r|^2 dr \right )^\frac{1}{2}
                          \right \|_p \\
      &\le& \left \| \int_{t_{i-1}^n}^{t_i^n} |f(r,Y_r,Z_r)|dr \right \|_p 
            + \left \| \sup_{q\in [t_{i-1}^n,t_i^n]}\left | \int_{t_{i-1}^n}^q Z_r dW_r \right |\right \|_p \\
      &   & \hspace{18em} + \left \| \left ( \int_{t_{i-1}^n}^{t_i^n} |Z_r|^2 dr \right )^\frac{1}{2} \right \|_p \\
      &\le& \int_{t_{i-1}}^{t_i} \|f(r,Y_r,Z_r)\|_pdr + [2 \beta_p +1]
            \left \| \left ( \int_{t_{i-1}}^{t_i}  |Z_r|^2 dr \right )^\frac{1}{2}
                          \right \|_p \\
      &\le& \int_{t_{i-1}}^{t_i} \|f(r,Y_r,Z_r)\|_pdr + [2 \beta_p +1]  \left ( \int_{t_{i-1}}^{t_i} \|Z_r\|^2_p 
            dr \right )^\frac{1}{2} \\
      &\le& \frac{1}{n} \int_0^T \left [ \| f(r,Y_r,Z_r) \|_p + \| Z_r\|_p^2 \right ] dr\\
      &   & \hspace*{8em}
            +  \frac{2 \beta_p +1}{\sqrt{n}} \left ( \int_0^T \left [ \| f(r,Y_r,Z_r) \|_p 
            + \| Z_r\|_p^2 \right ] dr \right )^\frac{1}{2}
      \tion
      where the Burkholder-Davis-Gundy inequalities \eqref{eqn:BDG} were exploited.  
      \st{Consequently,} we have a variation of $1/\sqrt{n}$ by taking the nets $\tau^n$. 

\item However, in general for $p\in (2,\infty)$ such an estimate is not always possible as
      shown by the following example for $d=1$: Take an infinite 
      time-net converging to $T$,
      \[ 0=t_0 < t_1 < t_2 < \cdots, \]
      and pair-wise disjoint $A_k \in \cF_{t_k}$ of positive measure for $k=1,2,...$.
      (Given pair-wise disjoint non-empty finite intervals $I_k=(a_k,b_k)$ one can choose
      $A_1:=\{ W_{t_1} \in I_1\}$ and
      $A_k:=\{ W_{t_1} \not \in I_1,...,W_{t_{k-1}} \not \in I_{k-1}, W_{t_k} \in I_k\}$
      for $k\ge 2$.)
      For $(\alpha_k)_{k=1}^\infty \subset (0,\infty)$ and $s\in [0,T]$ define 
      \[ \lambda_s := \sum_{k=2}^\infty \alpha_{k-1} \chi_{A_{k-1}} \chi_{(t_{k-1},t_k]}(s).  
          \]
      Let  $0<\alpha<\frac{p}{2}-1$ and arrange the $\alpha_k$ such that
      \[ \left \| \int_{(t_{k-1},t_k]} \lambda_s dW_s \right \|_p = k^{-\frac{1+\alpha}{p}} 
         \]
      which implies
      \[   \E \left | \int_{(0,T]} \lambda_s dW_s \right |^p 
         = \sum_{k=2}^\infty k^{-(1+\alpha)} < \infty. \]
      Let us assume $c>0$ and a sequence of time-nets $\tau^n$, 
      $0=t_0^n \le \cdots \le t_n^n =T$, such that 
      \[ \| Y_{t_i^n} - Y_{t_{i-1}^n} \|_p \le \frac{c}{\sqrt{n}}
         \sptext{1}{for}{1}
         Y_t := \int_0^t \lambda_s dW_s. \]
      Then $(t_{k-1},t_k)\cap \tau^n=\emptyset$ for $k\ge 2$ implies that 
      \[  \| Y_{t_k} - Y_{t_{k-1}} \|_p = k^{-\frac{1+\alpha}{p}} \le \frac{c}{\sqrt{n}} \]
      or, equivalently, the condition $k^{-\frac{1+\alpha}{p}} > \frac{c}{\sqrt{n}}$ gives 
      $(t_{k-1},t_k)\cap \tau^n\not =\emptyset$ for $k\ge 2$. In other words, all intervals
      $(t_{k-1},t_k)$ with
      \[ 2\le k < \left ( \frac{\sqrt{n}}{c} \right )^\frac{p}{1+\alpha} \]
      contain at least one element of the time-net $\tau^n$. This gives a contradiction to
      $\frac{p}{2(1+\alpha)}>1$.
\end{enumerate}
\end{remark}

%%%%%%%%%%%%%%%%%%%%%%%%%%%%%%%%%%%%%%%%%%%%%%%%%%%%%%%%%%%%%%%%%%%%%%%%%%%%%%%%%%%%%%%%%%%%%%%%%

\st{
\section{Applications to other types of BSDEs}

The decoupling techniques developed in this article rely only on the existence of solutions to BSDEs, not on their
uniqueness nor on special techniques to prove existence or uniqueness. This opens the possibility to apply the
results and techniques to other types of BSDEs as well. Let us list some potential examples: 

\begin{enumerate}
\item {\sc Multidimensional BSDEs \& coupled forward-backward SDEs}
      \smallskip

     Theorem \ref{theorem:change_multi_SDE} is flexible enough to treat in \eqref{eqn:gBSDE} an $\R^n$-valued process 
     $(L_t)_{t\in [0,T]}$ by considering its coordinates separately. This might be applied to BSDEs where the $Y$-process 
     is multi-dimensional. Moreover, if in coupled forward-backward SDEs (see for example \cite{Ma:Yong:07})
     the dependencies in the forward diffusion on the backward component can be handled 
     by Theorem \ref{theorem:change_multi_SDE}, then our decoupling approach can be directly examined as well.
     \medskip

\item {\sc BSDEs with singular terminal conditions}
      \smallskip
     
      Singular terminal conditions are considered for instance in \cite{Popier:06,Ankirchner:Jeanblanc:Kruse:14}. 
      The general idea behind this type of singular terminal condition for BSDEs consists in replacing the one-parametric family of
      equations from $t$ to $T$ by the two-parametric family
      \[ Y_t = Y_r + \int_t^r f(s,Y_s,Z_s) ds - \int_t^r Z_s dW_s 
         \sptext {1}{for}{1}  
         0 \le t < r < T \]
      and to look for solutions $(Y_t,Z_t)_{t\in [0,T)}$ where the process $Y$ is subject to constraints as
      $t\uparrow T$. Let us indicate how the process $Y$ from \eqref{equation:BSDE1_intro} might be mapped into $Y^\rho$ 
      like in \eqref{eqn:BSDE-rotated_intro}
      for constraints of the form $\lim_{t\uparrow T} Y_t = \infty$ on $\Omega^+$ or
      $\lim_{t\uparrow T} Y_t = - \infty$ on $\Omega^-$ for some $\Omega^\pm\in \cF_T$ of positive  measure.
      With $h:=\arctan:[-\infty,\infty]\to [-\pi/2,\pi/2]$ the 
      transformed process $Y^h:=(h(Y_t))_{t\in [0,T]}$ is continuous and takes values in $[-\pi/2,\pi/2]$. 
      This process $Y^h$ can be mapped into $(Y^h)^\rho$ as in \eqref{eqn:BSDE-rotated_intro}, and by changing 
      $(Y^h)^\rho$ on a set of measure zero we may assume as well that  $(Y^h)^\rho$ takes 
      values in $[-\pi/2,\pi/2]$ only. Applying $h^{-1}$ gives a candidate for $Y^\rho$.
      \medskip

\item {\sc Extension to L\'evy processes}
      \smallskip

      Our approach in Chapter \ref{chapter:general_factorization} is not restricted to particular distributions
      and its general presentation is intended to apply the results in other settings than the 
      Wiener space as well. A first natural candidate are BSDEs driven by L\'evy processes. Here first results 
      were obtained in \cite{CGeiss:Steinicke:16}, where a decoupling is used in $L_2$ as in \cite{GGG:12} for the 
      Brownian motion. Formally the approach in \cite{CGeiss:Steinicke:16} differs slightly from our approach, 
      as it directly uses It\^o's chaos expansion from \cite{Ito:56}. To generalize \cite{CGeiss:Steinicke:16} further 
      along the ideas of our notes, it might be also necessary to extend 
      Proposition \ref{proposition:cont_modification}
      to processes that have certain discontinuous trajectories. Moreover, generalizations beyond the setting of 
      L\'evy processes is left to future work.
\end{enumerate}
}

%%%%%%%%%%%%%%%%%%%%%%%%%%%%%%%%%%%%%%%%%%%%%%%%%%%%%%%%%%%%%%%%%%%%%%%%%%%%%%%%%%%%%%%%%%%%%%%%%
%%%%%%%%%%%%%%%%%%%%%%%%%%%%%%%%%%%%%%%%%%%%%%%%%%%%%%%%%%%%%%%%%%%%%%%%%%%%%%%%%%%%%%%%%%%%%%%%%

\appendix
\chapter{Technical Facts}
\label{chapter:appendix}

Let $M\not = \emptyset$ be a complete metric space that is {\em locally $\sigma$-compact},
\index{locally $\sigma$-compact} i.e. 
there exist compact subsets $\emptyset \not = K_1 \subseteq K_2 \subseteq \dots $, such that $\overline{\mathring K}_n=K_n$ and
$M=\cup_{n=1}^\infty \mathring{K}_n$.
By continuity of a stochastic process $(X_x)_{x \in M}:\Om\to \R$ we understand that
$x\mapsto X_x(\om)$ is continuous for all $\omega\in\Om$.
\medskip

\begin{proposition}\label{proposition:cont_modification}
Let $M\not = \emptyset$ be a complete locally $\sigma$-compact metric space and
$(X_x)_{x\in M}$ be a continuous process defined on a probability space
$(\Om^0,\cF^0,\P^0)$, and let $(\beta_x)_{x \in M}$ be a stochastic process on a
probability space $(\Om^1,\cF^1,\P^1)$ such that $X$ and $\beta$ have the same
finite-dimensional distributions. Then the following is satisfied:
\begin{enumerate}[{\rm (1)}]
\item There exists a continuous process $(Y_x)_{x \in M}$ on $(\Om^1,\cF^1,\P^1)$, which is a modification
      of $(\beta_x)_{x \in M}$, i.e. $\P^1(Y_x = \beta_x)=1$ for all $x \in M$.
\item If there is another process $Y'$ with this property, then
      $\P^1(Y_x = Y'_x, x\in M) =1$.
\item If $\cG^1\subseteq \cF^1$  is a sub-$\sigma$-algebra and $D\subseteq M$ dense, such that 
      $\beta_x$ is $\cG^1$-measurable for all $x\in D$, then the process $Y$ can be taken to be
      $\cG^1$-measurable.
\end{enumerate}
\end{proposition}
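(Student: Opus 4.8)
The plan is to build the continuous modification $Y$ by transporting, path by path, the uniform-convergence structure that $X$ automatically enjoys on each compact set $K_n$. Since $X$ is continuous on the complete locally $\sigma$-compact space $M$, for each $n$ the restriction of $X$ to $K_n$ is uniformly continuous on $\Omega^0$; quantifying this, for every $m\ge 1$ there is a (deterministic, by separability of $K_n$ one may fix a countable dense set $D_n\subseteq K_n$) modulus statement of the form
\[
  \P^0\Big( \sup_{\substack{x,y\in D_n\\ \rho(x,y)\le 1/k}} |X_x - X_y| \le 1/m \Big) \to 1
  \sptext{1}{as}{1} k\to\infty .
\]
Because $X$ and $\beta$ have the same finite-dimensional distributions, and the events above are measurable with respect to countably many coordinates, the same limits hold with $\beta$ in place of $X$. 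Hence, $\P^1$-a.s., the family $(\beta_x)_{x\in D}$ (with $D:=\bigcup_n D_n$ a countable dense set) is uniformly continuous on each $K_n\cap D$, so it extends uniquely to a continuous process $Y$ on all of $M$; on the exceptional null-set we set $Y\equiv 0$. This gives a genuinely continuous process.

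Next I would check that $Y$ is a modification of $\beta$. Fix $x\in M$ and pick $x_j\in D$ with $x_j\to x$. By construction $Y_x = \lim_j \beta_{x_j}$ $\P^1$-a.s.; on the other hand, equality of finite-dimensional distributions with the continuous process $X$ gives that $\beta_{x_j}\to\beta_x$ in probability (since $X_{x_j}\to X_x$ a.s., hence in probability, and the joint laws of $(\beta_{x_j},\beta_x)$ match those of $(X_{x_j},X_x)$). Therefore $Y_x=\beta_x$ $\P^1$-a.s., which is (1). Statement (2) is then the standard fact that two continuous processes that are modifications of each other are indistinguishable: they agree a.s. on the countable dense set $D$, and continuity of both propagates this to all of $M$ off a single null-set.

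For (3), I would simply arrange the construction so that the dense set used is the given $D$ (or, if $D$ is not countable, a countable dense subset of it — every separable metric space admits one, and $M$ is separable since it is $\sigma$-compact). Then $Y$ is, off a null-set, a pointwise limit of the $\cG^1$-measurable random variables $\beta_x$, $x\in D$, hence $\cG^1$-measurable provided $\cG^1$ contains the relevant $\P^1$-null-sets; if one does not want to assume that, one replaces $Y$ by $Y\chi_{\Omega_0}$ for a $\cG^1$-measurable full-measure set $\Omega_0$ on which the limits exist, which changes nothing up to indistinguishability. I expect the main obstacle to be purely bookkeeping: making the "uniform continuity moduli transfer through equality of finite-dimensional distributions" argument clean, i.e. exhibiting the right countable family of measurable events on which $X$ and $\beta$ must agree in law, and verifying that their a.s.-behaviour for $X$ forces the a.s.-behaviour for $\beta$ needed to run the extension. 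Everything after that — the two limit identifications and the measurability remark — is routine.
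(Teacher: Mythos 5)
Your proposal is correct and follows essentially the same route as the paper's proof: transfer the a.s.\ uniform continuity of $X$ on each $K_n$ to $\beta$ on a countable dense subset of $D$ via equality of finite-dimensional distributions, extend by continuity (setting $Y\equiv 0$ on the exceptional set), identify $Y_x=\beta_x$ a.s.\ through $\P^1(|\beta_x-\beta_{x_j}|>\vare)=\P^0(|X_x-X_{x_j}|>\vare)\to 0$, and obtain (2) from separability and (3) from the $\cG^1$-measurability of the limiting procedure. The bookkeeping you anticipate is exactly what the paper carries out with the explicit event $\bigcap_n\bigcup_m\bigcap_{u,v}\{|X_u-X_v|\le 1/n\}$.
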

\medskip

\begin{proof} 
There is a countable set $D_0=\{ a_k : k\ge 1 \} \subseteq D$ such that $D_0\subseteq M$
is dense as well. Taking a sequence $(K_n)_{n=1}^\infty$ like in the definition of locally $\sigma$-compact
we have therefore that $D_0\cap  K_n$ is dense in $K_n$ for all $n=1,2,...$
\medskip

(1) and (3): We prove both parts at the same time as (1) is a special case of (3) by
taking $D=M$ and $\cG^1=\cF^1$. Let $K$ be one of the sets $K_n$ and $A:= D_0\cap K$.
Since $x \mapsto X_x$ is continuous on $M$, it is uniformly continuous on $K$ and $A$.
Hence the set
\[ \bigcap_{n=1}^\infty \bigcup_{m=1}^\infty \bigcap_{\stackrel{d(u,v)<\frac{1}{m}}{u,v \in A}}
   \left\{\om : |X_u(\om)-X_v(\om)| \le \frac{1}{n} \right\} \in\cF^0\]
is of $\P^0$-measure one. By the fact that $X \stackrel{d}{=} \beta$, there exists
$\Om_0^1\in \cG^1$
with $\P^1(\Om_0^1)=1$ such that $x \mapsto \beta_x(\om)$ is uniformly continuous on $A$ for all
$\om \in \Om_0^1$. Since $A$ is dense in $K$ we can define for all $x \in K$ the extension  \[ Y_x(\om) := \left \{ \begin{array}{rcl}
           \lim_{\stackrel{x_n \to x,}{x_n \in A}} \beta_{x_n}(\om) &:& \om \in \Om_0^1 \\
           0                                                        &:& \om \in \Om^1\setminus\Om_0^1
                         \end{array} \right ..  \]
We obtain a $\cG^1$-measurable continuous process
$(Y_x)_{x \in K}$. Take $d\ge 1$, $x_1,\dots,x_d \in K$, and
$a_{j,m}\in A$ with $a_{j,m} \to x_j$ as $m \to \infty$. Then, for $(t_1,...,t_d)\in \R^d$,
\equa
      \int_{\Omega^1} e^{ i \sum_{j=1}^d t_j Y_{x_j}} d\P^1
& = & \lim_{m\to\infty}
      \int_{\Omega^1} e^{ i \sum_{j=1}^d t_j \beta_{a_{j,m}}} d\P^1 \\
& = & \lim_{m\to\infty}
      \int_{\Omega^0} e^{ i \sum_{j=1}^d t_j     X_{a_{j,m}}} d\P^0 \\
& = & \int_{\Omega^0} e^{ i \sum_{j=1}^d t_j X_{x_j}} d\P^0
\tion
so the finite-dimensional distributions of $Y$ and $X$ coincide.
To prove $\P^1(Y_x = \beta_x)=1$ for all $x \in K$ we check
$\P^1\left( |Y_x - \beta_x| > \epsilon \right)=0$
for all $\epsilon>0$ and all $x \in K$. Let $\epsilon>0$, $x \in K$, and choose $(x_k)_{k\ge 1}\subseteq A$
such that $x_k \to_k x$. Then
\equa
      \P^1\left( |Y_x - \beta_x| > \epsilon \right)
&\le& \P^1\left( |Y_x - \beta_{x_k}| > \frac{\epsilon}{2} \right)
      + \P^1\left( |\beta_x - \beta_{x_k}| > \frac{\epsilon}{2} \right) \\
& = & 2\P^0\left( |X_x - X_{x_k}| > \frac{\epsilon}{2} \right)
       \to_k 0,
\tion
where we used the fact that $Y \stackrel{d}{=} X \stackrel{d}{=} \beta$ and the fact that $X$ is continuous.
Thus on any compact $K_n\subseteq M$ we have a continuous $\cG^1$-measurable process $(Y^n_x)_{x \in K_n}$,
that is a modification of $(\beta_x)_{x \in K_n}$. Up to $\cG^1$-measurable null-sets the construction is
consistent in $n$ so that we can construct a $\cG^1$-measurable continuous process $(Y_x)_{x\in M}$
(where we use $M=\bigcup_{n=1}^\infty \mathring{K}_n$) that is a modification of $\beta$.
\medskip

(2) follows from the separability of $M$.
\end{proof}

The following lemma is well-known.
\smallskip

\begin{lemma}\label{lemma:joint_measurability}
Let $(A,\cA)$ be a measurable space and $M$ be a separable metric space. Assume that $f:M\times A \to \R$ is
such that $f(x,\cdot)$ is $\cA$-measurable for all $x \in M$ and $x \to f(x,\om)$ is continuous for
all $\om \in A$. Then $f$ is $\mathcal{B}(M)\otimes\cA$-measurable, where $\mathcal{B}(M)$ is generated
by the open sets.
\end{lemma}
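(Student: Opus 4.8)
The plan is to realise $f$ as the pointwise limit of a sequence of $\mathcal{B}(M)\otimes\cA$-measurable functions obtained by discretising the first (continuous) variable, exploiting separability of $M$. The whole argument is elementary; the only thing to organise is a measurable ``nearest dense point'' selection.

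First I would fix a countable dense subset $D=\{x_k:k\ge 1\}\subseteq M$ and, for each $n\ge 1$, peel off balls: set $B_k^n := B(x_k,1/n)\setminus \bigcup_{j<k} B(x_j,1/n)$, where $B(x,r)$ is the open ball of radius $r$. The balls are open, hence Borel, so each $B_k^n\in\mathcal{B}(M)$; the sets $(B_k^n)_{k\ge 1}$ are pairwise disjoint; and $\bigcup_k B_k^n=\bigcup_k B(x_k,1/n)=M$ since $D$ is dense. Thus $(B_k^n)_{k\ge 1}$ is a Borel partition of $M$ with the key property that $d(x,x_k)<1/n$ whenever $x\in B_k^n$. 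I would then define
\[ f_n(x,\om) := \sum_{k=1}^\infty \chi_{B_k^n}(x)\, f(x_k,\om), \qquad (x,\om)\in M\times A. \]
For a fixed $(x,\om)$ exactly one term of this series is non-zero, so $f_n$ is well defined; moreover each map $(x,\om)\mapsto \chi_{B_k^n}(x)\,f(x_k,\om)$ is $\mathcal{B}(M)\otimes\cA$-measurable, being a product of the Borel function $x\mapsto\chi_{B_k^n}(x)$ and the $\cA$-measurable function $\om\mapsto f(x_k,\om)$ (each viewed on $M\times A$), and a countable sum of such functions is again $\mathcal{B}(M)\otimes\cA$-measurable. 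Hence every $f_n$ is $\mathcal{B}(M)\otimes\cA$-measurable.

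Finally I would pass to the limit. Fix $(x,\om)$; for each $n$ let $k(n)$ be the unique index with $x\in B_{k(n)}^n$, so that $d(x,x_{k(n)})<1/n\to 0$, and therefore, by continuity of $y\mapsto f(y,\om)$, $f_n(x,\om)=f(x_{k(n)},\om)\to f(x,\om)$. Thus $f=\lim_n f_n$ pointwise on $M\times A$, and as a pointwise limit of $\mathcal{B}(M)\otimes\cA$-measurable functions $f$ is $\mathcal{B}(M)\otimes\cA$-measurable. I do not expect a genuine obstacle here: the only delicate point is the measurable selection of a nearby dense point, which the peeling construction of the $B_k^n$ provides, together with the observation that for a fixed argument the defining series of $f_n$ reduces to a single term, so no question of series convergence arises.
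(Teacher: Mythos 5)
Your proposal is correct and follows essentially the same argument as the paper: discretise the $M$-variable via a disjointified cover by balls of radius $1/n$ around a countable dense set, observe that the resulting step-in-$x$ approximants are product-measurable, and pass to the pointwise limit using continuity in $x$. The only cosmetic difference is your use of open rather than closed balls in the peeling construction.
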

\smallskip

\begin{proof} Let $(x_j)_{j\ge 1} \subseteq M$ be a dense set. We define for all $n,j \ge 1$
\[ B_j^n := \left\{ x \in M: d(x,x_j) \le \frac{1}{n} \right\} \]
and obtain a sequence of disjoint sets as follows:
$A^n_1 := B^n_1$, and
$A^n_k := B^n_k \setminus (\bigcup_{j=1}^{k-1} A^n_{j})$ for $k=2,3,\dots$ Then
$M = \bigcup_{k=1}^\infty A^n_k$
for all $n \ge 1$. Now we define $f^n:M\times A \to \R$ as follows:
 \[ f^n(x,\om) := \sum_{j=1}^\infty f(x_j,\om) 1_{A^n_j}(x).  \]
Since $f(x,\cdot)$ is $\cA$-measurable for all $x \in M$ and $A^n_j \in \mathcal{B}(M)$ for all $j,n \ge 1$,
it follows that each $f^n$ is $\mathcal{B}(M)\otimes\cA$-measurable. Moreover, for any $(x,\om) \in M\times A$
we have the pointwise convergence $f^n(x,\om) \to f(x,\om)$ as $n \to \infty$. This follows from the facts
 \[ |f^n(x,\om)-f(x,\om)| = |f(x_{j(n,x)},\om)-f(x,\om)|, \]
and $d(x_{j(n,x)},x)\le \frac{1}{n} \to_n 0$, where $j(n,x)$ is the index such that
$x\in A_{j(n,x)}^n$.
\end{proof}

\begin{lemma}\label{lemma:Lebesgue_differentiation}
Let $f\in L_1([0,T])$ be non-negative. Then
\[ \sup_{0\le a < b \le T} \frac{1}{b-a} \int_a^b f(t) dt = {\rm esssup}_{t\in [0,T]} f. \]
\end{lemma}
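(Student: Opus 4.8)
The plan is to prove the two inequalities separately, writing $E := \operatorname{esssup}_{t\in[0,T]} f$ and $S := \sup_{0\le a<b\le T} \frac{1}{b-a}\int_a^b f(t)\,dt$.

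First I would establish $S \le E$. This is the easy direction: for any $0\le a<b\le T$ one has $f(t) \le E$ for a.e.\ $t\in[a,b]$, hence $\int_a^b f(t)\,dt \le E(b-a)$, so $\frac{1}{b-a}\int_a^b f(t)\,dt \le E$; taking the supremum over all such $a<b$ gives $S\le E$. (If $E=\infty$ there is nothing to prove here.)

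For the reverse inequality $E\le S$, I would argue by contradiction, or equivalently prove $S\ge c$ for every $c<E$. Fix $c< E$. By definition of the essential supremum, the set $A := \{t\in[0,T] : f(t) > c\}$ has positive Lebesgue measure, $\lambda(A)>0$. The key step is then to use the Lebesgue density theorem: almost every point of $A$ is a density point, so there exists $t_0\in A$ and arbitrarily small $\delta>0$ with $\lambda(A\cap(t_0-\delta,t_0+\delta)) \ge \tfrac34\cdot 2\delta$ (any density fraction $>1/2$ will do). Writing $(a,b) := (t_0-\delta,t_0+\delta)\cap[0,T]$, for small enough $\delta$ this has length close to $2\delta$ and $\lambda(A\cap(a,b)) > \tfrac12(b-a)$. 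Actually, to avoid the boundary nuisance it is cleanest to note first that if $E=\operatorname{esssup} f$ is not attained on a full-measure neighbourhood of an endpoint, one can simply pick the density point $t_0$ in the open interval $(0,T)$, which is legitimate since $\lambda(A)>0$ forces $\lambda(A\cap(0,T))>0$. Then $\frac{1}{b-a}\int_a^b f\,dt \ge \frac{1}{b-a}\int_{A\cap(a,b)} f\,dt + \frac{1}{b-a}\int_{(a,b)\setminus A} 0\,dt \ge c\cdot\frac{\lambda(A\cap(a,b))}{b-a}$. Hmm — this only gives a factor $>1/2$, not $1$, so I need to be more careful.

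The fix for the main obstacle (recovering the full constant $1$, not just $1/2$) is to iterate the density argument: since almost every point of $A$ has density $1$ in $A$, for any $\varepsilon>0$ choose a density point $t_0$ and $\delta>0$ with $\lambda(A\cap(t_0-\delta,t_0+\delta))\ge(1-\varepsilon)\cdot 2\delta$, and with $(t_0-\delta,t_0+\delta)\subseteq(0,T)$. Then with $(a,b)=(t_0-\delta,t_0+\delta)$ we get $\frac{1}{b-a}\int_a^b f\,dt \ge c(1-\varepsilon)$, hence $S\ge c(1-\varepsilon)$. Letting $\varepsilon\downarrow 0$ gives $S\ge c$, and letting $c\uparrow E$ gives $S\ge E$. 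Combined with the first direction, $S=E$. The only genuine technical point is the invocation of the Lebesgue density theorem for subsets of $\mathbb{R}$; everything else is elementary estimation. If one prefers to avoid the density theorem entirely, an alternative is to use the Lebesgue differentiation theorem directly: for a.e.\ $t_0$, $\frac{1}{2\delta}\int_{t_0-\delta}^{t_0+\delta} f\,dt \to f(t_0)$ as $\delta\to 0$, and choosing $t_0$ with $f(t_0)>c$ (possible since $\lambda(A)>0$, and the differentiation points have full measure) yields an interval with average $>c$; this is the cleanest route and I would present it that way.
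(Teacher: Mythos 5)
Your proposal is correct, and the route you settle on at the end — invoking the Lebesgue differentiation theorem to find, for a.e.\ $t_0$, shrinking intervals whose averages converge to $f(t_0)$, then picking $t_0$ with $f(t_0)>c$ — is exactly the paper's argument (the density-point version you sketch first is just an equivalent reformulation). The easy inequality $S\le E$ is handled identically in both.
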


\begin{proof}
The inequality
\[ \sup_{0\le a < b \le T} \frac{1}{b-a} \int_a^b f(t) dt \le {\rm esssup}_{t\in [0,T]} f \]
is obvious. According to \cite[Theorem 3.3.8]{Stroock:11} there exists a Borel set $A\subseteq [0,T]$ with $\lambda(A)=T$
and $0\le a_n^s\le s \le b_n^s\le T$ with $0<b_n^s-a_n^s\to_n 0$ for $s\in A$,
such that
\[ \lim_n \frac{1}{b_n^s-a_n^s} \int_{a_n^s}^{b_n^s} f(t) dt = f(s) \]
for all $s\in A$. Hence,
\[ f(s) \le \sup_{0\le a < b \le T} \frac{1}{b-a} \int_a^b f(t) dt \]
for all $s\in A$.
\end{proof}
\bigskip

Let $\probsp$ be a complete probability space, $H$ be a separable Hilbert space
with $H\not = \{0\}$, and $(g_h)_{h\in H}$ be an iso-normal family of Gaussian
random variables $g_h:\Omega\to \R$. Assume that
\[ \cF = \sigma(g_h : h\in H) \vee \cN \]
where $\cN$ are the null-sets from $\cF$. Let $(e_k)_{k\in I}$ be an orthonormal basis of $H$ with
$I=\{1,...,d\}$ or $I=\{1,2,...\}$. Then
\[ \cF = \sigma(g_{e_k} : k\in I) \vee \cN. \]
We recall that $D:\D_{1,2}\to L_2^H$ is a closed operator  (see \cite[Proposition 1.2.1]{Nualart:06}). Assume that $\varphi_n:\R\to [0,\infty)\in C_0^\infty$ such that $\varphi_n(x)=0$ for $x\le0$ and $x\ge 1/n$ and that $\int_\R \varphi_n(x)dx=1$. Defining
$\psi_n(y) := \int_{-\infty}^y \varphi_n(x)dx$, we get $\psi_n(x)=0$ for $x\le 0$,
$\psi_n(x)=1$ if $x\ge 1/n$, and $0\le \psi_n(x) \le 1$. Finally, set
\[ L_n(y) := \int_{-\infty}^y  \psi_n(x) dx \]
so that $L_n'(x) = \psi_n(x) \to_n \chi_{(0,\infty)}(x)$ and
\[ 0\le x - L_n(x) \le \frac{1}{n} \]
for $x\ge 0$ whereas $L_n(x)=0$ for $x\le 0$. Given $\xi\in \D_{1,2}$ we get that
$| \xi^+ - L_n (\xi) |\le 1/n$ and $L_n'(\xi) D\xi \to   \chi_{(0,\infty)}(\xi) D\xi$
in $L_2^H$. Hence $\xi^+\in \D_{1,2}$ with
\[ D\xi^+ =  \chi_{(0,\infty)}(\xi) D\xi \]
and, for $L>0$,
\st{\[
    D(\xi \vee (-L))
  = D( (\xi+L)^+ - L) 
  = \chi_{(0,\infty)}(\xi+L) D(\xi +L)
  = \chi_{(-L,\infty)}(\xi) D(\xi).
\]
From this we get
\[
      D(\xi \wedge L)
   =  -  D((- \xi) \vee (-L)) 
   =  - \chi_{(-L,\infty)}(-\xi) D(-\xi)
   =    \chi_{(-\infty,L)}(\xi) D(\xi).
\]}
Finally,
\equa
      D ((\xi \vee (-L))\wedge L )
& = & \chi_{(-\infty,L)}(\xi \vee (-L)) D((\xi\vee (-L)) \\
& = & \chi_{(-\infty,L)}(\xi \vee (-L))\chi_{(-L,\infty)}(\xi) D(\xi) \\
& = & \chi_{(-L,L)}(\xi) D(\xi).
\tion

\begin{proposition}\label{prop:approximation-D1p}
Let $H$ be a separable Hilbert space with an orthonormal basis $(e_k)_{k\in I}$,
where $I=\{1,...,d\}$ or $I=\{1,2,...\}$, let $(g_h)_{h\in H}$, $g_h:\Omega\to\R$,
be an iso-normal family of Gaussian random variables defined on a complete probability
space $(\Omega,\cF,\P)$ with $\cF=\sigma (g_h:h\in H)\vee \cN$ with $\cN$ being the
null-sets of  $(\Omega,\cF,\P)$.
Let $p\in [2,\infty)$, $\vare>0$, and $\xi\in \D_{1,2}\cap L_p$ such that
$D\xi \in L_p^H$. Then there exist $n\ge 1$ and a bounded $f_n\in C^\infty(\R^n)$
such that all derivatives are bounded (where the bound may depend on the order of
the derivative) such that for $\xi_0:= f_n(g_{e_1},...,g_{e_n})$ one has
\[ \| \xi-\xi_0\|_p^p + \| D\xi - D\xi_0 \|_{L_p^H}^p < \vare^p. \]
\end{proposition}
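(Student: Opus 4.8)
The plan is to prove Proposition \ref{prop:approximation-D1p} by a two-stage approximation: first reduce to cylindrical random variables depending on finitely many coordinates, then mollify those cylindrical functions to obtain the required smoothness while keeping control of the Malliavin derivative. Throughout I would work with the orthonormal basis $(e_k)_{k\in I}$ of $H$ and the filtration $\cF_n := \sigma(g_{e_1},\dots,g_{e_n})\vee\cN$.

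First I would show that $\E(\xi|\cF_n)\to\xi$ in $L_p$ and that $\E(D\xi|\cF_n)\to D\xi$ in $L_p^H$, where the conditional expectation on the Hilbert-space valued random variable $D\xi$ is taken coordinatewise. The $L_p$ convergence of $\E(\xi|\cF_n)$ is the martingale convergence theorem, since $\cF = \bigvee_n \cF_n$ up to null sets. For the derivative one uses that $D$ commutes with $\E(\cdot|\cF_n)$ in the sense that $D\,\E(\xi|\cF_n) = P_n\,\E(D\xi|\cF_n)$, where $P_n$ is the orthogonal projection of $H$ onto $\myspan\{e_1,\dots,e_n\}$; this is the standard fact that conditioning on the first $n$ coordinates annihilates the derivative in the complementary directions. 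Then $D\,\E(\xi|\cF_n) = P_n\,\E(D\xi|\cF_n)\to D\xi$ in $L_p^H$, again by vector-valued martingale convergence together with $P_n\to \id$ strongly. Hence, given $\vare>0$, there is $n$ and a Borel $g:\R^n\to\R$ with $\xi_1 := g(g_{e_1},\dots,g_{e_n})\in\D_{1,2}\cap L_p$, $Dg(g_{e_1},\dots,g_{e_n})\in L_p^H$, and $\|\xi-\xi_1\|_p^p + \|D\xi - D\xi_1\|_{L_p^H}^p < (\vare/3)^p$.

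Second, I would mollify. Replace $\xi_1$ first by the truncation $\xi_1^L := (\xi_1\vee(-L))\wedge L$; by the computation preceding the proposition, $D\xi_1^L = \chi_{(-L,L)}(\xi_1)D\xi_1$, so as $L\to\infty$ dominated convergence in $L_p$ and $L_p^H$ gives $\|\xi_1-\xi_1^L\|_p + \|D\xi_1 - D\xi_1^L\|_{L_p^H}\to 0$; fix $L$ so this is $<\vare/3$ in the $p$-th-power-summed sense. Now $\xi_1^L = h(g_{e_1},\dots,g_{e_n})$ for a bounded Borel $h$; approximate $h$ in $W^{1,p}(\R^n,\gamma_n)$ — which is legitimate because the bounded, smooth, boundedly-differentiable functions are dense in the Gaussian Sobolev space — by convolving a suitable truncation of $h$ with a standard $C_0^\infty$ mollifier and, if necessary, multiplying by a smooth cutoff that is $1$ on a large ball and has bounded derivatives of every order. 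One must check that convolution against a mollifier is $\gamma_n$-a.e. well-behaved: since the Lebesgue density of $\gamma_n$ is smooth and bounded with bounded derivatives on compact sets, approximation in $L_p(\mathrm{Leb})$ on a large ball transfers to $L_p(\gamma_n)$, and the chain rule gives the convergence of the gradients. This produces $f_n\in C^\infty(\R^n)$, bounded with all derivatives bounded, such that $\xi_0 := f_n(g_{e_1},\dots,g_{e_n})$ satisfies $\|\xi_1^L - \xi_0\|_p^p + \|D\xi_1^L - D\xi_0\|_{L_p^H}^p < (\vare/3)^p$. Combining the three estimates with the triangle inequality in $L_p$ and $L_p^H$ yields $\|\xi-\xi_0\|_p^p + \|D\xi-D\xi_0\|_{L_p^H}^p < \vare^p$ (absorbing constants by relabelling $\vare$).

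I expect the main obstacle to be the rigorous handling of the mollification step in the Gaussian (rather than Lebesgue) setting: one needs to pass from $W^{1,p}(\gamma_n)$-approximation to genuine $C^\infty$ functions with globally bounded derivatives of every order, which requires combining a mollifier argument on compact balls with a smooth cutoff and then controlling the error introduced by the cutoff using the rapid decay of the Gaussian weight and the integrability $Df_n\in L_p^H$. The first (coordinate-reduction) step is essentially soft martingale theory, and the truncation step is the explicit chain-rule computation already recorded in the excerpt; it is only in the last step that one has to be careful that the standard density of smooth functions in $W^{1,p}(\R^n,\gamma_n)$ can be upgraded to the bounded-derivatives-of-all-orders class demanded by the statement.
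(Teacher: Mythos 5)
Your steps (a) (coordinate reduction via $\E(\,\cdot\,|\cF_n)$, using $D\,\E(\xi|\cF_n)=P_n\E(D\xi|\cF_n)$ and vector-valued martingale convergence) and (b) (truncation $\psi_L$, with $D\psi_L(\xi)=\chi_{(-L,L)}(\xi)D\xi$) coincide with the paper's proof. Where you genuinely diverge is the final smoothing step. The paper does not mollify with a compactly supported kernel: it takes the bounded Borel representative $f_n$ and sets $f_{n,t}:=F_n(t,\sqrt t\,\cdot)$, where $F_n$ solves the backward heat equation with terminal datum $f_n$ (equivalently, $f_{n,t}$ is a Gaussian convolution of $f_n$ composed with a dilation). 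This is automatically $C^\infty$ and bounded with all derivatives bounded, and --- crucially --- the convergence $f_{n,t}(g_{e_1},\dots,g_{e_n})\to f_n(g_{e_1},\dots,g_{e_n})$ together with the convergence of the gradients follows from the martingale property of $(F_n(t,B^n_t))_t$ and $(\nabla F_n(t,B^n_t))_t$, so no comparison between Lebesgue and Gaussian measure is ever needed. Your route (Euclidean mollifier plus smooth cutoff, first choosing the cutoff radius $R$ using the Gaussian tail and the integrability of $\nabla h$ in $L_p(\gamma_n)$, then the mollification parameter $\delta$ using the equivalence of $\gamma_n$ and Lebesgue measure on $B_{2R}$) does close, provided you keep that order of limits; the naive global estimate $\|(\nabla h)*\rho_\delta\|_{L_p(\gamma_n)}\lesssim\|\nabla h\|_{L_p(\gamma_n)}$ fails because $\gamma_n(x+y)/\gamma_n(x)$ is unbounded even for $|y|\le\delta$, so the localization is not optional. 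You should also avoid invoking "density of smooth functions in $W^{1,p}(\gamma_n)$" as a known fact, since that is essentially the statement being proved; your explicit mollifier-plus-cutoff construction is what carries the argument. In short: both approaches are valid; the paper's Gaussian-semigroup smoothing buys a cleaner proof with no localization, while yours is more elementary but requires the careful two-parameter limit you already flagged.
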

\bigskip

\begin{proof}
\underline{(a) Reduction to $\dim (H)<\infty$} in the case ${\rm dim}(H)=\infty$: Let
$\cH_n := \sigma (g_{e_1},...,g_{e_n})$.
By martingale convergence it follows that
\[ \lim_n \xi_n := \lim_n \E(\xi|\cH_n) = \xi
   \mbox{ a.s. and in } L_p. \]
For $n\in I$ let $P_n:H\to {\rm span}\{e_1,...,e_n\}\subseteq H$ be the orthogonal projection.
Then
\equa
      \| D\xi - D\xi_n \|_{L_p^H}
& = & \| P_n D \xi -  D \xi_n + (I-P_n)  D \xi \|_{L_p^H} \\
&\le&  \| P_n D \xi - D \xi_n\|_{L_p^H} + \| (I-P_n)  D \xi \|_{L_p^H}.
\tion
By dominated convergence,
\[ \lim_n  \| (I-P_n)  D \xi \|_{L_p^H} = 0.\]
On the other hand, using
$D\xi_n = P_n \E(D\xi |\cH_n)$
we get
\[      \| P_n D \xi - D \xi_n\|_{L_p^H}
    =   \|P_n D \xi - P_n \E(D\xi |\cH_n) \|_{L_p^H}
   \le  \|D \xi - \E(D\xi |\cH_n) \|_{L_p^H} \]
that converges to zero as $n\to\infty$ because $\cF=\vee_{n\ge 1} \cH_n \vee \cN$ and because of known facts about Banach space valued closable
martingales. Summing up, we obtain
\[ \lim_n \left [ \|\xi -\xi_n \|_p^p + \| D\xi - D\xi_n \|_{L_p^H}^p \right ] = 0. \]
\underline{(b) Reduction to a bounded $\xi$}:  For $L\ge 1$ define the truncation function $\psi_L:\R\to \R$ by
$\psi_L(x) := (x\vee (-L))\wedge L$. Then
\[ \lim_{L\to\infty}  \|\xi_n -\psi_L(\xi_n) \|_p = 0 \]
where $\xi_n$ is an approximation obtained by (a) or we take $\xi_n=\xi$ in case ${\rm dim}(H)<\infty$.
Moreover, $\chi_{(-L,L)}(\xi_n)D\xi_n$ is a representative of $D(\psi_L(\xi_n))$, so that
\[ \lim_{L\to\infty}   \|D \xi_n - D (\psi_L(\xi_n)) \|_{L_p^H} = 0 \]
as well. Consequently, for all $\vare>0$ there are $n,L\ge 1$ such that
\[ \|\xi -\psi_L(\xi_n) \|_p^p + \| D\xi - D(\psi_L(\xi_n)) \|_{L_p^H}^p < \vare^p. \]
\underline{(c) Reduction to the smooth case}: By the factorization theorem we can write
\[ \psi_L(\xi_n) = f_n(g_{e_1},...,g_{e_n}) \in \D_{1,2} \]
for a bounded Borel function $f_n:\R^n\to \R$ where we suppress $L$ in the following.
Let $F_n:[0,1)\times \R^n\to\R$ be the solution
of the backward heat equation with terminal condition $f_n$ so that
\[ \lim_{t\to 1} F_n(t,B_t^n) = f_n(B_1^n)
   \sptext{1}{and}{1}
   \lim_{t\to 1} \nabla F_n(t,B_t^n) = Df_n(B_1^n)  \]
in $L_p$ and $L_p^{\R^n}$, respectively, and a.s., where $(B_t^n)_{t\in [0,1]}$ is
an $n$-dimensio\-nal standard Brownian motion. But this implies also that
\[ \lim_{t\to 1} F_n(t,\sqrt{t} B_1^n) = f_n(B_1^n)
   \sptext{1}{and}{1}
   \lim_{t\to 1} \nabla F_n(t,\sqrt{t} B_1^n) = Df_n(B_1^n)  \]
in $L_p$ and $L_p^{\R^n}$, respectively. This can be seen from the estimate
\equa
       \| F_n(t,\sqrt{t} B_1^n) - f_n(B_1^n) \|^p_p
& = &  \E | \widetilde{\E} f_n(\sqrt{t} B_1^n+ \widetilde{B}_{1-t}^n) - f_n(B_1^n) |^p \\
&\le&  \E \widetilde{\E} | f_n(\sqrt{t} B_1^n+ \widetilde{B}_{1-t}^n) - f_n(B_1^n) |^p \\
& = &  \E \widetilde{\E} | f_n(B_{\sqrt{t}}^n+ \widetilde{B}_{1-\sqrt{t}}^n) - f_n(B_1^n) |^p
\tion
so that
\[  \| F_n(t,\sqrt{t} B_1^n) - f_n(B_1^n) \|_p
    \le 2 \| F_n(\sqrt{t}, B_{\sqrt{t}}^n) - f_n(B_1^n) \|_p\to 0 \]
as $t\to 1$. The fact we used here is that $(F_n(t,B_t^n))_{t\in [0,1]}$ is a
martingale. As $(\nabla F_n(t,B_t^n))_{t\in [0,1]}$ is a
martingale as well, where we agree about
$Df_n=:\nabla F_n(1,\cdot)$, the same computation yields to
\[
       \| \nabla F_n(t,\sqrt{t} B_1^n) - Df_n(B_1^n) \|_{L_p^H} 
 \le 2 \| \nabla F_n(\sqrt{t}, B_{\sqrt{t}}^n) - D f_n(B_1^n) \|_{L_p^H}\to 0
\]
as $t\to 1$. Letting $f_{n,t}:=F_n(t,\sqrt{t}\cdot)$ for $t\in [0,1)$, we get that
\[ D f_{n,t}(g_{e_1},...,g_{e_n}) = \sqrt{t} \sum_{k=1}^n \frac{\partial}{\partial x_k} F_n(t,\sqrt{t} (g_{e_1},...,g_{e_n}))e_k \]
because $f_{n,t}\in C_1^b(\R^n)\cap C^b(\R^n)$, and therefore
\equa
&   & \| D f_n (g_{e_1},...,g_{e_n}) - D f_{n,t}(g_{e_1},...,g_{e_n})  \|_{L_p^H} \\
&\le& \left \| D f_n(g_{e_1},...,g_{e_n}) - \sum_{k=1}^n \frac{\partial}{\partial x_k}
            F_n(t,\sqrt{t} (g_{e_1},...,g_{e_n}))e_k \right \|_{L_p^H} \\
&   & + (1-\sqrt{t}) \left \| \sum_{k=1}^n \frac{\partial}{\partial x_k}
            F_n(t,\sqrt{t} (g_{e_1},...,g_{e_n}))e_k \right \|_{L_p^H} \\
&\le& \left \| D f_n(g_{e_1},...,g_{e_n}) - \sum_{k=1}^n \frac{\partial}{\partial x_k}
            F_n(t,\sqrt{t} (g_{e_1},...,g_{e_n}))e_k \right \|_{L_p^H} \\
&   & + (1-\sqrt{t}) \| Df_n (g_{e_1},...,g_{e_n}) \|_{L_p^H}.
\tion
Summarizing,
\begin{multline*}
 \lim_{t\to 1} \big [
         \| f_{n,t}(g_{e_1},...,g_{e_n}) - f_n (g_{e_1},...,g_{e_n}) \|^p_p \\
      + \| D f_n (g_{e_1},...,g_{e_n}) - D f_{n,t}(g_{e_1},...,g_{e_n})  \|_{L_p^H}^p
                            \big ] = 0. \qedhere
\end{multline*}
\end{proof}

\begin{lemma}[Stein's martingale inequality, \cite{Meyer:78} and cf. {\cite[Theorem 3.2]{Qiu:11}}]
\label{lemma:stein-inequality}
\index{inequality!Stein's inequality}

Let $\probsp$ be a probability space, $p\in (1,\infty)$ and let $(\cG_k)_{k=1}^n$ be an increasing sequence of sub-$\sigma$-algebras of $\cF$. Then one has
\[ \left \| \left ( \sum_{k=1}^n |\E(f_k|\cG_k)|^2 \right )^\frac{1}{2} \right \|_p
   \le c_p \left \| \left ( \sum_{k=1}^n |f_k|^2 \right )^\frac{1}{2} \right \|_p \]
for all $f_1,...,f_n\in L_p$ where the constant $c_p>0$ depends at most on $p$.
\end{lemma}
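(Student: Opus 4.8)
The plan is to recognise the inequality as the $L_p(\Om;\ell_2^n)$-boundedness of the linear operator $T\colon(f_k)_{k=1}^n\mapsto(\E(f_k|\cG_k))_{k=1}^n$ and to exploit that each $\E(\,\cdot\,|\cG_k)$ is self-adjoint on $L_2$, so that $T$ is self-adjoint for the canonical pairing between $L_p(\Om;\ell_2^n)$ and $L_{p'}(\Om;\ell_2^n)$. For $p=2$ there is nothing to do, since $\E(\,\cdot\,|\cG_k)$ is an orthogonal projection and hence $\sum_k\|\E(f_k|\cG_k)\|_2^2\le\sum_k\|f_k\|_2^2$. For $p\in(1,2)$ the statement will follow from the case $p\in(2,\infty)$ by duality, because $\|T\|_{L_p(\Om;\ell_2^n)\to L_p(\Om;\ell_2^n)}=\|T\|_{L_{p'}(\Om;\ell_2^n)\to L_{p'}(\Om;\ell_2^n)}$ and $p'\in(2,\infty)$. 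So the core of the argument is the range $p\in(2,\infty)$.

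For $p\in(2,\infty)$ I would test the nonnegative function $\sum_k|\E(f_k|\cG_k)|^2$ against an arbitrary nonnegative $g$ with $\|g\|_{(p/2)'}\le1$. Using conditional Jensen in the form $|\E(f_k|\cG_k)|^2\le\E(|f_k|^2\,|\,\cG_k)$, then moving the conditional expectation onto $g$ by self-adjointness, and finally dominating $\E(g|\cG_k)$ by the martingale maximal function $Mg:=\sup_j\E(g|\cG_j)$ (legitimate because $(\cG_k)_k$ is increasing and $g\ge0$), one obtains
\[ \int\sum_k|\E(f_k|\cG_k)|^2\,g\,d\P\;\le\;\int\Big(\sum_k|f_k|^2\Big)\,Mg\,d\P\;\le\;\Big\|\sum_k|f_k|^2\Big\|_{p/2}\,\|Mg\|_{(p/2)'}. \]
Doob's $L_q$ maximal inequality with $q=(p/2)'$ gives $\|Mg\|_{(p/2)'}\le q'\|g\|_{(p/2)'}$ (indeed $(\E(g|\cG_k))_k$ is a martingale whose final term has $L_q$-norm at most $\|g\|_q$), and taking the supremum over all admissible $g$ yields the claim for $p\in(2,\infty)$ with a constant $c_p$ of the shape $\sqrt{((p/2)')'}=\sqrt{p/2}$, which depends on $p$ only; then $p\in(1,2)$ follows by the duality step with $c_p=c_{p'}$.

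I do not anticipate a real obstacle. The only points needing care are the precise set-up of the duality $\big(L_p(\Om;\ell_2^n)\big)^\ast=L_{p'}(\Om;\ell_2^n)$ together with the identity $T^\ast=T$ (here the finiteness of the index set $\{1,\dots,n\}$ is convenient), and checking that the maximal-function estimate passes cleanly from the exponent $p/2$ to its conjugate. Doob's maximal inequality is the one external ingredient, and it is already used elsewhere in these notes.
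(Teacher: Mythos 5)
Your argument is correct. Note, however, that the paper does not prove this lemma at all: it is stated as a known result with references to Meyer and to Qiu, so there is no in-paper proof to compare against. What you give is the classical Stein argument in its martingale form, and it is complete: for $p\in(2,\infty)$ the chain ``conditional Jensen $\Rightarrow$ self-adjointness of $\E(\cdot|\cG_k)$ $\Rightarrow$ domination of $\E(g|\cG_k)$ by the maximal function $Mg$ (using that $(\cG_k)_k$ is increasing and $g\ge 0$) $\Rightarrow$ Doob in $L_{(p/2)'}$'' is exactly right and yields $c_p=\sqrt{p/2}$; the case $p=2$ is the orthogonal-projection triviality; and the passage to $p\in(1,2)$ by testing $\sum_k\int \E(f_k|\cG_k)g_k\,d\P=\sum_k\int f_k\,\E(g_k|\cG_k)\,d\P$ against the unit ball of $L_{p'}(\Omega;\ell_2^n)$ is legitimate since $\ell_2^n$ is finite-dimensional, so $(L_p(\Omega;\ell_2^n))^\ast=L_{p'}(\Omega;\ell_2^n)$ and the bilinear identity is justified by H\"older ($\E(f_k|\cG_k)\in L_p$, $\E(g_k|\cG_k)\in L_{p'}$). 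The only cosmetic points worth writing out in a final version are the duality representation $\|h\|_{p/2}=\sup\{\int hg\,d\P: g\ge 0,\ \|g\|_{(p/2)'}\le 1\}$ for nonnegative $h\in L_{p/2}$ (available here because each $|\E(f_k|\cG_k)|^2$ lies in $L_{p/2}$), and the remark that $\|\E(g|\cG_n)\|_q\le\|g\|_q$ feeds Doob's inequality. This is consistent with the cited sources and with the remark following the lemma, since grouping coordinates extends the scalar case to $\ell_2^N$-valued $f_k$ without changing the constant.
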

\bigskip

Note that by grouping the random variables in an appropriate way in Stein's inequality, 
we can also assume that $f_1,...,f_n$ are random vectors with values in \st{$\R^N$}, 
whereas the constant $c_p>0$ does not enlarge.

\begin{lemma}\label{lemma:Stein_continuous}
For $p\in (1,\infty)$ assume a stochastic process 
$a=(a_t)_{t\in [0,1]}$ with values in \st{$\R^N$} that has 
left-continuous paths and satisfies $\E \sup_t |a_t|^p <\infty$. Suppose a 
filtration $(\cH_t)_{t\in [0,1]}$ and an  
$(\cH_t)_{t\in [0,1]}$-adapted process $(b_t)_{t\in [0,1]}$ 
with values in \st{$\R^N$} and $\E |b_t|^p<\infty$ for all $t\in [0,1]$
that has left-continuous paths and such that
$b_t =\E (a_t|\cH_t)$ a.s. for $t=k/2^n$ with $n=0,1,2,...$ and $k=0,...,2^n-1$.
Then one has that
\[     \left \| \left ( \int_0^1 |b_t|^2 dt \right )^\frac{1}{2} \right \|_p
   \le c_{(\ref{lemma:stein-inequality})}  \left \|  \left ( \int_0^1 |a_t|^2 dt \right )^\frac{1}{2}
   \right \|_p \]
where $c_{(\ref{lemma:stein-inequality})}>0$ is taken from Lemma \ref{lemma:stein-inequality}.
\end{lemma}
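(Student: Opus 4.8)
The statement is a continuous-time analogue of Stein's martingale inequality (Lemma \ref{lemma:stein-inequality}), and the natural strategy is to discretize the time interval, apply the finite version of Stein's inequality on a dyadic partition, and pass to the limit. The key observation is that although $b_t = \E(a_t \mid \cH_t)$ is only assumed to hold at the dyadic times $t = k/2^n$, the left-continuity of both paths $t \mapsto a_t(\omega)$ and $t \mapsto b_t(\omega)$ together with the left-continuity of the filtration-indexed conditional expectations along a sequence of times increasing to a fixed $t$ will let us recover enough control. First I would fix $n \ge 1$ and consider the partition points $t_k^n := k/2^n$ for $k = 0, \dots, 2^n$. On this partition I apply the vector-valued form of Lemma \ref{lemma:stein-inequality} (using the remark after it that allows $\ell_2^N$-valued random variables without enlarging the constant) to the family $(b_{t_{k-1}^n})_{k=1}^{2^n} = (\E(a_{t_{k-1}^n} \mid \cH_{t_{k-1}^n}))_{k=1}^{2^n}$ relative to the increasing sequence of $\sigma$-algebras $(\cH_{t_{k-1}^n})_{k=1}^{2^n}$, obtaining
\[
   \left\| \left( \sum_{k=1}^{2^n} |b_{t_{k-1}^n}|^2 \, 2^{-n} \right)^{\frac12} \right\|_p
   \le c_{(\ref{lemma:stein-inequality})} \left\| \left( \sum_{k=1}^{2^n} |a_{t_{k-1}^n}|^2 \, 2^{-n} \right)^{\frac12} \right\|_p,
\]
where I have multiplied every term inside by the mesh $2^{-n}$ — this is legitimate because Lemma \ref{lemma:stein-inequality} is homogeneous, so scaling each $f_k$ by the common factor $2^{-n/2}$ leaves the inequality intact.

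The next step is to identify both sides as Riemann sums converging to the desired $L_p$-norms. For a fixed $\omega$, left-continuity of $s \mapsto a_s(\omega)$ gives that the step function $s \mapsto a_{t_{k-1}^n}(\omega)$ for $s \in (t_{k-1}^n, t_k^n]$ converges pointwise in $s$ to $a_s(\omega)$ as $n \to \infty$, so the left-hand Riemann sum $\sum_k |a_{t_{k-1}^n}(\omega)|^2 2^{-n}$ converges to $\int_0^1 |a_s(\omega)|^2 ds$ by dominated convergence in $s$ (the domination being $\sup_s |a_s(\omega)|^2$, which is finite for a.e.\ $\omega$). The same argument applies to $b$, using left-continuity of its paths. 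To upgrade these pointwise-in-$\omega$ convergences to $L_p$-convergence of the square-root expressions, I would again invoke dominated convergence, now over $\Omega$: the left side is dominated by $(\int_0^1 \sup_s|b_s|^2 \, ds)^{1/2} = \sup_s |b_s|$, and by the already-established finite-$n$ inequality combined with Fatou's lemma, $\sup_s |b_s| \in L_p$ once we know $\sup_s |a_s| \in L_p$; the right side is dominated by $\sup_s |a_s| \in L_p$ directly. Actually, cleanest is to first establish the inequality with $\liminf$ on the left via Fatou, i.e.
\[
   \left\| \left( \int_0^1 |b_t|^2 \, dt \right)^{\frac12} \right\|_p
   \le \liminf_n \left\| \left( \sum_{k=1}^{2^n} |b_{t_{k-1}^n}|^2 \, 2^{-n} \right)^{\frac12} \right\|_p,
\]
then bound the right side by $c_{(\ref{lemma:stein-inequality})} \limsup_n \|(\sum_k |a_{t_{k-1}^n}|^2 2^{-n})^{1/2}\|_p$, and finally note that the latter Riemann sums converge in $L_p$ to $\|(\int_0^1 |a_t|^2 dt)^{1/2}\|_p$ by dominated convergence with majorant $\sup_t |a_t|$.

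\textbf{Expected main obstacle.} The routine parts are the two dominated-convergence arguments; the one subtlety that needs care is the measurability and integrability bookkeeping needed to make $\sup_t |b_t|$ a legitimate $L_p$ majorant — a priori we only know $\sup_t |a_t| \in L_p$, and $\sup_t |b_t|$ must be controlled. This is handled by first deriving, from the finite-$n$ Stein inequality and Fatou, that $\|(\int_0^1 |b_t|^2 dt)^{1/2}\|_p < \infty$, which is in any case the conclusion we want; the left-continuity hypothesis on $b$ ensures $\sup_t |b_t| = \sup_{t \in \Q} |b_t|$ is measurable, and one only ever needs the integrability of $(\int_0^1 |b_t|^2 dt)^{1/2}$, not of $\sup_t|b_t|$ itself, if the Fatou route is taken first. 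So the genuine difficulty is mild: it is just making sure the limit on the $b$-side is taken via Fatou (lower semicontinuity) rather than dominated convergence, since we have no a priori $L_p$ majorant for the $b$-Riemann sums before the inequality itself is in hand.
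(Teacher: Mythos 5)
Your proposal is correct and follows essentially the same route as the paper: discretize on the dyadic grid, apply the $\ell_2^N$-valued Stein inequality with the common scaling factor $2^{-n/2}$, pass to the limit on the left via Fatou (twice, in $s$ and in $\omega$, exactly because no a priori majorant for the $b$-side is available), and use dominated convergence with majorant $\sup_t|a_t|\in L_p$ on the right. Your self-correction away from dominated convergence on the $b$-side toward the Fatou route is precisely the point the paper's proof relies on.
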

\smallskip

\begin{proof}
Let $t_k^n := \frac{k}{2^n}$ for $n\ge 0$ and $k=0,...,2^n-1$. Then it follows from Lemma \ref{lemma:stein-inequality} that
\[
  \left \| \left ( \sum_{k=0}^{2^n-1} (t_{k+1}^n-t_{k}^n) |\E (a_{t_{k}^n}|\cH_{t_{k}^n})|^2 \right )^\frac{1}{2} \right \|_p 
   \le c_{(\ref{lemma:stein-inequality})} \left \|  \left (  \sum_{k=0}^{2^n-1} (t_{k+1}^n-t_{k}^n) |a_{t_{k}^n}|^2
   \right )^\frac{1}{2}
   \right \|_p.
\]
Applying twice Fatou's lemma on the left-hand side, we derive
\[  \left \| \left ( \int_0^1 |b_t|^2 dt \right )^\frac{1}{2} \right \|_p
   \le c_{(\ref{lemma:stein-inequality})} \liminf_n \left \| \left ( \sum_{k=0}^{2^n-1} (t_{k+1}^n-t_{k}^n) |a_{t_{k}^n}|^2 \right )^\frac{1}{2}\right \|_p  \]
and we can conclude by dominated convergence.
\end{proof}

\begin{lemma}\label{lemma:PDE-Stein}
Let $p\in (1,\infty)$, $N\ge 1$ and $f:\R^N\to \R\in C^\infty$ where $\| D^\alpha f \|_\infty < \infty$ for all
multi-indices $\alpha$. Let $\gamma_N$ be the standard Gaussian measure on $\R^N$. Then one has
\[ \left \| f - \int_{\R^N} f d\gamma_N \right \|_{L_p(\gamma_N)}
   \le c_{\eqref{lemma:PDE-Stein}} \| |\nabla f| \|_{L_p(\gamma_N)} \]
where the constant $c_{\eqref{lemma:PDE-Stein}}>0$ depends on $p$ only.
\end{lemma}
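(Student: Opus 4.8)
The plan is to represent $f(B_1)$, for an $N$-dimensional standard Brownian motion $(B_t)_{t\in[0,1]}$ with $B_1\stackrel{d}{=}\gamma_N$, as the terminal value of the martingale generated by the backward heat equation, and then to combine the Burkholder--Davis--Gundy inequality with Lemma \ref{lemma:Stein_continuous}. This is the ``PDE'' incarnation of Stein's inequality, which is exactly what the name of the lemma and its placement after Lemma \ref{lemma:Stein_continuous} suggest.

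First I would fix a complete probability space carrying an $N$-dimensional standard Brownian motion $(B_t)_{t\in[0,1]}$, let $(\cF_t)_{t\in[0,1]}$ be its augmented natural filtration, and set $F(t,x):=\E f(x+B_{1-t})$ for $(t,x)\in[0,1]\times\R^N$. Because every $D^\alpha f$ is bounded, $F$ is jointly continuous on $[0,1]\times\R^N$, is $C^\infty$ on $[0,1)\times\R^N$, is $C^1$ in $x$ up to $t=1$ with $\nabla F(1,\cdot)=\nabla f$, has all spatial derivatives bounded uniformly in $t$, and solves $\partial_t F+\tfrac12\Delta F=0$ on $[0,1)\times\R^N$. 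Applying It\^o's formula to $F(t,B_t)$ on $[0,1-\varepsilon]$ and letting $\varepsilon\downarrow 0$ (the uniform bounds justify the passage to the limit) gives
\[ f(B_1)-\int_{\R^N} f\,d\gamma_N = F(1,B_1)-F(0,0)=\int_0^1 \nabla F(t,B_t)\cdot dB_t. \]

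Next, the Burkholder--Davis--Gundy inequality for this $\R^N$-valued stochastic integral yields
\[ \Big\|\,f(B_1)-\int_{\R^N} f\,d\gamma_N\,\Big\|_p \;\le\; c_p\,\Big\|\Big(\int_0^1 |\nabla F(t,B_t)|^2\,dt\Big)^{1/2}\Big\|_p, \]
with $c_p$ depending on $p$ only (the constant may be chosen independently of $N$). Since $\nabla f$ is bounded one has $\nabla F(t,x)=\E\,\nabla f(x+B_{1-t})$, hence $\nabla F(t,B_t)=\E(\nabla f(B_1)\mid \cF_t)$ a.s. for every $t\in[0,1]$. I then apply Lemma \ref{lemma:Stein_continuous} with the constant-in-time process $a_t\equiv \nabla f(B_1)$ — which has continuous paths and $\sup_t|a_t|=|\nabla f(B_1)|\in L_p$ because $\nabla f$ is bounded — and with $b_t:=\nabla F(t,B_t)$, which is $(\cF_t)$-adapted, has continuous (in particular left-continuous) paths, and satisfies $b_t=\E(a_t\mid\cF_t)$ at every dyadic $t$. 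Lemma \ref{lemma:Stein_continuous} gives
\[ \Big\|\Big(\int_0^1 |\nabla F(t,B_t)|^2\,dt\Big)^{1/2}\Big\|_p \;\le\; c_{(\ref{lemma:stein-inequality})}\,\Big\|\Big(\int_0^1 |\nabla f(B_1)|^2\,dt\Big)^{1/2}\Big\|_p \;=\; c_{(\ref{lemma:stein-inequality})}\,\big\|\,|\nabla f(B_1)|\,\big\|_p. \]

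Combining the three displays and using $B_1\stackrel{d}{=}\gamma_N$, so that $\big\|\,|\nabla f(B_1)|\,\big\|_p=\big\|\,|\nabla f|\,\big\|_{L_p(\gamma_N)}$ and $\big\|f(B_1)-\int f\,d\gamma_N\big\|_p=\big\|f-\int f\,d\gamma_N\big\|_{L_p(\gamma_N)}$, yields the assertion with $c_{\eqref{lemma:PDE-Stein}}=c_p\,c_{(\ref{lemma:stein-inequality})}$, depending on $p$ only. The one place requiring genuine care is the behaviour of $F$ and $\nabla F$ up to the terminal time $t=1$ — joint continuity, the identity $\nabla F(1,\cdot)=\nabla f$, and the uniform bounds needed both for the limiting It\^o step and for checking the hypotheses of Lemma \ref{lemma:Stein_continuous} — but all of this is routine given that every $D^\alpha f$ is bounded.
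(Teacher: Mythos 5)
Your argument coincides with the paper's proof: the authors also set $F(t,x)=\E f(x+B_{1-t})$, use It\^o's formula to write $f(B_1)-\E f(B_1)=\int_0^1\nabla F(t,B_t)\,dB_t$, apply the Burkholder--Davis--Gundy inequality, and then invoke Lemma \ref{lemma:Stein_continuous} with $a_t\equiv\nabla f(B_1)$ and $b_t=\nabla F(t,B_t)$. Your write-up simply fills in the regularity details at $t=1$ that the paper leaves implicit, so the proposal is correct and essentially identical to the published argument.
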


\begin{proof}
Let $B=(B_t)_{t\in [0,1]}$ be an $N$-dimensional standard Brownian motion on a complete 
probability space $(M,\Sigma,\mu)$ with the augmented natural filtration $(\cG_t)_{t\in [0,1]}$ and that $\Sigma=\cG_1$.
Let
\[ F(t,x) := \E f(x+ B_{1-t})  \]
so that, by It\^o's formula,
\[ f(B_1) - \E f(B_1) = \int_0^1 \nabla F (t,B_t) dB_t, \]
and, by the Burkholder-Davis-Gundy inequalities,
\equa
       \| f(B_1) - \E f(B_1) \|_p
&\le& c_p \left \| \left ( \int_0^1 |\nabla F(t,B_t) |^2 dt \right )^\frac{1}{2} \right \|_p
\tion
and we can conclude with Lemma \ref{lemma:Stein_continuous}
by $a_t\equiv \nabla f(B_1)$, $b_t := \nabla F(t,B_t)$ and $\cH_t=\cG_t$.
\end{proof}

We call a function $h:\Omega\to \R$ a $\Pi$-step-function, where $\Pi\subseteq 2^\Omega$ is non-empty system of subsets, provided that
$h=\sum_{k=1}^n \alpha_k \chi_{A_k}$ for some $\alpha_1,...,\alpha_n\in \R$ and $A_1,...,A_n\in \Pi$.
\medskip

\begin{theorem}
\label{thm:PI-system_is_dense}
Let $\Omega$ be a non-empty set and $\Pi$ be a system of subsets of $\Omega$ such that
\begin{enumerate}[{\rm (i)}]
\item $A,B\in \Pi$ implies $A\cap B\in \Pi$,
\item $\Omega\in \Pi$.
\end{enumerate}
Let $p\in [1,\infty)$ and $\cF:=\sigma(\Pi)$. Then for all $f\in L_p(\Omega,\cF,\P)$ there are $\Pi$-step-functions $f_n:\Omega\to\R$ such that
$\lim_n \| f-f_n \|_p =0$.
\end{theorem}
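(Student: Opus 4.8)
The plan is to apply the Monotone Class Theorem (Theorem \ref{thm:monotone_class}) with $\cM := \{ \chi_A : A\in\Pi \}$ and $\cH$ the collection of all bounded functions lying in the $L_p(\Omega,\cF,\P)$-closure of the $\Pi$-step-functions. First I would observe that the $\Pi$-step-functions are bounded, hence in $L_p$ since $\P$ is a probability measure, and that they form a linear subspace (closure under sums and scalar multiples is immediate from the definition). Consequently their $L_p$-closure $\overline{\mathcal{S}}$ is a closed linear subspace, so $\cH=\overline{\mathcal{S}}\cap L_\infty$ is a linear space, and it contains $\chi_\Omega$ because $\Omega\in\Pi$ by hypothesis (ii).

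Next I would check the monotonicity required by the Monotone Class Theorem. If $0\le f_1\le f_2\le\cdots$ are in $\cH$ and $f_n\uparrow f$ pointwise with $f$ bounded, then $|f-f_n|\le 2\|f\|_\infty$ and, $\P$ being finite, dominated convergence gives $\|f-f_n\|_p\to 0$; since each $f_n$ lies in the closed subspace $\overline{\mathcal{S}}$, so does $f$, and $f$ is bounded, hence $f\in\cH$. I would then note $\cM\subseteq\cH$ (each $\chi_A$ with $A\in\Pi$ is already a bounded $\Pi$-step-function), that $\cM$ is closed under products since $\chi_A\chi_B=\chi_{A\cap B}$ and $\Pi$ is closed under finite intersections by hypothesis (i), and that $\sigma(\cM)=\sigma(\Pi)=\cF$. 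The Monotone Class Theorem then yields that every bounded $\cF$-measurable function belongs to $\cH$, i.e. is an $L_p$-limit of $\Pi$-step-functions.

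To treat a general $f\in L_p(\Omega,\cF,\P)$ I would truncate: put $f^L:=(-L)\vee f\wedge L$, which is bounded and $\cF$-measurable, hence approximable in $L_p$ by $\Pi$-step-functions, and observe that $\|f-f^L\|_p\to 0$ as $L\to\infty$ by dominated convergence with dominating function $2|f|\in L_p$. A diagonal selection then produces a sequence $(f_n)$ of $\Pi$-step-functions with $\|f-f_n\|_p\to 0$, which is the assertion.

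The only point requiring genuine care is the monotonicity clause in the Monotone Class Theorem: one must confirm that a monotone pointwise limit of elements of $\cH$ remains in the $L_p$-closure $\overline{\mathcal{S}}$, and this is exactly where finiteness of $\P$ is used, through dominated convergence applied to the bounded dominating constant $\|f\|_\infty$. All the remaining verifications are routine.
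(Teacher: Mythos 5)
Your proposal is correct and follows exactly the paper's route: the Monotone Class Theorem applied to $\cM=\{\chi_A : A\in\Pi\}$ and $\cH$ the bounded functions approximable in $L_p$ by $\Pi$-step-functions, followed by truncation for general $f\in L_p$. You merely spell out the routine verifications (linearity, monotonicity via dominated convergence, stability of $\cM$ under products) that the paper leaves implicit.
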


\begin{proof}
Let
$\cM := \{ \chi_A : A \in \Pi \}$
so that
$\cF = \sigma(\Pi) = \sigma (\cM)$.
Let $\cH$ be the set of all bounded measurable $f: \Omega \to \R$ such that there exist $\Pi$-step-functions $h_k:\Omega\to\R$
with $\lim_k \| f-h_k \|_p = 0$.
Then $\cH$ and $\cM$ satisfy the assumptions of the monotone class theorem (see \cite[p. 7]{Protter:04}).
Hence any bounded
$\cF$-measurable function can be approximated in $L_p$ by $\Pi$-step-functions. Our assertion follows by one more approximation
obtained by truncation of a general element of $L_p$.
\end{proof}
\smallskip

\begin{theorem}
Let $X=(X_t)_{t\in [0,T]}$, $T>0$, $X_t:\Omega\to \R^d$, be a stochastic process such that all families
$(X_{t_i^k}^k-X_{t_{i-1}^k}^k)_{k=1,i=1}^{d,N_k}$ with
\[ 0=t_0^k < \cdots < t_{N_k}^k = T
  \sptext{1}{and}{1}
  N_k\ge 1 \]
are independent, $\cF:=\sigma(X)$, and $p\in [1,\infty)$.
Then the following holds:
\begin{enumerate}[{\rm (i)}]
\item The linear span of
      \[ \prod_{k=1}^d \prod_{i=1}^{N_k} \chi_{\left \{ X_{t_i^k}^k-X_{t_{i-1}^k}^k \in (a_i^k,b_i^k)\right \}}, \]
      where for $N_k=0$ the corresponding product is replaced by $1$ and for
      $N_k\ge 1$ we have $-\infty<a_i^k<b_i^k<\infty$ and $0\le t_{i-1}^k<t_i^k\le T$, is dense in $L_p(\Omega,\cF,\P)$.
\item If $X$ is the $d$-dimensional standard Brownian motion, then the linear span of
      \[ \prod_{k=1}^d \prod_{i=1}^{N_k} \left (X_{t_i^k}^k-X_{t_{i-1}^k}^k \right ) \]
      is dense in $L_2(\Omega,\cF,\P)$, where for $N_k=0$ the corresponding product is replaced by $1$ and for $N_k\ge 1$
      the intervals $(t_{i-1}^k,t_i^k]$, $i=1,..,N_k$, are pair-wise  disjoint for any fixed $k$.
\end{enumerate}
\end{theorem}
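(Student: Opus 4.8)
Both parts are density statements of the form ``the linear span of a family of products is dense in $L_p(\Omega,\cF,\P)$''. The plan is to use the $\Pi$-system approximation result, Theorem \ref{thm:PI-system_is_dense}, for part (i), and the monotone class theorem, Theorem \ref{thm:monotone_class}, together with a Gaussian integration-by-parts / Hermite-polynomial argument for part (ii).

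For part (i), first I would introduce the system
\[ \Pi := \Big\{ \textstyle\bigcap_{k=1}^d \bigcap_{i=1}^{N_k} \{ X_{t_i^k}^k-X_{t_{i-1}^k}^k \in (a_i^k,b_i^k) \} : N_k\ge 0, \ 0\le t_{i-1}^k<t_i^k\le T, \ -\infty<a_i^k<b_i^k<\infty \Big\}, \]
with the usual convention that an empty product gives $\Omega$. The key steps are: (1) check that $\Pi$ is stable under finite intersections — this is where the independence hypothesis enters, since the intersection of two such sets with possibly different partitions can be rewritten, after passing to a common refinement of the time-grids in each coordinate $k$, as a finite \emph{union} of sets of the same type, and by disjointification (or, more cleanly, by taking the common refinement so the intervals $(a_i^k,b_i^k)$ become products of the refined-grid increments) one stays inside $\Pi$; (2) observe $\Omega\in\Pi$ (empty product); (3) verify $\sigma(\Pi)=\cF=\sigma(X)$, using that each increment $X_{t}^k-X_{s}^k$ is measurable with respect to $\sigma(\Pi)$ as an a.s.-limit of finite-increment combinations, together with path-wise continuity if needed, or simply that the increments along a fixed dense grid generate $\cF$; (4) apply Theorem \ref{thm:PI-system_is_dense} to conclude that $\Pi$-step-functions are dense in $L_p(\Omega,\cF,\P)$; (5) note that a $\Pi$-step-function is exactly a finite linear combination of the indicator products displayed in (i), so the span of those products is dense. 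The main obstacle here is step (1): rewriting $A\cap B$ — two products indexed by different partitions in each coordinate — as something still literally of the prescribed form. The clean way is: given two elements of $\Pi$, for each coordinate $k$ merge the two time-grids into a common finer grid $0=s_0^k<\cdots<s_{M_k}^k=T$; each original increment $X_{t_i^k}^k-X_{t_{i-1}^k}^k$ is a sum of consecutive refined increments, so the condition ``this sum lies in $(a_i^k,b_i^k)$'' defines a Borel (in fact open) set in $\R^{(\text{refined increments})}$, and intersecting two such gives another open set; any open set in $\R^m$ is a countable disjoint union of open boxes, and since $L_p$-density only needs the \emph{algebra} generated it suffices to note the span of box-indicators over the refined grid already contains these — so one should phrase Theorem \ref{thm:PI-system_is_dense}'s hypothesis for the slightly larger $\Pi$ consisting of \emph{all} products of refined-grid increment-indicators, which is manifestly $\cap$-stable, and observe it has the same span.

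For part (ii), I would invoke the monotone class theorem. Let $\cM$ be the set of all functions of the form $\prod_{k=1}^d\prod_{i=1}^{N_k}(X_{t_i^k}^k-X_{t_{i-1}^k}^k)$ with disjoint intervals in each coordinate (which is stable under products after, again, refining to a common grid and re-expanding, since a single increment over a coarse interval is a \emph{sum} of increments over finer disjoint intervals — but crucially here we must close under products, and a product of sums expands into a sum of products of disjoint-grid increments, all of which lie in $\cM$, so the linear span of $\cM$ is an algebra). Let $\cH$ be the $L_2$-closure of $\myspan(\cM)$ intersected with the bounded functions — actually it is cleaner to let $\cH$ be the set of bounded $\cF$-measurable $f$ that lie in the $L_2$-closure of $\myspan\cM$; one checks $\cH$ is a linear space containing $\chi_\Omega$ and is monotone (by dominated convergence for the bounded monotone limit, which is an $L_2$-limit). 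Since $\cM\cup\{\chi_\Omega\}$ is stable under multiplication and $\sigma(\cM)=\sigma(X)=\cF$, Theorem \ref{thm:monotone_class} gives that every bounded $\cF$-measurable function is in $\cH$, hence in the $L_2$-closure of $\myspan\cM$; a final truncation argument extends this to all of $L_2(\Omega,\cF,\P)$. The point that $\sigma(\cM)=\cF$ holds because the single increments $X_t^k-X_s^k$ for dyadic $s<t$ already generate $\cF$ and each such increment lies in $\cM$. I expect the monotone-class bookkeeping — verifying that $\myspan\cM$ is genuinely an algebra, i.e.\ that the refine-and-expand step keeps products inside $\myspan\cM$, and that $\cH$ as defined is monotone — to be the main (though routine) obstacle; the Brownian-motion hypothesis is used only to know the increments are honest random variables (not merely formal) so that these products lie in every $L_q$, $q<\infty$, ensuring all the $L_2$ manipulations and the truncation step are legitimate.
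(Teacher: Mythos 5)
Your part (i) follows essentially the paper's route: the paper takes $\Pi$ to be $\Omega$ together with all finite intersections of single-increment events $\{X_t^k-X_s^k\in(a,b)\}$ and applies Theorem \ref{thm:PI-system_is_dense}; your refine-the-grids-and-decompose step supplies the detail (left implicit in the paper) that indicators of such intersections lie in the $L_p$-closure of the span of products of consecutive-interval increment indicators. That part is in order.

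Part (ii) has a genuine gap, and it sits exactly where the real work is. The claim that $\myspan(\cM)$ is an algebra --- ``a product of sums expands into a sum of products of disjoint-grid increments, all of which lie in $\cM$'' --- is false: expanding $(X_b^k-X_a^k)^2=\bigl(\sum_j \Delta_j\bigr)^2$ over a refinement produces the diagonal terms $\Delta_j^2$, which are products of increments over \emph{identical} (hence non-disjoint) intervals and belong to $\cM$ for no refinement. These diagonal terms are precisely the obstruction; they can only be removed in the $L_2$-\emph{limit}, using that $\sum_j\Delta_j^2\to b-a$ (quadratic variation) and its higher-order analogues. This is exactly the paper's final step, where $(X_b^k-X_a^k)^l$ is approximated by the off-diagonal sum over distinct indices $i_1,\dots,i_l$ as $N\to\infty$. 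In addition, your appeal to Theorem \ref{thm:monotone_class} does not go through as stated: that theorem requires $\cM\subseteq\cH$ with $\cH$ a space of \emph{bounded} functions, while your multiplicative class consists of unbounded products of Gaussian increments. Finally, the remark that the Brownian hypothesis ``is used only to know the increments are honest random variables'' misidentifies its role: Gaussianity enters essentially, both in the quadratic-variation limit above and --- in the paper's route, which goes through part (i) and then expands bounded Borel functions of normalized increments in Hermite polynomials --- in the fact that polynomials are dense in $L_2$ of a Gaussian measure (this fails for general laws, e.g.\ the log-normal). Some version of the Hermite/chaos density argument plus the off-diagonal approximation is needed; the monotone-class shortcut does not replace it.
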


\begin{proof}
(i) The system $\Pi$ consisting of $\Omega$ and all possible finite intersections of
$\{ X_t^k - X_s^k \in (a,b) \}$
with $k\in \{ 1,...,d \}$, $0\le s < t \le T$, and $-\infty < a < b < \infty$, satisfies (i) and (ii) of
Theorem \ref{thm:PI-system_is_dense} and $\cF=\sigma(\Pi)$. Therefore assertion (i) follows from the same Theorem \ref{thm:PI-system_is_dense}.
\smallskip

(ii) By step (i) the random variables of form
\[ \xi=f\left (\frac{X_{t_1} - X_{t_0}}{\sqrt{t_1-t_0}},..., \frac{X_{t_n}-X_{t_{n-1}}}{\sqrt{t_n-t_{n-1}}} \right ), \]
where $n\ge 1$, $0=t_0<\cdots < t_n=T$ and $f:\R^{nd}\to \R$ is a bounded Borel function, are dense in $L_2(\Omega,\cF,\P)$.
Exploiting the orthonormal basis of Hermite functions of $L_2(\R^{nd},\gamma_{nd})$ we can approximate $\xi$ by polynomials in
$(X_{t_i}^k-X_{t_{i-1}}^k)$ where $k=1,...,d$ and $i=1,...,n$. It remains to approximate $(X_b^k-X_a^k)^l$ for $l\ge 2$, $k\in \{1,...,d\}$ and
$0\le a < b \le T$ by
\[ \sum_{\genfrac{}{}{0pt}{}{i_1,...,i_l\in \{1,...,N\}}{\mbox{\tiny distinct}}} \left (X^k_{a+ i_1 \frac{b-a}{N}} -  X^k_{a+ (i_1-1)\frac{b-a}{N}}\right )
                        \cdots  \left (X^k_{a+ i_l \frac{b-a}{N}} -  X^k_{a+ (i_l-1)\frac{b-a}{N}}\right ) \]
and $N\to \infty$.
\end{proof}

The following lemma can be proved by the generalized Clark-Ocone formula from 
\cite[Proposition A.1]{Nualart:Pardoux:88}. For completeness we include an argument based
on a periodic time-shift of the Brownian motion.
\smallskip

\begin{lemma}
\label{lemma:BDG-chaos}
Let $p\in [2,\infty)$, $\xi=\sum_{k=0}^\infty I_k(f_k) \in \D_{1,2} \cap L_p(\Omega,\cF,\P)$ 
with symmetric kernels $f_k$, and $b\in (0,T]$. Then
there are measurable processes $(\mu^b_t(i))_{t\in [0,b]}$, $i=1,...,d$, 
such that for all $a\in [0,b)$ one has 
\begin{enumerate}
\item $\| \xi - \E(\xi|\cG_a^b) \|_p
       \sim_{\kappa_p} \left\| \left ( \int_a^b |\mu^b_r|^2 dr \right )^\frac{1}{2} \right \|_p$,
       where $\kappa_p\ge 1$ depends on $p$ only,
\item and that
      \begin{multline*}
         \int_{(a,b]} \E|\mu_r^b(i) - D(r,i)\xi|^2 dr \\
       = \int_{(a,b]} \sum_{k=1}^\infty k^2(k-1)! \| f_k((r,i),\cdot)[\chi_{((0,r]\cup (b,T])^{k-1}}-1]\|^2_{L_2^{k-1}}dr.
       \end{multline*}
\end{enumerate}
\end{lemma}
\smallskip

\begin{proof}
We represent our Wiener space by a different Brownian motion, obtained by a permutation of the original one.
For this purpose we let
\[ W_t^b := \begin{cases}
                     W_{b+t} - W_b         & : t \in [0,T-b] \\
                     W_{t-T+b} + W_T- W_b  & : t \in [T-b,T]
            \end{cases}  \]
and obtain a standard Brownian motion (as Gaussian process).
We have that $\sigma(W_t^b: t\in [0,T]) = \sigma(W_t:t\in [0,T])$ and
$\cG_t^b= \cF^{W^b}_{T-b+t}$. The symmetric kernels $f_n$ for the chaos decompositions with respect to $W$ may be transformed to
$W^b$ as
\begin{equation} \label{eqn:shift_of_kernels}
  f_n^b ((t_1               ,i_1),...,(t_n                ,i_n))
= f_n   (((\vph^b)^{-1}(t_1),i_1),...,((\vph^b)^{-1}(t_n),i_n))
\end{equation}
where $\vph^b(t) := t+(T-b)$ for $t\in (0,b]$ and $\vph^b(t) := t-b$ for $t\in (b,T]$.
Now we get that
\[ \xi - \E(\xi|\cG_a^b) = \xi - \E(\xi|\cF^{W^b}_{T-b+a}). \]
Let $\xi=\sum_{n=0}^\infty I_n^b(f_n^b)$ the chaos decomposition with respect to $W^b$ where the kernels are obtained from
the representation in terms of $W$ by formula (\ref{eqn:shift_of_kernels}). Exploiting 
the representation property on the Wiener space, we find progressively measurable (with
respect to the augmentation of the natural filtration $(\cF_t^{W^b})_{t\in [0,T]}$ of $(W_t^b)_{t\in [0,T]}$) 
processes $(\lambda_t^b(i))_{t\in [0,T]}$, $i=1,\ldots,d$, satisfying 
$\E\int_0^T |\lambda_t^b|^2 dt < \infty$ and
\[ \xi = \E \xi + \int_{(0,T]} \lambda_t^b dW_t^b \mbox{ a.s.} \]
Then the processes  $(\mu_t^b(i))_{t\in [0,b]}$ are defined by
\[ \mu_r^b(i) := \lambda_{T-b+r}^b(i). \]
By the Burkholder-Davis-Gundy inequalities we get that
\begin{multline*}
        \left\|  \xi - \E(\xi|\cG_a^b) \right \|_p
     =             \left\|  \xi - \E(\xi|\cF^{W^b}_{T-b+a}) \right \|_p \\
   \sim_{\kappa_p} \left\| \left ( \int_{T-b+a}^T |\lambda^b_r|^2 dr \right )^\frac{1}{2} \right \|_p
     =             \left\| \left ( \int_a^b       |\mu    ^b_r|^2 dr \right )^\frac{1}{2} \right \|_p. 
\end{multline*}
This proves part (1). Regarding part (2) it is sufficient to prove the equality for $\xi$ from 
a dense subset of $\D_{1,2}$. So we may assume $\xi=\sum_{k=1}^N I_k(f_k)$, $N\ge 1$, with symmetric $f_k$ that are constant 
on dyadic cuboids of side-length $T/2^L$, $L\ge 1$, and vanish on diagonal cuboids (where at least 
two edges coincide). For those $\xi$ we have the explicit formula
\[ \lambda_t^b(i) = \sum_{k=1}^N k I_{k-1}^b (f_k^b((t,i),\cdot) \chi_{(0,t]^{k-1}}) \]
where we chose the canonical representatives on the right-hand side.
In this case one can directly check part (2).
\end{proof}

%%%%%%%%%%%%%%%%%%%%%%%%%%%%%%%%%%%%%%%%%%%%%%%%%%%%%%%%%%%%%%%%%%%%%%%%%%%%%%%%%%%%%%%%%%%%%%%%

{\bf Acknowledgment:} We would like to thank 
Christian Bender,
Christel Geiss,
David Nualart,
Adrien Richou,
and  Alexander Steinicke for helpful discussions, 
and the referee for reading the manuscript and for his valuable comments.

%%%%%%%%%%%%%%%%%%%%%%%%%%%%%%%%%%%%%%%%%%%%%%%%%%%%%%%%%%%%%%%%%%%%%%%%%%%%%%%%%%%%%%%%%%%%%%%%
%%%%%%%%%%%%%%%%%%%%%%%%%%%%%%%%%%%%%%%%%%%%%%%%%%%%%%%%%%%%%%%%%%%%%%%%%%%%%%%%%%%%%%%%%%%%%%%%

\printindex

\end{document}